\documentclass[a4paper, pdftex]{article}

\usepackage[utf8]{inputenc}
\usepackage[T1]{fontenc}
\usepackage[pdftex]{graphicx}
\usepackage{amsmath}
\usepackage{amsfonts}
\usepackage{amsthm}
\usepackage{amssymb}
\usepackage{setspace}
\usepackage[format=hang]{caption}
\usepackage{thmtools}
\usepackage{thm-restate}
\usepackage{mathtools}
\usepackage[colorlinks,citecolor=blue]{hyperref}
\usepackage[capitalize]{cleveref}
\usepackage{verbatim}
\usepackage[all, cmtip]{xy}

\begin{document}
\newtheorem*{thm*}{Theorem}
\newtheorem{theorem}{Theorem}[section]
\newtheorem{corollary}[theorem]{Corollary}
\newtheorem{lemma}[theorem]{Lemma}
\newtheorem{fact}[theorem]{Fact}
\newtheorem*{fact*}{Fact}
\newtheorem{proposition}[theorem]{Proposition}
\newcounter{theoremalph}
\renewcommand{\thetheoremalph}{\Alph{theoremalph}}
\newtheorem{thmAlph}[theoremalph]{Theorem}
\theoremstyle{definition}
\newtheorem*{question}{Question}
\newtheorem{definition}[theorem]{Definition}
\theoremstyle{remark}
\newtheorem{remark}[theorem]{Remark}
\newtheorem{example}[theorem]{Example}

\newcommand{\overbar}[1]{\mkern 2mu\overline{\mkern-2mu#1\mkern-2mu}\mkern 2mu}

\newcommand{\set}[1]{\ensuremath{ \left\lbrace #1 \right\rbrace}}
\newcommand{\bsl}{\ensuremath{\setminus}}
\newcommand{\grep}[2]{\ensuremath{\left\langle #1 \, \middle| \, #2\right\rangle}}
\renewcommand{\ll}{\left\langle}
\newcommand{\rr}{\right\rangle}

\newcommand{\rank}{n}
\newcommand{\Aut}[1]{\ensuremath{\operatorname{Aut}(#1)}}
\newcommand{\AutO}{\ensuremath{\operatorname{Aut}^0}}
\newcommand{\GL}[2]{\ensuremath{\operatorname{GL}_{#1}(#2)}}
\newcommand{\Out}[1]{\ensuremath{\operatorname{Out}(#1)}}
\newcommand{\Outo}[1][A_\Gamma]{\ensuremath{\operatorname{Out}^0(#1)}}
\newcommand{\Roar}[2]{\ensuremath{\operatorname{Out}^0(#1;\, #2)}}
\newcommand{\Stab}{\operatorname{Stab}}
\newcommand{\op}{\operatorname{op}}
\newcommand{\mcA}{\ensuremath{\mathcal{A}}}
\newcommand{\mcC}{\ensuremath{\mathcal{C}}}
\newcommand{\mcD}{\ensuremath{\mathcal{D}}}
\newcommand{\mcE}{\ensuremath{\mathcal{E}}}
\newcommand{\mcF}{\ensuremath{\mathcal{F}}}
\newcommand{\mcG}{\ensuremath{\mathcal{G}}}
\newcommand{\mcH}{\ensuremath{\mathcal{H}}}
\newcommand{\mcI}{\ensuremath{\mathcal{I}}}
\newcommand{\mcJ}{\ensuremath{\mathcal{J}}}
\newcommand{\mcN}{\ensuremath{\mathcal{N}}}
\newcommand{\mcP}{\ensuremath{\mathcal{P}}}
\newcommand{\mcQ}{\ensuremath{\mathcal{Q}}}
\newcommand{\mcR}{\ensuremath{\mathcal{R}}}
\newcommand{\mcS}{\ensuremath{\mathcal{S}}}
\newcommand{\mcU}{\ensuremath{\mathcal{U}}}
\newcommand{\mcV}{\ensuremath{\mathcal{V}}}
\newcommand{\mcW}{\ensuremath{\mathcal{W}}}
\newcommand{\mcZ}{\ensuremath{\mathcal{Z}}}
\newcommand{\FC}{\ensuremath{\operatorname{FC}_{\rank}}}
\newcommand{\CC}[2]{\ensuremath{\operatorname{CC}( #1 ,\, #2)}}
\newcommand{\on}[1]{\operatorname{#1}}
\newcommand{\CosPos}[2]{\ensuremath{\operatorname{CP}( #1 ,\, #2)}}
\newcommand{\st}{\operatorname{st}}
\newcommand{\lk}{\operatorname{lk}}
\newcommand{\im}{\operatorname{im}}
\newcommand{\rk}{\operatorname{rk}}
\newcommand{\corank}{\operatorname{crk}}
\newcommand{\Sym}{\operatorname{Sym}}
\newcommand{\AG}{A_\Gamma}

\newcommand{\FreeS}[1][n]{\mathcal{FS}_{#1}}
\newcommand{\FreeOne}{\mathcal{FS}^1}
\newcommand{\crX}{\operatorname{X}}
\newcommand{\crCore}{\operatorname{C}}

\newcommand{\real}[1]{\ensuremath{\left\lVert #1\right\rVert}}
\newcommand{\FRfactor}{\mathcal{F}}
\newcommand{\RAAGcomplex}{\mathcal{CC}}

\title{Between buildings and free factor complexes: A Cohen--Macaulay complex for Out(RAAGs)}
\author{Benjamin Br\"{u}ck}
\maketitle

\begin{abstract}
For every finite graph $\Gamma$, we define a simplicial complex associated to the outer automorphism group of the RAAG $A_\Gamma$. These complexes are defined as coset complexes of parabolic subgroups of $\Outo$ and interpolate between Tits buildings and free factor complexes. We show that each of these complexes is homotopy Cohen--Macaulay and in particular homotopy equivalent to a wedge of $d$-spheres. The dimension $d$ can be read off from the defining graph $\Gamma$ and is determined by the rank of a certain Coxeter subgroup of $\Outo$. In order to show this, we refine the decomposition sequence for $\Outo$ established by Day--Wade, generalise a result of Brown concerning the behaviour of coset posets under short exact sequences and determine the homotopy type of free factor complexes associated to relative automorphism groups of free products. 
\makeatletter{\renewcommand*{\@makefnmark}{}\footnotetext{MSC classes: 20F65, 20F28, 20E42, 20F36, 57M07.}\makeatother}
\end{abstract}

\section{Introduction}
Given a simplicial graph $\Gamma$, the associated right-angled Artin group (RAAG) $A_\Gamma$ is the group generated by the vertex set of $\Gamma$ subject to the relations $[v,w]=1$ whenever $v$ and $w$ are adjacent. If $\Gamma$ is a discrete graph (no edges),  $A_\Gamma$ is a free group whereas if $\Gamma$ is complete, the corresponding RAAG is a free abelian group; often, the family of RAAGs is seen as an interpolation between these two extremal cases.
Accordingly, the outer automorphism group $\Out{A_\Gamma}$ is often seen as interpolating between the arithmetic group $\GL{n}{\mathbb{Z}}=\Out{\mathbb{Z}^n}$ and $\Out{F_n}$. Over the last years, this point of view has served both as a motivation for studying automorphism groups of RAAGs and as a source of techniques improving our understanding of these groups.

This article contributes to this programme by providing a new geometric structure generalising well-studied complexes associated to arithmetic groups and automorphism groups of free groups. On the arithmetic side, we have the Tits building associated to $\GL{n}{\mathbb{Q}}$. It can be defined as the order complex of the poset of proper subspaces of $\mathbb{Q}^n$, ordered by inclusion, and is homotopy equivalent to a wedge of $(n-2)$-spheres (this is the Solomon--Tits Theorem). On the $\Out{F_n}$ side, there is the free factor complex, which is defined as the order complex of the poset of conjugacy classes of proper free factors of $F_n$, ordered by inclusion of representatives. This complex is homotopy equivalent to a wedge of $(n-2)$-spheres as well (see the work of Hatcher--Vogtmann \cite{HV:complexfreefactors} and of Gupta and the author \cite{BG:Homotopytypecomplex}). In this article, we construct a (simplicial) complex interpolating between these two structures.

It should be mentioned that the free factor complex was introduced in \cite{HV:complexfreefactors} in order to obtain an analogue of the more classical Tits building for the setting of $\Out{F_n}$. The same is true for the curve complex $\mcC(S)$ associated to a surface $S$, which was defined by Harvey \cite{Har:Boundarystructuremodular} and shown to be homotopy equivalent to a wedge of spheres by Harer \cite{Har:virtualcohomologicaldimension} and Ivanov \cite{Iva:ComplexescurvesTeichmullerb} --- it is an analogue of a Tits building for the setting of mapping class groups. In this sense, the complex we construct can also be seen as an $\Out{\AG}$-analogue of $\mcC(S)$.

Instead of looking at $\Out{\AG}$ itself, we will throughout work with its finite index subgroup $\Outo$, called the \emph{pure outer automorphism group}. The group $\Outo$ was first defined by Charney, Crisp and Vogtmann in \cite{CCV:Automorphisms2dimensional} and has since become popular as it avoids certain technical difficulties coming from automorphisms of the graph $\Gamma$; if $\AG$ is free or free abelian, we have $\Outo = \nolinebreak\Out{A_\Gamma}$. From now on, let $O\coloneqq \Outo$.

Above, we described the building associated to $\GL{n}{\mathbb{Q}}$ in terms of flags of subspaces of $\mathbb{Q}^n$. However, one can also describe the building associated to a group $G$ with BN-pair in a more intrinsic way using the parabolic subgroups of $G$. We use this definition as an inspiration for our construction: Given $O$, we define a family of \emph{maximal standard parabolic subgroups} $\mcP(O)$. Every element of $\mcP(O)$ is a proper, non-trivial subgroup of $O$ and defined as the stabiliser $\Stab_O(A_\Delta)$ of some special subgroup $A_\Delta\leq \AG$ (for the precise statement, see \cref{definition parabolics for RAAGs}). 
The complex we consider now is the \emph{coset complex of $O$ with respect to the family $\mcP(O)$}, i.e.~the simplicial complex whose vertices are given by cosets $gP$, with $g\in O$ and $P\in \mcP(O)$, and where a collection of such cosets forms a simplex if and only if its intersection is non-empty. This complex is denoted by $\CC{O}{\mcP(O)}$ and $O$ acts on it by left multiplication. 
The main result of this paper is the following:

\begin{thmAlph}
\label{introduction homotopy type}
The complex $\RAAGcomplex \coloneqq \CC{O}{\mcP(O)}$ is homotopy equivalent to a wedge of spheres of dimension $|\mcP(O)|-1$, where we call $\rk(O)\coloneqq |\mcP(O)|$ the \emph{rank} of $O$.
\end{thmAlph}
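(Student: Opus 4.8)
The plan is to prove the stronger assertion that $\CC{O}{\mcP(O)}$ is homotopy Cohen--Macaulay of dimension $|\mcP(O)|-1$; since a $d$-dimensional homotopy Cohen--Macaulay complex is homotopy equivalent to a wedge of $d$-spheres, this immediately yields \cref{introduction homotopy type}, and Cohen--Macaulayness is the natural invariant to push through an induction. The induction runs along the (refined) Day--Wade decomposition of $O$ as an iterated extension. At each stage one has a short exact sequence $1\to N\to G\to Q\to 1$ in which the maximal standard parabolics of $G$ split into those containing $N$, which are the pullbacks of the parabolics of $Q$, and a complementary family whose intersections with $N$ recover the parabolics of $N$; in particular $|\mcP(G)|=|\mcP(N)|+|\mcP(Q)|$. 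The dimension count is then easy: distinct cosets of the \emph{same} parabolic are disjoint, so every simplex uses at most one coset of each type and $\dim\CC{O}{\mcP(O)}\le|\mcP(O)|-1$, while each $g\in O$ spans a top ``chamber'' $\set{gP:P\in\mcP(O)}$; the real content is therefore purity together with the high connectivity of all links.

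The heart of the argument is the promised generalisation of Brown's theorem on coset posets. For a short exact sequence as above I expect to show a join decomposition
\[
\CC{G}{\mcP(G)} \;\simeq\; \CC{N}{\mcP(N)} \ast \CC{Q}{\mcP(Q)}.
\]
Since the join of a wedge of $a$-spheres with a wedge of $b$-spheres is a wedge of $(a+b+1)$-spheres, and since joins preserve homotopy Cohen--Macaulayness, both conclusions propagate: the dimension $(|\mcP(N)|-1)+(|\mcP(Q)|-1)+1=|\mcP(G)|-1$ matches, and the wedge-of-spheres form is maintained. Iterating this step collapses the problem to the atomic pieces of the Day--Wade decomposition.

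This leaves two base cases. The free-abelian (transvection) pieces give $O\cong\GL{m}{\mathbb{Z}}$, whose coset complex with respect to maximal parabolics is the associated Tits building and is handled by the Solomon--Tits theorem. The Fouxe--Rabinovitch pieces require identifying their coset complexes with relative free factor complexes and then computing these to be wedges of spheres of the correct dimension; this is where $\Out{F_n}$-style input enters, generalising the Hatcher--Vogtmann and \cite{BG:Homotopytypecomplex} arguments to the relative setting.

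I expect the main obstacle to be the Brown-type join decomposition itself. Brown's original statement concerns the full coset poset and does not respect the partition of $\mcP(G)$ into normal and quotient parabolics, so the work lies in controlling how the nonempty-intersection conditions defining simplices in $G$ factor through the sequence --- concretely, showing that any family of cosets spanning a simplex decomposes compatibly into an $N$-part and a $Q$-part, so that the nerve splits as the asserted join up to homotopy. The second substantial ingredient is the Fouxe--Rabinovitch base case, since the relevant relative free factor complexes have not previously been determined and their connectivity must be established from scratch.
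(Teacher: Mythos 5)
Your outline follows the same route as the paper: refine the Day--Wade decomposition into short exact sequences, check that the maximal parabolics of $G$ split into those containing $N$ (which correspond to the parabolics of $Q$) and a complementary family whose intersections with $N$ are the parabolics of $N$, apply the Brown-type join theorem (\cref{coset complexes and SES}, used in the form \cref{short exact sequences}), and settle the base cases by Solomon--Tits for $\GL{n}{\mathbb{Z}}$ and by the sphericity of relative free factor complexes for the Fouxe--Rabinovitch pieces (\cref{building and coset complex}, \cref{relative free factor complexes}). The two ``obstacles'' you single out are indeed where the paper invests its work.

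However, there is a genuine flaw in the way you set up the induction: you cannot propagate homotopy Cohen--Macaulayness through the decomposition as you propose. The decomposition
\[
\CC{G}{\mcP(G)}\;\simeq\;\CC{N}{\mcP(N)}\ast\CC{Q}{\mcP(Q)}
\]
is only a \emph{homotopy equivalence} --- its proof passes through coset posets, Quillen fibre arguments and deformation retractions, not through a simplicial isomorphism --- and homotopy Cohen--Macaulayness is not a homotopy-invariant property: it constrains the dimension and the links of every simplex of the given triangulation. (For instance, two $2$-simplices glued at a single vertex form a contractible complex that is not homotopy Cohen--Macaulay, since the link of the glueing vertex is disconnected.) So even granting that the simplicial join of two homotopy CM complexes is homotopy CM, the homotopy equivalence above transports neither purity nor the link conditions to $\CC{G}{\mcP(G)}$, and the inductive step collapses. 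What does propagate through joins up to homotopy equivalence is sphericity (\cref{homotopy type of wedges and joins}), and that is all Theorem \ref{introduction homotopy type} needs; this is exactly how the paper argues in \cref{homotopy type complex of parabolics in ROARs}. Cohen--Macaulayness is then obtained separately (\cref{CM of coset complex parabolic subgroups}) by invoking the characterisation of homotopy CM coset complexes via higher generation (\cref{characterisation CC CM}): one shows that for \emph{every} subfamily $\mcP'\subseteq\mcP(O)$ the complex $\CC{O}{\mcP'}$ is $(|\mcP'|-1)$-spherical --- a homotopy-invariant statement to which the same induction applies verbatim. If you re-route your argument through sphericity (of all subfamilies, if you also want CM), the rest of your proposal matches the paper's proof.
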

The rank $\rk(O)$ seems to be an interesting invariant of the group $O$, which has, to the best of the author's knowledge, not been studied in the literature so far.

In order to prove Theorem \ref{introduction homotopy type}, we are lead to study relative versions of it, namely we have to consider the case where $O$ is not given by all of $\Outo$, but rather by a \emph{relative outer automorphism group} $O=\Roar{\AG}{\mcG, \mcH^t}$ as defined by Day and Wade \cite{DW:Relativeautomorphismgroups} (for the definitions, see \cref{sec RAAGs and their automorphism groups} and \cref{sec generators of ROARs}). This is why we prove all of the results mentioned in this introduction in that more general setting. In particular, we define a set of maximal standard parabolic subgroups $\mcP(O)$ and the rank $\rk(O)$ for all such $O$.

In addition to Theorem \ref{introduction homotopy type}, we show that $\RAAGcomplex$ has the following properties which indicate that it is a reasonable analogue of Tits buildings and free factor complexes:
\paragraph{Properties of $\RAAGcomplex$} 
\begin{itemize}
\item \emph{Building.} If $O=\GL{n}{\mathbb{Z}}$, the complex  $\RAAGcomplex$ is isomorphic to the building associated to $\GL{n}{\mathbb{Q}}$. (\cref{building and coset complex})
\item \emph{Free factor complex.} If $O=\Out{F_n}$, the complex $\RAAGcomplex$ is isomorphic to the free factor complex associated to $F_n$. (\cref{isomorphism relative free factor complex})
\item \emph{Cohen--Macaulayness.} $\RAAGcomplex$ is homotopy Cohen--Macaulay and in particular a chamber complex. (\cref{CM of coset complex parabolic subgroups})
\item \emph{Facet-transitivity.} Any maximal simplex of $\RAAGcomplex$ forms a fundamental domain for the action of $O$. (\cref{section group actions detecting cc})
\item \emph{Stabilisers.} The vertex stabilisers of this action are exactly the conjugates of the elements of $\mcP(O)$. Stabilisers of higher-dimensional simplices are given by the intersections of such conjugates and can be seen as parabolic subgroups of lower rank. (\cref{section parabolics of lower rank})
\item \emph{Parabolics as relative automorphism groups.} Every maximal standard parabolic $P\in \mcP(O)$ is itself a relative automorphism group of the form $\Roar{\AG}{\mcG, \mcH^t}$ and $\rk(P)=|\mcP(P)|=\rk(O)-1$. (\cref{rank of parabolic subgroups})
\item \emph{Rank via Weyl group.} Similar to a group with BN-pair, the rank $\rk(O)$ is equal to the rank of a naturally defined Coxeter subgroup $\AutO(\Gamma)\leq O$. (\cref{rank via Coxeter system})
\item \emph{Direct and free products.} The construction is well-behaved under taking direct and free products of the underlying RAAGs, i.e. under passing from $\operatorname{Out}^0(A_{\Gamma})$ to $\operatorname{Out}^0(A_{\Gamma}\times A_{\Gamma'})$ or to $\operatorname{Out}^0(A_{\Gamma}\ast A_{\Gamma'})$. (\cref{section constructions}) 
\end{itemize}

\begin{remark}
The present article focuses on topological properties of $\RAAGcomplex$. This is the perspective that makes the complexes it generalises look quite similar, reflecting the fact that $\Out{F_n}$, $\GL{n}{\mathbb{Z}}$ and mapping class groups of surfaces share many homological properties (see e.g.~\cite{CFP:stabilityconjectureunstable} and references therein). It should be noted that from a geometric perspective, there are significant differences between the associated complexes: While the free factor complex \cite{BF:Hyperbolicitycomplexfree} and the curve complex \cite{MM:Geometrycomplexcurves.a} are hyperbolic, spherical buildings such as the one associated to $\GL{n}{\mathbb{Z}}$ have finite diameter. In general, $\Out{\AG}$ has many aspects of $\GL{n}{\mathbb{Z}}$, so one should probably not expect associated complexes to have a ``purely hyperbolic'' flavour (cf.~the work of Haettel \cite{Hae:Hyperbolicrigidityhigher}).
\end{remark}

As an application of Theorem \ref{introduction homotopy type} and the results of \cite{Bru:Highergeneratingsubgroups}, we obtain higher generating families of subgroups of $O$ in the sense of Abels--Holz (for the definition, see \cref{sec higher generation}):
\begin{thmAlph}
\label{introduction higher generation by parabolic subgroups}
The family $\mcP_m(O)$ of rank-$m$ parabolic subgroups of $O$ is $m$-generating.
\end{thmAlph}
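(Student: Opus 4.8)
The plan is to deduce \cref{introduction higher generation by parabolic subgroups} from \cref{introduction homotopy type} (in its Cohen--Macaulay refinement) by feeding the complex $\CC{O}{\mcP(O)}$ into the transfer machinery between homotopy Cohen--Macaulay complexes and higher generation developed in \cite{Bru:Highergeneratingsubgroups}. The essential point is that all the hypotheses of that machinery have already been verified, as structural properties of the $O$-action on $X \coloneqq \CC{O}{\mcP(O)}$, in the course of the paper.

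First I would assemble the relevant features of this action. By \cref{CM of coset complex parabolic subgroups}, $X$ is homotopy Cohen--Macaulay of dimension $\rk(O)-1$; by \cref{section group actions detecting cc}, the $O$-action is facet-transitive with a single maximal simplex as strict fundamental domain; and by \cref{section parabolics of lower rank} the stabiliser of a $p$-simplex of $X$ is a parabolic subgroup of rank $\rk(O)-1-p$, the vertex stabilisers being precisely the conjugates of the maximal standard parabolics in $\mcP(O)$. Combining these, a rank-$m$ parabolic is exactly the stabiliser of a simplex of dimension $\rk(O)-1-m$, i.e.\ of codimension $m$ in $X$, so that the family $\mcP_m(O)$ of all rank-$m$ parabolics is precisely the (conjugacy-closed) family of codimension-$m$ simplex stabilisers.

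Next I would invoke the main result of \cite{Bru:Highergeneratingsubgroups}, which converts homotopy Cohen--Macaulayness into higher generation: for a group acting on a homotopy Cohen--Macaulay complex of dimension $\rk(O)-1$ with a simplex as strict fundamental domain, the family of stabilisers of codimension-$m$ simplices is $m$-generating. Unwinding the definitions in the sense of Abels--Holz, this amounts to showing that the coset complex $\CC{O}{\mcP_m(O)}$ is $(m-1)$-connected. The mechanism is that the Cohen--Macaulay condition makes the link of each codimension-$m$ simplex $(m-2)$-connected, and a nerve/coset argument assembles these local connectivity bounds into the required global connectivity of $\CC{O}{\mcP_m(O)}$. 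Since $\mcP_m(O)$ is exactly the family of codimension-$m$ stabilisers identified above, this yields that $\mcP_m(O)$ is $m$-generating.

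The main obstacle is not a new computation but the careful bookkeeping needed to match the three indexings that appear --- the rank $m$ of a parabolic, the codimension $m$ (equivalently dimension $\rk(O)-1-m$) of the simplices it stabilises, and the connectivity degree $m-1$ required for $m$-generation --- and to check that the hypotheses of the transfer theorem of \cite{Bru:Highergeneratingsubgroups} (facet-transitivity, strict fundamental domain, and the precise coset-complex description of $X$ together with its stabilisers) are met on the nose. All of the genuine geometric content is already contained in the homotopy Cohen--Macaulayness of $X$, so once the framework is aligned, \cref{introduction higher generation by parabolic subgroups} follows formally.
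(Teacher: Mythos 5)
Your proposal is correct and follows essentially the same route as the paper: the paper deduces Theorem \ref{introduction higher generation by parabolic subgroups} (as \cref{higher generation by parabolic subgroups}) directly from homotopy Cohen--Macaulayness of $\CC{O}{\mcP(O)}$ (\cref{CM of coset complex parabolic subgroups}) together with the transfer result \cite[Theorem 2.9]{Bru:Highergeneratingsubgroups}, which is exactly the machinery you invoke. Your rephrasing of $\mcP_m(O)$ as the stabilisers of codimension-$m$ simplices is just the action-based formulation of the paper's definition of rank-$m$ parabolics as intersections of $\rk(O)-m$ distinct maximal ones, so the two arguments coincide in substance.
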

Here, an element of $\mcP_m(O)$ is given by the intersection of $\rk (O) - m$ distinct elements from $\mcP(O)$ (see \cref{section parabolics of lower rank}).
Higher generation can be interpreted as an answer to the question ``How much information about $O$ is contained in the family of subgroups $\mcP_m(O)$?'' --- as an immediate consequence of this theorem, we are able to give for every $2\leq m \leq \rk(O)-1$ a presentation of $O$ in terms of the rank-$m$ parabolic subgroups (\cref{presentation by parabolic subgroups}).

The main ingredient in our proof of Theorem \ref{introduction homotopy type} is an inductive procedure developed by Day--Wade. In \cite{DW:Relativeautomorphismgroups}, they show how to decompose $O$ using short exact sequences into basic building blocks which consist of free abelian groups, $\GL{n}{\mathbb{Z}}$ and so-called Fouxe-Rabinovitch groups, which are groups of certain automorphisms of free products. We refine their induction in order to get a better control on the induction steps that are needed and to get a more explicit description of the resulting base cases. An overview of this can be found in \cref{section summary inductive procedure}.
In order to make use of this inductive procedure, we establish a theorem regarding the behaviour of coset posets and complexes under short exact sequences. This generalises results of Brown \cite{Bro:cosetposetprobabilistic}, Holz \cite{Hol:EndlicheIdentifizierbarkeitvon} and Welsch \cite{Wel:cosetposetsnerve} and can be phrased as follows:

\begin{thmAlph}
\label{introduction coset complexes and SES}
Let $G$ be a group, $\mcH$ a family of subgroups of $G$ and $N\triangleleft G$ a normal subgroup. If $\mcH$ is strongly divided by $N$, there is a homotopy equivalence
\begin{equation*}
\CC{G}{\mcH}\simeq \CC{G/N}{\overbar{\mcH}}\ast \CC{N}{\mcH\cap N}.
\end{equation*}
\end{thmAlph}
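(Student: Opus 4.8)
The plan is to realise the two join factors as full subcomplexes of $\CC{G}{\mcH}$ and then to show that the whole complex is, up to homotopy, their join. Writing the strong division as $\mcH = \mcH_1 \sqcup \mcH_2$, where every $H \in \mcH_1$ contains $N$ and every $H \in \mcH_2$ satisfies $HN = G$, let $C_1$ and $C_2$ be the full subcomplexes of $\CC{G}{\mcH}$ spanned by the cosets of subgroups in $\mcH_1$, respectively $\mcH_2$. First I would identify $C_1 \cong \CC{G/N}{\overbar{\mcH}}$ via $gH \mapsto \overbar{g}\,\overbar{H}$: since $N \leq H$, each such coset is the full preimage $\pi^{-1}(\overbar g\,\overbar H)$ under $\pi\colon G \to G/N$, so the map is a bijection on vertices, and a family of type-I cosets has nonempty intersection exactly when its image does, because intersection commutes with taking preimages. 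Dually I would identify $C_2 \cong \CC{N}{\mcH \cap N}$ via $gH \mapsto gH \cap N$: here $HN = G$ forces every coset $gH$ to meet $N$ in a single coset of $H \cap N$, and the second isomorphism theorem makes this a bijection on vertices. With these identifications, the target $\CC{G/N}{\overbar{\mcH}} \ast \CC{N}{\mcH \cap N}$ becomes $C_1 \ast C_2$, so it suffices to compare $\CC{G}{\mcH}$ with $C_1 \ast C_2$.

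Since $C_1$ and $C_2$ are full subcomplexes partitioning the vertex set, every simplex of $\CC{G}{\mcH}$ splits as $\sigma_1 \cup \sigma_2$ with $\sigma_i$ a simplex of $C_i$, and $\CC{G}{\mcH}$ equals $C_1 \ast C_2$ precisely when every such union is again a simplex, i.e. when any type-I simplex and any type-II simplex can be joined. This is the heart of the argument. A type-I intersection $\bigcap_i g_i H_i$ is a union of $N$-cosets (being saturated under $\pi$), while a type-II intersection $\bigcap_j g_j' H_j'$ is a single coset of $\bigcap_j H_j'$; as the former is $\pi$-saturated, the two meet iff the image of the latter in $G/N$ meets the image of the former. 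I would show that the strong division hypothesis is exactly the guarantee that $\bigl(\bigcap_j H_j'\bigr)N = G$ for every type-II subfamily occurring in a simplex, so that this image is all of $G/N$ and hence automatically meets the nonempty image of the type-I part. This is where the word ``strongly'' does its work: individual subgroups in $\mcH_2$ surject onto $G/N$, but one needs this surjectivity to persist under the intersections arising in simplices, which is the closure property I expect to be built into the definition. Granting it, every mixed union is a simplex and $\CC{G}{\mcH} = C_1 \ast C_2$ on the nose.

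The main obstacle I anticipate lies not in the join combinatorics but in the two vertex identifications of the first step: the assignments $H \mapsto \overbar H$ on $\mcH_1$ and $H \mapsto H \cap N$ on $\mcH_2$ may fail to be injective, so that distinct cosets of $\CC{G}{\mcH}$ collapse to the same vertex of a join factor and the clean isomorphism degrades to the asserted homotopy equivalence. To handle this I would pass to the associated coset posets and invoke Quillen's fiber theorem, which is the point where Brown's argument gets generalised: the collapsing map $C_2 \to \CC{N}{\mcH \cap N}$, and its analogue on the quotient side, should have fibers built from cosets intersected with $N$, and the task is to verify that these fibers are contractible (they ought to carry a natural cone point coming from a common intersection), so that the maps are homotopy equivalences and the join is preserved. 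Assembling the two factor-level equivalences with the exact join identity of the second step would then yield $\CC{G}{\mcH} \simeq \CC{G/N}{\overbar{\mcH}} \ast \CC{N}{\mcH \cap N}$.
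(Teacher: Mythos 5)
Your proof is correct in substance, but it takes a genuinely different route from the paper's --- and in fact it establishes a stronger conclusion. The paper never argues at the level of the complex: it first proves a poset statement (\cref{coset posets and SES}, generalising Brown's result) under the \emph{weaker} hypothesis that $\mcH$ is merely divided by $N$, using Quillen's fibre lemma; it then transports this to coset complexes by closing $\mcH$ under finite intersections (\cref{homotopy equivalence cc and cospos}, \cref{homotopy equivalence cc closed under intersections}, \cref{finite intersections divided by N}) and finishes with \cref{intersection with kernel}. That detour through the weaker hypothesis is what makes the paper's statement a common generalisation of Brown, Holz and Welker, but it yields only a homotopy equivalence. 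Your argument stays inside $\CC{G}{\mcH}$: the mixed-simplex computation in your second paragraph is exactly right --- a type-I intersection is $\pi$-saturated, a nonempty type-II intersection is a single coset of $\bigcap_j K_j$, and condition (2) of the definition of ``strongly divided'' is verbatim the statement $\bigl(\bigcap_j K_j\bigr)N=G$, so the image of that coset is all of $G/N$ --- and it shows $\CC{G}{\mcH} = C_1 \ast C_2$ as simplicial complexes. Combined with the two vertex identifications, this upgrades the asserted homotopy equivalence to an isomorphism; it is the same mechanism the paper uses in its warm-up discussion of direct products, pushed to the general strongly divided setting.

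Two caveats, neither fatal. First, the worry in your third paragraph is unfounded, and the proposed repair would not work if the worry were real: when injectivity genuinely fails (e.g.\ $G=\mathbb{Z}^2$, $N=\mathbb{Z}\times\{0\}$, $\mcH$ consisting of $\{0\}\times\mathbb{Z}$ and the diagonal, which is \emph{not} strongly divided since the two subgroups meet in $\{0\}$), the fibres of your collapsing map are discrete two-point sets and the conclusion of the theorem is simply false. What saves you is that under strong division injectivity cannot fail: for $C_1$ because a subgroup containing $N$ equals the preimage of its image, and for $C_2$ because $K_1\neq K_2$ together with $(K_1\cap K_2)N=G$ forces $K_1\cap N\neq K_2\cap N$ (the paper's \cref{subgroups distinct in quotient}); this uses condition (2) for pairs, which ``the second isomorphism theorem'' alone does not give you. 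Second, the identification $C_2\cong\CC{N}{\mcH\cap N}$ also needs the simplex-level check that $\bigcap_j n_jK_j\neq\emptyset$ if and only if $\bigcap_j n_j(K_j\cap N)\neq\emptyset$, which again follows from $\bigl(\bigcap_j K_j\bigr)N=G$ (this is the content of the paper's \cref{intersection with kernel}). With these two elementary verifications inserted, your first two paragraphs constitute a complete, self-contained proof.
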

Here, $\ast$ denotes the join on geometric realisations, $\overbar{\mcH}$ and $\mcH\cap N$ are certain families of subgroups of $G/N$ and $N$, respectively, and being \emph{strongly divided by $N$} is a compatibility condition on the family $\mcH$. (For the definitions, see \cref{sec SES and cosets}; for an explicitly stated special case of Theorem \ref{introduction coset complexes and SES} that we will use in this article, see \cref{short exact sequences}.) 

Note that if two spaces $X$ and $Y$ are homotopy equivalent to wedges of spheres, then so is their join $X\ast Y$. Thus, combining Theorem \ref{introduction coset complexes and SES} with the decomposition of Day--Wade, we are able to reduce the question of sphericity of $\RAAGcomplex$ to the cases where $O$ is either isomorphic to $\GL{n}{\mathbb{Z}}$ or a Fouxe-Rabinovitch group. In the former case, sphericity follows from the Solomon--Tits Theorem (\cref{building and coset complex}). In the latter case, we are lead to study relative versions of free factor complexes (see \cref{definition relative free factor}) and show:

\begin{thmAlph}
\label{introduction free factor complex}
Let $A = F_n \ast A_1 \ast \cdots \ast A_k$ be a finitely generated group. Then the complex of free factors of $A$ relative to $\set{A_1, \ldots , A_k}$ is homotopy equivalent to a wedge of spheres of dimension $n-2$.
\end{thmAlph}

In the case where the group $A$ is a RAAG, Theorem \ref{introduction free factor complex} is a special case of Theorem \ref{introduction homotopy type}. However, we prove it without making this assumption by using the techniques of \cite{BG:Homotopytypecomplex}.

\paragraph{Structure of the article}
Many sections of this article can be read independently from  the others.
We start in \cref{sec topology preliminaries} by recalling some well-known results from topology that we will use throughout the paper. \cref{section coset posets and complexes} contains definitions and basic properties of coset complexes and higher generating subgroups as well as the proof of Theorem \ref{introduction coset complexes and SES}; it can be read completely independently from the rest of this text. The reader not so much interested in details about coset complexes might just want to skim \cref{section coset posets and complexes: I} and then have a look at \cref{short exact sequences}, which summarises the results of this section in the way they will be used later on.
In \cref{section building and relative free factor complex}, we give a definition of the building associated to $\GL{n}{\mathbb{Z}}$ and determine the homotopy type of relative free factor complexes (Theorem \ref{introduction free factor complex}). A reader willing to take this on faith may just have a look at the main results of this section, namely \cref{relative free factor complexes} and \cref{CM free factor complex}; the general theory of automorphisms of RAAGs is still not needed for this.
\cref{section ROARs} contains background about (relative) automorphism groups of RAAGs.
\cref{section spherical complex for Out(RAAG)} is in some sense the core of this article: We define (maximal) parabolic subgroups and the rank of $O$ and combine the results of the previous sections in order to prove Theorem \ref{introduction homotopy type}.
In \cref{section summary induction and examples}, we summarise to which extent we can refine the inductive procedure of Day--Wade and give examples of our construction for specific graphs $\Gamma$.
In \cref{section CM and higher generation}, we show Cohen--Macaulayness of $\RAAGcomplex$, define parabolic subgroups of lower rank and prove Theorem~{\ref{introduction higher generation by parabolic subgroups}}. We then show how the dimension of our complex is related to the rank of a Coxeter subgroup of $O$ (see \cref{rank via Coxeter system}). We close with comments about the limitations of our construction and open questions in \cref{section closing comments}.

\paragraph{Acknowledgements} 
I would like to thank my supervisor Kai-Uwe Bux for his support and guidance throughout this project. Furthermore, many thanks are due to Dawid Kielak and Ric Wade for numerous helpful remarks and suggestions at different stages of this project.
I would also like to thank Herbert Abels, Stephan Holz and Yuri Santos Rego for several interesting conversations about coset complexes and Kai-Uwe Bux, Radhika Gupta, Dawid Kielak and Yuri Santos Rego for helpful comments on a first draft of this text. 
Thanks are also due to the anonymous referee for the careful reading and many good suggestions that helped to improve the quality of this text.

The author was supported by the grant BU 1224/2-1 within the Priority Programme 2026 ``Geometry at infinity'' of the German Science Foundation (DFG).

\section{Preliminaries on (poset) topology}
\label{sec topology preliminaries}

\subsection{Posets and their realisations}
Let $P=(P,\leq)$ be a poset (partially ordered set). If $x\in P$, the sets $P_{\leq x}$ and $P_{\geq x}$ are defined by
\begin{align*}
P_{\leq x}\coloneqq\set{y\in P\mid y\leq x},&& P_{\geq x} \coloneqq\set{y\in P\mid y\geq x}.
\end{align*}
Similarly, one defines $P_{< x}$ and $P_{> x}$.
For $x,y\in P$, the \emph{open interval} between $x$ and $y$ is defined as
\begin{equation*}
(x,y)\coloneqq \set{z\in P \mid x<z<y}.
\end{equation*}
A \emph{chain of length $l$} in $P$ is a totally ordered subset $x_0<x_1<\ldots <x_l$.
For each poset $P=(P,\leq)$, one has an associated simplicial complex $\Delta(P)$ called the \emph{order complex} of $P$. Its vertices are the elements of $P$ and higher dimensional simplices are given by the chains of $P$. When we speak about the \emph{realisation of the poset $P$}, we mean the geometric realisations of its order complex and denote this space by $\real{ P } \coloneqq \real{ \Delta(P) }$.
By an abuse of notation, we will attribute topological properties (e.g. homotopy groups and connectivity properties) to a poset when we mean that its realisation has these properties.

The \emph{join} of two posets $P$ and $Q$, denoted $P\ast Q$, is the poset whose elements are given by the disjoint union of $P$ and $Q$ equipped with the ordering extending the orders on $P$ and $Q$ and such that $p< q$ for all $p\in P, \, q\in Q$.
The geometric realisation of the join of $P$ and $Q$ is homeomorphic to the topological join of their geometric realisations:
\begin{equation*}
\real{P\ast Q} \cong \real{P}\ast \real{Q}
\end{equation*}

The \emph{direct product} $P \times Q$ of two posets $P$ and $Q$ is the poset whose underlying set is the Cartesian product $\{(p,q) \mid p \in P,\, q \in Q\}$ and whose order relation is given by
\begin{equation*}
(p,q) \leq_{P \times Q} (p',q') \text{ if } p \leq_P p' \text{ and } q \leq_Q q'. 
\end{equation*}

A map $f\colon P\to Q$ between two posets is called a \emph{poset map} if $x\leq y$ implies $f(x)\leq f(y)$. Such a poset map induces a simplicial map from $\Delta(P)$ to $\Delta(Q)$ and hence a continuous map on the realisations of the posets. It will be denoted by $\real{ f }$ or just by $f$ if what is meant is clear from the context.

\subsection{Fibre theorems}

An important tool to study the topology of posets is given by so called fibre lemmas comparing the connectivity properties of posets $P$ and $Q$ by analysing the fibres of a poset map between them. The first such fibre theorem appeared in {\cite[Theorem A]{Qui:HigheralgebraicKa}} and is known as Quillen's fibre lemma:

\begin{lemma}[{\cite[Proposition 1.6]{Qui:Homotopypropertiesposet}}]
\label{Quillen fibre lemma contractibility}
Let $f\colon P\to Q$ be a poset map such that the fibre $f^{-1}(Q_{\leq x})$ is contractible for all $x\in Q$. Then $f$ induces a homotopy equivalence on geometric realisations.
\end{lemma}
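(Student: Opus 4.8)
The plan is to prove the statement by induction on the number of elements of $Q$, using a Mayer--Vietoris/gluing argument along a maximal element; the case of an infinite $Q$ reduces to this finitary one, since every compact subset of a realisation meets only finitely many simplices, so $\real{P}$ and $\real{Q}$ are the directed unions of the realisations of their finite subposets and it suffices to treat those. Assume then that $Q$ is finite and nonempty and pick a maximal element $m\in Q$. Because $m$ is maximal, every chain in $P$ whose image contains $m$ lies entirely in $f^{-1}(Q_{\leq m})$, while every other chain lies in $f^{-1}(Q\setminus\set{m})$; hence
\[
\Delta(P)=\Delta\bigl(f^{-1}(Q_{\leq m})\bigr)\cup \Delta\bigl(f^{-1}(Q\setminus\set{m})\bigr),
\]
with intersection $\Delta\bigl(f^{-1}(Q_{<m})\bigr)$, and likewise $\Delta(Q)=\Delta(Q_{\leq m})\cup \Delta(Q\setminus\set{m})$ with intersection $\Delta(Q_{<m})$. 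The map $f$ respects this decomposition.

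Next I would check that $f$ restricts to a homotopy equivalence on each of the three pieces. Over $Q_{\leq m}$ this is immediate: $\real{f^{-1}(Q_{\leq m})}$ is contractible by hypothesis (it is the fibre over $x=m$), while $Q_{\leq m}$ has a maximum so $\real{Q_{\leq m}}$ is a cone and hence contractible, and any map between nonempty contractible spaces is a homotopy equivalence. For the restrictions $f\colon f^{-1}(Q\setminus\set{m})\to Q\setminus\set{m}$ and $f\colon f^{-1}(Q_{<m})\to Q_{<m}$ I would invoke the inductive hypothesis, after observing that the fibre condition passes to these subposets: since $m$ is maximal, one has $(Q\setminus\set{m})_{\leq x}=Q_{\leq x}$ for every $x\neq m$ and $(Q_{<m})_{\leq x}=Q_{\leq x}$ for every $x<m$, so the relevant fibres coincide with the contractible fibres $f^{-1}(Q_{\leq x})$ of the original map. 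Thus both restricted maps satisfy the hypothesis over a strictly smaller poset and are homotopy equivalences by induction.

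Finally I would glue. The inclusions appearing above are inclusions of simplicial complexes, hence cofibrations, so $\real{P}$ and $\real{Q}$ are (homotopy) pushouts of their respective two pieces along the common intersection, and $\real{f}$ is a map of such pushout squares which is a homotopy equivalence on all three constituents and compatible with the (cofibration) inclusions. The gluing lemma for homotopy pushouts then yields that $\real{f}$ is a homotopy equivalence, completing the induction; the base case $|Q|=1$ is exactly the hypothesis that $\real{P}=\real{f^{-1}(Q_{\leq x})}$ is contractible, matching the one-point space $\real{Q}$. The main obstacle I anticipate is not any single deep step but the bookkeeping that makes the induction self-contained: verifying that the fibre hypothesis restricts correctly to the subposets (which is precisely where maximality of $m$ is used), applying the gluing lemma in a form valid for the non-injective map $\real{f}$, and carrying out the direct-limit argument that removes the finiteness assumption on $Q$.
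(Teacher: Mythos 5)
Your induction for finite $Q$ is correct, and all of its steps check out: since $m$ is maximal, the two subcomplexes do cover $\Delta(P)$ and intersect in $\Delta\bigl(f^{-1}(Q_{<m})\bigr)$; the fibre hypothesis genuinely does restrict to the maps over $Q\setminus\set{m}$ and $Q_{<m}$; and the gluing lemma for cofibrations applies without any injectivity of $\real{f}$. Note that this is necessarily a different route from the paper's, because the paper does not prove this lemma at all --- it cites it as Proposition 1.6 of Quillen, whose argument (via Theorem A) works for arbitrary posets; your maximal-element induction is the standard elementary proof in the finite setting.

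The genuine gap is the reduction of the infinite case to the finite one, which you present as routine bookkeeping but which is in fact where the argument breaks. The fibre hypothesis is a condition about down-sets in the \emph{ambient} poset $Q$ and is not inherited by finite subposets: for $Q'\subseteq Q$ and $x\in Q'$, the fibre of the restricted map $f^{-1}(Q')\to Q'$ over $x$ is $f^{-1}(Q'_{\leq x})$, not $f^{-1}(Q_{\leq x})$, and these can differ drastically. Concretely, take $P=\set{a}$, $Q=\set{a,c}$ with $a<c$, and $f$ the inclusion: every fibre $f^{-1}(Q_{\leq a})=f^{-1}(Q_{\leq c})=\set{a}$ is contractible and $\real{f}$ is a homotopy equivalence, yet for the finite subposet $Q'=\set{c}$ the preimage $f^{-1}(Q')$ is empty, so the restricted map $\emptyset\to\real{Q'}$ is not a homotopy equivalence. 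Hence $\real{f}$ cannot in any natural way be exhibited as a directed colimit of homotopy equivalences between realisations of finite subposets --- there is simply no supply of finite-level equivalences to pass to the limit, and "it suffices to treat those" is false as stated. This matters for the present paper, which applies the lemma exclusively to infinite posets (coset posets, posets of free splittings, spines of relative Outer space). To get the general statement one needs an argument that does not induct on the size of $Q$, e.g. Quillen's original proof of Theorem A.
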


The following result shows that if one is given a poset map $f$ such that the fibres have only vanishing homotopy groups up to a certain degree, one can also transfer connectivity results between the domain and the image of $f$. Recall that for $n\in \mathbb{N}$, a space $X$ is \emph{$n$-connected} if $\pi_i(X)=\set{1}$ for all $i\leq n$ and $X$ is $(-1)$-connected if it is non-empty.

\begin{lemma}[{\cite[Proposition 7.6]{Qui:Homotopypropertiesposet}}]
\label{Quillen fibre lemma k-connected}
Let $f\colon P\to Q$ be a poset map such that the fibre $f^{-1}(Q_{\leq x})$ is $n$-connected for all $x\in Q$. Then $P$ is $n$-connected if and only if $Q$ is $n$-connected.
\end{lemma}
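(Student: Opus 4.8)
The plan is to upgrade the contractibility statement of \cref{Quillen fibre lemma contractibility} to a connectivity-tracking version. Call a map \emph{$(n+1)$-connected} if it induces isomorphisms on $\pi_i$ for all $i\le n$ and a surjection on $\pi_{n+1}$ (at every basepoint). I would deduce the lemma from the stronger claim that the map $\real{f}\colon\real{P}\to\real{Q}$ induced by $f$ is itself $(n+1)$-connected: such a map identifies $\pi_i(\real{P})$ with $\pi_i(\real{Q})$ for every $i\le n$, so $\real{P}$ is $n$-connected precisely when $\real{Q}$ is. Thus the whole problem reduces to establishing this sharper, relative statement.

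The engine is the homotopy-colimit description of $\real{P}$ over the base $Q$ that underlies \cref{Quillen fibre lemma contractibility}. The poset map $f$ yields a natural homotopy equivalence
\begin{equation*}
\real{P}\;\simeq\;\operatorname*{hocolim}_{x\in Q}\,\real{f^{-1}(Q_{\leq x})},
\end{equation*}
where for $x\le y$ the structure map is the inclusion $f^{-1}(Q_{\leq x})\hookrightarrow f^{-1}(Q_{\leq y})$, and under this equivalence the projection to the base corresponds to $\real{f}\colon\real{P}\to\real{Q}=\operatorname*{hocolim}_{x\in Q}\ast$. The hypothesis says precisely that the diagram $x\mapsto\real{f^{-1}(Q_{\leq x})}$ maps to the constant point-diagram by an objectwise $(n+1)$-connected transformation. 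The key step is then that homotopy colimits preserve objectwise connectivity of maps: an objectwise $(n+1)$-connected map of $Q$-diagrams induces an $(n+1)$-connected map on homotopy colimits. Applied here, this yields that $\real{f}$ is $(n+1)$-connected, which is exactly what we want.

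To prove that homotopy colimits preserve objectwise $(n+1)$-connectivity, I would filter the homotopy colimit by the skeleta of its bar (simplicial replacement) construction and build $\real{P}$ and $\real{Q}$ cell-by-cell along this filtration. At each stage one attaches along a homotopy pushout, and the comparison map is assembled from a map of pushout squares in which the three relevant maps are $(n+1)$-connected; one then invokes the standard gluing lemma for $(n+1)$-connected maps, which itself follows from comparing the two Mayer--Vietoris sequences via the five lemma, from van Kampen for the fundamental group, and from the relative Hurewicz theorem to pass between homology and homotopy. This gluing-and-bookkeeping step is where all the work sits, and I expect it to be the main obstacle: the homological comparison is routine, but controlling basepoints and the $\pi_1$-statement across the colimit (and checking that the relevant squares are genuine homotopy pushouts, which holds because every map in sight is an inclusion of subcomplexes) requires care.

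Finally, when $Q$ is finite the same argument can be phrased as a direct induction on $|Q|$, avoiding any explicit mention of homotopy colimits. Choosing a maximal element $x_0\in Q$ and setting $Q'=Q\setminus\{x_0\}$ and $P'=f^{-1}(Q')$, one checks that
\begin{equation*}
\real{Q}=\real{Q'}\cup_{\real{Q_{<x_0}}}\real{Q_{\leq x_0}},\qquad
\real{P}=\real{P'}\cup_{\real{f^{-1}(Q_{<x_0})}}\real{f^{-1}(Q_{\leq x_0})}
\end{equation*}
are homotopy pushouts carried into one another by $\real{f}$. The three comparison maps are $(n+1)$-connected: two by the inductive hypothesis (applied to the restrictions of $f$ over $Q'$ and over $Q_{<x_0}$, whose fibres coincide with those of $f$ because $x_0$ is maximal), and the third because $\real{f^{-1}(Q_{\leq x_0})}$ is $n$-connected while $\real{Q_{\leq x_0}}$ is a cone on $x_0$. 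One then concludes by the same gluing lemma, so the finite case isolates exactly the technical core of the general argument.
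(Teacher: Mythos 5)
The paper never proves this lemma --- it is quoted from \cite[Proposition 7.6]{Qui:Homotopypropertiesposet} and used as a black box --- so there is no internal proof to compare against; your argument is a genuine reconstruction, and its outline is correct. Your route is the standard ``diagrams of spaces'' proof of poset fibre theorems (in the style of Ziegler--\v{Z}ivaljevi\'{c}): identify $\real{P}$ with $\operatorname{hocolim}_{x\in Q}\real{f^{-1}(Q_{\leq x})}$ via the projection lemma, note that the hypothesis makes the comparison with the constant point diagram objectwise $(n+1)$-connected, and invoke preservation of objectwise connectivity by homotopy colimits. Two remarks. First, for the statement as given (an ``iff'' about the spaces $P$ and $Q$) you only need the equivalence of spaces $\real{P}\simeq\operatorname{hocolim}_{x\in Q}\real{f^{-1}(Q_{\leq x})}$ together with $\real{Q}=\operatorname{hocolim}_{x\in Q}\ast$; you never need to verify that this equivalence is compatible with $\real{f}$, which removes one of the delicate verifications you worry about (that compatibility is only needed for your stronger claim that $\real{f}$ itself is $(n+1)$-connected). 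Second, your two inputs --- the projection lemma (whose usual proof runs through \cref{Quillen fibre lemma contractibility} applied to the Grothendieck construction, so there is no circularity with the paper's Lemma 2.1) and the gluing lemma that a homotopy pushout of $(n+1)$-connected maps is $(n+1)$-connected --- are both true and citable, but your sketch of the latter via Mayer--Vietoris, van Kampen and relative Hurewicz is the thinnest point: relative Hurewicz requires simple connectivity or a trivial $\pi_1$-action, so the honest argument passes to universal covers and homology with local coefficients, exactly the basepoint issue you flag but do not resolve. Your finite-$Q$ induction (remove a maximal $x_0$, decompose both order complexes as pushouts along $Q_{<x_0}$, and note that the fibres of the restricted maps agree with those of $f$ by maximality of $x_0$) is correct and complete modulo that same gluing lemma; do note, however, that the paper applies the lemma to infinite posets (coset posets, spines of relative Outer space), so the homotopy-colimit version, not the finite induction, is the case that actually matters.
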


For a poset $P=(P,\leq)$, let $P^{op}=(P,\leq_{op})$ be the poset that is defined by $x\leq_{op} y \colon \Leftrightarrow y\leq x$.
Using the fact that one has a natural identification $\Delta(P)\cong \Delta(P^{op})$, one can draw the same conclusion as in the previous lemmas if one shows that $f^{-1}(Q_{\geq x})$ is contractible or $n$-connected, respectively, for all $x\in Q$.

Another standard tool which is helpful for studying the topology of posets is:
\begin{lemma}[{\cite[1.3]{Qui:Homotopypropertiesposet}}]
\label{Quillen homotopic poset maps}
If $f,g\colon P\to Q$ are poset maps that satisfy $f(x)\leq \nolinebreak g(x)$ for all $x\in P$, then they induce homotopic maps on geometric realisations.
\end{lemma}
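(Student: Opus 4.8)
The plan is to exhibit an explicit homotopy already at the level of posets and then apply the realisation functor. Write $\mathbf{2}$ for the two-element totally ordered poset $\set{0<1}$, so that $\real{\mathbf 2}\cong[0,1]$. The observation driving the proof is that the two maps $f$ and $g$, together with the hypothesis $f(x)\leq g(x)$, assemble into a single map on the direct product $P\times\mathbf 2$, namely
\begin{equation*}
H\colon P\times\mathbf 2\to Q,\qquad H(x,0)=f(x),\quad H(x,1)=g(x).
\end{equation*}
The first thing I would check is that $H$ is a poset map. A relation $(x,i)\leq(y,j)$ in $P\times\mathbf 2$ forces $i\leq j$ in $\mathbf 2$, so only the cases $(i,j)\in\set{(0,0),(0,1),(1,1)}$ can occur. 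If $i=j=0$, then $x\leq y$ gives $f(x)\leq f(y)$ because $f$ is a poset map; the case $i=j=1$ is identical with $g$ in place of $f$; and in the mixed case $i=0,\,j=1$ with $x\leq y$ one has $f(x)\leq f(y)\leq g(y)$, where the second inequality is precisely the hypothesis $f(y)\leq g(y)$. Hence $H$ respects the order, and passing to realisations yields a continuous map $\real{H}\colon\real{P\times\mathbf 2}\to\real{Q}$.

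Next I would identify $\real{P\times\mathbf 2}$ with the cylinder $\real{P}\times[0,1]$. The order complex $\Delta(P\times\mathbf 2)$ is exactly the standard (staircase) triangulation of the prism over $\Delta(P)$, giving a natural homeomorphism $\real{P\times\mathbf 2}\cong\real{P}\times[0,1]$ under which the two inclusions $i_0,i_1\colon P\to P\times\mathbf 2$, $i_\varepsilon(x)=(x,\varepsilon)$, are sent to the bottom and top faces $\real{P}\times\set{0}$ and $\real{P}\times\set{1}$. Since $\real{H}\circ\real{i_0}=\real{f}$ and $\real{H}\circ\real{i_1}=\real{g}$ by the very definition of $H$, the map $\real{H}$ read through this homeomorphism is a homotopy $\real{P}\times[0,1]\to\real{Q}$ from $\real{f}$ to $\real{g}$, which is the assertion.

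The only non-formal ingredient is the identification $\real{P\times\mathbf 2}\cong\real{P}\times[0,1]$; this is the standard fact that the order complex of a product of posets realises the product of realisations with its canonical triangulation, and for the second factor $\mathbf 2$ it can be verified by hand, so I expect this to be the main (though routine) technical point. Should one prefer to avoid it, an alternative route is to note that the projection $\pi\colon P\times\mathbf 2\to P$ has fibres $\pi^{-1}(P_{\leq x})=P_{\leq x}\times\mathbf 2$, each of which contains the maximum $(x,1)$ and is therefore contractible; by \cref{Quillen fibre lemma contractibility} the map $\real{\pi}$ is a homotopy equivalence. As $\real{i_0}$ and $\real{i_1}$ are both sections of $\real{\pi}$, composing with any homotopy inverse $q$ of $\real{\pi}$ gives $\real{i_\varepsilon}\simeq q\circ\real{\pi}\circ\real{i_\varepsilon}=q$ for $\varepsilon\in\set{0,1}$, so $\real{i_0}\simeq\real{i_1}$; precomposing with $\real{H}$ once more yields $\real{f}\simeq\real{g}$.
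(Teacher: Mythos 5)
Your proof is correct: the paper gives no argument of its own for this lemma (it is imported from \cite[1.3]{Qui:Homotopypropertiesposet}), and your cylinder construction via $P\times\mathbf{2}$, together with the identification $\real{P\times\mathbf{2}}\cong\real{P}\times[0,1]$, is precisely the standard proof of Quillen's result that the paper is citing. The only caveat is that your alternative route through \cref{Quillen fibre lemma contractibility} would be circular in Quillen's original development (the fibre lemma is itself proved using this homotopy lemma), though it is logically harmless here since the paper quotes both statements as black boxes.
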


Later on, we will mostly use the following consequence of this lemma.
\begin{corollary}
\label{homotopic poset maps}
Let $P'$ be a subposet of $P$ and $f\colon P\to P'$ a  poset map such that $f|_{P'}= \operatorname{id}_{P'}$. If $f$ is \emph{monotone}, i.e. $f(x)\leq x$ for all $x\in P$ or $f(x)\geq x$ for all $x\in P$, then it defines a deformation retraction $\real{P}\to \real{P'}$.
\end{corollary}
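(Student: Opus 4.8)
The plan is to deduce this directly from \cref{Quillen homotopic poset maps}, applied to the composite $\iota\circ f$ and the identity of $P$, where $\iota\colon P'\hookrightarrow P$ denotes the inclusion. I would first treat the case $f(x)\leq x$ for all $x\in P$; the case $f(x)\geq x$ then follows by running the same argument on the opposite posets $P^{\op}$ and $(P')^{\op}$ (on which $f$ is still a poset map with $f|_{(P')^{\op}}=\operatorname{id}$ and now $f(x)\leq_{\op}x$) and transporting the conclusion back via the natural identifications $\real{P}\cong\real{P^{\op}}$ and $\real{P'}\cong\real{(P')^{\op}}$.

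So assume $f(x)\leq x$ for all $x$. Both $\iota\circ f$ and $\operatorname{id}_P$ are poset maps $P\to P$, and by hypothesis $(\iota\circ f)(x)=f(x)\leq x=\operatorname{id}_P(x)$ for every $x\in P$. Hence \cref{Quillen homotopic poset maps} gives $\real{\iota}\circ\real{f}=\real{\iota\circ f}\simeq\operatorname{id}_{\real{P}}$. On the other hand, the condition $f|_{P'}=\operatorname{id}_{P'}$ says precisely that $f\circ\iota=\operatorname{id}_{P'}$, so that $\real{f}\circ\real{\iota}=\operatorname{id}_{\real{P'}}$ and $\real{f}$ is a retraction of $\real{P}$ onto the subcomplex $\real{P'}$. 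Taken together, these two facts already exhibit $\real{f}$ as a homotopy equivalence onto $\real{P'}$.

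To upgrade this to an honest (strong) deformation retraction I would make the homotopy underlying \cref{Quillen homotopic poset maps} explicit: it is induced by the poset map $h\colon P\times\set{0<1}\to P$ with $h(x,0)=f(x)$ and $h(x,1)=x$, which is order-preserving exactly because $f(x)\leq x$. Using that the order complex of $P\times\set{0<1}$ is the standard prism triangulation of $\real{P}\times[0,1]$, the realisation $\real{h}$ is a homotopy $\real{P}\times[0,1]\to\real{P}$ from $\real{\iota\circ f}$ to $\operatorname{id}_{\real{P}}$. The key observation is that for $x\in P'$ one has $h(x,0)=f(x)=x=h(x,1)$, so $h$ restricted to $P'\times\set{0<1}$ is just the projection onto $P'$; hence the homotopy is stationary on $\real{P'}$, which is exactly what ``deformation retraction'' demands. (Alternatively, since $\real{P'}\hookrightarrow\real{P}$ is a subcomplex inclusion, hence a cofibration, and a homotopy equivalence, one may invoke the standard fact that such an inclusion is automatically a strong deformation retract.)

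I expect the only genuinely delicate point to be this last step. The formal manipulation of the two hypotheses $f(x)\leq x$ and $f|_{P'}=\operatorname{id}_{P'}$ yields a homotopy equivalence immediately, but \cref{Quillen homotopic poset maps} as stated only outputs a homotopy, so a little care is needed to see that it can be taken stationary on $\real{P'}$; everything else is routine.
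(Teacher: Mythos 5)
Your proposal is correct and follows essentially the same route as the paper: apply \cref{Quillen homotopic poset maps} to $\iota\circ f$ versus $\operatorname{id}_P$, observe $f\circ\iota=\operatorname{id}_{P'}$, and upgrade the resulting homotopy equivalence to a deformation retraction. The paper handles that last step exactly by your parenthetical alternative, citing the standard fact (Hatcher, Proposition 0.19) that a subcomplex inclusion which is a homotopy equivalence is a deformation retract, so your explicit prism homotopy is just a more hands-on substitute for that citation.
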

\begin{proof}
Without loss of generality, assume that $f(x)\leq x$ for all $x\in P$.
Let $i\colon P'\hookrightarrow P$ denote the inclusion map. Then for all $x\in P$, we have $i \circ f (x) \leq x$, so by \cref{Quillen homotopic poset maps}, this composition is homotopic to the identity. As $f \circ i = \operatorname{id}_{P'}$, the inclusion $i$ is a homotopy equivalence and the claim follows from \cite[Proposition 0.19]{Hat:Algebraictopology}.
\end{proof}

\subsection{Spherical complexes and their joins}
\label{section spherical complexes}
Recall that a topological space is \emph{$n$-spherical} if it is homotopy equivalent to a wedge of $n$-spheres; as a convention, we consider a contractible space to be homotopy equivalent to a (trivial) wedge of $n$-spheres for all $n$ and the empty set to be $(-1)$-spherical. 
By the Whitehead theorem, an $n$-dimensional CW-complex is $n$-spherical if and only if it is $(n-1)$-connected. Furthermore, sphericity is preserved under taking joins:

\begin{lemma}
\label{homotopy type of wedges and joins}
Let $X$ and $Y$ be CW-complexes such that $X$ is $n$-spherical and $Y$ is $m$-spherical. Then the join $X \ast Y$ is $(n+m+1)$-spherical.
\end{lemma}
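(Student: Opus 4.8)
The plan is to reduce the statement to the case where $X$ and $Y$ are literally wedges of spheres, and then to combine the standard connectivity behaviour of the join with the Whitehead criterion recalled above.

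First I would dispose of the degenerate cases separately, since these are governed by the conventions fixed at the start of this subsection. If $X$ is empty, then necessarily $n=-1$, and since joining with the empty set is the identity we have $X\ast Y \cong Y$, which is $m$-spherical, i.e.\ $(n+m+1)$-spherical; the case of $Y$ empty is symmetric. If $X$ is contractible, then $X\ast Y$ is contractible (the join of a point with $Y$ is the cone on $Y$, and the join is homotopy invariant), hence $(n+m+1)$-spherical by the contractibility convention; again $Y$ contractible is symmetric. From here on I may therefore assume $n,m\geq 0$ and that $X$ and $Y$ are non-contractible genuine wedges of spheres.

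Next I would exploit the invariance of the topological join under homotopy equivalences of CW-complexes. This can be justified either by writing $X\ast Y$ as the homotopy pushout of $X\leftarrow X\times Y\rightarrow Y$, or via the homotopy equivalence $X\ast Y\simeq \Sigma(X\wedge Y)$ valid for well-pointed spaces such as CW-complexes. Using it, I replace $X$ and $Y$ by homotopy equivalent wedges $X'=\bigvee_{\alpha} S^{n}$ and $Y'=\bigvee_{\beta} S^{m}$, so that $X\ast Y\simeq X'\ast Y'$, and the latter is an $(n+m+1)$-dimensional CW-complex. The quantitative input is the standard fact that the join of an $a$-connected space and a $b$-connected space is $(a+b+2)$-connected; since a wedge of $n$-spheres is $(n-1)$-connected and a wedge of $m$-spheres is $(m-1)$-connected, the join $X'\ast Y'$ is $(n+m)$-connected. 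An $(n+m+1)$-dimensional CW-complex that is $(n+m)$-connected is $(n+m+1)$-spherical by the Whitehead criterion, and consequently so is $X\ast Y$.

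The main obstacle here is purely one of bookkeeping rather than genuine difficulty: I must ensure that homotopy invariance of the join (used to pass to wedges of spheres) and the join connectivity estimate are invoked in forms valid for the CW-complexes at hand, and that the degenerate empty and contractible cases are checked against the $(-1)$-spherical and contractibility conventions. The substantive content is simply the combination of the connectivity estimate $(n-1)+(m-1)+2=n+m$ with the dimension count $n+m+1$, after which the stated Whitehead theorem closes the argument. One could alternatively obtain the wedge-of-spheres structure directly by combining $X\ast Y\simeq\Sigma(X\wedge Y)$ with the distributivity of the smash product over wedges and the identity $S^{n}\wedge S^{m}\cong S^{n+m}$, but the connectivity route dovetails most cleanly with the tools already recorded in this section.
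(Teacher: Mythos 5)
Your proposal is correct. The paper states this lemma without proof, treating it as a standard fact; your argument --- homotopy invariance of the join, the join connectivity estimate, the dimension count, and the Whitehead criterion recalled immediately before the lemma, together with the check of the empty and contractible conventions --- is exactly the standard argument that the paper's surrounding text (in particular the remark on the Whitehead theorem) is implicitly invoking.
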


\subsection{The Cohen--Macaulay property}
\begin{definition}
Let $X$ be a simplicial complex of dimension $d<\infty$. Then $X$ is \emph{homotopy Cohen-Macaulay} if it is $(d-1)$-connected and the link of every $s$-simplex is $(d-s-2)$-connected.
\end{definition}
The word ``homotopy'' here refers to the original ``homological'' notion of being  ``Cohen--Macaulay over a field $k$''. This  homological condition is weaker than the homotopical one and came up in the study of finite simplicial complexes via their Stanley--Reisner rings (see \cite{Sta:CombinatoricsCommutativeAlgebra:}). For more details on Cohen--Macaulayness and its connections to other combinatorial properties of simplicial complexes, see \cite{Bjo:Topologicalmethods}.

\section{Coset posets and coset complexes}
\label{section coset posets and complexes}

\subsection{Definitions and basic properties}
\label{section coset posets and complexes: I}
\paragraph{Standing assumptions}
Throughout this section, let $G$ be a group and let $\mcH$ be a family of proper subgroups of $G$.

\subsubsection{Background and relation between poset and complex}
\begin{definition}
Let $X$ be a set and $\mathcal{U}$ be a collection of subsets of $X$ such that \mcU\, covers $X$. Then the \emph{nerve} $N(\mathcal{U})$ of the covering $\mcU$ is the simplicial complex that has vertex set \mcU\, and where the vertices $U_0,\ldots ,U_k\in \mathcal{U}$ form a simplex if and only if $U_0\cap\ldots \cap U_k\not = \emptyset$.
\end{definition}

\begin{definition}
Define $\mcU\coloneqq \coprod_{H\in \mcH}G/H$ to be the collection of cosets of the subgroups from $\mcH$.
\begin{enumerate}
\item The \emph{coset poset} $\CosPos{G}{\mcH} \coloneqq (\mcU, \subseteq)$ is the partially ordered set consisting of the elements of $\mcU$ where $g_1 H_1 \leq g_2 H_2$ if and only if $g_1 H_1 \subseteq g_2 H_2$.
\item The \emph{coset complex} $\CC{G}{\mcH}$ is the nerve $N(\mcU)$ of the covering of $G$ given by $\mcU$.
\end{enumerate}
\end{definition}

In this form, coset complexes were introduced by Abels--Holz in \cite{AH:Highergenerationsubgroups} but they appear with different names in several branches of group theory: The main motivation of Abels and Holz was to study finiteness properties of groups. Recent work in this direction can be found in the work of Bux--Fluch--Marschler--Witzel--Zaremsky \cite{BFM+:braidedThompson'sgroups} and Santos-Rego \cite{SReg:finitenesslengthsome}. In \cite{MMV:Highergenerationsubgroup}, Meier--Meinert--VanWyk used these complexes to study the BNS invariants of right-angled Artin groups.
Well-known examples of coset posets are given by Coxeter and Deligne complexes \cite{CD:K1problemhyperplane}. Brown \cite{Bro:cosetposetprobabilistic} studied the coset poset of all subgroups of a finite group and its connection to zeta functions. Generalisations of his work can be found in the articles of Ramras \cite{Ram:Connectivitycosetposet} and  Shareshian--Woodroofe \cite{SW:Ordercomplexescoset}. 
However, the examples that are most important to the present work are given by Tits buildings and free factor complexes (see \cref{section building and relative free factor complex}).

\begin{figure}
\begin{center}
\includegraphics[scale=1]{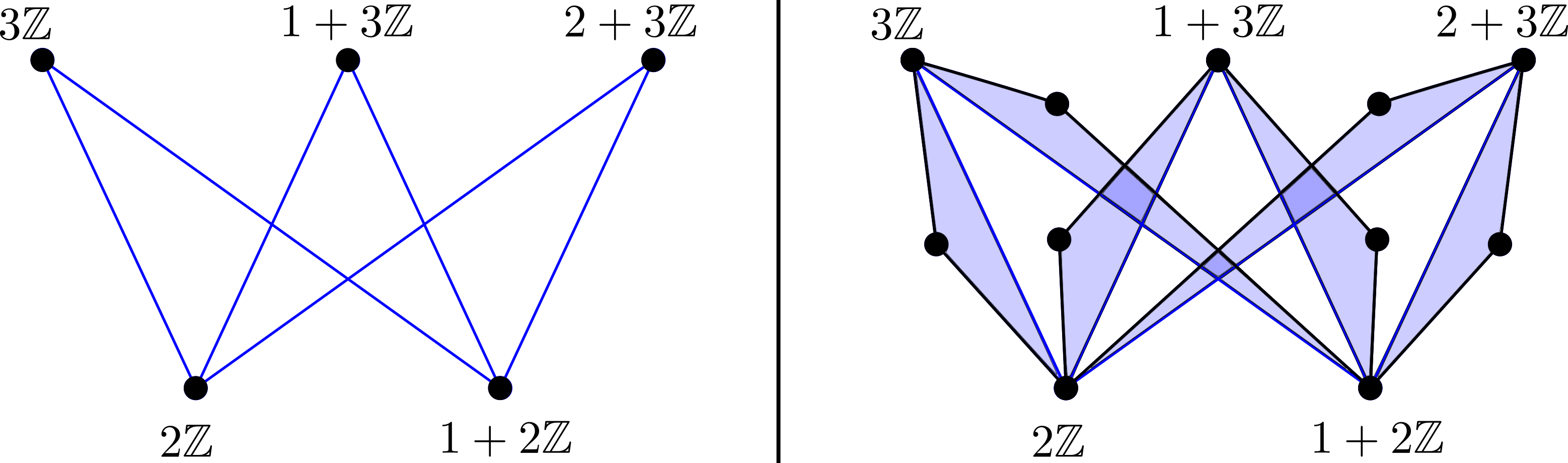}
\end{center}
\caption{The left hand side shows $\CC{\mathbb{Z}}{\mcH}$ and $\CosPos{\mathbb{Z}}{\mcH}$, the right hand side $\CC{\mathbb{Z}}{\widetilde{\mcH}}$ and $\CosPos{\mathbb{Z}}{\widetilde{\mcH}}$, where $\mcH=\set{2\mathbb{Z}, 3\mathbb{Z}}$ and $\widetilde{\mcH}=\set{2\mathbb{Z}, 3\mathbb{Z}, 6\mathbb{Z}}$.
In both pictures, the coset poset is drawn in black and the coset complex is obtained from it by adding the blue parts.}
\label{figure coset complex and poset}
\end{figure}

The order complex of the coset poset $\CosPos{G}{\mcH}$ has the same vertices as the coset complex $\CC{G}{\mcH}$ but the higher-dimensional simplices do not have to agree (see \cref{figure coset complex and poset}).
However, if we assume that $\mcH$ be closed under finite intersections, the topology of these complexes is the same:
\begin{lemma}
\label{homotopy equivalence cc and cospos}
Suppose that $H_1,H_2\in \mcH$ implies $H_1\cap H_2 \in \mcH$. Then $\CC{G}{\mcH}$ deformation retracts to $\CosPos{G}{\mcH}$. In particular, we have
\begin{equation*}
\CosPos{G}{\mcH} \simeq \CC{G}{\mcH}.
\end{equation*}
\end{lemma}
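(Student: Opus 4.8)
The plan is to realise both $\CosPos{G}{\mcH}$ and $\CC{G}{\mcH}$ through the barycentric subdivision of the nerve $N(\mcU)=\CC{G}{\mcH}$, using ``intersect all cosets of a simplex'' as the comparison map and applying Quillen's fibre lemma. The obstruction to a naive argument is that the two complexes have the same vertex set $\mcU$ but different higher simplices, so there is no simplicial map between them to exploit directly. To get around this I would pass to the poset $\mcS$ of nonempty simplices of $N(\mcU)$, ordered by inclusion; its order complex is the barycentric subdivision of $N(\mcU)$, so that $\real{\mcS}\cong \CC{G}{\mcH}$.

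The crucial point, and the only place the hypothesis is used, is that closure of $\mcH$ under finite intersections makes this intersection map land back inside the coset poset. Concretely, if $S=\set{g_0H_0,\ldots,g_kH_k}\in\mcS$, then $\bigcap S\neq\emptyset$ by definition of the nerve; choosing $g\in\bigcap S$ gives $g_iH_i=gH_i$ for every $i$, whence $\bigcap S=g\bigl(\bigcap_i H_i\bigr)$ is a coset of $\bigcap_i H_i\in\mcH$, so $\bigcap S\in\mcU$. This defines a map $\phi\colon\mcS\to \CosPos{G}{\mcH}^{\op}$, $\phi(S)=\bigcap S$. Since $S\subseteq T$ forces $\bigcap T\subseteq\bigcap S$, the map $\phi$ reverses inclusions and is therefore an order-preserving poset map into the \emph{opposite} poset; recall that $\real{\CosPos{G}{\mcH}^{\op}}=\real{\CosPos{G}{\mcH}}$, so nothing is lost by passing to the opposite.

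It then remains to verify that the fibres $\phi^{-1}\bigl((\CosPos{G}{\mcH}^{\op})_{\leq x}\bigr)$ are contractible for every $x=gH\in\mcU$, after which \cref{Quillen fibre lemma contractibility} gives $\CC{G}{\mcH}\cong\real{\mcS}\simeq\real{\CosPos{G}{\mcH}^{\op}}=\CosPos{G}{\mcH}$. Unwinding the opposite order, this fibre is the subposet $\mcS_x\coloneqq\set{S\in\mcS\mid x\subseteq\bigcap S}$ of those simplices all of whose cosets contain $x$. I would contract $\mcS_x$ by the standard conical argument: the assignment $S\mapsto S\cup\set{x}$ is a well-defined poset endomorphism of $\mcS_x$ (note $\bigcap(S\cup\set{x})\supseteq x\neq\emptyset$ and $\set{x}\in\mcS_x$), and it lies above both the identity and the constant map $S\mapsto\set{x}$; by \cref{Quillen homotopic poset maps} the identity on $\real{\mcS_x}$ is thus homotopic to a constant map, so $\mcS_x$ is contractible.

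The main obstacle is bookkeeping rather than substance: one must keep the variance straight (the intersection map is order-\emph{reversing}, hence is treated as a poset map into the opposite poset, and the fibres are computed accordingly), and one must invoke the hypothesis at exactly the right moment to guarantee $\bigcap S\in\mcU$. A minor point worth a remark is that $\mcS_x$ may be infinite (if $G$ or $\mcH$ is infinite), so $\real{\mcS_x}$ need not be finite-dimensional; but the conical contraction above is insensitive to this, so no finiteness assumption is required.
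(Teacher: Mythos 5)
Your proof is correct, and it takes a genuinely different route from the paper: the paper's own proof is essentially a citation --- it observes that closure of $\mcH$ under intersections makes the coset cover $\mcU$ closed under nonempty intersections and then invokes \cite[Theorem 1.4 (b)]{AH:Highergenerationsubgroups} --- whereas you prove that result from scratch. Your argument (pass to the poset $\mcS$ of nonempty simplices of the nerve, send a simplex to the intersection of its cosets, note that this intersection is $g\bigl(\bigcap_i H_i\bigr)\in\mcU$ precisely because of the hypothesis, and contract each fibre $\mcS_x$ conically via $S\mapsto S\cup\set{x}$ using \cref{Quillen homotopic poset maps} before applying \cref{Quillen fibre lemma contractibility}) is a clean, self-contained instance of the standard nerve-versus-poset comparison for intersection-closed covers; all the steps check out, including the variance bookkeeping through the opposite poset, the nonemptiness of the fibres (each contains $\set{x}$), and the fact that no finiteness or finite-dimensionality is needed anywhere. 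What the paper's approach buys is brevity and a pointer to the general theory of Abels--Holz; what yours buys is independence from that reference, using only the two Quillen lemmas already stated in the paper's preliminaries, and it makes explicit the one place where the intersection hypothesis enters.
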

\begin{proof}
As $\mcH$ is closed under intersections,  the intersection of two cosets from $\mcU$ is either empty or also an element of $\mcU$. 
Hence, we can define a map from the poset of simplices of $\CC{G}{\mcH}$ to $\CosPos{G}{\mcH}$ by sending $(g_0 H_0, \ldots, g_k H_k)$ to $\bigcap_i g_i H_i$. 
On the corresponding order complexes, this defines a deformation retraction from the barycentric subdivision of $\CC{G}{\mcH}$ to $\Delta\CosPos{G}{\mcH}$ (see \cite[Theorem 1.4 (b)]{AH:Highergenerationsubgroups}). See \cref{figure coset complex and poset} for an easy example.
\end{proof}

Let $\widetilde{\mcH}$ denote the \emph{family consisting of all finite intersections of elements from $\mcH$}. The following was proved by Holz in \cite{Hol:EndlicheIdentifizierbarkeitvon}.

\begin{lemma}
\label{homotopy equivalence cc closed under intersections}
\leavevmode
\begin{enumerate}
\item Let $\mcH'$ be a family of subgroups of $G$ with $\mcH\subseteq \mcH'$ and such that for all $H'\in \mcH'$, there is $H\in \mcH$ with $H'\subseteq H$. Then there is is a homotopy equivalence $\CC{G}{\mcH}\simeq \CC{G}{\mcH'}$.
\item There is a homotopy equivalence $\CC{G}{\mcH}\simeq \CC{G}{\widetilde{\mcH}}$.
\end{enumerate}
\end{lemma}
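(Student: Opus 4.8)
The plan is to reduce both coset complexes to order complexes of coset \emph{posets} and then compare these via \cref{Quillen fibre lemma contractibility}. Observe first that (2) is the special case $\mcH'=\widetilde{\mcH}$ of (1): every element of $\widetilde{\mcH}$ is a finite intersection $H_1\cap\dots\cap H_k$ with $H_i\in\mcH$ and hence is contained in $H_1\in\mcH$, so the hypothesis of (1) is satisfied. I would therefore prove (1) and deduce (2).

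The first ingredient is a general comparison between a nerve and the order complex of its ``intersection poset''. For an arbitrary family $\mcF$ of subgroups, with $\mcU=\coprod_{H\in\mcF}G/H$, I claim that
\begin{equation*}
\CC{G}{\mcF}\simeq \real{\CosPos{G}{\widetilde{\mcF}}}.
\end{equation*}
The key observation is that a nonempty intersection of cosets is again a coset: if $x\in g_1H_1\cap g_2H_2$ then this intersection equals $x(H_1\cap H_2)$. Hence $\CosPos{G}{\widetilde{\mcF}}$ is precisely the set of nonempty finite intersections of cosets from $\mcU$, ordered by inclusion. To prove the claim I would identify $\CC{G}{\mcF}$ with the realisation of its face poset $F$ (i.e.\ its barycentric subdivision) and consider the map $\phi\colon F\to \CosPos{G}{\widetilde{\mcF}}^{\op}$ sending a simplex $\sigma$ to $\bigcap_{U\in\sigma}U$. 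This is order-reversing, hence a poset map into the opposite poset. For a coset $W$, the fibre $\phi^{-1}\big((\CosPos{G}{\widetilde{\mcF}}^{\op})_{\leq W}\big)$ consists of all nonempty collections of cosets each \emph{containing} $W$; since any such collection automatically has nonempty intersection (it contains $W$), this fibre is the face poset of a nonempty simplex, hence contractible. By \cref{Quillen fibre lemma contractibility}, $\phi$ is then a homotopy equivalence, and $\real{\CosPos{G}{\widetilde{\mcF}}^{\op}}=\real{\CosPos{G}{\widetilde{\mcF}}}$ gives the claim.

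Applying this to $\mcF=\mcH$ and to $\mcF=\mcH'$ reduces (1) to showing $\real{\CosPos{G}{\widetilde{\mcH}}}\simeq \real{\CosPos{G}{\widetilde{\mcH'}}}$, where $\widetilde{\mcH'}$ denotes the intersection-closure of $\mcH'$. Since $\CosPos{G}{\widetilde{\mcH}}$ is a subposet of $\CosPos{G}{\widetilde{\mcH'}}$, I would show that the inclusion $i$ is a homotopy equivalence using the opposite-poset version of \cref{Quillen fibre lemma contractibility}. For a coset $V=gH'\in \CosPos{G}{\widetilde{\mcH'}}$, the relevant fibre $i^{-1}\big(\CosPos{G}{\widetilde{\mcH'}}_{\geq V}\big)$ is the set of cosets $gK$ with $K\in\widetilde{\mcH}$ and $K\supseteq H'$. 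This set is nonempty: writing $H'=H_1'\cap\dots\cap H_m'$ with $H_i'\in\mcH'$ and choosing $H\in\mcH$ with $H_1'\subseteq H$ (possible by hypothesis) yields $gH\supseteq V$ with $gH$ in the subposet. Moreover the fibre is closed under intersection, since $gK_1\cap gK_2=g(K_1\cap K_2)$ with $K_1\cap K_2\in\widetilde{\mcH}$ still containing $H'$.

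The crux is therefore the contractibility of these fibres, and this is the step I expect to demand the most care: for an infinite family $\mcH$ there need not be a smallest $K\in\widetilde{\mcH}$ containing $H'$, so the fibre need not possess a minimal element and need not be an evident cone. Instead I would argue that a nonempty poset closed under binary meets is downward directed, and that any directed poset has contractible realisation. For the latter, given a map from a sphere into the realisation, its image lies in a finite subcomplex spanned by finitely many vertices; a common lower bound $u$ of these vertices places this subcomplex inside $\real{P_{\geq u}}$, which is the cone on its minimum $u$ and hence contractible, so the map is null-homotopic. As this holds in every degree (including path-connectedness), the realisation is weakly contractible and thus contractible. This establishes the fibre contractibility, completes the reduction, and yields both parts of the statement.
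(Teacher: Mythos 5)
Your proof is correct; note, however, that there is nothing in the paper to compare it against line by line, because the paper gives no proof of this lemma at all --- it is quoted from Holz's thesis \cite{Hol:EndlicheIdentifizierbarkeitvon}, just as the neighbouring \cref{homotopy equivalence cc and cospos} is quoted from \cite{AH:Highergenerationsubgroups}. So what you have written is a self-contained replacement for a citation, and it holds up. The reduction of (2) to (1) is legitimate, since $\mcH\subseteq\widetilde{\mcH}$ and every finite intersection $H_1\cap\ldots\cap H_k$ lies in $H_1\in\mcH$. Your intermediate claim $\CC{G}{\mcF}\simeq\real{\CosPos{G}{\widetilde{\mcF}}}$ is in substance \cite[Theorem 1.4(b)]{AH:Highergenerationsubgroups} (the paper's \cref{homotopy equivalence cc and cospos} is its intersection-closed special case), and your proof of it is the standard one: the map from the face poset of the nerve to $\CosPos{G}{\widetilde{\mcF}}^{\op}$ sending a simplex to the intersection of its vertices is a poset map whose fibres are face posets of nonempty (possibly infinite-dimensional) simplices, so \cref{Quillen fibre lemma contractibility} --- which the paper states for arbitrary posets --- applies. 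The genuinely delicate step is the one you isolate: in the fibres $\set{gK \mid K\in\widetilde{\mcH},\ K\supseteq H'}$ of the inclusion $\CosPos{G}{\widetilde{\mcH}}\hookrightarrow\CosPos{G}{\widetilde{\mcH'}}$ there need not, for infinite $\mcH$, be a smallest element to cone off, so a naive ``unique maximal/minimal coset'' argument fails. Your fix is exactly right: the fibre is nonempty (this is where the hypothesis on $\mcH'$ enters, via $H'\subseteq H_1'\subseteq H$ for some $H\in\mcH$), closed under binary intersection, hence downward directed, and a directed poset is weakly contractible by the compactness argument and therefore contractible by Whitehead. Two cosmetic remarks: your two fibre computations are both instances of the single fact that a poset in which every finite subset admits a common bound (below, respectively above) is contractible, so this could be factored out once and cited twice; and the properness of the subgroups in $\mcH'$, needed for the paper's standing conventions, is automatic from your hypotheses, since $G\in\mcH'$ would force $G\in\mcH$.
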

\begin{proof}
The nerve $N(\mcU)$ of a collection $\mcU$ of subsets of $X$ is homotopy equivalent to the simplicial complex whose simplices are the non-empty finite subsets of $X$ contained in some $U \in \mcU$ (see \cite[Theorem 1.4 (a)]{AH:Highergenerationsubgroups}). This implies the first claim. The second statement is an immediate consequence of the first one.
\end{proof}

\begin{remark}
The preceding lemmas imply that for any family $\mcH$ of subgroups of $G$, we have
\begin{equation*}
\CC{G}{\mcH}\simeq \CC{G}{\widetilde{\mcH}}\simeq \CosPos{G}{\widetilde{\mcH}}.
\end{equation*}
It follows that we can always replace a coset complex by a coset poset. The advantage of this is that it allows us to apply the tools of poset topology, e.g. the Quillen fibre lemma, to study the topology of these complexes. The trade-off however is that we have to increase the size of our family of subgroups.
\end{remark}

\subsubsection{Higher Generation}
\label{sec higher generation}
We now turn our attention to coset complexes.

\begin{definition}
The \emph{free product of $\mcH$ amalgamated along its intersections} is the group given by the presentation $\ll X \mid R \rr$ where $X= \set{x_g \mid g\in \bigcup \mcH}$ and $R = \set{x_g x_h x_{gh}^{-1} \,\middle| \, \exists H\in \mcH: g,h\in H}$.
\end{definition}

\begin{definition}
 We say that \mcH\, is \emph{$n$-generating} for  $G$ if $\CC{G}{\mcH}$ is $(n-1)$-connected, i.e. $\pi_i(\CC{G}{\mcH})=\set{1}$ for all $i<n$.
\end{definition}

The term ``higher generating subgroups'' was coined by Holz in \cite{Hol:EndlicheIdentifizierbarkeitvon} and is motivated by the following:
\begin{theorem}[{\cite[Theorem 2.4]{AH:Highergenerationsubgroups}}] 
\label{higher generating subgroups by generation}
\leavevmode
\begin{enumerate}
\item $\mcH$ is $1$-generating if and only if $\bigcup \mcH$ generates $G$.
\item $\mcH$ is $2$-generating if and only if $G$ is the free product of $\mcH$ amalgamated along its intersections.
\end{enumerate}
\end{theorem}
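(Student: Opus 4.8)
The plan is to analyse the $1$- and $2$-skeleta of the coset complex $C \coloneqq \CC{G}{\mcH}$ directly, exploiting the left multiplication action of $G$. Recall that a vertex of $C$ is a coset $gH$ with $H \in \mcH$, that two vertices span an edge exactly when the corresponding cosets intersect, and that since every $H \in \mcH$ contains the identity, the ``base'' vertices $\set{H : H\in \mcH}$ pairwise meet in $e$ and hence form a clique in $C$.

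\emph{Part 1.} Since $C$ is $(1-1)=0$-connected precisely when it is non-empty and path-connected, I would show that $C$ is connected if and only if $G_0 \coloneqq \ll \bigcup \mcH \rr = G$. For the forward direction, note the elementary move: for any $g\in G$, any $H\in\mcH$, and any $s\in H_s\in\mcH$, the cosets $gH$ and $gH_s$ share the point $g$; the coset $gH_s$ equals $gsH_s$ because $s\in H_s$; and $gsH_s$ and $gsH$ share the point $gs$. Hence $gH$ and $gsH$ lie in the same component for every generator $s\in\bigcup\mcH$. Iterating along a word for $g\in G_0$ connects $gH$ to the base vertex $H$, so $G_0=G$ forces $C$ to be connected. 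For the converse, the assignment $gH\mapsto gG_0\in G/G_0$ is well defined on vertices (as $H\subseteq G_0$) and constant on edges (if $gh=g'h'$ with $h\in H,\,h'\in H'$, then $g^{-1}g'=h(h')^{-1}\in G_0$), hence constant on components; being surjective onto $G/G_0$, connectivity of $C$ forces $G/G_0$ to be a single coset, i.e. $G_0=G$.

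\emph{Part 2.} Here $2$-generation means $C$ is simply connected. Let $\widehat{G}$ denote the free product of $\mcH$ amalgamated along its intersections and let $\phi\colon\widehat{G}\to G$, $x_g\mapsto g$, be the canonical homomorphism; it is well defined because each relator maps to $g\cdot h\cdot(gh)^{-1}=e$, and it is surjective once $\bigcup\mcH$ generates $G$ (which, by Part 1, is implied by $2$-generation). The key structural observation is that the $G$-action on $C$ has quotient a single simplex: the orbits of vertices are indexed by $\mcH$, and since all $H$ share $e$, every subfamily of $\mcH$ spans a simplex, so $G\backslash C$ is the full (contractible) simplex on the vertex set $\mcH$. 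Moreover the stabiliser of the base face spanned by $\set{H_0,\dots,H_k}$ is exactly $H_0\cap\dots\cap H_k$, and $\widehat{G}$ is precisely the colimit of this diagram of face stabilisers under inclusion: the generators $x_g$ record the vertex groups $H$, and the relations record multiplication inside each $H$ together with the identifications along the edge groups $H\cap H'$.

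With this dictionary in place, I would invoke the standard principle for a group acting on a simplicial complex with simply connected quotient: the natural map from the colimit of face stabilisers to $G$ is an isomorphism if and only if the complex is simply connected, so that in general $\pi_1(C)\cong\ker\phi$. Granting this, $C$ is simply connected if and only if $\phi$ is injective, i.e. if and only if $\phi\colon\widehat{G}\to G$ is an isomorphism, which is exactly the statement that $G$ is the free product of $\mcH$ amalgamated along its intersections. The main obstacle is making this last principle precise. Concretely, one forms the coset complex $\CC{\widehat{G}}{\widehat{\mcH}}$ for the preimage family, checks that $\phi$ restricts to an isomorphism on each vertex group (the presentation of a group by all of its elements and their multiplication table just reproduces that group), and verifies that $\CC{\widehat{G}}{\widehat{\mcH}}\to C$ is a covering with deck group $\ker\phi$ and simply connected total space; alternatively, a van Kampen computation on the $2$-skeleton of $C$, taking the base clique into a spanning tree and reading relations off the triangles, yields the same identification $\pi_1(C)\cong\ker\phi$ without recourse to covering-space theory.
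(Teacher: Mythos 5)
The paper itself does not prove this statement: it is quoted verbatim from Abels--Holz (\cite[Theorem 2.4]{AH:Highergenerationsubgroups}), so there is no internal proof to compare against, and your argument has to stand on its own. Your Part 1 does: both directions are correct and complete (the two-step move connecting $gH$ to $gsH$, and the well-defined, edge-constant map $gH\mapsto gG_0$ onto $G/G_0$), and your structural claims at the start of Part 2 are also right --- the base simplex is a strict fundamental domain, type-preservation forces setwise stabilisers to be pointwise, so the stabiliser of $\set{H_0,\dots,H_k}$ is $H_0\cap\dots\cap H_k$, and $\widehat{G}$ is the colimit of this face-stabiliser diagram.

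The genuine gap is in the last step of Part 2. Of the two facts you propose to ``verify'' for the covering-space route, one is indeed routine: since $\phi$ restricts to an isomorphism $\widehat{H}\to H$ on each vertex group, $\ker\phi$ acts freely on the simplices of $\CC{\widehat{G}}{\widehat{\mcH}}$ and the quotient is $\CC{G}{\mcH}$, so one gets a regular covering with deck group $\ker\phi$. This already yields one direction: if $\CC{G}{\mcH}$ is simply connected, then $\ker\phi$, being a quotient of $\pi_1(\CC{G}{\mcH})$, is trivial, so $\phi$ is an isomorphism. But the converse direction rests entirely on the claim that the total space $\CC{\widehat{G}}{\widehat{\mcH}}$ is simply connected, and this is not a verification --- it is exactly the hard direction of the theorem, restated for $\widehat{G}$. (Even invoking the theorem circularly for $\widehat{G}$ is blocked: you would first need to know that $\widehat{H}\cap\widehat{H}'$ equals the image of $H\cap H'$ in $\widehat{G}$, which is itself not obvious in a colimit.) Similarly, the ``standard principle'' $\pi_1(\CC{G}{\mcH})\cong\ker\phi$ is true in your setting, but only under the strict-fundamental-domain/no-inversions hypothesis (a simply connected quotient alone does not suffice), and it is a theorem with real content, not a principle one may grant: it needs either the Borel-construction argument (where $\pi_2(BG)=0$ gives injectivity of $\pi_1(\CC{G}{\mcH})\to\pi_1$ of the homotopy quotient, which van Kampen for homotopy colimits identifies with $\widehat{G}$), or the edge-path computation you compress into half a sentence --- encoding an edge loop as a word $h_0h_1\cdots h_k$ with $h_i\in H_{j_i}$ and product $e$ in $G$, checking independence of the choices of intersection points via the relations of $\widehat{G}$, checking invariance under elementary homotopies across triangles, and proving the resulting map to $\ker\phi$ is an isomorphism. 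That computation is the entire proof of Part 2; as written, your proposal reduces the theorem to it without carrying it out.
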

Roughly speaking, the latter means that the union of the subgroups in $\mcH$ generates $G$ and that all relations that hold in $G$ follow from relations in these subgroups. The concept of $3$-generation has a similar interpretation using identities among relations (see \cite[2.8]{AH:Highergenerationsubgroups}).

\subsubsection{Group actions and detecting coset complexes}
\label{section group actions detecting cc}
Coset complexes are endowed with a natural action of $G$ given by left multiplication. These complexes are highly symmetric in the sense that this action is \emph{facet transitive}:
Assume that $\mcH$ is finite. Then $\CC{G}{\mcH}$ has dimension $|\mcH|-1$ and $\mcH$ itself is the vertex set of a \emph{facet}, i.e. a maximal simplex, of the coset complex. This (and hence any other) facet is a strict fundamental domain for the action of $G$. The following converse of this observation is due to Zaremsky.

\begin{proposition}[see {\cite[Proposition A.5]{BFM+:braidedThompson'sgroups}}]
\label{detecting CC}
Let $G$ be a group acting by simplicial automorphisms on a simplicial complex $X$, with a single facet $C$ as a strict fundamental domain.
Let
\begin{equation*}
\mathcal{P}\coloneqq \set{\Stab_G(v) \mid v \text{ is a vertex of } C}.
\end{equation*}
Then the map
\begin{align*}
\psi\colon \CC{G}{\mathcal{\mcP}}&\to X\\
 g\Stab_G(v)&\mapsto g.v
\end{align*}
is an isomorphism of simplicial $G$-complexes.
\end{proposition}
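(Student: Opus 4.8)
The plan is to construct an explicit inverse to $\psi$ and check that both $\psi$ and its inverse are well-defined simplicial maps, so that $\psi$ is a $G$-equivariant isomorphism. First I would verify that $\psi$ is well-defined on vertices: if $g\Stab_G(v)=g'\Stab_G(v)$, then $g'=gh$ for some $h\in\Stab_G(v)$, whence $g'.v=gh.v=g.v$, so the formula does not depend on the choice of coset representative. Equivariance $\psi(g'.\,g\Stab_G(v))=\psi(g'g\Stab_G(v))=(g'g).v=g'.(g.v)=g'.\psi(g\Stab_G(v))$ is then immediate from the definition, so I would record it early since it will let me reduce several later verifications to the fundamental domain $C$.

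Next I would show that $\psi$ is a bijection on vertices. Since $C$ is a strict fundamental domain, every vertex $w$ of $X$ lies in the $G$-orbit of a unique vertex $v$ of $C$, so $w=g.v$ for some $g$; this gives surjectivity. For injectivity, suppose $g.v=g'.v'$ with $v,v'$ vertices of $C$. Because $C$ is a \emph{strict} fundamental domain, distinct vertices of $C$ lie in distinct $G$-orbits, forcing $v=v'$; then $g^{-1}g'\in\Stab_G(v)$, so $g\Stab_G(v)=g'\Stab_G(v)$, and the two cosets agree as vertices of $\CC{G}{\mcP}$. The key input here is exactly the ``single facet as strict fundamental domain'' hypothesis, which guarantees that each $G$-orbit meets $C$ in precisely one vertex.

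The heart of the argument is showing that $\psi$ and $\psi^{-1}$ both respect simplices, i.e. that a collection of cosets spans a simplex of $\CC{G}{\mcP}$ if and only if their images span a simplex of $X$. By definition of the coset complex, cosets $g_0\Stab_G(v_0),\dots,g_k\Stab_G(v_k)$ form a simplex exactly when $\bigcap_i g_i\Stab_G(v_i)\neq\emptyset$. Unwinding this, the intersection is nonempty if and only if there is a single $g\in G$ with $g\Stab_G(v_i)=g_i\Stab_G(v_i)$ for all $i$, equivalently $g_i.v_i=g.v_i$ for all $i$; in that case $\psi$ sends these vertices to $g.v_0,\dots,g.v_k$, which are the vertices of the facet $g.C$ and hence span a simplex of $X$. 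For the converse I would use facet transitivity of the $G$-action: any simplex $\sigma$ of $X$ lies in some facet, which by the fundamental-domain hypothesis has the form $g.C$, so the vertices of $\sigma$ are of the form $g.v_i$ with $v_i$ vertices of $C$, and these pull back under $\psi$ to cosets sharing the common element $g$. This simplex-detection step is the main obstacle, since it is where one must argue that the combinatorial ``common coset representative'' condition defining simplices of $\CC{G}{\mcP}$ matches the geometric condition of lying in a common translate of $C$; everything hinges on translating nonemptiness of coset intersections into the existence of a shared group element moving $C$ onto the given simplex.

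Having established that $\psi$ is an equivariant simplicial bijection whose inverse is also simplicial, I would conclude that $\psi$ is an isomorphism of simplicial $G$-complexes. To keep the argument clean I would first treat the case where $\mcH=\mcP$ consists of distinct stabilisers (which holds when $C$ has pairwise non-orbit-equivalent vertices) and remark that the strict-fundamental-domain assumption is precisely what makes $\mcP$ index the vertices of $C$ bijectively; any degeneracies would only collapse the statement, not affect it. This completes the verification.
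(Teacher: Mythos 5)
The paper itself gives no proof of this proposition: it is quoted from Zaremsky's appendix to \cite{BFM+:braidedThompson'sgroups}, so there is no in-paper argument to compare against. Your direct verification is the standard one and matches the cited source in structure: well-definedness and $G$-equivariance of $\psi$, bijectivity on vertices (surjectivity because $X=G.C$, injectivity because strictness puts distinct vertices of $C$ in distinct orbits), and the two-way simplex correspondence, where non-emptiness of $\bigcap_i g_i\Stab_G(v_i)$ is translated into the existence of a single $g$ with $g_i.v_i=g.v_i$ for all $i$, so that the images lie in the common chamber $g.C$; conversely every simplex of $X$ is a face of some $g.C$ (this follows \emph{directly} from $G.C=X$ --- you do not need to first argue that every simplex lies in a facet), and its vertices pull back to cosets all containing $g$. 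Since $\psi$ is a vertex bijection preserving simplices in both directions, it is a simplicial isomorphism. This core argument is complete and correct.

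The one genuinely flawed point is your closing remark about degeneracies. First, the parenthetical is backwards: strictness already guarantees that vertices of $C$ are pairwise non-orbit-equivalent, but this does \emph{not} imply their stabilisers are distinct subgroups. Second, it is not true that "degeneracies would only collapse the statement, not affect it": if two vertices $v\neq w$ of $C$ had $\Stab_G(v)=\Stab_G(w)$, then with $\mcP$ read literally as a \emph{set} your map $\psi$ is not even well-defined, and the set-based coset complex is genuinely not isomorphic to $X$. Concretely, let $G=\mathbb{Z}/2$ act on the disjoint union of two edges by swapping them; one edge $C$ is a strict fundamental domain consisting of a single facet, both of its vertices have trivial stabiliser, and then the set-based $\CC{G}{\mcP}$ has two vertices and no edge, while $X$ has four vertices and two edges. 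The correct resolution is that $\mcP$ must be understood as a family \emph{indexed by the vertices of $C$}, one copy of the stabiliser per vertex, which is consistent with the disjoint union $\coprod_{H\in\mcH}G/H$ in the paper's definition of the coset complex; with that reading your argument goes through verbatim, and in all applications in this paper the stabilisers are pairwise distinct anyway, so the issue never materialises.
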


\subsection{Short exact sequences}
\label{sec SES and cosets}
We will later on study coset complexes in the setting where $G= \Out{A_\Gamma}$, the outer automorphism group of a right-angled Artin group. For this, we want to use the decomposition sequences of $ \Out{A_\Gamma}$ developed in \cite{DW:Relativeautomorphismgroups}. In order to do so, we need to study the following question: If $G$ fits into a short exact sequence, can the coset complex $\CC{G}{\mcH}$ be decomposed into ``simpler'' complexes related to the image and kernel of the sequence? There is a special case where this question can easily be answered:

\paragraph{Coset complexes and direct products} Assume that we have a group factoring as a direct product $G= G_1 \times G_2$ and let $\mcH$ be a family of subgroups such that each $H\in \mcH$ contains either $\set{1}\times G_2$ or $G_1 \times \set{1}$; denote the set of those elements of $\mcH$ satisfying the former by $\mcH_1$ and the set of those satisfying the latter by $\mcH_2$. Now given $H_1, H_1'\in \mcH_1$, we have
\begin{align*}
& (g_1,g_2) \cdot H_1 \cap (g_1',g_2')\cdot H_1'  \not= \emptyset \\
\Leftrightarrow \; & (g_1,1)\cdot H_1 \cap (g_1',1) \cdot H_1'  \not= \emptyset \\
\Leftrightarrow \; & g_1 \cdot p_1(H_1) \cap g_1' \cdot p_1(H_1')  \not= \emptyset ,
\end{align*}
where $p_1$ is the projection map $G\to G_1$. The analogous statement holds for $H_2, H_2'\in \mcH_2$. On the other hand, if we take $H_1\in \mcH_1$ and $H_2\in \mcH_2$, all of their cosets intersect non-trivially because 
\begin{align*}
(g_1,g_2) \cdot H_1 = (g_1,g_2') \cdot H_1 &&\text{and}&& (g_1',g_2') \cdot H_2 = (g_1,g_2') \cdot H_2.
\end{align*}
It follows that the coset complex $\CC{G}{\mcH}$ decomposes as a join
\begin{equation*}
\CC{G}{\mcH} \cong \CC{G_1}{p_1(\mcH_1)}\ast \CC{G_2}{p_2(\mcH_2)}.
\end{equation*}

However, the situation becomes more complicated if we consider semi-direct products or general short exact sequences
\begin{equation*}
1\to N\to G \to Q\to 1.
\end{equation*}
\cite[Proposition 5.17]{Hol:EndlicheIdentifizierbarkeitvon}, \cite[Theorem 7.3]{Wel:cosetposetsnerve} and \cite[Proposition 10]{Bro:cosetposetprobabilistic} contain results in this direction for the cases where every $H\in\mcH$ is a complement of $N$, every $H\in\mcH$ contains $N$ and where $G$ is a finite group and $\mcH$ is the set of \emph{all} subgroups of $G$, respectively.
 Our work in this section provides a common generalisation of all three of these results (see \cref{coset complexes and SES}).

\paragraph{Notation and standing assumptions}
\label{paragraph notation H^N H_N}
From now on, we will fix a normal subgroup $N\triangleleft G$ and assume that $\mcH$ is a set of proper subgroups of $G$.
In this situation, we can write $\mcH$ as a disjoint union $\mcH=\mcH_N \sqcup \mcH^N$, where
\begin{align*}
\mcH_N\coloneqq \set{H\in \mcH \mid HN \not=G} \text{ and } \mcH^N\coloneqq \set{K\in \mcH \mid KN =G}. 
\end{align*}
For elements $g\in G$ and subgroups $H\leq G$ of $G$, let $\bar{g}$ and $\overbar{H}$ denote the image of $g$ and $H$ in the quotient $G/N$, respectively. 

The family $\mcH_N$ gives rise to a family of proper subgroups of $G/N$, denoted by 
\begin{equation*}
\overbar{\mcH}\coloneqq \set{\overbar{H}\mid H\in\mcH_N}.
\end{equation*}
Similarly, $\mcH^N$ gives rise to a family of proper subgroups of $N$, denoted by 
\begin{equation*}
\mcH\cap N\coloneqq \set{K\cap N\mid K\in\mcH^N}.
\end{equation*}

\subsubsection{Coset posets and short exact sequences}
We start by considering the behaviour of coset posets under short exact sequences.

\begin{definition}
The family $\mcH$ of proper subgroups of $G$ is \emph{divided by $N$} if the following holds true:
\begin{enumerate}
\item  For all $H\in \mcH_N$, one has $HN\in \mcH$.
\item For all $H\in \mcH_N$ and $K\in \mcH^N$, one has $HN\cap K \in\mcH$.
\end{enumerate}
\end{definition}

In what follows, we will use the following elementary observations:
\begin{lemma}
\label{subgroups multiplied by N}
Let $H,K\leq G$ be two subgroups of $G$ and assume that $KN=G$. Then one has $(HN\cap K) \cdot N = HN$.
\end{lemma}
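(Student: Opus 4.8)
The plan is to deduce this from Dedekind's modular law, which states that for subgroups $A, B, C$ of an arbitrary group satisfying $A \leq C$ one has $(AB) \cap C = A(B \cap C)$. Throughout I would make repeated use of the standing assumption that $N \triangleleft G$: this guarantees that $HN$ is a subgroup of $G$ containing $N$, that $HN \cdot N = HN$, and that for any subset $S \subseteq G$ the products satisfy $NS = SN$.

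The core computation applies the modular law with $A = N$, $B = K$ and $C = HN$. The hypothesis $A \leq C$ holds because $N \subseteq HN$, so the law yields
\begin{equation*}
(NK) \cap HN = N(K \cap HN).
\end{equation*}
On the left-hand side, normality of $N$ gives $NK = KN$, which equals $G$ by assumption; hence $(NK) \cap HN = G \cap HN = HN$. On the right-hand side, normality lets me rewrite $N(K \cap HN) = (HN \cap K) \cdot N$. Combining these two identities gives $(HN \cap K) \cdot N = HN$, which is exactly the claim.

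It is worth noting that the modular law produces the full equality in one step, so no separate treatment of the two inclusions is strictly needed; as a sanity check one could also verify the easy inclusion $(HN \cap K) \cdot N \subseteq HN \cdot N = HN$ directly from $HN \cap K \subseteq HN$ and $N \subseteq HN$. The only delicate points are purely bookkeeping: confirming that normality of $N$ makes $HN$ a subgroup containing $N$ (so that the modular law is applicable and $HN \cdot N = HN$), and using $NK = KN = G$ to evaluate the intersection. I do not expect any genuine obstacle here, since all of the content is carried by the modular law.
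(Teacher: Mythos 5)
Your proof is correct, and it takes a genuinely different route from the paper. The paper argues directly by two inclusions: the containment $(HN\cap K)\cdot N \subseteq HN$ is immediate, and for the reverse it uses $KN=G$ to write each $hn \in HN$ as $hn = kn'$ with $k \in K$, $n' \in N$, observing that $k = hn(n')^{-1}$ lies in $HN \cap K$, so $hn \in (HN\cap K)\cdot N$. Your argument instead packages everything into one application of Dedekind's modular law with $A = N$, $B = K$, $C = HN$ (the hypothesis $N \leq HN$ holds, and $HN$ is a subgroup by normality of $N$), then evaluates both sides using $NK = KN = G$ and $N(K \cap HN) = (HN \cap K)N$. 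Both evaluations are justified correctly, including the point that $NS = SN$ for arbitrary subsets $S$ when $N$ is normal. What your approach buys is concision and a structural explanation --- the identity is revealed as a special case of a standard lattice-theoretic fact; what the paper's approach buys is self-containedness, since its element chase is in effect an inline proof of exactly the instance of the modular law you invoke. There is no gap in either argument.
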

\begin{proof}
Obviously, $(HN\cap K)\cdot N$ is contained in $HN$. We claim that in fact, these sets are equal. Indeed, as $KN=G$, each $hn\in HN$ can be written as $hn=kn'$ with $k\in K$ and $n'\in N$. As $k=hnn'^{-1}$, it is contained in $HN \cap K$. Hence, $hn=kn'\in  (HN \cap K)\cdot N$. 
\end{proof}

\begin{lemma}
\label{intersection fibres}
Let $H\in \mcH_N$, $K \in \mcH^N$ and $g\in G$. If $\mcH$ is divided by $N$, then 
\begin{equation*}
(g\cdot HN)\cap K = k \cdot (HN\cap K)
\end{equation*}
for some $k \in K$. Furthermore, $(HN\cap K)\in \mcH_N$.
\end{lemma}
\begin{proof}
As $G=KN$, we can write $g= k n$ with $n\in N$ and $k\in K$. The intersection
\begin{equation*}
\label{equation representative from K}
(g\cdot HN)\cap K= (k n\cdot HN)\cap K = (k\cdot HN)\cap K
\end{equation*}
contains $k$, so it is 
equal to $k\cdot (HN\cap K)$.
That $HN\cap K$ is contained in $\mcH$ is clear because $\mcH$ is divided by $N$; that it is contained in $\mcH_N$ is a consequence of \cref{subgroups multiplied by N}.
\end{proof}

The next proposition is a generalisation of \cite[Proposition 10]{Bro:cosetposetprobabilistic}. Our proof closely follows the ideas of Brown.
\begin{proposition}
\label{coset posets and SES}
If $\mcH$ is divided by $N$, then there is a homotopy equivalence
\begin{equation*}
\CosPos{G}{\mcH}\simeq \CosPos{G/N}{\overbar{\mcH}}\ast\CosPos{G}{\mcH^N}.
\end{equation*}
\end{proposition}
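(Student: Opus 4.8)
The plan is to realise the join directly as the target of a single poset map and then invoke Quillen's fibre lemma (\cref{Quillen fibre lemma contractibility}). First I would split the poset $\CosPos{G}{\mcH}$ along the decomposition $\mcH = \mcH_N \sqcup \mcH^N$ into the subposets $R \coloneqq \CosPos{G}{\mcH_N}$ and $Q \coloneqq \CosPos{G}{\mcH^N}$. A one-line computation shows that a coset of a subgroup in $\mcH^N$ is never contained in a coset of a subgroup in $\mcH_N$ (otherwise $G = KN \le HN \neq G$), so every comparison between the two families inside $\CosPos{G}{\mcH}$ points from $R$ to $Q$; this is the order pattern of a join. I would exploit this to define a poset map $F$ from $\CosPos{G}{\mcH}$ to the join $J \coloneqq \CosPos{G/N}{\overbar{\mcH}} \ast Q$, sending a coset $gH$ with $H \in \mcH_N$ to its image $\bar g\,\overbar{H}$ in the left-hand factor and sending a coset $gK$ with $K \in \mcH^N$ to itself in $Q$ (orienting the join so that the left factor lies below $Q$). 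The cross comparisons, the only nonobvious case of monotonicity, are absorbed by the join order.

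The fibres of $F$ over the left-hand factor are the benign case. For $p = \bar a\,\overbar{H_0}$ with $H_0 \in \mcH_N$, the lower fibre $F^{-1}(J_{\le p})$ meets only $R$, and the condition $\bar g\,\overbar{H} \le p$ forces $gHN \subseteq aH_0N$ and hence $gH \subseteq aH_0N$. Thus the coset $aH_0N$ is a maximum of the fibre, which is therefore contractible. Here the first clause of ``$\mcH$ is divided by $N$'' enters exactly to make $H_0N$ an element of $\mcH$, so that $aH_0N$ is a genuine vertex of the coset poset.

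The main obstacle is the family of fibres over a vertex $bK_0 \in Q$. Such a lower fibre equals $R \cup Q_{\le bK_0}$: it contains all of $R$, so contractibility cannot be read off from a maximum. I would first push every coset in $Q_{\le bK_0}$ up to the apex $bK_0$; this is a monotone retraction, hence a deformation retraction by \cref{homotopic poset maps}, collapsing the fibre onto $R \cup \{bK_0\}$. Now $R \cup \{bK_0\}$ is precisely the mapping cone of the inclusion $\iota \colon R_{\le bK_0} \hookrightarrow R$ of the subposet of cosets contained in $bK_0$, so it suffices to prove that $\iota$ is a homotopy equivalence; coning off a homotopy equivalence produces a contractible space.

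Establishing that $\iota$ is a homotopy equivalence is the heart of the argument, and it is where the second clause of ``divided by $N$'' is used. That clause guarantees $HN \cap K_0 \in \mcH$ for $H \in \mcH_N$ (and, via \cref{subgroups multiplied by N}, that it lies in $\mcH_N$), so that $gH \mapsto gHN \cap bK_0$ is a well-defined poset map $\pi \colon R \to R_{\le bK_0}$; nonemptiness of these intersections follows from $HN \cdot K_0 = G$. One composite satisfies $\pi\iota(gH) = g(HN \cap K_0) \supseteq gH$, so $\pi\iota \ge \operatorname{id}$ and $\pi\iota \simeq \operatorname{id}$ by \cref{Quillen homotopic poset maps}. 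For the other composite I would bridge through the auxiliary map $u \colon gH \mapsto gHN$: since $\operatorname{id} \le u$ and $\iota\pi \le u$ pointwise, both the identity and $\iota\pi$ are homotopic to $u$, hence to each other. This detour is essential, because the naive ``multiply by $N$'' retraction clashes with the cosets in $Q$ (subgroups in $\mcH^N$ need not contain $N$), and passing through $u$ is what repairs the incompatibility. With every fibre contractible, \cref{Quillen fibre lemma contractibility} delivers the homotopy equivalence $\CosPos{G}{\mcH} \simeq \CosPos{G/N}{\overbar{\mcH}} \ast \CosPos{G}{\mcH^N}$.
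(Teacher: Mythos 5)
Your proposal is correct and follows essentially the same route as the paper: the same poset map onto the join, Quillen's fibre lemma (\cref{Quillen fibre lemma contractibility}), the unique-maximum argument for fibres over the left-hand factor, and, over a coset $bK_0$, the same two maps (intersecting with $bK_0$ and multiplying by $N$) compared via \cref{Quillen homotopic poset maps}. The only differences are organisational --- you collapse $Q_{\le bK_0}$ to its apex and phrase the resulting fibre as the mapping cone of $R_{\le bK_0}\hookrightarrow R$, whereas the paper writes the fibre as a union glued along $(C_N)_{\le K}$ and deformation-retracts $C_N$ onto that subposet --- so the two arguments are interchangeable.
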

\begin{proof}
Set $C\coloneqq \CosPos{G}{\mcH}$, $C_N\coloneqq \CosPos{G}{\mcH_N}$ and $C^N\coloneqq \CosPos{G}{\mcH^N}$. We define a map 
\begin{equation*}
f\colon C\to \CosPos{G/N}{\overbar{\mcH}}\ast C^N
\end{equation*}
such that $f$ restricts to the identity on $C^N$ and $f(gH)=\bar{g}\overbar{H}$ for all $gH\in \nolinebreak C_N$. As no coset from $C^N$ can be contained in a coset from $C_N$, this map is order-preserving, i.e.~a poset map. For $x\in \CosPos{G/N}{\overbar{\mcH}}\ast C^N$, define
\begin{equation*}
F\coloneqq f^{-1}(\,(\CosPos{G/N}{\overbar{\mcH}}\ast C^N \,)_{\leq x})
\end{equation*}
to be the fibre of $x$ with respect to $f$.  We want to use \cref{Quillen fibre lemma contractibility} to show that $f$ is a homotopy equivalence. For this, we need to show that $F$ is contractible.

If $x\in \CosPos{G/N}{\overbar{\mcH}}$, this is clear: Write $x=\bar{g}\overbar{H}$ such that $g \in G, \, H\in \mcH_N$. As $N$ divides $\mcH$, the subgroup $HN$ is contained in $\mcH$ and $g\cdot HN$ is the unique maximal element of $F$. This immediately implies contractibility of $F$.

Now assume $x\in C^N$. Using the natural action of $G$ on these posets, we can assume that $x = K \in \mcH^N$.
By definition of the join, the poset $F$ can as be written as $F=C_N \cup C_{\leq K}$. On the level of geometric realisations, it decomposes as 
\begin{equation*}
\real{F}=\real{C_N}\cup_{\real{C'}} \real{C_{\leq K}},
\end{equation*} 
where $C'\coloneqq C_N\cap C_{\leq K}$ is equal to $(C_N)_{\leq K}$. (To see this, note that no coset from $C^N$ can be contained in a coset from $C_N$ and that if $gH\in C_N$ is contained in some $g'H'\in C_{\leq K}$, we have $gH\in C'$.)
Next, we show that $\real{C'}$ is a strong deformation retract of $\real{C_N}$. This implies that $F$ is homotopy equivalent to $C_{\leq K}$, which is contractible as it has $K$ as unique maximal element.

The poset $C'$ is given by all cosets $gH\subseteq K$ such that $H\in \mcH_N$. Hence, \cref{intersection fibres} implies that for $gH\in C_N$, the intersection $(g\cdot HN)\cap \nolinebreak K$ is an element of $C'$. This allows us to define poset maps
\begin{align*}
\phi\colon  C_N 	&\to C' &\text{ and }& & \psi\colon  C' 	&\to C_N \\
gH 			&\mapsto 	(g\cdot HN)\cap K &&& gH 	&\mapsto g\cdot HN.
\end{align*}
For $gH\in C'$, we have $gH\subseteq K$, hence 
\begin{equation*}
\phi\circ\psi (gH)= (g\cdot HN)\cap K \supseteq gH\cap K =gH .
\end{equation*}
If on the other hand $gH\in C_N$, one has by \cref{intersection fibres}
\begin{align*}
\psi\circ\phi (gH)&= ((g\cdot HN)\cap K )\cdot N \\
& = k\cdot (HN\cap K)\cdot N 
\end{align*}
for some $k\in g\cdot H N\cap K$. By \cref{subgroups multiplied by N}, we have $(HN\cap K) \cdot N =HN$, so it follows that $\psi\circ\phi (gH) =k\cdot HN \supseteq g H$.
\cref{Quillen homotopic poset maps} now implies that $\phi$ and $\psi$ are homotopy equivalences which are inverse to each other. Furthermore, we have $gH\subseteq \psi(gH)$ for all $gH\in C'$, so again by \cref{Quillen homotopic poset maps}, the map $\psi$ is homotopic to the inclusion $C'\hookrightarrow C_N$ which must hence be a homotopy equivalence as well. It follows that $\real{C'}$ is a strong deformation retract of $\real{C_N}$.
\end{proof}

\subsubsection{Coset complexes and short exact sequences}
We will now translate the results obtained in the last section to coset complexes. 
The following observation follows from elementary group theory.

\begin{lemma}
\label{subgroups distinct in quotient}
Let $K_1 \not= K_2$ be subgroups of $G$ such that $G=(K_1\cap K_2) N$. Then one has $K_1\cap N \not= K_2\cap N$.
\end{lemma}

We obtain the following relation between $\CC{G}{\mcH^N}$ and $\CC{N}{\mcH\cap N}$:

\begin{lemma}
\label{intersection with kernel}
Assume that for every finite collection $K_1,\ldots, K_m \in \mcH^N$, one has $(K_1\cap \ldots \cap K_m)N=G$. Then there is an isomorphism
\begin{equation*}
\CC{G}{\mcH^N}\cong \CC{N}{\mcH\cap N}.
\end{equation*}
\end{lemma}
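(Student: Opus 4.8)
The plan is to construct an explicit simplicial isomorphism, defining it first on vertices and then checking that it respects the simplex relation. The key structural fact is that every $K\in\mcH^N$ satisfies $KN=G$, and since $N$ is normal we also have $NK=G$; hence every left coset $gK$ can be rewritten as $nK$ for some $n\in N$. First I would define a vertex map $\Phi\colon \CC{G}{\mcH^N}\to\CC{N}{\mcH\cap N}$ by choosing, for each vertex $gK$, an $N$-representative $n$ with $gK=nK$, and setting $\Phi(gK)\coloneqq n(K\cap N)$. I would then check this is well defined independently of the choice of $n$: if $nK=n'K$ with $n,n'\in N$, then $n^{-1}n'\in K\cap N$, so $n(K\cap N)=n'(K\cap N)$.

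For bijectivity on vertices, surjectivity is immediate, since $\Phi(nK)=n(K\cap N)$ realises every vertex of $\CC{N}{\mcH\cap N}$. Injectivity is the step that genuinely uses the finite-intersection hypothesis: if $\Phi(gK)=\Phi(g'K')$, then comparing the underlying subgroups gives $K\cap N=K'\cap N$, and applying \cref{subgroups distinct in quotient} with $(K\cap K')N=G$ forces $K=K'$; the resulting equality $n(K\cap N)=n'(K\cap N)$ then yields $n^{-1}n'\in K\cap N\subseteq K$, so $gK=n K=n'K=g'K'$. This is the main subtlety to watch: without the hypothesis, two distinct subgroups of $\mcH^N$ could meet $N$ in the same subgroup, collapsing vertices and destroying injectivity.

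Finally I would verify that $\Phi$ and its inverse are simplicial, i.e. that cosets $n_0K_0,\ldots,n_lK_l$ (with $n_i\in N$, $K_i\in\mcH^N$) have nonempty total intersection in $G$ if and only if the cosets $n_i(K_i\cap N)$ have nonempty total intersection in $N$. The ``if'' direction is trivial, since $n_i(K_i\cap N)\subseteq n_iK_i$. The converse is where I expect the real work, and it is again driven by the hypothesis: if $y\in\bigcap_i n_iK_i$, then $n_iK_i=yK_i$ for each $i$, so the intersection is the single coset $yL$ with $L\coloneqq\bigcap_i K_i$. The hypothesis gives $LN=G$, hence $y\in NL$, so writing $y=n\ell$ with $n\in N,\ \ell\in L$ produces $x\coloneqq y\ell^{-1}=n\in yL\cap N$. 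Since $x\in N$ and $n_i\in N$, membership $x\in n_iK_i$ upgrades to $n_i^{-1}x\in K_i\cap N$, so $x$ lies in every $n_i(K_i\cap N)$ and witnesses the required simplex in $\CC{N}{\mcH\cap N}$. Combining the vertex bijection with this equivalence of simplex relations gives the isomorphism $\CC{G}{\mcH^N}\cong\CC{N}{\mcH\cap N}$.
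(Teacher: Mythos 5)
Your proposal is correct and follows essentially the same route as the paper's own proof: the same vertex map $nK\mapsto n(K\cap N)$, the same appeal to \cref{subgroups distinct in quotient} (via the hypothesis with $m=2$) for injectivity, and the same use of $\left(\bigcap_i K_i\right)N=G$ to produce an element of $N$ in a nonempty intersection of cosets, which is exactly the paper's chain of equivalences written out with an explicit witness. No gaps; the only difference is expository.
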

\begin{proof}
As $G=KN=NK$ for all $K \in \mcH^N$, each vertex of $\CC{G}{\mcH^N}$ can be written as $n K$ with $n\in N$. Use this to define the map
\begin{align*}
\psi\colon  \CC{G}{\mcH^N} & \to \CC{N}{\mcH\cap N}\\
n K & \mapsto n \cdot K\cap N
\end{align*}
which we claim is an isomorphism of simplicial complexes.

As $n\in N$, this map is well-defined on vertices. It also clearly is surjective on vertices.
Now assume that for $n_1,n_2\in N$ and $K_1,K_2\in \mcH^N$, one has $n_1 \cdot K_1\cap N = n_2 \cdot K_2 \cap N$. As the two cosets coincide, so do the subgroups $K_1\cap N = K_2 \cap N $. By \cref{subgroups distinct in quotient}, this implies that $K_1 = K_2$. It follows in particular that $n_1 K_1 = n_2 K_2$ which shows that $\psi$ defines a bijection between the vertex sets of the two coset complexes.

To see that $\psi$ is a simplicial map which defines a bijection between the set of simplices of the two complexes, take $n_1, \ldots , n_m \in N$ and $K_1,\ldots ,K_m\in \mcH^N$ and consider the following chain of equivalences:
\begin{align*}
&\bigcap_i n_i K_i \not =\emptyset \\
\Leftrightarrow \,& \exists\, g\in G: \bigcap_i n_i K_i = \bigcap_i g K_i = g\bigcap_i K_i\\
\stackrel{*}{\Leftrightarrow} \,& \exists\, n\in N: \bigcap_i n_i K_i = n \bigcap_i K_i \\
\Leftrightarrow \,& \emptyset \not =\left(\bigcap_i n_i K_i \right)\cap N = \bigcap_i n_i (K_i\cap N) ,
\end{align*} 
where $*$ follows because $G=N(K_1\cap \ldots \cap K_m)$.
\end{proof}

This motivates the following definition:
\begin{definition}
\label{definition strongly divided}
The family $\mcH$ of proper subgroups of $G$ is \emph{strongly divided by $N$} if the following holds true:
\begin{enumerate}
\item For all $H\in \mcH_N$, one has $N\subseteq H$.
\item For all $K_1,\ldots ,K_m \in \mcH^N$, one has $(K_1\cap \ldots \cap K_m)N=G$.
\end{enumerate}
\end{definition}

Using \cref{subgroups multiplied by N}, it is easy to see that every family of subgroups which is strongly divided by $N$ is also divided by $N$. On top of that, given a family which is strongly divided, we can even produce a family which is closed under intersections and still divided by $N$ as the following lemma shows. Recall that $\widetilde{\mcH}$ denotes the family of all finite intersections of elements from $\mcH$.

\begin{lemma}
\label{finite intersections divided by N}
If $\mcH$ is strongly divided by $N$, the family $\widetilde{\mcH}$ is divided by $N$. Furthermore,  we have
\begin{enumerate}
\item \label{item intersections and H^N} $\widetilde{\mcH}^N$ is equal to the family of all finite intersections of elements from $\mcH^N$, i.e. 
\begin{equation*}
\widetilde{\mcH}^N = \widetilde{\mcH^N}.
\end{equation*}
\item \label{item intersections and H_N} The image of $\widetilde{\mcH}_N$ in $G/N$ is equal to the family of finite intersections of elements from $\overbar{\mcH}$, i.e. 
\begin{equation*}
\overbar{\widetilde{\mcH}} = \widetilde{\overbar{\mcH}}.
\end{equation*}
\end{enumerate}
\end{lemma}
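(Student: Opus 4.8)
The plan is to reduce everything to a single structural description of the elements of $\widetilde{\mcH}$ and then read off all three assertions. Since $\mcH = \mcH_N \sqcup \mcH^N$, any $X \in \widetilde{\mcH}$ can be written as $X = H \cap K$ with $H = H_1 \cap \dots \cap H_p$ (the factors $H_i \in \mcH_N$) and $K = K_1 \cap \dots \cap K_q$ (the factors $K_j \in \mcH^N$), where an empty intersection is read as $G$. The key claim I would prove is: if $p \geq 1$ then $XN = H$, and if $p = 0$ then $XN = G$. The case $p = 0$ is exactly condition (2) of strong division. For $p \geq 1$, condition (1) of strong division gives $N \subseteq H_i$ for each $i$, hence $N \subseteq H$ and $HN = H$; if moreover $q \geq 1$, then $KN = G$ by condition (2), and \cref{subgroups multiplied by N} applied to $H$ and $K$ yields $(HN \cap K)N = HN$, that is $XN = (H \cap K)N = HN = H$ (the case $q = 0$ being trivial). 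I expect this identity $XN = H$ to be the only genuinely technical point; everything else is bookkeeping about which factors absorb $N$.

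From the claim I obtain the dichotomy that $XN = G$ holds precisely when $X$ has no $\mcH_N$-factor, and this gives $\widetilde{\mcH}^N = \widetilde{\mcH^N}$ at once: any finite intersection of members of $\mcH^N$ has $XN = G$ by the $p = 0$ case, and conversely if $X \in \widetilde{\mcH}^N$ then $XN = G$ forces $p = 0$, so $X$ is a finite intersection of members of $\mcH^N$.

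For $\overbar{\widetilde{\mcH}} = \widetilde{\overbar{\mcH}}$, I would argue that for $X \in \widetilde{\mcH}_N$ the claim gives $p \geq 1$ and $\overbar{X} = \overbar{XN} = \overbar{H}$; since each $H_i$ contains $N$, the correspondence theorem gives $\overbar{H} = \overbar{H_1} \cap \dots \cap \overbar{H_p} \in \widetilde{\overbar{\mcH}}$, proving $\overbar{\widetilde{\mcH}} \subseteq \widetilde{\overbar{\mcH}}$. Conversely, for $H_1, \dots, H_p \in \mcH_N$ the subgroup $X = H_1 \cap \dots \cap H_p$ contains $N$, hence is proper with $XN = X \neq G$, so lies in $\widetilde{\mcH}_N$, and its image is $\overbar{H_1} \cap \dots \cap \overbar{H_p}$; this yields the reverse inclusion.

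Finally, to see that $\widetilde{\mcH}$ is divided by $N$: condition (1) of ``divided by $N$'' asks that $XN \in \widetilde{\mcH}$ for every $X \in \widetilde{\mcH}_N$, which is immediate from $XN = H = H_1 \cap \dots \cap H_p \in \widetilde{\mcH}$ supplied by the claim; condition (2) then comes for free, since $\widetilde{\mcH}$ is by construction closed under finite intersections, so for $X \in \widetilde{\mcH}_N$ and $Y \in \widetilde{\mcH}^N$ the set $XN \cap Y$ is an intersection of two members of $\widetilde{\mcH}$ and hence again lies in $\widetilde{\mcH}$.
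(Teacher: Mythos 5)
Your proposal is correct and follows essentially the same route as the paper's proof: the same decomposition of an element of $\widetilde{\mcH}$ into its $\mcH_N$-factors (which contain $N$ by strong division) and $\mcH^N$-factors, the same key identity $\widetilde{H}N = H_1 \cap \ldots \cap H_n$ obtained from \cref{subgroups multiplied by N}, and the same case analysis to read off both set equalities and the divided-by-$N$ property. Your packaging of the argument as a single ``key claim'' with the dichotomy $XN = H$ versus $XN = G$ is just a slightly more explicit organization of what the paper does.
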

\begin{proof}
Every $\widetilde{H}\in \widetilde{\mcH}$ can be written as 
\begin{equation*}
\widetilde{H}= H_1\cap \ldots \cap H_n \cap K_1\cap\ldots \cap K_m
\end{equation*}
where for all $i$ and $j$, one has $N\subseteq H_i$ and $K_j \in \mcH^N$.

If $\widetilde{H}\in \widetilde{\mcH}^N$, we must have $n=0$, i.e. $\widetilde{H}=K_1\cap\ldots \cap K_m$ is a finite in\-ter\-section of elements from $\mcH^N$. On the other hand, every such finite intersection forms an element of $\widetilde{\mcH}^N$ because one has $(K_1\cap\ldots \cap K_m)N=G$, which proves \cref{item intersections and H^N}.

This also implies that if $\widetilde{H}\in \widetilde{\mcH}_N$, we have $n\geq 1$. It follows from \cref{subgroups multiplied by N} that $\widetilde{H}N$  is equal to $H_1\cap \ldots \cap H_n $. This is  a finite intersection of elements from $\mcH_N$ and hence contained in $\widetilde{\mcH}$. Furthermore, this implies that the image $\overbar{H}$ of $\widetilde{H}$ in $G/N$ is equal to $\overbar{H}= \overbar{H}_1\cap \ldots \cap \overbar{H}_n $, showing \cref{item intersections and H_N}.

The last thing that remains to be checked is that $\widetilde{H}$ is divided by $N$, i.e. that for all $\widetilde{H}\in \widetilde{\mcH}_N$ and $\widetilde{K}\in \widetilde{\mcH}^N$, one has $\widetilde{H} N\cap \widetilde{K} \in\widetilde{\mcH}$. However, we already know that $\widetilde{H}N = H_1\cap \ldots \cap H_n $, so $\widetilde{H} N\cap \widetilde{K}$ is itself a finite intersection of elements from $\mcH$.
\end{proof}

We are now ready to prove Theorem \ref{introduction coset complexes and SES} which we restate as:
\begin{theorem}
\label{coset complexes and SES}
If $\mcH$ is strongly divided by $N$, there is a homotopy equivalence
\begin{equation*}
\CC{G}{\mcH}\simeq \CC{G/N}{\overbar{\mcH}}\ast \CC{N}{\mcH\cap N}.
\end{equation*}
\end{theorem}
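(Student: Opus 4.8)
The plan is to reduce the statement to its poset-level analogue \cref{coset posets and SES} by replacing $\mcH$ with its closure under finite intersections $\widetilde{\mcH}$, where coset poset and coset complex carry the same homotopy type. The strength of the hypothesis (``strongly divided'' rather than merely ``divided'') is exactly what guarantees that the finite-intersection operation interacts well with passing to the quotient $G/N$ and with intersecting against $N$, and this is precisely what \cref{finite intersections divided by N} records.

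First I would move from the complex to the poset on the left-hand side. By \cref{homotopy equivalence cc closed under intersections} there is a homotopy equivalence $\CC{G}{\mcH}\simeq \CC{G}{\widetilde{\mcH}}$, and since $\widetilde{\mcH}$ is closed under finite intersections by construction, \cref{homotopy equivalence cc and cospos} gives $\CC{G}{\widetilde{\mcH}}\simeq \CosPos{G}{\widetilde{\mcH}}$. Now \cref{finite intersections divided by N} tells us that, because $\mcH$ is strongly divided by $N$, the enlarged family $\widetilde{\mcH}$ is divided by $N$. Hence \cref{coset posets and SES} applies to $\widetilde{\mcH}$ and yields
\[
\CosPos{G}{\widetilde{\mcH}}\simeq \CosPos{G/N}{\overbar{\widetilde{\mcH}}}\ast \CosPos{G}{\widetilde{\mcH}^N}.
\]

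Next I would translate the two join factors back into coset complexes of the original families. \cref{finite intersections divided by N} identifies $\overbar{\widetilde{\mcH}}=\widetilde{\overbar{\mcH}}$ and $\widetilde{\mcH}^N=\widetilde{\mcH^N}$, both of which are closed under intersections; applying \cref{homotopy equivalence cc and cospos} and then \cref{homotopy equivalence cc closed under intersections} to each factor gives $\CosPos{G/N}{\overbar{\widetilde{\mcH}}}\simeq \CC{G/N}{\overbar{\mcH}}$ and $\CosPos{G}{\widetilde{\mcH}^N}\simeq \CC{G}{\mcH^N}$. Since a homotopy equivalence of each join factor induces a homotopy equivalence of the join, I obtain $\CC{G}{\mcH}\simeq \CC{G/N}{\overbar{\mcH}}\ast \CC{G}{\mcH^N}$. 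Finally, the second clause in the definition of being strongly divided by $N$ asserts exactly that $(K_1\cap \ldots \cap K_m)N=G$ for all $K_1,\ldots ,K_m\in \mcH^N$, which is the hypothesis of \cref{intersection with kernel}; that lemma identifies $\CC{G}{\mcH^N}\cong \CC{N}{\mcH\cap N}$, completing the chain.

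I do not expect a genuine obstacle here: the whole argument is an assembly of already-established equivalences. The one point requiring care is the commutation of the tilde (finite-intersection) operation with the bar and with $\,\cdot\cap N$, so that the factors $\CosPos{G/N}{\overbar{\widetilde{\mcH}}}$ and $\CosPos{G}{\widetilde{\mcH}^N}$ really do reduce back to $\CC{G/N}{\overbar{\mcH}}$ and $\CC{G}{\mcH^N}$. That bookkeeping is isolated in \cref{finite intersections divided by N}, which is why passing to $\widetilde{\mcH}$ is the natural route and why ``strongly divided'' (forcing $N\subseteq H$ for $H\in\mcH_N$ and $(K_1\cap\ldots\cap K_m)N=G$ for intersections in $\mcH^N$) is the right hypothesis.
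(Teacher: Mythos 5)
Your proposal is correct and follows essentially the same route as the paper's own proof: pass to $\widetilde{\mcH}$, apply \cref{homotopy equivalence cc and cospos}, \cref{homotopy equivalence cc closed under intersections} and \cref{finite intersections divided by N} to invoke \cref{coset posets and SES}, translate the join factors back via the identities $\overbar{\widetilde{\mcH}}=\widetilde{\overbar{\mcH}}$ and $\widetilde{\mcH}^N=\widetilde{\mcH^N}$, and finish with \cref{intersection with kernel}. The only (harmless) difference is that you make explicit the fact that factorwise homotopy equivalences induce an equivalence of joins, which the paper leaves implicit.
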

\begin{proof}
It follows from \cref{homotopy equivalence cc and cospos} and \cref{homotopy equivalence cc closed under intersections} that $\CC{G}{\mcH}$ is homotopy equivalent to $\CosPos{G}{\widetilde{\mcH}}$. Furthermore, \cref{finite intersections divided by N} tells us that $\widetilde{\mcH}$ is divided by $N$. Hence, we can apply \cref{coset posets and SES} to see that there is a homotopy equivalence
\begin{equation*}
\CosPos{G}{\widetilde{\mcH}} \simeq \CosPos{G/N}{\overbar{\widetilde{\mcH}}}\ast\CosPos{G}{\widetilde{\mcH}^N}.
\end{equation*}

By \cref{finite intersections divided by N}, we have $\overbar{\widetilde{\mcH}} = \widetilde{\overbar{\mcH}}$. Hence, using \cref{homotopy equivalence cc and cospos} and \cref{homotopy equivalence cc closed under intersections} again,
\begin{equation*}
\CosPos{G/N}{\overbar{\widetilde{\mcH}}}\simeq \CC{G/N}{\widetilde{\overbar{\mcH}}}\simeq \CC{G/N}{\overbar{\mcH}}.
\end{equation*}
On the other hand, \cref{finite intersections divided by N} also tells us that $\widetilde{\mcH}^N$ consists of all finite intersections of elements from $\mcH^N$. It follows that
\begin{equation*}
\CosPos{G}{\widetilde{\mcH}^N}\simeq \CC{G}{\widetilde{\mcH}^N}\simeq \CC{G}{\mcH^N}.
\end{equation*}
As $\mcH$ is strongly divided by $N$, we can finally apply \cref{intersection with kernel} and get that $\CC{G}{\mcH^N}\cong \CC{N}{\mcH\cap N}$.
\end{proof}

\subsubsection{Summary}
We summarise the results of this section in the form that we will use later on:

\begin{corollary}
\label{short exact sequences}
Let $G$ be a group and assume we have a short exact sequence
\begin{equation*}
1\to N\to G \stackrel{q}{\to} Q\to 1.
\end{equation*}
Let $S$ be a set of generators for $G=\ll S \rr$ and let $\mcP$ be a family of proper subgroups. Furthermore, assume that for all $P\in \mcP$, one of the following holds:
\begin{enumerate}
\item Either $P$ contains the kernel $N=\ker q$, or
\item $P$ contains $S\bsl N$.
\end{enumerate}
Then there is a homotopy equivalence
\begin{equation*}
\CC{G}{\mcP}\simeq \CC{Q}{\overbar{\mcP}}\ast\CC{N}{\mcP\cap N},
\end{equation*}
where $\overbar{\mcP}=\set{q(P) \mid P\in \mcP, \, N \subseteq P}$ and $\mcP\cap N = \set{P \cap N \mid P \in \mcP, \, S\bsl N \subseteq P}$.
\end{corollary}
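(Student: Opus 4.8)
The plan is to derive \cref{short exact sequences} as a direct corollary of \cref{coset complexes and SES}, whose conclusion has exactly the same shape. The entire task is therefore to verify that the hypotheses of the corollary imply that $\mcP$ is \emph{strongly divided by $N$} in the sense of \cref{definition strongly divided}, and that the families $\overbar{\mcP}$ and $\mcP\cap N$ defined in the statement agree with the families appearing in the theorem. Once this translation is complete, the homotopy equivalence is immediate.

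First I would unpack the dichotomy in the hypothesis in terms of the splitting $\mcP=\mcP_N\sqcup\mcP^N$. The key point is that the two cases in the hypothesis are not a priori the same as the two cases defining $\mcP_N$ and $\mcP^N$, so I must reconcile them. If $P$ contains $N$, then $PN=P\neq G$ (as $P$ is proper), so $P\in\mcP_N$; this already verifies condition (1) of \cref{definition strongly divided}, namely $N\subseteq H$ for all $H\in\mcP_N$, once I check that every element of $\mcP_N$ falls into case (1) rather than case (2). Conversely, if $P$ contains $S\bsl N$, then $q(P)$ contains $q(S\bsl N)=q(S)\bsl\set{1}$, which generates $Q$ since $S$ generates $G$; hence $PN=G$ and $P\in\mcP^N$. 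So I would argue that the hypothesis forces each $P$ to land unambiguously: case (1) gives $\mcP_N$ with $N\subseteq P$, and case (2) gives $\mcP^N$. This matches condition (1) of strong division.

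Next I would verify condition (2) of \cref{definition strongly divided}: for $K_1,\dots,K_m\in\mcP^N$, one needs $(K_1\cap\dots\cap K_m)N=G$. Here is where I expect the main obstacle, since strong division requires the intersection to surject onto $Q$, not merely each factor individually. The clean way to get this is to use that each $K_i\in\mcP^N$ contains the common set $S\bsl N$ by case (2). Therefore $K_1\cap\dots\cap K_m\supseteq S\bsl N$, so $q(K_1\cap\dots\cap K_m)\supseteq q(S)\bsl\set{1}$, which generates $Q$; hence the intersection surjects onto $Q$, i.e. $(K_1\cap\dots\cap K_m)N=G$. This is exactly the reason the hypothesis is phrased using a \emph{common} generating set $S$ rather than an individual condition on each $P$ --- it is what makes the intersection condition hold uniformly, and it is the subtle point the proof must highlight.

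Finally I would match the families. The family $\overbar{\mcP}$ in \cref{coset complexes and SES} is $\set{\overbar H\mid H\in\mcP_N}$; since $\mcP_N$ consists exactly of the $P$ containing $N$, this equals $\set{q(P)\mid P\in\mcP,\,N\subseteq P\}$, the set named in the statement. Likewise $\mcP\cap N=\set{K\cap N\mid K\in\mcP^N\}$ equals $\set{P\cap N\mid P\in\mcP,\,S\bsl N\subseteq P\}$ by the identification of $\mcP^N$ with the case (2) subgroups. With strong division established and the families identified, \cref{coset complexes and SES} yields the homotopy equivalence $\CC{G}{\mcP}\simeq\CC{G/N}{\overbar{\mcP}}\ast\CC{N}{\mcP\cap N}$, and since $Q\cong G/N$ via $q$ this is precisely the claimed equivalence.
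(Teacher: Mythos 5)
Your proposal is correct and takes essentially the same route as the paper: you verify that the dichotomy forces every $P\in\mcP_N$ to contain $N$ and every $P\in\mcP^N$ to contain $S\bsl N$, so that finite intersections of elements of $\mcP^N$ still surject onto $Q$, whence $\mcP$ is strongly divided by $N$ and \cref{coset complexes and SES} applies. The only difference is cosmetic: you spell out explicitly the identification of $\overbar{\mcP}$ and $\mcP\cap N$ with the families appearing in that theorem, a step the paper leaves implicit.
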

\begin{proof}
We stick with the notation defined on page \pageref{paragraph notation H^N H_N}. If $P\in \mcP^N$, it cannot contain $N$. Hence, all such $P$ must contain the set $S\bsl N$ of elements from $S$ that are not contained in the kernel. It follows that for any $P_1,\ldots, P_m\in \mcP^N$, one has $(P_1\cap \ldots \cap P_m)N=G$. On the other hand, for every $P\in\mcP_N$, our assumption implies that $N\subseteq P$. Hence, $\mcP$ is strongly divided by $N$ and the claim follows from \cref{coset complexes and SES}.
\end{proof}

\section{The base cases: Buildings and relative free factor complexes}
\label{section building and relative free factor complex}
In this section, we study complexes of parabolic subgroups associated to two particular families of (relative) automorphism groups: The first one is $\GL{n}{\mathbb{Z}}$ (\cref{section buildings}), the second one are so-called Fouxe-Rabinovitch groups (\cref{section factor complexes}).
 On the one hand, these are special cases of the complexes we will consider in \cref{section spherical complex for Out(RAAG)}, on the other hand, they play a distinguished role because they appear as base cases of the inductive argument that we will use there.
We show that in both situations, the complexes one obtains are spherical, but the methods for the two cases are quite different. In the first one, the result follows without much effort from the Solomon--Tits Theorem while in the second one, we have to generalise the work of \cite{BG:Homotopytypecomplex} to the ``relative'' setting considered here.

\subsection{The building associated to $\boldsymbol{\GL{n}{\mathbb Z}}$ and the Solomon--Tits Theorem}
\label{section buildings}
The \emph{building associated to $\GL{n}{\mathbb Q}$} is the order complex of the poset $\mcQ$ of proper (i.e.~non-trivial and not equal to $\mathbb{Q}^n$) subspaces of $\mathbb{Q}^n$, ordered by inclusion. 

This is a special case of a Tits building and a lot can be said about the structure 
of these simplicial complexes --- we refer the reader to \cite{AB:Buildings} for further details.
However, the only non-trivial result about them that we need for this article is the following special case of the Solomon--Tits Theorem:
\begin{theorem}[{\cite{Sol:Steinbergcharacterfinite}}]
\label{Solomon-Tits}
The building associated to $\GL{n}{\mathbb Q}$ is homotopy equivalent to a wedge of $(n-2)$-spheres.
\end{theorem}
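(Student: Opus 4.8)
The plan is to prove the statement by induction on $n$, first reducing the sphericity claim to a connectivity statement, and then building the whole complex from a contractible subcomplex by coning off a carefully chosen family of vertices.

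First I would record the dimension. A maximal chain in $\mcQ$ is a complete flag $0\subsetneq V_1\subsetneq\cdots\subsetneq V_{n-1}\subsetneq\mathbb{Q}^n$ consisting of $n-1$ proper subspaces, so $\real{\mcQ}$ is pure of dimension $n-2$. By the Whitehead criterion recalled in \cref{section spherical complexes}, it therefore suffices to show that $\mcQ$ is $(n-3)$-connected. The base cases are immediate: for $n=1$ the poset $\mcQ$ is empty, hence $(-1)$-spherical, and for $n=2$ it is a discrete set of lines, hence $0$-spherical.

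For the inductive step, fix a hyperplane $H\subsetneq\mathbb{Q}^n$ and let $\mcQ^{\ast}\subseteq\mcQ$ be the full subposet obtained by deleting the lines not contained in $H$. The assignment $W\mapsto W$ if $W\subseteq H$ and $W\mapsto W\cap H$ otherwise is a well-defined monotone poset map $g\colon\mcQ^{\ast}\to\mcQ_{\leq H}$ onto the subposet $\mcQ_{\leq H}$ of nonzero subspaces of $H$: the only vertices on which intersecting with $H$ fails to give a nonzero proper subspace of $H$ are exactly the lines transverse to $H$, which is why those are the vertices we removed. Since $\mcQ_{\leq H}$ has $H$ as a maximum element it is contractible, and as $g$ restricts to the identity on $\mcQ_{\leq H}$ and satisfies $g(W)\subseteq W$, \cref{homotopic poset maps} shows that $\real{\mcQ^{\ast}}$ deformation retracts onto $\real{\mcQ_{\leq H}}$; hence $\mcQ^{\ast}$ is contractible.

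It then remains to understand how $\mcQ$ is assembled from $\mcQ^{\ast}$ by reinserting the deleted lines $\ell\not\subseteq H$. Each such $\ell$ is a minimal vertex, so its star is the cone $\ell\ast\lk(\ell)$, and its link $\lk(\ell)=\{W\mid \ell\subsetneq W\subsetneq\mathbb{Q}^n\}$ is isomorphic to the building of $\mathbb{Q}^n/\ell\cong\mathbb{Q}^{n-1}$, which is $(n-3)$-spherical by the inductive hypothesis and lies entirely inside $\mcQ^{\ast}$. Crucially, distinct lines are incomparable, so no simplex of $\mcQ$ contains two of them; consequently the stars $\st(\ell)$ meet one another only inside $\mcQ^{\ast}$. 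Collapsing the contractible subcomplex $\mcQ^{\ast}$ (a cofibration) thus turns each star into the suspension of $\lk(\ell)$, i.e. a wedge of $(n-2)$-spheres, all wedged together at the collapsed point, so $\mcQ\simeq\mcQ/\mcQ^{\ast}\simeq\bigvee_{\ell}\Sigma\lk(\ell)$ is a wedge of $(n-2)$-spheres, as desired.

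The step I expect to be the main obstacle is this final assembly. The delicate point is that the individual links $\lk(\ell)$ overlap inside $\mcQ^{\ast}$ (transverse lines share many higher-dimensional subspaces), so one must argue that coning them off simultaneously really produces a wedge of suspensions rather than some more complicated homotopy colimit. The clean resolution is the incomparability observation above together with the identification $\st(\ell)\cap\mcQ^{\ast}=\lk(\ell)$, which guarantees that after the collapse the apices $\ell$ are joined only at the basepoint; verifying this compatibility, and checking that $g$ really is a retraction with the claimed monotonicity, is where the actual work lies.
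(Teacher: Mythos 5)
Your proof is correct, but note that the paper does not prove this statement at all: it imports the Solomon--Tits theorem as a black box, citing Solomon's original article, and only ever uses the conclusion. So there is no "paper proof" to match; what you have written is a genuine, self-contained replacement for the citation. Your argument is the classical inductive proof of Solomon--Tits, and all of its steps check out: the retraction $g(W)=W\cap H$ is a well-defined monotone poset map on $\mcQ^{\ast}$ precisely because the transverse lines were removed, so \cref{homotopic poset maps} applies and $\mcQ^{\ast}$ deformation retracts to the cone $\mcQ_{\leq H}$; minimality of the deleted lines gives $\st(\ell)\cap \Delta(\mcQ^{\ast})=\lk(\ell)$ and pairwise incomparability confines $\st(\ell)\cap\st(\ell')$ to $\mcQ^{\ast}$, so collapsing the contractible subcomplex (a CW cofibration) yields $\bigvee_{\ell}\st(\ell)/\lk(\ell)\cong\bigvee_{\ell}\Sigma\lk(\ell)$; and $\lk(\ell)\cong\Delta(\mcQ(\mathbb{Q}^n/\ell))$ feeds the induction. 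Two cosmetic remarks: the initial reduction to $(n-3)$-connectedness via Whitehead is never used, since your assembly produces the wedge decomposition directly and the inductive hypothesis you invoke is the full sphericity statement; and the wedge is infinite (there are infinitely many lines transverse to $H$ over $\mathbb{Q}$), which is harmless but worth saying. It is also worth noting that your gluing step is structurally identical to the argument the paper itself uses on the free-group side, in the proof of \cref{homotopy type poset of core subgraphs}, where the star of $G-e$ is attached along its $(\rank-3)$-spherical link to an $(\rank-2)$-spherical complex; so your proof has the pleasant side effect of making the two base cases of the paper (building and relative free factor complex) rest on the same elementary poset-topology toolkit rather than on an external reference.
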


It is well-known that this building can equivalently be described as the coset complex of $\GL{n}{\mathbb Q}$ with respect to the family of maximal standard parabolic subgroups. We will now show that it can also be described as a coset complex of $\GL{n}{\mathbb Z} = \Out{\mathbb{Z}^n}$, an outer automorphism group of a RAAG.

A subgroup $A\leq \mathbb{Z}^n$ is called a \emph{direct summand} if there is $B\leq \mathbb{Z}^n$ such that $\mathbb{Z}^n = A \oplus B$. We say that a direct summand $A$ is \emph{proper} if it is neither trivial nor equal to $\mathbb{Z}^n$.
Let $\mcZ$ be the poset of all proper direct summands of $\mathbb{Z}^n$, ordered by inclusion. The group $\GL{n}{\mathbb Z}$ acts naturally on $\mcZ$.

Fix a basis $\set{e_1,\ldots , e_n}$ of $\mathbb{Z}^{n}$ and for all $1\leq i \leq n-1$, set $S_i \coloneqq \nolinebreak\ll e_1,\ldots, e_i \rr$.
Note that $S_i\in \mcZ$ for all $i$ and define 
\begin{equation*}
P_i\coloneqq\Stab_{\GL{n}{\mathbb Z}}(S_i)
\end{equation*}
to be the stabiliser of $S_i$ under the action of $\GL{n}{\mathbb Z}$ on $\mcZ$. We define the set of \emph{maximal standard parabolic subgroups of $\GL{n}{\mathbb Z}$} as 
\begin{align*}
\mcP = \mcP(\GL{n}{\mathbb Z}) \coloneqq\set{P_i \mid 1\leq i\leq n-1} . 
\end{align*}

\begin{remark}
\label{remark standard parabolics}
We called the elements of $\mcP$ the maximal \emph{standard} parabolic subgroups of $\GL{n}{\mathbb Z}$ to match the usual convention where an arbitrary parabolic subgroup is defined as the conjugate of a standard one. As we will however not work with non-standard parabolic subgroups in this article, we leave out this adjective from now on.  
\end{remark}
In terms of matrices, the maximal parabolic subgroups can be written in the form
\begin{equation*}
P_i = \begin{pmatrix}
\GL{i}{\mathbb{Z}} 	& M_{i,n-i}(\mathbb{Z})	\\
0	&\GL{n-i}{\mathbb{Z}}	
\end{pmatrix}\leq \GL{n}{\mathbb{Z}}.
\end{equation*}

\begin{proposition}
\label{building and coset complex}
The building associated to $\GL{n}{\mathbb{Q}}$ is $\GL{n}{\mathbb{Z}}$-equivariantly isomorphic to the coset complex $\CC{\GL{n}{\mathbb{Z}}}{\mcP}$.
\end{proposition}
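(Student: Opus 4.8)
The plan is to realise the building as the order complex $\Delta(\mcZ)$ of the poset of proper direct summands of $\mathbb{Z}^n$ and then invoke Zaremsky's criterion (\cref{detecting CC}). First I would produce a $\GL{n}{\mathbb{Z}}$-equivariant poset isomorphism between $\mcQ$ and $\mcZ$. Sending a proper subspace $V\leq \mathbb{Q}^n$ to $V\cap \mathbb{Z}^n$ yields a direct summand (the quotient $\mathbb{Z}^n/(V\cap\mathbb{Z}^n)$ injects into the torsion-free group $\mathbb{Q}^n/V$, hence is free), while sending a proper direct summand $A\leq \mathbb{Z}^n$ to its $\mathbb{Q}$-span gives the inverse assignment. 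Both maps are inclusion-preserving and commute with the $\GL{n}{\mathbb{Z}}$-action, so $\Delta(\mcQ)\cong \Delta(\mcZ)$ as $\GL{n}{\mathbb{Z}}$-complexes.

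With this identification, the standard flag $C\coloneqq \set{S_1\subset S_2 \subset \cdots \subset S_{n-1}}$ is a maximal chain in $\mcZ$, hence a facet of $\Delta(\mcZ)$; its vertex stabilisers are by definition the $P_i$, so $\mcP$ is exactly the set of stabilisers of the vertices of $C$. To apply \cref{detecting CC} it then remains to check that $C$ is a strict fundamental domain for the $\GL{n}{\mathbb{Z}}$-action, i.e. that every simplex of $\Delta(\mcZ)$ is $\GL{n}{\mathbb{Z}}$-equivalent to a unique face of $C$.

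For the existence part, take any flag of proper direct summands $A_1 \subset \cdots \subset A_k$ with $\rk A_j = r_j$. The key input is that such a flag admits an adapted basis: there is a $\mathbb{Z}$-basis $f_1, \ldots, f_n$ of $\mathbb{Z}^n$ with $A_j = \ll f_1, \ldots, f_{r_j}\rr$ for each $j$ (built inductively using that each $A_j$ is a direct summand and each successive quotient $A_j/A_{j-1}$ is again free). The element of $\GL{n}{\mathbb{Z}}$ carrying $f_i$ to $e_i$ then sends the flag to the subflag $S_{r_1}\subset \cdots \subset S_{r_k}$ of $C$. For uniqueness, note that $\GL{n}{\mathbb{Z}}$ preserves the rank of a direct summand, so a face of $C$ is determined up to the action by its sequence of ranks; since distinct faces of $C$ correspond to distinct subsets of $\set{1, \ldots, n-1}$ and hence to distinct rank sequences, no two distinct faces are $\GL{n}{\mathbb{Z}}$-equivalent. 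With both conditions verified, \cref{detecting CC} produces a $\GL{n}{\mathbb{Z}}$-equivariant isomorphism $\CC{\GL{n}{\mathbb{Z}}}{\mcP}\cong \Delta(\mcZ)\cong \Delta(\mcQ)$. The main obstacle is the adapted-basis/transitivity claim — everything else is formal — though for $\mathbb{Z}^n$ this is a standard consequence of the structure theory of finitely generated modules over a PID.
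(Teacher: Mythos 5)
Your proposal is correct and follows essentially the same route as the paper: identify the building $\Delta(\mcQ)$ with the poset $\mcZ$ of proper direct summands via $V\mapsto V\cap\mathbb{Z}^n$ and the $\mathbb{Q}$-span, then apply \cref{detecting CC} to the standard flag $S_1\subset\cdots\subset S_{n-1}$ as a strict fundamental domain. The only differences are cosmetic — you perform the two steps in the opposite order and spell out the adapted-basis and strictness verifications that the paper compresses into the statement that $\GL{n}{\mathbb{Z}}$ acts transitively on maximal chains while preserving ranks.
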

\begin{proof}
Each $A\in \nolinebreak \mcZ$ is isomorphic to $\mathbb{Z}^i$ for an integer $i \coloneqq \rk(A)\in \set{1,\ldots, n-1}$, the rank of $A$. Furthermore, if $A\leq B$ in $\mcZ$, we have $\rk(A)\leq \rk(B)$ with equality if and only if $A$ and $B$ are equal. It follows that the maximal simplices of $\Delta(\mcZ)$ are given by chains $A_1 \leq \ldots \leq A_{n-1}$,
where $\rk(A_i)=i$. The group $\GL{n}{\mathbb{Z}}$ acts transitively on the set of all such chains and preserves the rank of each summand. Hence, the facet $S_1 \leq \ldots \leq S_{n-1}$ is a fundamental domain for this action and \cref{detecting CC} implies that the order complex of $\mcZ$ is $\GL{n}{\mathbb{Z}}$-equivariantly isomorphic to $\CC{\GL{n}{\mathbb{Z}}}{\mcP}$.

On the other hand, there is a poset map $f\colon  \mcQ \to \mcZ$ defined by sending $V$ to $V\cap \mathbb{Z}^n$. This is a $\GL{n}{\mathbb{Z}}$-equivariant isomorphism whose inverse is given by sending $A\leq \mathbb{Z}^n$ to its $\mathbb{Q}$-span $\ll A \rr_\mathbb{Q}$ (see e.g.~\cite[Corollary 2.5]{CP:codimensiononecohomology}).
\end{proof}

\subsection{Relative free factor complexes}
\label{section factor complexes}
The aim of this section is to generalise \cite[Theorem A]{BG:Homotopytypecomplex} which states that the complex of free factors of the free group $F_n$ is homotopy equivalent to a wedge of $(n-2)$-spheres. We want to extend this result to certain complexes of free factors of a free product $A= F_n \ast A_1 \ast \ldots \ast A_k$.
After adapting the definitions  to this setting, the proofs of \cite{BG:Homotopytypecomplex} largely go through without major changes.
We still include most of them here in order to make this section as self-contained as possible.

\subsubsection{Relative automorphism groups and relative Outer space}
\label{section relative automorphism groups}
\paragraph{Relative automorphism groups}
Let $A$ be a countable group. We will often use capital letters for elements from the outer automorphism group of $A$ and lower-case letters for the corresponding representatives from the automorphism group of $A$. I.e.~for $\Phi\in \Out{A}$, we write $\Phi=[\phi]$ where $\phi\in \Aut{A}$.
Let $\Phi$ be an outer automorphism of a group $A$ and $H\leq A$ a subgroup. 
Then $\Phi$ \emph{stabilises} $H$ or \emph{$H$ is invariant under $\Phi$} if there exists a representative $\phi\in\Phi$ such that $\phi(H)=H$. We say that $\Phi$ \emph{acts trivially} on $H$ if there is $\phi\in \Phi$ restricting to the identity on $H$.

If $\mcG$ and $\mcH$ are families of subgroups of $A$, the \emph{relative outer automorphism group} $\Out{A; \mcG, \mcH^t} $ is the subgroup of $\Out{A}$ consisting of all elements stabilising each $H\in \mcG$ and acting trivially on each $H\in \mcH$. If $\mcG$ or $\mcH$ are given by the empty set, we also write $\Out{A;\mcH^t} $ or $\Out{A; \mcG}$ for this group.

If $O\leq \Out{A}$ is a subgroup of the outer automorphism group of $A$ and $G\leq A$, we also write 
\begin{equation*}
\Stab_O(G)
\end{equation*}
for the subgroup of $O$ consisting of all elements that stabilise $G$. In the case where $O$ is equal to $\Out{A; \mcG, \mcH^t} $, we have $\Stab_O(G)=\Out{A; \mcG\cup \set{G}, \mcH^t}$.

\paragraph{Free splittings}
A \emph{free splitting} $S$ of $A$ is a non-trivial, minimal, simplicial $A$-tree with finitely many edge orbits and trivial edge stabilisers.
The \emph{vertex group system} of a free splitting $S$ is the (finite) set of conjugacy classes of its vertex stabilisers. 
 Two free splittings $S$ and $S'$ are equivalent if they are equivariantly isomorphic. We say that $S'$ collapses to $S$ if there is a \emph{collapse map} $S'\to S$ which collapses an $A$-invariant set of edges. The \emph{poset of free splittings} $\FreeS$ is given by the set of all equivalence classes of free splittings of $A$ where $S\leq S'$ if $S'$ collapses to $S$. The \emph{free splitting complex} is the order complex $\Delta(\FreeS)$ of the poset of free splittings.

\paragraph{Fouxe-Rabinovitch groups and relative Outer space}

Let $A$ be a finitely generated group that splits as a free product 
\begin{equation*}
A = F_n \ast A_1 \ast \cdots \ast A_k
\end{equation*}
where $F_n$ denotes the free group on $n$ generators and $n+k \geq 2$.
Define $\mcA \coloneqq \set{A_1, \ldots, A_k}$ and $O\coloneqq \Out{A; \mcA^t} $.
The group $O$ is also called a \emph{Fouxe-Rabinovitch group} because of the work of Fouxe-Rabinovitch on automorphism groups of free products \cite{Fou:UberdieAutomorphismengruppena}.

In \cite{GL:outerspacefree}, Guirardel and Levitt define a topological space called \emph{relative Outer space} for such groups.
This space contains a \emph{spine}, which is denoted by $L = L(A, \mcA)$. This spine is (the order complex of) the subposet of $\FreeS$ consisting of all free splittings whose vertex group system is equal to the set of conjugacy classes of elements from $\mcA$.
The poset $L$ is contractible and $O$ acts cocompactly on it.

\subsubsection{Parabolic subgroups and relative free factor complexes}
\label{sec:rel_free_factor}
\paragraph{Standing assumptions and notation}
From now on and until the end of \cref{section factor complexes}, fix a finitely generated group $A = F_n \ast A_1 \ast \cdots \ast A_k$ with $n\geq 2$ and a basis $\set{x_1,\ldots , x_n}$ of $F_n$. As above, let $\mcA \coloneqq \set{A_1, \ldots, A_k}$ and $O\coloneqq \Out{A; \mcA^t}$.\\

A \emph{free factor} of $A$ is a subgroup $B\leq A$ such that $A$ splits as a free product $A=B \ast C$. There is a natural partial order on the set of conjugacy classes of free factors of $A$ given by $[B_1] \leq [B_2]$ if, up to conjugacy, $B_1$ is contained in $B_2$.

\begin{definition}
\label{definition relative free factor}
Let $\FRfactor = \mcF(A;\mcA)$ denote the poset of all conjugacy classes of proper free factors $B\subset A$ such that there is a free factor $B'$ of $A$ with $[A_i] \leq [B']$ for all $i$ and $B'$ is a proper subgroup of $B$. (In particular, $[A_1 \ast \cdots \ast A_k]\not \in \FRfactor$.)
We call the order complex of $\FRfactor$ the \emph{free factor complex of $A$ relative to $\mcA$}. It carries a natural, simplicial action of $O$.
\end{definition}
\begin{remark}
If $k=0$, the poset $\FRfactor$ consists of \emph{all} conjugacy classes of proper free factors of $F_n$, so we recover the free factor complex of $F_n$. More generally, the free factor complex of $A$ relative to $\mcA$ is a subcomplex of the \emph{complex of free factor systems of $A$ relative to $\mcA$} as defined by Handel--Mosher \cite{HM:Relativefreesplitting}. The ordering $\sqsubseteq$ of free factor systems defined there restricts to the ordering on $\FRfactor$ for free factor systems having only one component.

Our definition however differs from the one used by Guirardel--Horbez, e.g. in \cite{GH:Algebraiclaminationsfree}; in their definition, a proper free factor $B\leq A$ is relative to $\mcA$ if $A=B \ast C$ where for all $i$, either $[A_i]\leq [B]$ or $[A_i]\leq [C]$ and $[A_i]\not= [B]$.

For studying geometric questions, the definition of the free factor complex and similar complexes is often adapted such that it becomes connected for low $n$ as well. This is not the case for the definition used in the present article, where the free factor complex associated to $\Out{F_2}$ is a disjoint union of points.
\end{remark}

\paragraph{Corank}
\cite[Lemma 2.11]{HM:Relativefreesplitting} implies that the elements of $\FRfactor$ are conjugacy classes of groups of the form
\begin{equation*}
B =  F \ast A_1^{a_1} \ast \ldots \ast A_k^{a_k},
\end{equation*} 
where $a_j\in A$ and $F$ is a free group with $1\leq \rk(F)\leq n-1$. Furthermore, we can write $A$ as a free product $A = B \ast C$, where $C$ is a free group of rank $n-\rk(F)$. The rank of $C$ is an invariant of the conjugacy class $[B]$ (see \cite[Section 2.3]{HM:Relativefreesplitting}). It is called the \emph{corank} of $[B]$ and will be denoted by $\corank [B]$.
\\

We study these relative free factor complexes because they can also be described as coset complexes of parabolic subgroups: Let
\begin{equation*}
S_i\coloneqq \ll x_1,\ldots, x_i \rr \ast A_1\ast \ldots \ast A_k.
\end{equation*}
Every $S_i$ is a free factor of $A$ because for all $i$, we have $A= S_i \ast \ll x_{i+1},\ldots, x_n \rr $.
We set $P_i\coloneqq\Stab_O(S_i)$ and define the set of \emph{maximal standard parabolic subgroups of $O$} as 
\begin{align*}
\mcP = \nolinebreak\mcP(O) \coloneqq\nolinebreak \set{P_i \mid 1\leq i\leq n-1} .
\end{align*}
As in the case of $\GL{n}{\mathbb{Z}}$, we will usually leave out the adjective ``standard'' (see \cref{remark standard parabolics}). 
\begin{proposition}
\label{isomorphism relative free factor complex}
The free factor complex of $A$ relative to $\mcA$ is $O$-equivariantly isomorphic to the coset complex $\CC{O}{\mcP}$.
\end{proposition}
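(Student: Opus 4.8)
The plan is to apply the detection criterion \cref{detecting CC} to the action of $O$ on the order complex of $\FRfactor$, exhibiting the standard chain $C \coloneqq \big([S_1] < [S_2] < \cdots < [S_{n-1}]\big)$ as a single facet that is a strict fundamental domain, and identifying the vertex stabilisers with the maximal parabolics $P_i$. First I would analyse the combinatorics of $\FRfactor$ via the corank. Since corank is invariant under automorphisms of $A$, it is an $O$-invariant; and $[B] < [B']$ forces $\corank[B] > \corank[B']$, because writing $A = B' \ast C'$ and $B' = B \ast D$ with $D$ a nontrivial free group yields $\corank[B] = \corank[B'] + \rk(D)$. Thus corank is a strictly monotone grading with values in $\{1,\ldots,n-1\}$. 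One checks that any chain can be extended downward (splitting off one generator of the free part) until corank $n-1$ and upward until corank $1$, and refined whenever two consecutive coranks differ by more than one. Hence the maximal chains are exactly the complete flags of length $n-1$ realising each corank once; in particular $C$, with $\corank[S_i] = n-i$, is a facet.

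The main step is to prove that $O$ acts transitively on the set of facets. Given a maximal chain $[B_1] < \cdots < [B_{n-1}]$, I would choose nested representatives $B_1 \subset \cdots \subset B_{n-1}$; since consecutive coranks drop by one, this produces a free-product decomposition $A = \langle a_1\rangle \ast A_1^{g_1} \ast \cdots \ast A_k^{g_k} \ast \langle a_2\rangle \ast \cdots \ast \langle a_n\rangle$ with $B_i = \langle a_1,\ldots,a_i\rangle \ast A_1^{g_1}\ast\cdots\ast A_k^{g_k}$. By the Kurosh subgroup theorem, the factor systems of this decomposition and of the standard one, $A = \langle x_1\rangle \ast A_1 \ast \cdots \ast A_k \ast \langle x_2\rangle \ast \cdots \ast \langle x_n\rangle$, are abstractly matched, so there is $\phi_0 \in \Aut{A}$ sending $x_j \mapsto a_j$ and $A_i \mapsto A_i^{g_i}$, and hence $S_i \mapsto B_i$.

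The obstacle is that $\phi_0$ need not lie in $O$: it restricts to $A_i$ as conjugation by $g_i$, and no single global conjugation can simultaneously undo all the $g_i$. This is resolved using the Fouxe-Rabinovitch structure of $O$. Because acting trivially on $\mcA^t$ only requires, for each $i$ \emph{separately}, some representative fixing $A_i$ pointwise, the partial conjugations conjugating an individual peripheral factor lie in $O$; composing $\phi_0$ with suitable such moves yields $\Phi \in O$ with $\Phi \cdot [S_i] = [B_i]$ for all $i$. (Equivalently, this transitivity is the statement that $O$ acts transitively on the top-dimensional simplices of the spine $L$ of relative Outer space.) Since corank is $O$-invariant and the vertices of $C$ have pairwise distinct coranks, any element stabilising $C$ setwise fixes it pointwise, so $C$ is indeed a strict fundamental domain.

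Finally, by the definition of the $O$-action on conjugacy classes and of ``stabilises'' for outer automorphisms, we have $\Stab_O([S_i]) = \Stab_O(S_i) = P_i$, so $\{\Stab_O(v) \mid v \text{ a vertex of } C\} = \mcP$. Then \cref{detecting CC} provides the $O$-equivariant isomorphism $\CC{O}{\mcP} \cong \Delta(\FRfactor)$, which is the free factor complex of $A$ relative to $\mcA$. I expect the transitivity argument, and specifically the verification that the connecting automorphism can be chosen inside $O$ rather than in a larger relative automorphism group, to be the main point requiring care.
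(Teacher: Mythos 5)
Your proposal is correct and takes essentially the same approach as the paper's proof: both grade $\FRfactor$ by corank, prove that $O$ acts transitively on complete flags by choosing nested representatives and building an automorphism matching the standard free-product decomposition (the paper constructs it by induction along the chain, you in one step from the full decomposition), justify membership in $O$ by the observation that acting trivially on $\mcA^t$ only requires a separate representative for each $A_i$ --- which indeed makes your ``fix-up by partial conjugations'' unnecessary, since $[\phi_0]$ itself already lies in $O$ --- and finish with \cref{detecting CC}. The only flaw is your parenthetical aside: transitivity on facets of $\Delta(\FRfactor)$ is \emph{not} equivalent to transitivity on top-dimensional simplices of the spine $L$ (the latter is false in general, since $L$ has several orbits of facets, e.g.\ coming from non-isomorphic underlying graphs), but since your argument never uses this remark, the proof stands.
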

\begin{proof}

If $[B_1] \leq [B_2]$, we know from \cite[Proposition 2.10]{HM:Relativefreesplitting} that the corank of $[B_2]$ is smaller than or equal to the corank of $[B_1]$ and that equality holds if and only if $[B_1]=[B_2]$.
Consequently, the simplices of $\Delta(\FRfactor)$ are given by chains of the form
\begin{equation*}
[B_1]  \leq [B_2] \leq \ldots \leq [B_m] 
\end{equation*}
with $\corank[B_1]< \corank[B_2]< \ldots < \corank[B_m]$. Let $i_j\coloneqq \corank[B_j]$.

We claim that for each such chain, there exists $\Phi\in O$ with $[\phi(S_{i_j})] = \nolinebreak {[B_j]}$ for all $j$.
To see this, first observe that sending each $A_i$ to a conjugate of itself and fixing all the other generators defines an automorphism of $A$ that represents an element in $O$.
 Hence, we can assume that $A_1\ast \ldots \ast A_k \leq B_1$. Now choose representatives such that  $B_j\leq B_{j+1}$ for all $j$. In order to use induction, assume that there is $\Phi'\in O$ such that for some $l$, we have $\phi'(S_{i_j}) = B_{j}$ for all $0\leq j\leq l$ --- this is true for $l=0$ where we define $i_0=0$ and $B_0=S_0=A_1\ast \ldots \ast A_k$.
By assumption, $\phi' (S_{i_{l}}) = B_l \leq B_{l+1}$, so \cite[Lemma 2.11]{HM:Relativefreesplitting} implies that
\begin{align*}
A = \phi'(S_{i_{l}}) \ast C \ast D , && \text{ where } && B_{l+1} = \phi'(S_{i_{l}}) \ast C
\end{align*}
and $C$ and $D$ are free groups of rank $(i_{l+1} - i_{l})$ and $(n-i_{l+1})$, respectively.
On the other hand, the group $A$ also decomposes as a free product
\begin{align*}
 A = S_{i_{l}} \ast \ll x_{i_l + 1} ,\ldots , x_{i_{l+1}} \rr \ast \ll x_{i_{l+1}+1} ,\ldots , x_n \rr.
\end{align*}
This allows us to define an automorphism $\phi$ of $A$ which agrees with $\phi'$ on $S_{i_{l}}$, maps $\ll x_{i_l + 1} ,\ldots , x_{i_{l+1}} \rr$ isomorphically  to $C$ and  $\ll x_{i_{l+1}+1} ,\ldots , x_n \rr$ to $D$.
As $\phi$ agrees with $\phi'$ on $S_{i_{l}}$, we know  that $[\phi(S_{i_j})] = [B_j]$ for all $j\leq l$  and that $\phi$ acts by conjugation on each $A_i$, i.e. $[\phi]\in O$. Furthermore, we have
\begin{align*}
\phi (S_{i_{l+1}}) &= \phi(S_{i_{l}})\ast \phi(\ll x_{i_l + 1} ,\ldots , x_{i_{l+1}} \rr) \\
 &= \phi'(S_{i_{l}}) \ast C = B_{l+1}.
\end{align*}
By induction, this proves the claim.

On the other hand, for each $[\phi]\in O$, the chain 
\begin{equation*}
[\phi (S_1)]  \leq [\phi (S_2)] \leq \ldots \leq [\phi (S_{n-1})] 
\end{equation*}
forms a facet in $\Delta(\FRfactor)$. Hence, every facet of $\Delta(\FRfactor)$ can be written in this form.
It follows that the natural action of $O$ on $\Delta(\FRfactor)$ has a fundamental domain given by the simplex
\begin{equation*}
[S_1]  \leq [S_2] \leq \ldots \leq [S_{n-1}] 
\end{equation*}
The result now follows from \cref{detecting CC}.
\end{proof}

Note that the corank played in this proof the same role as the dimension and rank did in the proof of \cref{building and coset complex}.

\subsubsection{The associated complex of free splittings}
In order to study the connectivity properties of relative free factor complexes, we will use yet another description of them; namely, we will show in this subsection that they are homotopy equivalent to certain posets of free splittings.

Let $L\coloneqq L(A,\mcA)$ be the spine of Outer space of $A$ relative to $\mcA$. 
Taking the quotient by the action of $A$, each free splitting $S\in L$ can equivalently be seen as a marked graph of groups $\mathbb{G}$. The edge groups of $\mathbb{G}$ are trivial and for all $1\leq i \leq k$, there is exactly one vertex group which is conjugate to $A_i$. All the other vertex groups are trivial. 
 The \emph{marking} is an isomorphism $\pi_1(\mathbb{G}) \to A$ that is well-defined up to composition with inner automorphisms. Using this description, the action of $O$ on $L$ is given by changing the marking. The underlying graph $G$ of $\mathbb{G}$ is finite, has fundamental group of rank $n$ and all of its vertices with valence one have non-trivial vertex group. 

\begin{definition}
\label{def:labelled&core}
\begin{enumerate}
\item A \emph{labelled graph} is a pair $(G, l)$ consisting of a graph $G$ and a map $l\colon \set{1,\ldots ,k}\to V(G)$ to its vertex set $V(G)$. We call the image of $l$ the \emph{labelled vertices} of $G$.
\item A connected labelled graph $(G,l)$ is called a \emph{core graph} if it has non-trivial fundamental group and every vertex of valence one lies in the image of $l$.
\end{enumerate}
\end{definition}

For the graph $G$ associated to $S\in L$ as above, there is a natural labelling $l\colon \set{1,\ldots ,k}\to V(G)$ of $G$ given by defining $l(i)$ as the vertex with vertex group conjugate to $A_i$. It follows that $(G,l)$ is a core graph. 
If $H$ is a connected subgraph of $G$ that contains all the vertices with non-trivial vertex group, then there is an induced structure of a marked graph of groups on $H$. We define the \emph{fundamental group $\pi_{\mathbb{G}}(H)$} as the fundamental group of this graph of groups. It is a subgroup of $A$ that is well-defined up to conjugacy and has the form
\begin{equation*}
\pi_{\mathbb{G}}(H) = F\ast A_1^{a_1} \ast \ldots \ast A_k^{a_k},
\end{equation*} 
where $a_i\in A$ and $F$ is a free group with rank equal to the rank of $\pi_1(H)$.

\begin{definition}
Let $S\in L$, let $\mathbb{G}$ be the associated graph of groups and $(G,l)$ the underlying labelled graph. Let $B\leq A$ be a subgroup of $A$. We say that \emph{$S$ has a subgraph with fundamental group $[B]$} if there is a subgraph $H$ of $G$ such that $[\pi_{\mathbb{G}}(H)]= [B]$.

If such a subgraph exists, there is also a unique core subgraph of $(G,l)$ with fundamental group $[B]$ which will be denoted by $B|S$.
We then also say that \emph{$B|S$ is a subgraph of $S$}.
\end{definition}

\paragraph{Notation}
To simplify notation, we will from now on not distinguish between a free splitting $S$ and the corresponding graph of groups. For example, we will talk about ``(core) subgraphs of $S$\,'' and mean (core) subgraphs of the corresponding labelled graph $(G,l)$. Instead we will use the letter $G$ for elements in $L=L(A,\mcA)$ and the letter $S$ for free splittings that have vertex group system different than $\mcA$. If $G\in L$ and $H$ is a subgraph, let $G/H$ denote the free splitting obtained by collapsing $H$.

\begin{definition}
Let $\FreeOne = \FreeOne(A; \mcA)$ be the poset of all free splittings $S$ of $A$ that have \emph{exactly one} conjugacy class $\mcV(S)$ of non-trivial vertex stabilisers and such that $\mcV(S)\in \FRfactor$.

For $[B]\in \FRfactor$, let $\FreeOne(B)$ be the poset consisting of all $S\in \nolinebreak \FreeOne$ such that $[B] \leq \mcV(S)$.
\end{definition}

\begin{proposition}
\label{contractibility of relative free splitting complex}
For all $[B]\in \FRfactor$, the poset $\FreeOne(B)$ is contractible.
\end{proposition}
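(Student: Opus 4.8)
The plan is to show contractibility of $\FreeOne(B)$ by exhibiting a map to the spine $L = L(A,\mcA)$ of relative Outer space and applying Quillen's fibre lemma (\cref{Quillen fibre lemma contractibility}), using the known contractibility of $L$. The natural comparison is the \emph{blow-up}/\emph{collapse} relationship between an element $S\in\FreeOne(B)$ and the elements of $L$ that refine it. First I would set up a poset map $\FreeOne(B)\to L$ or, more likely, in the reverse direction: given $G\in L$, one collapses a suitable core subgraph with fundamental group $[B]$ to obtain an element of $\FreeOne$ whose vertex group contains $[B]$. Concretely, since $L$ is contractible and $O$ acts cocompactly on it, the strategy is to find a poset map $\pi\colon L \to \FreeOne(B)$ (sending $G$ to the collapse $G/H$ where $H=[B]|G$ when this subgraph exists) and then analyse the fibres $\pi^{-1}(\FreeOne(B)_{\leq S})$ over each $S\in\FreeOne(B)$.

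The key steps, in order, are as follows. (1) For $S\in\FreeOne(B)$, interpret $S$ as a one-edge-orbit-type free splitting whose single non-trivial vertex group $\mcV(S)$ refines $[B]$, and describe the fibre over $\FreeOne(B)_{\leq S}$ in terms of free splittings of $A$ that collapse onto $S$. (2) Show that this fibre is itself (homotopy equivalent to) the spine of a relative Outer space, using \cref{subgraphs as elements of relative Outer space}: the vertex group $\mcV(S)=[B']$ with $[B]\le[B']$ can be expanded within the vertex, and the poset of such expansions is the spine $L(B',\mcA')$ for an appropriate family $\mcA'$ of conjugates. (3) Invoke the theorem of Guirardel--Levitt that each such spine is contractible, so every fibre is contractible, and conclude via \cref{Quillen fibre lemma contractibility} that $\real{\FreeOne(B)}\simeq\real{L}$, which is contractible.

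The main obstacle I expect is step (2): making precise the identification of the fibre with a spine of relative Outer space and verifying that the poset structures match. One must check that collapsing $H=[B]|G$ is compatible with the partial order (so that $\pi$ is genuinely a poset map and the preimage of a down-set is the right object), and that the ``expand within the vertex group'' operation gives exactly the free splittings of the relevant relative Outer space $L(B',\mcA')$, with the correct family $\mcA'$ of conjugates of the $A_i$. There is also a bookkeeping subtlety about whether $\mcV(S)$ lies in $\FRfactor$ (which is built into the definition of $\FreeOne$) interacting correctly with the corank constraints, but this is a routine consistency check rather than a genuine difficulty. An alternative, should the fibre computation prove delicate, is to mimic the argument of \cite{BG:Homotopytypecomplex} directly: build a monotone poset map (in the sense of \cref{homotopic poset maps}) from $\FreeOne(B)$ onto a contractible subposet --- for instance the subposet of splittings whose vertex group is exactly $[B]$, which has a canonical structure --- by repeatedly collapsing edges not contributing to $[B]$, and deduce contractibility from the deformation-retraction corollary. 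Either route reduces the statement to the known contractibility of relative Outer space.
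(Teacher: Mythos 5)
Your strategy has two genuine gaps, and they are not cosmetic. First, the collapse map $\pi\colon G \mapsto G/([B]|G)$ is \emph{not} defined on $L$: a splitting $G\in L$ need not contain any core subgraph with fundamental group $[B]$, and the subposet of those that do is exactly what the paper's proof calls $X(\mcA : [B])$, a proper subposet of $L$ in general. Consequently \cref{Quillen fibre lemma contractibility} can at best give $\real{\FreeOne(B)}\simeq \real{X(\mcA:[B])}$, never $\real{\FreeOne(B)}\simeq\real{L}$, and the contractibility of $X(\mcA:[B])$ is not ``the known contractibility of $L$'': it is a separate, non-trivial statement --- precisely the first step of the paper's proof, established via the blow-up construction of \cite{BG:Homotopytypecomplex}, which identifies $X(\mcA:[B])$ up to homotopy with the product $L(A,\set{B})\times L(B,\mcA')$ of two contractible spines. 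The same hidden content reappears in your fibres: the (correctly oriented) fibre over $S$ with $\mcV(S)=[B']$ is not the spine $L(B',\mcA')$, as your step (2) asserts, but is, up to contractible blow-up data, the subposet of $L(B',\mcA')$ consisting of splittings of $B'$ that admit a core subgraph realizing $[B]$ --- again an ``$X$-type'' poset whose contractibility needs the blow-up machinery, not just Guirardel--Levitt. So the proposed reduction is circular: it rests on contractibility statements of exactly the kind being proved.

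Second, the orientation of your fibres is wrong. In the paper's conventions $S\leq S'$ means $S'$ collapses to $S$, so collapsing enlarges vertex groups and $\mcV$ is order-reversing on $\FreeOne$. Every splitting in the image of $\pi$ has vertex group exactly $[B]$, so $\pi(G)\leq S$ forces $[B]=\mcV(\pi(G))\geq \mcV(S)\geq [B]$, i.e.\ $\mcV(S)=[B]$. Hence the fibres $\pi^{-1}(\FreeOne(B)_{\leq S})$ you propose to analyse are \emph{empty} for every $S$ whose vertex group strictly contains $[B]$, and \cref{Quillen fibre lemma contractibility} is inapplicable as stated; one must pass to the opposite poset and use $\pi^{-1}(\FreeOne(B)_{\geq S})$. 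The same reversal sinks your fallback: you cannot reach the subposet of splittings with vertex group exactly $[B]$ ``by repeatedly collapsing edges'', since collapsing only increases vertex groups; one would have to blow up, and blow-ups are not canonical, so no monotone retraction in the sense of \cref{homotopic poset maps} exists. For comparison, the paper's actual proof avoids a single global fibre argument altogether: it covers $\real{\FreeOne(B)}$ by the subposets $X([B],[B_1],\ldots,[B_l]:A)$ indexed by chains in $\FRfactor_{\geq [B]}$, proves each of these (and each finite intersection) contractible by blow-up arguments and induction, and concludes with the nerve lemma, the nerve being a cone and hence contractible.
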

This proposition can be shown as \cite[Theorem 5.8]{BG:Homotopytypecomplex}; there, only the case where $A$ is a free group is considered, but the proof generalises to the present situation without any major changes. In what follows, we provide an outline of the main steps.

\begin{proof}[Sketch of proof of \cref{contractibility of relative free splitting complex}]
For a chain of free factors of $A$ given by $ B \subset B_1 \subset \ldots \subset B_l \subset C_{0} \subset \ldots \subset C_m  $,
 let \emph{$X(B, B_1, \ldots, B_l : C_{0}, \ldots, C_m)$} be the poset of all free splittings $S$ such that $\mcV(S) \in\nolinebreak \{[B], [B_1], \ldots, [B_l]\}$ and $C_i|S$ is a subgraph of $S$ for every $0 \leq i \leq m$. We want to use induction on $l$ to show that this poset is contractible.

We start with the case $l=0$.
Let $\mcD$ be the Outer space of $A$ relative to $\{[B]\}$ as defined in \cite{GL:outerspacefree}. It can be seen as a subspace of the space of all non-trivial metric simplicial $A$-trees.
In \cite{GL:Deformationspacestrees}, Guirardel and Levitt show that its closure $\bar{\mcD}$ in this space is contractible. To do so, they use Skora folding paths to define a map $\rho:\bar{\mcD}\times [0,\infty ] \to \bar{\mcD}$.
The map $\rho$ depends on the choice of a ``base point'' $T_0\in \mcD$ and is defined such that for all $T$, one has $\rho(T,0) = T$ whereas $\rho(T,\infty)$ is contained in a contractible subspace (a closed simplex of $\bar{\mcD}$) containing $T_0$.
In \cite[Lemma 5.5]{BG:Homotopytypecomplex}, it is shown that for an appropriate choice of $T_0$ (namely for a tree in $X(B:C_0,...,C_m)$ with a minimal number of edge orbits), the map $\rho$ restricts to a continuous map on $\real{X(B:C_0,...,C_m)}$.
There, the argument is formulated for the case where $A$ a free group, but it applies verbatim in our setting as all the results in \cite{GL:Deformationspacestrees} are formulated in this more general situation anyway.

For $l>0$, assume that by induction, we know that the posets
\begin{gather*}
X_{l-1} \coloneqq X(B, B_1, \ldots, B_{l-1} : C_0, \ldots, C_m),\hspace{0.5 cm} X_l \coloneqq X(B_l:C_0, \ldots, C_m) \\
 \text{and} \hspace{0.5 cm} X_{l-1,l} \coloneqq X(B, B_1, \ldots, B_{l-1} : B_l, C_0, \ldots, C_m)
\end{gather*}
are contractible. The poset $X(B, B_1 \ldots , B_l : C_0, \ldots, C_m)$ is the union of $X_{l-1}$ and $X_{l}$. Furthermore, an element $S \in X_{l-1}$ collapses to some $S'\in X_l$ if and only if $S\in X_{l-1,l}$. It follows that 
\begin{equation*}
\real{X(B, B_1 \ldots , B_l : C_0, \ldots, C_m)}  = \real{X_{l-1}} \cup_{\real{X_{l,l-1}}} \real{X_l}
\end{equation*}
is contractible. In particular, $X(B, B_1 \ldots , B_l : A)$ is contractible.
\\

Now fix $[B] \in \FRfactor$. Each simplex $\sigma$ in the order complex $\Delta(\FreeOne(B))$ is given by a sequence $S_1, \ldots , S_l$ where every $S_i$ is a free splitting in $\FreeOne$ that collapses to $S_{i+1}$. It follows that the vertex groups of these splittings form a chain $\mcV(S_1) \leq \ldots \leq \mcV(S_l)$ such that $[B]\leq \mcV(S_i)$ for all $i$.
Hence, the simplex  $\sigma$ is contained in the order complex of $X(B, \mcV(S_1), \ldots  , \mcV(S_l):A)$. Consequently, the realisation $\real{\FreeOne(B)}$ can be written as a union
\begin{equation}
\label{eq:union_contractible}
\real{\FreeOne(B)}=\bigcup_{B \subset B_1 \subset \ldots  \subset B_l} \real{X(B, B_1, \ldots  , B_l :A)}.
\end{equation}
By the arguments above, all of these sets are contractible. Also, one has
\begin{equation*}
\real{X(B, B_1, \ldots  , B_l :A)} \cap \real{X(B, C_1, \ldots  , C_m :A)}= \real{X(B, D_1, \ldots  , D_k :A)},
\end{equation*}
where $[B] < [D_1] < \ldots  < [D_k]$ is the longest common subchain of $[B] < [B_1]<  \ldots  < [B_l]$ and $[A]< [C_1]<  \ldots  <[C_m]$. This implies that finite intersections of the sets appearing on the right hand side of \cref{eq:union_contractible} are contractible. By the nerve lemma (see \cite[Theorem 10.6]{Bjo:Topologicalmethods}), $\real{\FreeOne(B)}$ is homotopy equivalent to the nerve of this covering. This is contractible as all of these sets intersect non-trivially (they all contain $\real{X(B:A)}$).
\end{proof}

\begin{proposition}
There is a homotopy equivalence $\FreeOne \simeq \FRfactor$.
\end{proposition}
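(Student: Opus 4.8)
The plan is to exhibit an explicit order-reversing poset map from $\FreeOne$ to $\FRfactor$ and apply Quillen's fibre lemma (\cref{Quillen fibre lemma contractibility}), where the contractibility of the fibres is exactly the content of \cref{contractibility of relative free splitting complex}. Concretely, I would define
\[
\pi\colon \FreeOne \to \FRfactor, \qquad S \mapsto \mcV(S),
\]
sending each free splitting to its unique conjugacy class of non-trivial vertex stabilisers. This is well-defined into $\FRfactor$ by the very definition of $\FreeOne$.

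The first thing to check is the monotonicity behaviour of $\pi$. If $S \leq S'$ in $\FreeOne$, then $S'$ collapses to $S$; since all edge stabilisers are trivial, collapsing an invariant set of edges can only merge vertices and hence enlarge vertex groups, so the vertex group of $S$ contains (up to conjugacy) that of $S'$ as a free factor. Thus $\mcV(S') \leq \mcV(S)$, i.e. $\pi$ is order-reversing. Equivalently, $\pi$ is an order-preserving poset map $\FreeOne \to \FRfactor^{op}$, and since $\real{\FRfactor^{op}} = \real{\FRfactor}$ it suffices to produce a homotopy equivalence onto $\real{\FRfactor^{op}}$.

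The key computation is then to identify the fibres. For $[B] \in \FRfactor$, the fibre is
\[
\pi^{-1}\bigl((\FRfactor^{op})_{\leq [B]}\bigr) = \set{S \in \FreeOne \mid [B] \leq \mcV(S)} = \FreeOne(B),
\]
which is precisely the poset shown to be contractible in \cref{contractibility of relative free splitting complex}. In particular every such fibre is non-empty, so $\pi$ meets all of $\FRfactor$ and the hypothesis of the fibre lemma holds for all $[B]$. Applying \cref{Quillen fibre lemma contractibility} to $\pi\colon \FreeOne \to \FRfactor^{op}$ now yields a homotopy equivalence $\real{\FreeOne} \simeq \real{\FRfactor^{op}} = \real{\FRfactor}$, as desired.

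There is essentially no remaining obstacle: all of the genuine work sits in \cref{contractibility of relative free splitting complex}, and the present statement is a clean bookkeeping application of the fibre lemma. The only points that demand care are getting the direction of the order right (so that the fibres come out as the sets $\FreeOne(B)$ rather than their complements, which is why I pass to $\FRfactor^{op}$), and confirming that $\mcV$ genuinely lands in $\FRfactor$ with non-empty fibres over every $[B]$ — both of which are immediate from the definitions of $\FreeOne$ and $\FreeOne(B)$.
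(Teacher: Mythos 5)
Your proposal is correct and is essentially identical to the paper's own proof: the paper defines the same map $S \mapsto \mcV(S)$, views it as an order-inverting map via the identification $\Delta(\FRfactor^{op}) \cong \Delta(\FRfactor)$, identifies the fibres $f^{-1}(\FRfactor_{\geq B})$ with the posets $\FreeOne(B)$, and concludes by \cref{contractibility of relative free splitting complex} and \cref{Quillen fibre lemma contractibility}. Your extra care about the order direction and the collapse-enlarges-vertex-groups observation is a sound elaboration of what the paper leaves implicit.
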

\begin{proof}
Assigning to each splitting $S \in \FreeOne$ the conjugacy class $\mcV(S)$ of its non-trivial vertex stabiliser defines a poset map $f\colon  \FreeOne \to \FRfactor^{op}$.
 As there is a natural isomorphism of the order complexes $\Delta(\FRfactor^{op})\cong \Delta(\FRfactor)$, we will interpret $f$ as an order-inverting map $f\colon  \FreeOne \to\nolinebreak \FRfactor$.
Now for any $B \in \FRfactor$, the fibre $f^{-1}(\FRfactor_{\geq B})$ is equal to the poset $\FreeOne(B)$ which is contractible by \cref{contractibility of relative free splitting complex}. The claim follows from \cref{Quillen fibre lemma contractibility}.
\end{proof}

\subsubsection{Homotopy type of relative free factor complexes}
In order to study the homotopy type of $\FreeOne$, we ``thicken it up'' by elements from $L=L(A,\mcA)$, the spine of Outer space of $A$ relative to $\mcA$.
\begin{definition}
Let $Y$ be the subposet of the product $L\times \FreeOne$ consisting of all pairs $(G,S)$ such that $S = G/H$ is obtained from $G$ by collapsing a proper core subgraph $H$ and let $p_1\colon Y\to L$ and $p_2\colon Y\to \FreeOne$ be the natural projection maps.
\end{definition}
By analysing the maps $p_1$ and $p_2$, we now want to show that $\FRfactor$ is spherical. This closely follows \cite[Section 7]{BG:Homotopytypecomplex}.
We first deformation retract the fibres of $p_2$ to a simpler subposet:

\begin{lemma}
\label{fibre of p_2 retracts to maximal element}
\leavevmode
For all $S\in \FreeOne$, the fibre $p_2^{-1}(\FreeOne_{\geq S})$ deformation retracts to $p_2^{-1}(S)$.
\end{lemma}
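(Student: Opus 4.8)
The plan is to exhibit an explicit order-preserving retraction and invoke \cref{homotopic poset maps}. Concretely, I would define a map
\begin{equation*}
r\colon p_2^{-1}(\FreeOne_{\geq S}) \to p_2^{-1}(S), \qquad r(G,S')\coloneqq \bigl(G,\; G/(\mcV(S)|G)\bigr),
\end{equation*}
which fixes the first coordinate $G$ and replaces the second by $S$. Here $\mcV(S)|G$ is the unique core subgraph of $G$ with fundamental group $\mcV(S)$; it exists because any $(G,S')$ in the fibre satisfies $S'\geq S$, so $G$ collapses onto $S'$ and hence onto $S$, realising $\mcV(S)$ as the fundamental group of a subgraph of $G$.

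The first thing I would check is that $r$ is well defined, i.e. that $(G, G/(\mcV(S)|G))$ really lies in $p_2^{-1}(S)$. By construction $\mcV(S)|G$ is a core subgraph and $\mcV(S)$ is a proper free factor, so the pair lies in $Y$; the content is the identity $G/(\mcV(S)|G)=S$. To prove it I would use that $(G,S')\in Y$ means $S'=G/H'$ for the core subgraph $H'=\mcV(S')|G$, and that the collapse $S'\to S$ (which exists since $S'\geq S$) collapses a connected core subgraph $H_0$ of $S'$ with $\pi_{S'}(H_0)=\mcV(S)$ containing the unique non-trivial vertex $v'$ of $S'$. Letting $\widehat H\subseteq G$ be the preimage of $H_0$ under $G\to S'=G/H'$, one has $S=G/\widehat H$ and $\pi_G(\widehat H)=\mcV(S)$. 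The key observation is that $\widehat H$ has no unlabelled valence one vertices: those of $H'$ are labelled because $H'$ is a core subgraph, while the vertices of $H_0$ other than $v'$ have valence at least two in $H_0$ since $S'$ has only one non-trivial vertex. Hence $\widehat H$ is itself a core subgraph, so it coincides with the unique core subgraph $\mcV(S)|G$, giving $G/(\mcV(S)|G)=S$.

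Granting this, the remaining verifications are routine. The map $r$ depends only on the first coordinate and the fixed splitting $S$, so it is a poset map: an inequality $(G_1,S_1')\leq (G_2,S_2')$ forces $G_1\leq G_2$ and hence $r(G_1,S_1')=(G_1,S)\leq (G_2,S)=r(G_2,S_2')$. It restricts to the identity on $p_2^{-1}(S)$, where $S'=S$. Finally it is monotone in the sense of \cref{homotopic poset maps}: since every element of the fibre has $S'\geq S$, we obtain $r(G,S')=(G,S)\leq (G,S')$, so $r(x)\leq x$ throughout. \cref{homotopic poset maps} then yields the desired deformation retraction of $\real{p_2^{-1}(\FreeOne_{\geq S})}$ onto $\real{p_2^{-1}(S)}$.

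The step I expect to be the main obstacle is the identity $G/(\mcV(S)|G)=S$, that is, ruling out that collapsing the core subgraph realising $\mcV(S)$ leaves behind extra (tree) edges and produces a splitting strictly finer than $S$. The argument above resolves this using the two defining features of the fibre — that elements of $Y$ arise from $G$ by collapsing a single core subgraph, and that splittings in $\FreeOne$ have exactly one non-trivial vertex group — and this is precisely where the hypothesis $(G,S')\in Y$, rather than merely that $G$ collapses onto $S$, enters.
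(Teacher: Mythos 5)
Your overall strategy --- an explicit monotone poset retraction followed by \cref{homotopic poset maps} --- is the same as the paper's, but the map you define does not work: the identity $G/(\mcV(S)|G)=S$, the very step you flagged as the main obstacle, is false in general, and so is your description of the collapse $S'\to S$. A collapse map between elements of $\FreeOne$ may collapse \emph{tree} edges; the collapsed subgraph $H_0\subseteq S'$ need not be connected, need not contain the non-trivial vertex $v'$, and may have unlabelled valence-one vertices (minimality constrains the splittings $S'$ and $S$, not the subgraph being collapsed). Consequently the subgraph $\widehat H\subseteq G$ collapsed by the composite $G\to S'\to S$ is in general only the union of a core subgraph and a non-empty forest. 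Concretely, take $k=0$, $A=F_5$, $B=\langle a,b\rangle$, and let $S'$ be the splitting with vertices $v'$ (vertex group $B$), $w_1,w_2$ (trivial), two edges joining $v'$ to $w_1$, two edges joining $v'$ to $w_2$, and one edge $e$ joining $w_1$ to $w_2$; let $S\coloneqq S'/e$. Both splittings lie in $\FreeOne$, $S<S'$, and $\mcV(S)=\mcV(S')=[B]$. Let $G\in L$ be obtained from $S'$ by replacing $v'$ with a two-petal rose $H'$ marked by $a,b$, so that $(G,S')\in Y$ with $\mcV(S')|G=H'$. Then $\mcV(S)|G=H'$ as well, so your map sends $(G,S')$ to $(G,G/H')=(G,S')$, which does not lie in $p_2^{-1}(S)$. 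Worse, $(G,S)\notin Y$ at all: any core subgraph of $G$ whose collapse yields $S$ would have fundamental group $[B]$ and hence equal $H'$ by uniqueness, yet $G/H'=S'\neq S$ (here $S$ is obtained from $G$ only by collapsing $H'\sqcup\set{e}$, which is disconnected and not a core subgraph). So no retraction fixing the first coordinate can exist; the failure is not a defect of your write-up but of the statement you try to prove along the way.

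This is exactly the point the paper's proof is built around. There, the subgraph of $G'$ collapsed by the composite $G'\to S'\to S$ is written as the union of a (possibly trivial) forest $T'$ and its core $\mathring{H}'$, and the retraction is defined by $f(G',S')\coloneqq (G'/T',\,S)$; since $S=(G'/T')/\mathring{H}'$ and $\mathring{H}'$ is a core subgraph of $G'/T'$, this pair does lie in $p_2^{-1}(S)$. The map changes the first coordinate, but only by a forest collapse, so it is still a poset map, it restricts to the identity on $p_2^{-1}(S)$ (there the forest is empty), and it satisfies $f(G',S')=(G'/T',S)\leq (G',S')$, whence \cref{homotopic poset maps} gives the deformation retraction exactly as you intended. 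In short: your reduction to \cref{homotopic poset maps} and the monotonicity checks are fine, but the missing idea is that the retraction must collapse the forest part of $\widehat H$ in the first coordinate rather than leave $G$ untouched.
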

\begin{proof}
Let $F\coloneqq p_2^{-1}(\FreeOne_{\geq S})$ and define $f\colon F \to p_2^{-1}(S)$ as follows: If $(G',S')$ is an element of $F $, there are collapse maps $G'\to S'$ and $S'\to S$. Concatenating these maps, we see that $S$ is obtained from $G'$ by collapsing a subgraph $H'\subset G'$. The subgraph can be written as the union of a (possibly trivial) forest $T'$ and a unique maximal core graph $\mathring{H}'$.
We set $f(G',S')\coloneqq (G'/T',S)$. As $S=(G'/T')/\mathring{H}'$, this is indeed an element of $p_2^{-1}(S)$.
Also if $(G'',S'')\geq (G',S')$ in $F $, we have $c(T'')\supseteq T' $ which implies $G''/T'' \geq G'/T'$.
Consequently $f\colon F \to p_2^{-1}(S)$ is a well-defined, monotone poset map restricting to the identity on $p_2^{-1}(S)$. It follows from \cref{homotopic poset maps} that this defines a deformation retraction.
\end{proof}

Hence, instead of studying arbitrary fibres, it suffices to consider the preimages of single vertices.

\begin{lemma}
\label{fibre of p_2 contractible}
For all $S\in \FreeOne$, the preimage $p_2^{-1}(S)$ is contractible.
\end{lemma}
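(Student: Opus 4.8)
The plan is to fix $S \in \FreeOne$ and reduce contractibility of $p_2^{-1}(S)$ to contractibility of the spine of a smaller relative Outer space, via the Quillen fibre lemma. Write $[B] = \mcV(S)$ for the unique conjugacy class of non-trivial vertex stabilisers of $S$, and fix a presentation of $S$ as a graph of groups in which the vertex $v$ carrying this group has vertex group exactly $B$ and peripheral structure $\mcA' = \set{A_1^{x_1}, \ldots , A_k^{x_k}}$; let $m$ be the number of half-edges incident at $v$.

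First I would observe that $p_2^{-1}(S)$ is a poset of blow-ups of $v$. Indeed, if $(G,S) \in p_2^{-1}(S)$ then $S = G/H$ for a proper core subgraph $H$, and since $S$ has only the single non-trivial vertex group $B$, the collapsed subgraph is forced to be the unique core subgraph $H = [B]|G$ of $G$ with fundamental group $[B]$. By \cref{subgraphs as elements of relative Outer space}, this $H$ may be regarded as an element of $L(B, \mcA')$, the spine of Outer space of $B$ relative to $\mcA'$. This gives a map $q\colon p_2^{-1}(S) \to L(B, \mcA')$, $(G,S) \mapsto [B]|G$, which is order-preserving because a collapse $G' \to G$ restricts to a collapse $[B]|G' \to [B]|G$ of the corresponding core subgraphs, i.e.\ $[B]|G \leq [B]|G'$ in $L(B,\mcA')$.

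Next I would identify the fibres of $q$ by the blow-up construction of \cite[Lemma 5.6, Lemma 5.12.1]{BG:Homotopytypecomplex}, the same tool used in the proof of \cref{contractibility of relative free splitting complex}. Fixing $H \in L(B,\mcA')$, an element of $q^{-1}(L(B,\mcA')_{\leq H})$ is a blow-up of $v$ whose core graph is a collapse of $H$, together with a choice of where each of the $m$ half-edges at $v$ attaches in the blow-up; equivariantly, each such choice is a point of the Bass--Serre tree $\tilde{H}$ of $H$. As in that proof, this identifies the fibre (up to homotopy) with the product $\tilde{H}^m$ of $m$ copies of a tree, which is contractible. Since $L(B, \mcA')$ is contractible (being the spine of a relative Outer space \cite{GL:outerspacefree}), \cref{Quillen fibre lemma contractibility} then yields $p_2^{-1}(S) \simeq L(B, \mcA')$, so $p_2^{-1}(S)$ is contractible.

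The main obstacle will be the precise identification of the fibres of $q$ with products of Bass--Serre trees: one must check that the attaching data for the $m$ half-edges varies over $\tilde{H}^m$ compatibly with refining $H$, so that the fibres over the down-sets $L(B,\mcA')_{\leq H}$ (and not merely over single points) are contractible, as required by \cref{Quillen fibre lemma contractibility}. This is exactly the content of the blow-up lemmas of \cite{BG:Homotopytypecomplex} specialised to the present, simpler situation in which only one vertex is blown up and its core graph is a point, so I would adapt those verifications rather than reprove them.
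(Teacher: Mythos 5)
Your overall route is the same as the paper's: you use the identical map $q=f\colon p_2^{-1}(S)\to L(B,\mcA')$, $(G,S)\mapsto [B]|G$, the same three ingredients (Quillen's fibre lemma, the blow-up lemmas of \cite{BG:Homotopytypecomplex}, contractibility of $L(B,\mcA')$), and the same conclusion $p_2^{-1}(S)\simeq L(B,\mcA')$. The gap is in which fibres you check and how. You run the argument on the down-set fibres $q^{-1}(L(B,\mcA')_{\leq H})$ and assert that their contractibility ``is exactly the content of the blow-up lemmas''. It is not. What the blow-up construction of \cite[Lemma 5.12.1 and Lemma 5.6]{BG:Homotopytypecomplex} identifies with the product of trees $\tilde{H}^m$ is the collection of blow-ups of $S$ whose core is $H$ itself, i.e.\ the point fibre $q^{-1}(H)=\Psi^{-1}(S,H)$. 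The down-set fibre additionally contains blow-ups $(G',S)$ whose core is a proper collapse $H'\lneq H$; their attaching data lives in the tree $\tilde{H'}$, not in $\tilde{H}$, and these elements are not accounted for by the product-of-trees structure on $\tilde{H}^m$. Nor can you retract the down-set fibre onto $q^{-1}(H)$: passing from a blow-up with core $H'$ to one with core $H$ requires blowing up, which involves a non-canonical choice of preimages of the attaching points under the collapse $\tilde{H}\to\tilde{H'}$ --- so there is no monotone poset map to feed into \cref{homotopic poset maps}. Closing this would need a genuinely additional argument (for instance, a gluing/homotopy-colimit decomposition of the down-set fibre over the poset $L(B,\mcA')_{\leq H}$, which has maximum $H$), not just a specialisation of the cited lemmas.

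The paper avoids this problem by running the fibre lemma on up-set fibres, which is legitimate by the $P^{op}$ remark following \cref{Quillen fibre lemma contractibility}: if the core of $G'$ collapses to $H$, then collapsing exactly those core edges inside $G'$ is canonical, and this defines a monotone poset map $f^{-1}(L(B,\mcA')_{\geq H})\to f^{-1}(H)$ restricting to the identity on $f^{-1}(H)$; by \cref{homotopic poset maps} it is a deformation retraction, and only the point fibre $f^{-1}(H)=\Psi^{-1}(S,H)$ then needs the identification with $\tilde{H}^m$ from \cite{BG:Homotopytypecomplex}. If you replace your down-set fibres by up-set fibres and insert this collapse-induced retraction, your proposal becomes the paper's proof; as written, the key fibre verification is not covered by the results you invoke.
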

\begin{proof}
Let $[B]\coloneqq \mcV(S)$. Every element in $p_2^{-1}(S)$ is given by a pair $(G,S)$ such that $H\coloneqq B|G$ is a subgraph of $G$ and $S=G/H$. Forgetting the (constant) second coordinate, we can interpret these as elements of the Outer space of $A$ relative to $\mcA$. Let $X$ be the subspace of this Outer space that is given by all open simplices containing an element of $p_2^{-1}(S)$. Then $p_2^{-1}(S)$ is a deformation retract of $X$. In \cite[Proposition 7.3.1 and Proposition 7.2]{BG:Homotopytypecomplex}, Skora folds are used to show that $X$ is contractible. The proof in \cite{BG:Homotopytypecomplex} is formulated for the case where $A$ is free, but it applies here as well because it only uses the ideas of Guirdel--Levitt \cite{GL:Deformationspacestrees}, which hold true in the generality needed in our setting.
\end{proof}

In particular, these fibres are all contractible, so by \cref{Quillen fibre lemma contractibility}, we have:

\begin{corollary}
\label{fibres of p_2 contractible}
The map $p_2\colon Y\to \FreeOne$ is a homotopy equivalence.
\end{corollary}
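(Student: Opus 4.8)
The plan is to apply Quillen's fibre lemma to the projection $p_2\colon Y\to \FreeOne$. The two lemmas just proved control the \emph{upper} fibres of $p_2$, so rather than the form stated in \cref{Quillen fibre lemma contractibility} I would use its order-reversed variant (noted in the discussion following \cref{Quillen fibre lemma k-connected}), for which it suffices to show that $p_2^{-1}(\FreeOne_{\geq S})$ is contractible for every $S\in\FreeOne$.

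First I would simply assemble the two preceding lemmas. By \cref{fibre of p_2 retracts to maximal element}, the fibre $p_2^{-1}(\FreeOne_{\geq S})$ deformation retracts onto $p_2^{-1}(S)$, and by \cref{fibre of p_2 contractible} this preimage is contractible. Hence $p_2^{-1}(\FreeOne_{\geq S})$ is contractible for all $S$, which is precisely the hypothesis required by the fibre lemma.

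There is essentially no obstacle remaining, as the substantive work was done in establishing contractibility of the fibres. The only point needing care is bookkeeping of the order: since we have verified contractibility of the \emph{upper} intervals $\FreeOne_{\geq S}$, we must invoke the order-reversed form of \cref{Quillen fibre lemma contractibility}, equivalently passing to the opposite poset via the natural identification of order complexes $\Delta(P)\cong\Delta(P^{\op})$. With this in hand, the fibre lemma yields directly that $p_2$ induces a homotopy equivalence on geometric realisations, completing the proof.
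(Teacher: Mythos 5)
Your proposal is correct and is exactly the paper's argument: the two preceding lemmas show that each fibre $p_2^{-1}(\FreeOne_{\geq S})$ deformation retracts onto the contractible preimage $p_2^{-1}(S)$, and then the order-reversed form of \cref{Quillen fibre lemma contractibility} (via the identification $\Delta(P)\cong\Delta(P^{\op})$) gives the homotopy equivalence. Your extra care in flagging the passage to the opposite poset is precisely the one bookkeeping point the paper leaves implicit.
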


We now turn to $p_1$ and show that its fibres are highly connected as well.

\begin{definition}
For a labelled graph  $(G,l)$, let $\crCore(G,l)$ denote the poset of all proper core subgraphs of $(G,l)$, where the partial order is given by inclusion of subgraphs.
\end{definition}

\begin{lemma}
\label{fibre of p_1 and poset of core subgraphs}
Let $G \in L$, and let $(G,l)$ denote the induced structure of a labelled graph. Then the fibre $p_1^{-1}(L_{\leq G})$ is homotopy equivalent to $\crCore(G,l)$.
\end{lemma}
\begin{proof}
Each element of $p_1^{-1}(L_{\leq G})$ consists of a pair $(G',S')$ where $G'\leq G$ in $L$ and $S'\in\FreeOne$ is obtained from $G'$ by collapsing a proper core subgraph $H'$.
As $G'$ is obtained from $G$ by collapsing a forest, $H\coloneqq \pi_{G'}(H')|G$ 
is a subgraph of $G$.

The collapse $G\to G'$ induces a collapse $G/H \to G'/H'=S'$. Hence, we get a monotone poset map
\begin{align*}
f\colon p_1^{-1}(L_{\leq G})&\to p_1^{-1}(G)\\
(G',S')&\mapsto (G,G/H)
\end{align*}
restricting to the identity on $p_1^{-1}(G)\subseteq p_1^{-1}(L_{\leq G})$.
Again \cref{homotopic poset maps} implies that $f$ defines a deformation retraction.

For every proper core subgraph $H$ of $G$, the pair $(G,G/H)$ forms an element of $p_1^{-1}(G)$.
Also, if $H$ and $H'$ are proper core subgraphs of $G$, one has $G/H\geq \nolinebreak G/H'$ in $\FreeOne$ if and only if $H\leq H'$ in $\crCore(G,l)$. Hence, the fibre $p_1^{-1}(G)$ can be identified with $\crCore(G,l)^{op}$. Because $\real{\crCore(G,l)^{op}}\cong \real{\crCore(G,l)}$, this finishes the proof.
\end{proof}

\begin{theorem}
\label{homotopy type poset of core subgraphs}
The poset $\crCore(G,l)$ is $(n-2)$-spherical.
\end{theorem}

We postpone the proof of this result until \cref{section posets of subgraphs} and first note the following corollary:

\begin{corollary}
\label{product poset connected}
The poset $Y$ is $(n-3)$-connected.
\end{corollary}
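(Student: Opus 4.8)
The plan is to feed the two projections into Quillen's fibre lemma; in fact all the genuine content has already been isolated in the preceding lemmas, so this corollary should fall out formally. I would work with the projection $p_1\colon Y\to L$ and apply \cref{Quillen fibre lemma k-connected}, whose hypothesis is precisely a statement about the fibres $p_1^{-1}(L_{\leq G})$ that \cref{fibre of p_1 and poset of core subgraphs} computes.

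First I would establish the connectivity of these fibres. For every $G\in L$ the underlying labelled graph $(G,l)$ has fundamental group of rank $n\geq 2$, so \cref{homotopy type poset of core subgraphs} tells us that $\crX(G,l)$ is $(n-2)$-spherical; since \cref{lem:cSubgraphstocrCores} exhibits a deformation retraction of $\crX(G,l)$ onto $\crCore(G,l)$, the poset $\crCore(G,l)$ is $(n-2)$-spherical as well. A wedge of $(n-2)$-spheres is $(n-3)$-connected, so $\crCore(G,l)$ is $(n-3)$-connected. By \cref{fibre of p_1 and poset of core subgraphs}, $p_1^{-1}(L_{\leq G})\simeq \crCore(G,l)$, hence $p_1^{-1}(L_{\leq G})$ is $(n-3)$-connected for all $G\in L$.

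With this in hand, \cref{Quillen fibre lemma k-connected} applied to $p_1$ yields that $Y$ is $(n-3)$-connected if and only if $L$ is $(n-3)$-connected. Since $L=L(A,\mcA)$ is the spine of relative Outer space, it is contractible (as recorded in \cref{section relative automorphism groups}) and thus $(n-3)$-connected, which gives the claim. I do not expect a real obstacle here: the substance is entirely in \cref{homotopy type poset of core subgraphs} and in the fibre identification of \cref{fibre of p_1 and poset of core subgraphs}, both already proved. The only point that requires a little care is the bookkeeping around the connectivity convention of \cref{section spherical complexes} — one must note that ``$(n-2)$-spherical'' passes to ``$(n-3)$-connected'' and that the contractible poset $L$ satisfies the required connectivity in the relevant range — but this is routine.
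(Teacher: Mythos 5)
Your proposal is correct and follows essentially the same route as the paper: apply \cref{Quillen fibre lemma k-connected} to the projection $p_1\colon Y\to L$, using \cref{fibre of p_1 and poset of core subgraphs} together with \cref{homotopy type poset of core subgraphs} (via the retraction of \cref{lem:cSubgraphstocrCores}) to see that the fibres are $(n-3)$-connected, and then conclude from contractibility of the spine $L$. The only difference is that you spell out the bookkeeping (sphericity passing to connectivity, the $\crX$-to-$\crCore$ retraction) that the paper leaves implicit.
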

\begin{proof}
The projection $p_1\colon Y \to L$ is a map from $Y$ to the contractible poset $L$. By \cref{fibre of p_1 and poset of core subgraphs} and \cref{homotopy type poset of core subgraphs}, the fibres of this map are $(n-3)$-connected. Hence, the result follows from \cref{Quillen fibre lemma k-connected}.
\end{proof}

The main result of this section, which was stated as Theorem \ref{introduction free factor complex} in the introduction, is now an easy consequence of the last corollaries:
\begin{theorem}
\label{relative free factor complexes}
The free factor complex of $A= F_n \ast A_1 \ast \cdots \ast A_k$ relative to $\mcA=\set{A_1, \ldots, A_k}$ is homotopy equivalent to a wedge of $(n-2)$-spheres.
\end{theorem}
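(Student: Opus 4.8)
The plan is to assemble the homotopy equivalences established earlier in this subsection, combine them with a dimension count, and then invoke the Whitehead-type criterion recalled in \cref{section spherical complexes} to upgrade connectivity to sphericity. First I would note that the free factor complex of $A$ relative to $\mcA$ is by definition the order complex of $\FRfactor$, and that we have a chain of comparisons
\begin{equation*}
\FRfactor \simeq \FreeOne \simeq Y .
\end{equation*}
The first equivalence is the homotopy equivalence $\FreeOne \simeq \FRfactor$ proved above, and the second is \cref{fibres of p_2 contractible}, which asserts that $p_2\colon Y \to \FreeOne$ is a homotopy equivalence. Since \cref{product poset connected} tells us that $Y$ is $(n-3)$-connected, it follows at once that $\FRfactor$ is $(n-3)$-connected.

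Next I would pin down the dimension of $\Delta(\FRfactor)$. Recall from the proof of \cref{isomorphism relative free factor complex} that a simplex of $\Delta(\FRfactor)$ is a chain $[B_1]\leq \cdots \leq [B_m]$ along which the corank is strictly increasing, and that the corank of any element of $\FRfactor$ lies in $\set{1,\ldots, n-1}$. Hence such a chain has at most $n-1$ elements, so $\Delta(\FRfactor)$ has dimension at most $n-2$; the facets $[S_1]\leq \cdots \leq [S_{n-1}]$ exhibited in that proof show that the dimension is in fact exactly $n-2$.

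Finally, an $(n-2)$-dimensional CW-complex that is $(n-3)$-connected is $(n-2)$-spherical, by the Whitehead-theorem argument of \cref{section spherical complexes}; applying this to $\Delta(\FRfactor)$ yields the claim. In this form the argument is essentially an assembly of the preceding results: the genuine work has already been carried out, namely the blow-up and rooted-tree constructions underlying the contractibility statements of \cref{contractibility of relative free splitting complex} and \cref{fibre of p_2 contractible}, together with the sphericity of the posets of core subgraphs in \cref{homotopy type poset of core subgraphs}, which feeds the connectivity of $Y$. The only point demanding a little care is to confirm that $\Delta(\FRfactor)$ has dimension \emph{exactly} $n-2$ rather than merely at most $n-2$, since otherwise the $(n-3)$-connectivity could not be promoted to sphericity in the correct degree.
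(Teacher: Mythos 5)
Your proposal is correct and follows essentially the same route as the paper's own proof: identify $\FRfactor$ with $Y$ via the homotopy equivalences $\FRfactor \simeq \FreeOne \simeq Y$, quote the $(n-3)$-connectivity of $Y$ from \cref{product poset connected}, and conclude by the Whitehead argument using that $\Delta(\FRfactor)$ is $(n-2)$-dimensional. You merely make explicit two points the paper leaves implicit, namely the intermediate equivalence $\FRfactor \simeq \FreeOne$ and the corank-based dimension count, which is a reasonable amount of added detail.
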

\begin{proof}
By \cref{fibres of p_2 contractible}, there is a homotopy equivalence $\FRfactor \simeq Y$. By \cref{product poset connected}, the poset $Y$ is $(n-3)$-connected. As $\FRfactor$ is $(n-2)$-dimensional, the claim follows.
\end{proof}

\subsubsection{Cohen--Macaulayness}

The relative formulations allow us to deduce that $\FRfactor$ or equivalently $\CC{O}{\mcP}$ is even Cohen--Macaulay:
\begin{theorem}
\label{CM free factor complex}
The coset complex $\CC{O}{\mcP}$ is homotopy Cohen--Macaulay.
\end{theorem}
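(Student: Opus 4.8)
The plan is to carry out everything inside the order complex $\Delta(\FRfactor)$, which by \cref{isomorphism relative free factor complex} is isomorphic to $\CC{O}{\mcP}$. This complex has dimension $d = |\mcP|-1 = n-2$, since the corank strictly decreases along any chain in $\FRfactor$ and takes values in $\{1,\ldots,n-1\}$. The $(d-1)$-connectivity demanded by the definition of homotopy Cohen--Macaulayness is then immediate from \cref{relative free factor complexes}: being $(n-2)$-spherical, $\Delta(\FRfactor)$ is $(n-3)$-connected. So the entire content of the statement is to control the connectivity of links of simplices.

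First I would invoke the standard decomposition of a link in an order complex. A simplex of $\Delta(\FRfactor)$ is a chain $\sigma\colon [B_1] < \cdots < [B_{s+1}]$, and a vertex lies in $\lk(\sigma)$ precisely when it is comparable to, and distinct from, every $[B_j]$. Since elements lying in different zones relative to the chain are automatically comparable, the link splits as a join
\begin{equation*}
\lk(\sigma) \cong \FRfactor_{<[B_1]} \ast ([B_1],[B_2]) \ast \cdots \ast ([B_s],[B_{s+1}]) \ast \FRfactor_{>[B_{s+1}]},
\end{equation*}
where the interior factors are open intervals in $\FRfactor$. It then suffices to determine the homotopy type of each of the $s+2$ factors and to recombine them via \cref{homotopy type of wedges and joins}.

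The key step is to recognise each factor as a (relative) free factor complex of a smaller free product, using the structure theory of \cite[Lemma 2.11]{HM:Relativefreesplitting} and the monotonicity of corank \cite[Proposition 2.10]{HM:Relativefreesplitting}. Writing $c_j \coloneqq \corank[B_j]$, so that $c_1 > \cdots > c_{s+1}$: the lower set $\FRfactor_{<[B_1]}$ is the relative free factor complex $\mcF(B_1;\mcA')$ of $B_1 = F_{n-c_1}\ast A_1^{x_1}\ast\cdots\ast A_k^{x_k}$, hence $(n-c_1-2)$-spherical by \cref{relative free factor complexes}; each open interval $([B_i],[B_{i+1}])$, via a splitting $B_{i+1} = B_i \ast D$ with $D$ free of rank $c_i-c_{i+1}$, is the absolute free factor complex of $F_{c_i-c_{i+1}}$, hence $(c_i-c_{i+1}-2)$-spherical by the $k=0$ case of \cref{relative free factor complexes}; and the upper set $\FRfactor_{>[B_{s+1}]}$, via $A = B_{s+1}\ast C$ with $C$ free of rank $c_{s+1}$, is the free factor complex of $F_{c_{s+1}}$, hence $(c_{s+1}-2)$-spherical. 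Joining these with \cref{homotopy type of wedges and joins} and using the telescoping identity $(n-c_1) + \sum_{i=1}^{s}(c_i-c_{i+1}) + c_{s+1} = n$ shows that $\lk(\sigma)$ is $(n-s-3)$-spherical, hence $(n-s-4) = (d-s-2)$-connected, which is exactly the required bound.

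The main obstacle I expect is the middle step: rigorously establishing the isomorphisms between the intervals of $\FRfactor$ and the (relative) free factor posets of $B_1$, $D$ and $C$, i.e. checking that every free factor of $A$ in a given interval arises from a free factor of the appropriate sub- or complementary free product, and that the defining conditions of $\FRfactor$ (containment of all $[A_i]$, properness, positive free rank) transfer correctly. I would also need to treat the degenerate cases where an interval is empty — e.g. $c_i - c_{i+1} = 1$ or $n - c_1 \le 1$ — which are $(-1)$-spherical under the conventions of \cref{section spherical complexes}, and to confirm that the exclusion $[A_1\ast\cdots\ast A_k]\notin\FRfactor$ introduces no off-by-one discrepancy in the lower interval.
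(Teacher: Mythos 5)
Your global strategy -- pass to $\Delta(\FRfactor)$ via \cref{isomorphism relative free factor complex}, decompose the link of a chain as the join of the lower set, the open intervals and the upper set, determine the homotopy type of each factor, and recombine with \cref{homotopy type of wedges and joins} -- is exactly the paper's, and your dimension bookkeeping (including the degenerate empty factors) is correct. But the step you yourself flag as the main obstacle is a genuine gap, and it fails in the precise form you propose. You claim that an open interval $([B_i],[B_{i+1}])$, via a splitting $B_{i+1}=B_i\ast D$, is isomorphic to the \emph{absolute} free factor complex of the free group $D$, and similarly that $\FRfactor_{>[B_{s+1}]}$ is the free factor complex of a complement $C$. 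This is false: the natural map is not even well defined on conjugacy classes. Concretely, take $A=F_3=\langle x,y,z\rangle$, $k=0$, $B_i=\langle x\rangle$, $D=\langle y,z\rangle$. The elements $y$ and $zyz^{-1}$ are conjugate in $D$, so $[\langle y\rangle]=[\langle zyz^{-1}\rangle]$ in $\mcF(D)$; but $\langle x,y\rangle$ and $\langle x,zyz^{-1}\rangle$ are \emph{not} conjugate in $F_3$ (if $g\langle x,y\rangle g^{-1}=\langle x,zyz^{-1}\rangle$, then $x$ lies in $\langle x,y\rangle\cap g\langle x,y\rangle g^{-1}$, so malnormality of the free factor $\langle x,y\rangle$ forces $g\in\langle x,y\rangle$, giving $zyz^{-1}\in\langle x,y\rangle$, a contradiction). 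There is no canonical map in the opposite direction either, since a free factor containing $B_i$ up to conjugacy has no canonical complement lying inside $D$. So sphericity of the interval cannot be deduced from sphericity of $\mcF(F_{c_i-c_{i+1}})$ by your route.

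The correct identification -- and the one the paper uses -- is with \emph{relative} free factor complexes: using malnormality of free factors (two subgroups of a free factor $B$ of $A$ are conjugate in $A$ if and only if they are conjugate in $B$, \cite[Lemma 2.1]{HM:Relativefreesplitting}), one gets
\begin{equation*}
([B_i],[B_{i+1}])\cong \mcF(B_{i+1},\set{B_i}),\qquad \FRfactor_{>[B_s]}\cong \mcF(A,\set{B_s}),
\end{equation*}
and then one applies \cref{relative free factor complexes} to these \emph{relative} complexes (with the single, generally non-free, factor $B_i$ resp.\ $B_s$ in the role of the $A_j$). This is precisely why \cref{relative free factor complexes} is proved for arbitrary free products $F_n\ast A_1\ast\cdots\ast A_k$ rather than just for free groups: even when one starts from the absolute free factor complex of $F_n$, the link factors are genuinely relative objects. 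Your numerology survives -- $\mcF(B_i\ast D,\set{B_i})$ is $(\operatorname{rk}(D)-2)$-spherical, the same dimension you assign to $\mcF(F_{\operatorname{rk}(D)})$ -- so the final count is unaffected, but the justification must go through the relative complexes, not the absolute ones.
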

\begin{proof}
By \cref{isomorphism relative free factor complex}, we have to show that the link of every $s$-simplex $\sigma =
[B_0]\leq \ldots \leq [B_s]$
in $\Delta(\mcF)$ is $(n-s-3)$-spherical. However, the link of this simplex is by definition given by the following join of posets
\begin{equation*}
\lk (\sigma) = \FRfactor_{< [B_0]} \ast ([B_0], [B_1]) \ast \ldots \ast ([B_{s-1}], [B_s]) \ast \FRfactor_{> [B_s]}.
\end{equation*}
As above, each $B_i$ can be written in the form
\begin{equation*}
B_i =  D_i \ast A_1^{a_1} \ast \ldots \ast A_k^{a_k},
\end{equation*}
where $D_i$ is a free group of rank $n-\corank [B_i]$. Using malnormality of free factors, it follows that two subgroups of a free factor $B$ of $A$ are conjugate in $A$ if and only they are conjugate in $B$ (see \cite[Lemma 2.1]{HM:Relativefreesplitting}).It follows that there are isomorphisms
\begin{align*}
\FRfactor_{< [B_0]} &\cong \mcF(B_0, \set{A_1^{a_1},\ldots , A_k^{a_k}}) , \\  
([B_i], [B_{i+1}]) &\cong \mcF(B_{i+1}, \set{B_i}) , \\
\FRfactor_{> [B_s]} &\cong \mcF(A, \set{B_s}). 
\end{align*}
The result now follows from \cref{homotopy type of wedges and joins} and \cref{relative free factor complexes}.
\end{proof}

\subsubsection{Posets of subgraphs}
\label{section posets of subgraphs}
In this section, we prove \cref{homotopy type poset of core subgraphs} by studying posets of subgraphs of labelled graphs. The results we obtain gene\-ralise \cite[Section 4.2]{BG:Homotopytypecomplex} and we closely follow the structure of the proofs there.

In what follows, all graphs are allowed to have loops and multiple edges. For a graph $G$, let $V(G)$ denote the set of its vertices and $E(G)$  the set of its edges. If $e\in E(G)$ is an edge, then $G-e$ is defined to be the graph obtained from $G$ by removing $e$ and $G/e$ is obtained by collapsing $e$ and identifying its two endpoints to a new vertex $v_e$. For a labelled graph $(G,l)$ (see \cref{def:labelled&core}), there are canonical labellings $\set{1,\ldots , k}\to G-e$ and $\set{1,\ldots , k}\to G/e$ that will be denoted by $l$ as well.
A graph is called a \emph{tree} if it is contractible.

\begin{definition}
For a labelled graph  $(G,l)$, let $\crX(G,l)$ be the poset of all connected subgraphs of $G$ which are not trees, contain all the labelled vertices and whose fundamental group is strictly contained in $\pi_1(G)$.
\end{definition}

The following lemma allows us to replace $\crCore(G,l)$ with $\crX(G,l)$. This bigger poset will be easier to handle for the inductive arguments that we want to use.

\begin{lemma}
\label{lem:cSubgraphstocrCores}
$\crX(G,l)$ deformation retracts to $\crCore(G,l)$.
\end{lemma}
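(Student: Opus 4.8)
The plan is to build an explicit poset retraction and then invoke \cref{homotopic poset maps}. Define $f\colon \crX(G,l)\to \crCore(G,l)$ by sending a subgraph $H$ to its core $\mathring{H}$, which is obtained by iteratively deleting every valence-one vertex that is not labelled together with its incident edge. First I would record that $\crCore(G,l)\subseteq \crX(G,l)$: a proper core subgraph is connected, carries non-trivial fundamental group (hence is not a tree), contains all labelled vertices, and is proper in the sense that its fundamental group is strictly contained in $\pi_1(G)$, so it satisfies every condition defining $\crX(G,l)$.

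Next I would verify that $f$ is well defined. For $H\in \crX(G,l)$ the graph $H$ is connected with $\pi_1(H)\neq 1$ and $\im(l)\subseteq V(H)$, so its core $\mathring{H}$ is defined as in the discussion preceding the lemma. Pruning never removes a labelled vertex and does not change the fundamental group, so $\mathring{H}$ is connected, contains $\im(l)$, satisfies $\pi_1(\mathring{H})=\pi_1(H)\subsetneq \pi_1(G)$, and by construction all of its valence-one vertices are labelled; hence $\mathring{H}$ is a proper core subgraph, i.e. $f(H)\in \crCore(G,l)$. It is immediate that $f(H)=\mathring{H}\subseteq H$ and that $f$ restricts to the identity on $\crCore(G,l)$, since a core graph is its own core.

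The hard part will be checking that $f$ is a poset map, i.e. that coring is monotone: $H_1\subseteq H_2$ implies $\mathring{H_1}\subseteq \mathring{H_2}$. The plan is to characterise the core intrinsically, declaring an edge $e$ of $H$ to be \emph{essential} if it lies on an embedded cycle of $H$ or on a reduced path joining two vertices each of which is either labelled or lies on such a cycle, and then identifying the set of essential edges with the set of edges surviving the pruning. The key point is that essentiality can only grow under enlargement: any cycle of $H_1$ and any reduced path between labelled vertices of $H_1$ persists in $H_2$, and since we only add edges while keeping the same labelled vertices, passing to $H_2$ never turns an essential edge into a prunable one. Thus every essential edge of $H_1$ remains essential in $H_2$, giving $\mathring{H_1}\subseteq \mathring{H_2}$. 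I expect the bookkeeping that matches ``edges surviving pruning'' with ``essential edges'', together with the verification that this set is stable under enlargement, to be the only genuinely technical point.

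Finally, with $f\colon \crX(G,l)\to \crCore(G,l)$ a poset map into the subposet $\crCore(G,l)$ satisfying $f|_{\crCore(G,l)}=\operatorname{id}$ and $f(H)\subseteq H$ for every $H$, \cref{homotopic poset maps} applies in its downward-monotone form and yields a deformation retraction $\real{\crX(G,l)}\to \real{\crCore(G,l)}$, which is exactly the claim.
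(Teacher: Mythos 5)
Your proposal is correct and follows essentially the same route as the paper: send $H$ to its core $\mathring{H}$, observe this is a monotone poset map restricting to the identity on $\crCore(G,l)$, and apply \cref{homotopic poset maps}. The only difference is that your ``essential edges'' argument for monotonicity can be shortcut: since $\mathring{H_1}$ is itself a core graph contained in $H_2$ and containing $\im(l)$, it is contained in the unique \emph{maximal} core subgraph $\mathring{H_2}$, which is exactly how the paper (implicitly) justifies this step.
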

\begin{proof}
By restricting the labelling, every $H\in \crX(G,l)$ can be seen as a labelled graph $(H,l)$. It contains contains a unique maximal core subgraph $(\mathring{H},l)$. Also, if $H_1\leq H_2$ in $\crX(G,l)$, one has $(\mathring{H_1},l)\subseteq (\mathring{H_2},l)$. Hence, sending $(H,l)$ to $(\mathring{H},l)$ defines a poset map $f\colon \crX(G,l)\to \crCore(G,l)$ that restricts to the identity on $\crCore(G,l)$. The claim now follows from \cref{homotopic poset maps}.
\end{proof}

An edge $e\in E(G)$ is called \emph{separating} if $G-e$ is disconnected; in particular, we consider edges adjacent to vertices of valence one to be separating. 

\begin{lemma}
\label{lemma valence-1 homotopies}
Let $(G,l)$ be a labelled graph where $G$ is finite and connected. Let $e\in E(G)$ be an edge that is not a loop and set
\begin{equation*}
Y_e\coloneqq \set{H \in X(G,l) \mid H\cup \{e\} \text{ connected and } \pi_1(H\cup \{e\}) = \pi_1(G)}.
\end{equation*}
Then $\crX(G,l)\bsl Y_e \simeq \crX(G/e,l)$.

Furthermore, if $e$ is separating, then $Y_e$ is empty, so $\crX(G,l) \simeq \crX(G/e,l)$.
\end{lemma}
\begin{proof}
Whenever $H\in \crX(G,l)$, the edges in $E(H)\bsl\set{e}$ form a connected subgraph of $G/e$ that will be denoted by $H/e$. It contains all labelled vertices of $(G/e,l)$ and has non-trivial fundamental group. 

If $H$ is not in $Y_e$, then either $e$ is not adjacent to $H$ and hence $\pi_1(H/e) \cong \pi_1(H) $, or $\pi_1(H/e)\leq \pi_1(H\cup \{e\}/e) \cong \pi_1(H\cup \{e\})$. In either case, $\pi_1(H/e)$ is a proper subgroup of $\pi_1(G/e) \cong \pi_1(G)$.
Consequently, we get a poset map
\begin{align*}
 f\colon  \crX(G,l)\bsl Y_e &\to \crX(G/e,l)\\
H&\mapsto H/e.
\end{align*}

On the other hand, if $K\in \crX(G/e,l)$ contains the vertex $v_e$ to which $e$ was collapsed, it is easy to see that $K\cup\set{e}$ is an element of $\crX(G,l)\bsl Y_e$.
This allows us to define a poset map
\begin{align*}
\label{map undoing a collapse}
 g\colon  \crX(G/e,l)&\to \crX(G,l)\bsl Y_e\\
K&\mapsto
\begin{cases}
K\cup\set{e}&,\,v_e\in V(K),\\
K&\text{, else.}
\end{cases}
\end{align*}
One has $ g\circ f(H)\supseteq H$ and $ f\circ g(K)=K$, so using \cref{Quillen homotopic poset maps}, these two posets are homotopy equivalent, which proves the first part of the statement.

For the second part, note that if $e$ is separating and $H\in \crX(G,l)$ such that $H\cup \{e\}$ is connected, then either $e$ is contained in $H$ or $e$ has valence one in $H\cup \{e\}$. In either case, we have $\pi_1(H\cup \{e\})=\pi_1(H)\not=\pi_1(G)$.
\end{proof}

To prove the following result, we apply an argument similar to the one used in \cite[Proposition 2.2]{Vog:Localstructuresome}. (Note that in \cite{Vog:Localstructuresome}, being $n$-spherical is only defined for $n$-dimensional posets.)

\begin{proposition}
\label{homotopy type poset of connected subgraphs}
Let $(G,l)$ be a labelled graph where $G$ is finite, connected  and has fundamental group of rank $\rank\geq 2$. Then $\crX(G,l)$ is $(n-2)$-spherical.
\end{proposition}
\begin{proof}
If $e\in E(G)$ is separating, then by \cref{lemma valence-1 homotopies}, we have $\crX(G,l) \simeq \crX(G/e,l)$. As $G/e$ has one edge less than $G$, we can apply induction to assume that $G$ does not have any separating edges. 

We do induction on $\rank$ and start with the case $\rank=2$. By \cref{lem:cSubgraphstocrCores}, it suffices to show that $\crCore(G,l)$ is homotopy equivalent to a wedge of $0$-spheres, i.e. a disjoint union of points. To see this, let $H\in \crCore(G,l)$. As $1< \pi_1(H) < \pi_1(G)$, the fundamental group of $H$ is infinite cyclic. Let $e\in H$ be an edge of $H$. We distinguish between the two cases where $e$ is non-separating or separating in $H$. If $e$ is non-separating, then $H - e$ has trivial fundamental group while if $e$ is separating, $H - e$ has two connected components both of which either have non-trivial fundamental group or contain at least one labelled vertex. In both cases, no $K\in \crCore(G,l)$ can be contained in $H - e$. Hence, the order complex of $\crCore(G,l)$ does not contain any simplex of dimension greater than zero which proves the claim.

Now let $\rank>2$. If every edge of $G$ is a loop, $G$ is a rose with $\rank$ petals and every proper non-empty subset of $E(G)$ forms an element of $\crX(G,l)$. In this case, the order complex of $\crX(G,l)$ is given by the set of all proper faces of a simplex of dimension $\rank-1$ whose vertices are in 1-to-1 correspondence with the edges of $G$ and hence is homotopy equivalent to an $(n-2)$-sphere.

So assume that $G$ has an edge $e$ that is not a loop. 
As we assumed that $e$ is non-separating, $G-e$ is a connected graph having the same number of vertices as $G$ and one edge less. This implies that $\rk(\pi_1(G-e))=\rank-1$. Collapsing separating edges and using Lemma \ref{lemma valence-1 homotopies}, we see that $\crX(G-e, l)\simeq \crX(G',l)$ where $G'$ has the same rank as $G-e$, at most as many edges and no separating edges. 
Hence, $\crX(G-e,l)\simeq \crX(G',l)$ is by induction homotopy equivalent to a wedge of $(\rank-3)$-spheres.

$\real{\crX(G,l)}$ is obtained from $\real{\crX(G,l)\bsl \set{G-e}}$ by attaching the star of $G-\nolinebreak e$ along its link. The link of $G-e$ in $\real{\crX(G,l)}$ is isomorphic to $\real{\crX(G-e,l)}$ and its star is contractible. Gluing a contractible set to an $(\rank-2)$-spherical complex along an $(\rank-3)$-spherical subcomplex results in an $(\rank-2)$-spherical complex, so the claim follows (see e.g.~\cite[Lemma 6.3]{BSV:bordificationouterspace}).
\end{proof}

\begin{proof}[Proof of \cref{homotopy type poset of core subgraphs}]
That $\crCore(G,l)$ is $(n-2)$-spherical is an immediate consequence of \cref{lem:cSubgraphstocrCores} and \cref{homotopy type poset of connected subgraphs}.
\end{proof}

\section{Relative automorphism groups of RAAGs}
In this section, we examine relative automorphism groups of right-angled Artin groups. These groups were studied in detail in  \cite{DW:Relativeautomorphismgroups} and many of the results here are either taken from the work of Day--Wade or build on their ideas.
For an overview about other literature on relative automorphism groups, see \cite[Section 6.1]{DW:Relativeautomorphismgroups}.
In this article, such relative automorphism groups occur in two ways: On the one hand, they arise as the images and kernels of restriction and projection homomorphisms, which in turn play an important role for the inductive procedure of Day--Wade; on the other hand, the parabolic subgroups we will define in \cref{section spherical complex for Out(RAAG)} are themselves relative automorphism groups of RAAGs.
For the purpose of this text, the present section mostly serves as a toolbox for the inductive proof of Theorem \ref{introduction homotopy type} in \cref{section spherical complex for Out(RAAG)}. Its main goals are to collect all the results from \cite{DW:Relativeautomorphismgroups} that we will need afterwards, to adapt them to our purposes and, maybe most importantly, to set up the language we will use later on.

\label{section ROARs}
\paragraph{Standing assumption}
From now on, all graphs that we consider will be finite and simplicial, i.e. without loops or multiple edges. To emphasise this difference to \cref{section building and relative free factor complex}, they will be denoted by Greek letters.

\subsection{RAAGs and their automorphism groups}
\label{sec RAAGs and their automorphism groups}
\paragraph{Subgraphs, links and stars}
In contrast to \cref{section building and relative free factor complex}, if we talk about a subgraph $\Delta$ of a graph $\Gamma$, we will from now on always mean a \emph{full} subgraph, i.e. if two vertices $v,w\in V(\Delta)$ are connected by an edge in $\Gamma$, they are connected in $\Delta$ as well. A full subgraph of $\Gamma$ can also be seen as a subset of the vertex set $V(\Gamma)$; we will often take this point of view, identify $\Delta$ with $V(\Delta)$ and write $\Delta\subseteq \Gamma$ or $\Delta\subset \Gamma$ if we want to emphasise that $\Delta$ is a proper subgraph of $\Gamma$.

Given a vertex $v\in V(\Gamma)$, the \emph{link} $\lk(v)$ of $v$ is the subgraph of $\Gamma$ consisting of all the vertices that are adjacent to $v$. The \emph{star} $\st(v)$ of $v$ is the subgraph of $\Gamma$ with vertex set $\set{v}\cup \lk(v)$. We also write $\lk_\Gamma(v)$ or $\st_\Gamma(v)$ if we want to distinguish between links and stars in different graphs.

\paragraph{RAAGs and special subgroups}
Given a graph $\Gamma$, the associated \emph{right-angled Artin group} --- abbreviated as RAAG --- $A_\Gamma$ is defined to be the group generated by the set $V(\Gamma)$ subject to the relations $[v,w]=1$ for all $v,w\in V(\Gamma)$ which are adjacent to each other.

Given any subgraph $\Delta\subseteq \Gamma$, the inclusion $V(\Delta)\to V(\Gamma)$ induces an injective homomorphism $A_\Delta\hookrightarrow A_\Gamma$. This allows us to interpret $A_\Delta$ as a subgroup of $A_\Gamma$. Subgroups of this type are called \emph{special subgroups} of $A_\Gamma$.

\paragraph{The standard ordering and its equivalence classes}
There is a so-called \emph{standard ordering} on the vertex set $V(\Gamma)$ that is the partial pre-order given by $v\leq w$ if and only if $\lk(v)\subseteq \st(w)$. The induced equivalence relation of this partial pre-order will be denoted by $\sim$, i.e. $v\sim w$ if and only if $v\leq w$ and $w\leq v$. The equivalence class of $v$ will be denoted by $[v]$. The standard ordering induces a partial order on the equivalence classes where we say $[v]\leq [w]$ if $v\leq w$ (this does not depend on the choice of representatives).
If two equivalent vertices $v\sim w$ are adjacent, it follows that the vertices from their equivalence class $[v]$ form a complete subgraph of $\Gamma$. In this case, the special subgroup $A_{[v]}$ is isomorphic to $\mathbb{Z}^{|[v]|}$ and we call $[v]$ an \emph{abelian} equivalence class. If on the other hand $[v]$ does not contain any pair of adjacent vertices, it can be seen as discrete subgraph of $\Gamma$. In this case, we call $[v]$ a \emph{free} equivalence class because $A_{[v]}$ is isomorphic to the free group $F_{|[v]|}$.
For more details about this ordering and the equivalence relation, see \cite{CV:Finitenesspropertiesautomorphism}.

\paragraph{Automorphisms of RAAGs}
Let $\Aut{A_\Gamma}$ and $\Out{A_\Gamma}$ denote the automorphism group and the group of outer autmorphisms of $A_\Gamma$, respectively.  By the work of Servatius \cite{Ser:Automorphismsgraphgroups} and Laurence \cite{Lau:generatingsetautomorphism}, the group $\Aut{A_\Gamma}$ is generated by the following automorphisms:
\begin{itemize}
\item \emph{Graph automorphisms.} Any automorphism of the graph $\Gamma$ gives rise to an automorphism of $A_\Gamma$ by permuting the generators of the RAAG.
\item \emph{Inversions.} Let $v\in V(\Gamma)$. The map sending $v$ to $v^{-1}$ and fixing all the other generators induces an automorphism of $\AG$. It is called an \emph{inversion} and denoted by $\iota_v$. 
\item \emph{Transvections.} Let $v,w\in V(\Gamma)$ with $v\leq w$. The \emph{transvection} $\rho_v^w$ is the automorphism of $\AG$ induced by sending $v$ to $vw$ and fixing all the other generators. We call $w$ the \emph{acting letter} of $\rho_v^w$.
\item \emph{Partial conjugations.} Let $v\in V(\Gamma)$ and $K$ a union of connected components of $\Gamma\bsl \st(v)$. The map sending every vertex $w$ of $K$ to $vwv^{-1}$ and fixing the remaining generators induces an automorphism $\pi_K^v$ of $\AG$ and is called a \emph{partial conjugation}. We call $v$ the \emph{acting letter} of $\pi_K^v$.
\end{itemize}

We will use the same notation to denote the images of these automorphisms in $\Out{\AG}$ and call these (outer) automorphisms the \emph{Laurence generators} of $\Aut{\AG}$ or $\Out{\AG}$, respectively.

The subgroup of $\Out{\AG}$ generated by all inversions, transvections and partial conjugations is denoted by $\Outo$. It is called the \emph{pure outer automorphism group} of $\AG$ and has finite index in $\Out{\AG}$. If $\AG$ is equal to $\mathbb{Z}^n$ or $F_n$, we have that $\Outo=\Out{\AG}$.

\subsection{Generators of relative automorphism groups}
\label{sec generators of ROARs}
Recall that for a group $G$ and families of subgroups $\mcG$ and $\mcH$, the group $\Out{G; \mcG, \mcH^t} $ is defined as the subgroup of $\Out{G}$ consisting of all elements stabilising each $H\in \mcG$ and acting trivially on each $H\in \mcH$ (see \cref{section relative automorphism groups}).

Given a pair $(\mcG,\mcH)$ of families of special subgroups of $A_\Gamma$, we define 
\begin{equation*}
\Roar{\AG}{\mcG, \mcH^t} \coloneqq \Out{\AG; \mcG, \mcH^t} \cap\Outo 
\end{equation*}
as the intersection of $\Out{\AG; \mcG, \mcH^t}$ with $\Outo$.  Building on the work of Laurence, Day--Wade show:

\begin{theorem}[{\cite[Theorem D]{DW:Relativeautomorphismgroups}}]
\label{generators ROARs}
If $\mcG$ and $\mcH$ are families of special subgroups of $\AG$, the group $\Roar{\AG}{\mcG, \mcH^t}$ is generated by the set of all inversions, transvections and partial conjugations of $\Out{\AG}$ it contains.
\end{theorem}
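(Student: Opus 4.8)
The plan is to prove this by a peak reduction argument, in the spirit of Whitehead's method and its RAAG analogue. By definition, $\Outo$ is generated by the set $\mathcal{L}$ of all inversions, transvections and partial conjugations, so any $\Phi \in \Roar{\AG}{\mcG, \mcH^t}$ can be written as a product $\Phi = g_m \cdots g_1$ with each $g_i \in \mathcal{L}$. Since $\Roar{\AG}{\mcG, \mcH^t}$ is obtained by intersecting $\Out{\AG; \mcG, \mcH^t}$ with $\Outo$, the generators we are permitted to use are exactly these pure generators, and the goal is to choose the word $g_m \cdots g_1$ so that every partial product $g_i \cdots g_1$ — and hence every individual factor $g_i$ — already lies in $\Roar{\AG}{\mcG, \mcH^t}$.

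To keep track of the defining conditions, I would encode the pair $(\mcG, \mcH)$ as a tuple $\mathbf{w}$ built from the vertex generating sets of the special subgroups $A_\Delta$: for each $A_\Delta \in \mcG$ record the conjugacy classes of its generators (reflecting that $\Phi$ must \emph{stabilise} $A_\Delta$), and for each $A_\Delta \in \mcH$ record the generators as honest group elements (reflecting that $\Phi$ must act \emph{trivially} on $A_\Delta$). One then equips the set of such tuples with a norm $\|\cdot\|$ measuring total cyclic word length, arranged so that $\Phi$ fixes $\mathbf{w}$ precisely when $\Phi \in \Roar{\AG}{\mcG, \mcH^t}$. In particular $\|g_i \cdots g_1 \cdot \mathbf{w}\|$ is defined for every partial product.

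The heart of the argument is a peak reduction lemma: if the sequence of norms $\|g_i \cdots g_1 \cdot \mathbf{w}\|$ has a peak, then the factor $g_{i+1}g_i$ can be rewritten as a product of Laurence generators along which the norm never exceeds the larger of the two neighbouring values. Iterating removes all peaks and yields a peak-free word representing $\Phi$. Since the special subgroups are generated by vertices, which have cyclic length one, the tuple $\mathbf{w}$ already realises the minimal norm in its $\Outo$-orbit; as both $\mathbf{w}$ and $\Phi \cdot \mathbf{w} = \mathbf{w}$ sit at this minimum and the word is peak-free, the norm must be constant along the entire word. Constancy forces every partial product $g_i \cdots g_1$ to stabilise the $\mcG$-data and fix the $\mcH$-data, i.e. to lie in $\Roar{\AG}{\mcG, \mcH^t}$; consecutive partial products differ by a single $g_i$, which therefore lies in the relative group as well, completing the proof.

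The main obstacle is establishing the peak reduction lemma in this mixed setting. It requires a careful case analysis of how pairs of Laurence generators interact — governed by the standard ordering $v \leq w$ in the case of transvections and by the component structure of $\Gamma \bsl \st(v)$ in the case of partial conjugations — while simultaneously respecting two different kinds of recorded data (conjugacy classes for $\mcG$ versus actual elements for $\mcH$) and ensuring that the rewriting stays inside $\Outo$, so that no graph automorphisms are introduced. Extending Day's peak reduction for $\Outo$, which tracks a single tuple of conjugacy classes, to handle the additional ``acts trivially'' condition and several simultaneous orbits is the technical core; once the lemma is in hand, the remaining bookkeeping is routine.
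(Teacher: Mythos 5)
Note first that the paper does not prove this statement at all: it is imported verbatim from Day--Wade as \cite[Theorem D]{DW:Relativeautomorphismgroups}, so the only proof to measure your attempt against is theirs, which indeed runs through Day's peak-reduction machinery. Your overall strategy is therefore aligned with the cited source, but your outline has a gap that is not merely ``deferred technical work'': the encoding you propose does not cut out the right subgroup, so the peak-reduction lemma you would need is not an extension of Day's --- it is ill-posed as stated. Elements of $\Roar{\AG}{\mcG,\mcH^t}$ are \emph{outer} automorphisms; they act on conjugacy classes, not on ``honest group elements'', so the $\mcH$-entries of your tuple $\mathbf{w}$ are not acted on at all. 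And for the $\mcG$-entries, fixing the conjugacy classes of the vertex generators of $A_\Delta$ is strictly weaker than stabilising $A_\Delta$: take $\Gamma$ discrete on $\set{a,b,c}$, so $\AG=F_3$, and $\Delta=\set{a,b}$; the partial conjugation $\pi^{c}_{\set{b}}$ fixes the conjugacy classes $[a]$ and $[b]$, yet by \cref{Laurence generator stabilise criterion} it neither acts trivially on $A_\Delta$ (since $\set{b}\cap\Delta\neq\emptyset$ and $\Delta\setminus\st(c)=\set{a,b}\not\subseteq\set{b}$) nor stabilises it (the acting letter $c$ is not in $\Delta$). So your asserted equivalence ``$\Phi$ fixes $\mathbf{w}$ precisely when $\Phi\in\Roar{\AG}{\mcG,\mcH^t}$'' fails for the data you chose; producing data, a norm, and a peak-reduction statement for which it holds is precisely the content of Day--Wade's proof, not routine bookkeeping.

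Independently of the encoding, your final deduction is a non sequitur. Peak-freeness together with minimality of $\|\mathbf{w}\|$ does force the norm to be constant along the word, but constancy of the norm does not force the partial products $g_i\cdots g_1$ to \emph{fix} $\mathbf{w}$ --- it only keeps them in the minimal-norm stratum of the orbit of $\mathbf{w}$. Concretely, with $\mcH=\set{A_{\set{v}}}$ the identity is represented by the norm-constant word $\iota_v\cdot\iota_v$, whose partial product $\iota_v$ does not lie in $\Roar{\AG}{\mcG,\mcH^t}$ by \cref{Laurence generator containment criterion} (it sends $[v]$ to $[v^{-1}]$, preserving length but not the class). Upgrading a norm-constant factorisation to one whose individual factors lie in the stabiliser is exactly the McCool-type step that any proof of this kind must supply, and it is delicate here because the natural correcting moves are signed permutations of the vertex data, i.e.\ graph automorphisms and inversions, the former of which are excluded from $\Outo$. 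Without that step, the conclusion that each $g_i$ lies in the relative group does not follow.
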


In order to prove this, Day and Wade give a description of the Laurence generators contained in such a relative automorphism group. To state it, we first need to set up the terminology developed in their article.

\paragraph{$\mcG$-components and $\mcG$-ordering}
Let $\mcG$ be a family of proper special subgroups of $\AG$. We say that $v,w\in V(\AG)$ are \emph{$\mcG$-adjacent} if $v$ is adjacent to $w$ or if there is some $A_\Delta\in \mcG$ such that $v,w \in \Delta$. A subgraph $\Delta\subseteq \Gamma$ is \emph{$\mcG$-connected} if for all $v,w\in \Delta$, there is a sequence of vertices in $\Delta$ which starts with $v$, ends with $w$ and such that each of its vertices is $\mcG$-adjacent to the next one. A maximal $\mcG$-connected subgraph of $\Gamma$ is called a \emph{$\mcG$-component}.

The \emph{$\mcG$-ordering} $\leq_\mcG$ on $V(\Gamma)$ is the partial pre-order defined by saying that $v\leq_\mcG w$ if and only if $v\leq w$ and for all $A_\Delta\in \mcG$, if $v\in\Delta$, one has $w\in \Delta$. The equivalence relation of this pre-order is denoted by $\sim_\mcG$, its equivalence classes by $[\cdot]_\mcG$.

Note that in the case where $\mcG=\emptyset$, a $\mcG$-component of $\Gamma$ is just a connected component and the $\mcG$-ordering is the standard ordering on $V(\Gamma)$.
\\

For $v\in V(\Gamma)$, let $\mcG^v\coloneqq\set{A_\Delta\in \mcG \mid v\not\in \Delta}$. It is easy to see that every $\mcG^v$-component of $\Gamma\bsl\st(v)$ is a union of connected components of $\Gamma\bsl\st(v)$.
Suppose that $\mcH$ is a family of special subgroups of $\AG$. The \emph{power set of $\mcH$}, denoted by $P(\mcH)$, is defined as the set of all special subgroups $A_\Delta\leq \AG$ which are contained in some element of $\mcH$. 

\begin{lemma}[{\cite[Proposition 3.9]{DW:Relativeautomorphismgroups}}]
\label{Laurence generator containment criterion}
Let $\mcG$ and $\mcH$ be families of special subgroups of $\AG$ such that $\mcG$ contains $P(\mcH)$. Let $v,w \in V(\Gamma)$ and let $K$ be a union of connected components of $\Gamma\bsl \st(v)$.
Then:
\begin{itemize}
\item The inversion $\iota_v$ is contained in $\Roar{\AG}{\mcG, \mcH^t}$ if and only if there is no subgroup $A_\Delta\in \mcH$ with $v\in \Delta$.
\item The transvection $\rho_v^w$ is contained in $\Roar{\AG}{\mcG, \mcH^t}$ if and only if $v\leq_\mcG w$.
\item The partial conjugation $\pi_K^v$ is contained in $\Roar{\AG}{\mcG, \mcH^t}$ if and only if $K$ is a union of $\mcG^v$-components of $\Gamma\bsl\st(v)$.
\end{itemize}
\end{lemma}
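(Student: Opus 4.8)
The plan is to verify directly, for each of the three types of Laurence generator, the two conditions defining membership in $\Roar{\AG}{\mcG, \mcH^t} = \Out{\AG;\mcG,\mcH^t}\cap \Outo$: since inversions, transvections and partial conjugations already lie in $\Outo$, it only remains to check when a given generator \emph{stabilises} every $A_\Delta \in \mcG$ and \emph{acts trivially} on every $A_\Delta \in \mcH$. I would use two reformulations throughout. First, an outer automorphism $\Phi=[\phi]$ stabilises $A_\Delta$ exactly when $\phi(A_\Delta)$ is conjugate to $A_\Delta$ (if $\phi(A_\Delta)=hA_\Delta h^{-1}$, replace $\phi$ by $c_{h^{-1}}\phi$), and it acts trivially on $A_\Delta$ exactly when some representative restricts to the identity on $A_\Delta$. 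Second, the hypothesis $P(\mcH)\subseteq \mcG$ is what allows the criteria to be stated purely in terms of $\mcG$: every singleton $A_{\{v\}}$ and every two-generator subgroup $A_{\{u,u'\}}$ contained in an element of $\mcH$ already lies in $\mcG$, so the ``act trivially on $\mcH$'' requirements are forced through the $\mcG$-ordering and $\mcG^v$-component conditions.

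For inversions and transvections the arguments run through the abelianisation $A_\Gamma \to \mathbb{Z}^{V(\Gamma)}$, on which conjugation acts trivially so that $\Phi^{\mathrm{ab}}$ is well defined. The inversion $\iota_v$ sends each $A_\Delta$ to itself on the nose (fixing $A_\Delta$ pointwise if $v\notin\Delta$, and mapping $v\mapsto v^{-1}$ otherwise), so it always stabilises $\mcG$; and it acts trivially on $A_\Delta$ iff $v\notin\Delta$, since a representative fixing $A_\Delta$ pointwise with $v\in\Delta$ would conjugate $v$ to $v^{-1}$, which is impossible because $\bar v\neq -\bar v$ in $\mathbb{Z}^{V(\Gamma)}$. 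This gives the first bullet. For $\rho_v^w$, which presupposes $v\leq w$, the abelianisation is the elementary map $\bar v\mapsto \bar v+\bar w$; hence if some $A_\Delta\in\mcG$ has $v\in\Delta$ but $w\notin\Delta$, the coordinate subgroup $\mathbb{Z}^{\Delta}$ cannot be preserved and $\rho_v^w$ fails to stabilise $A_\Delta$. Since $v\not\leq_\mcG w$ (given $v\leq w$) means precisely that such a $\Delta$ exists, this is the ``only if'' of the second bullet. Conversely, if $v\leq_\mcG w$ then $w\in\Delta$ whenever $v\in\Delta\in\mcG$, so $\rho_v^w$ stabilises every such $A_\Delta$; and because $A_{\{v\}}\in\mcG$ would force $w=v$, the case $w\neq v$ means $v$ lies in no element of $\mcH$, whence the standard representative fixes each $A_\Delta\in\mcH$ pointwise.

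The partial conjugation $\pi_K^v$ is the delicate case, as it is trivial on the abelianisation. For the ``if'' direction I argue directly. Consider $A_\Delta\in\mcG$. If $v\in\Delta$ then $\pi_K^v(A_\Delta)=A_\Delta$, so $\pi_K^v$ stabilises it. If $v\notin\Delta$, then $A_\Delta\in\mcG^v$ and, $K$ being a union of $\mcG^v$-components, the set $\Delta\setminus\st(v)$ lies entirely inside or entirely outside $K$ (its vertices are pairwise $\mcG^v$-adjacent through $A_\Delta$); in the outside case $\pi_K^v$ fixes $A_\Delta$ pointwise, in the inside case $c_{v^{-1}}\pi_K^v$ does. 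For $A_\Delta\in\mcH$ one needs the stronger conclusion that $\pi_K^v$ acts trivially; here $P(\mcH)\subseteq\mcG$ (via the pair subgroups $A_{\{u,u'\}}\in\mcG^v$) again forces $\Delta\setminus\st(v)$ to lie entirely inside or outside $K$, and the same two representatives $\pi_K^v$ or $c_{v^{-1}}\pi_K^v$ restrict to the identity on $A_\Delta$.

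The main obstacle is the converse for partial conjugations. Suppose $K$ is \emph{not} a union of $\mcG^v$-components; since $K$ is already a union of connected components of $\Gamma\setminus\st(v)$, the splitting must occur across a $\mcG^v$-adjacency coming from some $A_\Delta\in\mcG^v$, producing vertices $u\in\Delta\cap K$ and $u'\in(\Delta\setminus\st(v))\setminus K$. Then $\pi_K^v(A_\Delta)$ contains the fixed generator $u'$ together with $vuv^{-1}$, and I must show this subgroup is \emph{not} conjugate to $A_\Delta$, so that $\pi_K^v$ cannot stabilise $A_\Delta$. The point is that conjugating only part of $A_\Delta$ by the ``outside'' letter $v$ creates an essential occurrence of $v$ that no global conjugation can remove: the fixed letter $u'$ pins the basepoint inside the $A_\Delta$-region, so any conjugator carrying $vuv^{-1}$ back into $A_\Delta$ would have to absorb $v\notin A_\Delta$. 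I expect to make this precise using convexity and malnormality properties of special subgroups of $\AG$ (in the free case this is exactly the statement that the Stallings core graph of $\langle vuv^{-1},u'\rangle$ has strictly more vertices than that of $\langle u,u'\rangle$); turning this intuition into a clean non-conjugacy argument valid for all RAAGs is where the real work lies.
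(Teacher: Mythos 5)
Your reduction is the right one and, for three of the four halves, your argument is complete and essentially the one underlying the cited result: membership of a Laurence generator in $\Roar{\AG}{\mcG, \mcH^t}$ amounts to stabilising every $A_\Delta\in\mcG$ and acting trivially on every $A_\Delta\in\mcH$, and these per-subgroup conditions are exactly what the paper records as \cref{Laurence generator stabilise criterion} (Day--Wade's Lemma~2.2). Your abelianisation arguments for inversions and transvections are correct, as is your use of $P(\mcH)\subseteq\mcG$ (via the singletons $A_{\set{v}}$ for transvections and the pair subgroups $A_{\set{u,u'}}$ for partial conjugations) to convert the ``act trivially on $\mcH$'' requirement into conditions on $\leq_\mcG$ and on $\mcG^v$-components; this matches how the containment criterion is assembled from the per-subgroup lemma.

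However, there is a genuine gap, and you have located it yourself: the ``only if'' direction for partial conjugations. Having produced $A_\Delta\in\mcG^v$ with $u\in\Delta\cap K$ and $u'\in(\Delta\bsl\st(v))\bsl K$, you must show that $\pi_K^v$ does not stabilise $A_\Delta$, i.e.\ that $\pi_K^v(A_\Delta)$ is not conjugate to $A_\Delta$ in $\AG$; your proposal only offers the heuristic that the fixed letter $u'$ ``pins the conjugator'' and defers to unspecified convexity and malnormality properties. This non-conjugacy statement is precisely the nontrivial half of \cref{Laurence generator stabilise criterion} and is the technical core of the whole lemma. In the free case it does follow from the intersection property of free factors ($B\cap gBg^{-1}\neq 1$ implies $g\in B$ for a free factor $B$), but for general RAAGs conjugates of special subgroups can have large intersections (for instance when parts of $\Delta$ lie in stars of other vertices), so an actual argument --- via normal forms in $\AG$ or Bass--Serre theory for the relevant splittings --- is required, and none is given. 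Note also that the paper itself does not prove this statement: it cites Day--Wade's Proposition~3.9 and presents their Lemma~2.2 as the key ingredient. So what is missing from your proof is exactly the ingredient the paper imports rather than reproves; until that step is supplied, your proposal is not a proof.
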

Note that it imposes no great restriction to assume that the power set of $\mcH$ be contained in $\mcG$ because for any families $\mcG$ and $ \mcH$ of special subgroups, one has
\begin{equation*}
\Roar{\AG}{\mcG, \mcH^t} = \Roar{\AG}{\mcG\cup P(\mcH), \mcH^t}
\end{equation*}
(see \cite[Lemma 3.8]{DW:Relativeautomorphismgroups}).

The next result is the key ingredient for the proof of \cref{Laurence generator containment criterion} in \cite{DW:Relativeautomorphismgroups}. We include it here because it will allow us a more convenient description of the parabolic subgroups that we will study later on.

\begin{lemma}[{\cite[Lemma 2.2]{DW:Relativeautomorphismgroups}}]
\label{Laurence generator stabilise criterion}
Let $A_\Delta$ be a special subgroup of $\AG$.
Let $v,w \in V(\Gamma)$ and let $K$ be a union of connected components of $\Gamma\bsl \st(v)$.
Then:
\begin{itemize}
\item The inversion $\iota_v$ acts trivially on $A_\Delta$ if and only if $v\not \in \Delta$; it always stabilises $A_\Delta$.
\item The transvection $\rho_v^w$ acts trivially on $A_\Delta$ if and only if $v\not \in \Delta$; it stabilises $A_\Delta$ if it acts trivially on it or $w\in \Delta$.
\item The partial conjugation $\pi_K^v$ acts trivially on $A_\Delta$ if and only if 
\begin{equation*}
K\cap \Delta=\emptyset \text{ or } \Delta\bsl \st(x)\subseteq K;
\end{equation*}
it stabilises $A_\Delta$ if it acts trivially on it or $w\in \Delta$.
\end{itemize}
\end{lemma}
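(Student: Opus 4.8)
The plan is to reduce every claim to two elementary invariants of $\AG$. The first is the abelianisation $A_\Gamma^{\mathrm{ab}}\cong\mathbb{Z}^{V(\Gamma)}$: inner automorphisms act trivially on homology, so the map induced on $A_\Gamma^{\mathrm{ab}}$ by an automorphism depends only on its outer class, and if an outer automorphism acts trivially on $A_\Delta$ then the induced map on $\mathbb{Z}^{V(\Gamma)}$ must fix the basis vector $u$ for every $u\in\Delta$. The second is, for the acting letter $v$, the ``$v$-exponent'' homomorphism $r_v\colon \AG\to\mathbb{Z}$ that sends $v\mapsto 1$ and every other generator to $0$; this is well defined since each defining relator is mapped to $0$. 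I will also invoke Servatius' centraliser theorem: the centraliser of a generator $u$ in $\AG$ is the special subgroup $A_{\st(u)}$.

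I would dispatch the inversion and transvection together. The forward implications are immediate: if $v\notin\Delta$, then neither $\iota_v$ nor $\rho_v^w$ moves a generator of $A_\Delta$, so each already restricts to the identity on $A_\Delta$. For the converses, suppose $v\in\Delta$ and that some representative $c_g\circ\iota_v$ (respectively $c_g\circ\rho_v^w$) restricts to the identity on $A_\Delta$; passing to $A_\Gamma^{\mathrm{ab}}$ and using that $c_g$ is trivial there, the induced map would fix the basis vector $v$, yet it sends $v\mapsto -v$ (respectively $v\mapsto v+w$), a contradiction since $v\neq -v$ and $w\neq v$. The stabilisation statements are equally short: $\iota_v$ always maps $A_\Delta$ onto $A_\Delta$, and when $w\in\Delta$ both $\rho_v^w$ and its inverse $\rho_v^{w^{-1}}$ send the generators of $\Delta$ into $A_\Delta$, forcing $\rho_v^w(A_\Delta)=A_\Delta$.

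The partial conjugation $\pi_K^v$ is the substantive case. For the forward direction I split into the two hypotheses. If $K\cap\Delta=\emptyset$, then $\pi_K^v$ fixes every generator of $A_\Delta$ and restricts to the identity. If instead $\Delta\bsl\st(v)\subseteq K$, I correct by the inner automorphism $c_{v^{-1}}$: a generator $u\in\Delta\cap\st(v)$ commutes with $v$ and is fixed by $\pi_K^v$, while a generator $u\in\Delta\cap K$ is sent to $vuv^{-1}$; in both cases $c_{v^{-1}}\circ\pi_K^v$ fixes $u$, so this representative restricts to the identity on $A_\Delta$. For the converse I assume $K\cap\Delta\neq\emptyset$ and $\Delta\bsl\st(v)\not\subseteq K$ and choose $a\in K\cap\Delta$ and $b\in(\Delta\bsl\st(v))\bsl K$; since $a,b\notin\st(v)$ we have $v\notin\st(a)$ and $v\notin\st(b)$. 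If some $c_g\circ\pi_K^v$ restricted to the identity on $A_\Delta$, then evaluating on the fixed generator $b$ gives $gbg^{-1}=b$, so $g\in A_{\st(b)}$ by the centraliser theorem and hence $r_v(g)=0$; evaluating on $a\in K$ gives $(gv)a(gv)^{-1}=a$, so $gv\in A_{\st(a)}$ and hence $r_v(gv)=0$. This is absurd, because $r_v(gv)=r_v(g)+1=1$. Finally, if $v\in\Delta$ then $\pi_K^v$ sends each generator of $\Delta$ into $A_\Delta$ and so stabilises it by the same argument as for transvections.

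The only genuine obstacle is the converse for partial conjugations. Unlike inversions and transvections, a partial conjugation acts trivially on $A_\Gamma^{\mathrm{ab}}$, so abelianisation carries no information; the finer homomorphism $r_v$ together with the centraliser theorem is exactly what is needed to separate the conjugated part $\Delta\cap K$ from the fixed part $\Delta\cap\st(v)$. The point to verify carefully is that $a$ and $b$ lie outside $\st(v)$ so that $v\notin\st(a)\cup\st(b)$, which is precisely what the hypotheses $a\in K\subseteq\Gamma\bsl\st(v)$ and $b\in\Delta\bsl\st(v)$ provide.
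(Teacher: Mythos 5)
Your proposal is correct, but note that there is nothing in the paper to compare it against: the lemma is stated as a quotation of Day--Wade \cite[Lemma 2.2]{DW:Relativeautomorphismgroups} and the paper supplies no proof of its own. Judged on its merits, your argument is sound and self-contained. The easy directions (generators not touching $\Delta$ restrict to the identity; the explicit stabilisation computations) are fine, and the two tools you bring in do exactly the work needed for the converses: abelianisation kills the inner ambiguity for inversions and transvections (since $e_v\mapsto -e_v$ and $e_v\mapsto e_v+e_w$ cannot fix $e_v$), and for partial conjugations --- where abelianisation is blind --- the combination of Servatius' centraliser theorem with the exponent homomorphism $r_v$ correctly rules out a representative $c_g\circ\pi_K^v$ fixing both $b\in(\Delta\bsl\st(v))\bsl K$ and $a\in K\cap\Delta$: from $g\in A_{\st(b)}$ and $gv\in A_{\st(a)}$ with $v\notin\st(a)\cup\st(b)$ you get the contradiction $1=r_v(gv)=r_v(g)=0$. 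You also handled the boundary cases correctly (the representative $c_{v^{-1}}\circ\pi_K^v$ when $\Delta\bsl\st(v)\subseteq K$, including the possibility $v\in\Delta$), and you silently repaired the two typos in the statement as printed ($\st(x)$ should read $\st(v)$, and the stabilisation hypothesis in the third bullet should be $v\in\Delta$, not $w\in\Delta$), proving the intended statement. This is essentially the natural argument, and it is in the same spirit as the centraliser-based reasoning in Day--Wade; your use of $r_v$ to finish is a clean way to close the case analysis.
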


\subsection{Restriction and projection homomorphisms}
Let $O$ be a subgroup of $\Out{\AG}$. If the special subgroup $A_\Delta\leq A_\Gamma$ is stabilised by $O$, there is a \emph{restriction homomorphism}
\begin{equation*}
R_\Delta\colon  O \to \Out{A_\Delta},
\end{equation*}
where $R_\Delta(\Phi)$ is the outer automorphism given by taking a representative $\phi\in \Phi$ that sends $A_\Delta$ to itself and restricting it to $A_\Delta$.
If the normal subgroup $\langle \langle A_\Delta \rangle\rangle$ generated by $A_\Delta$ is stabilised by $O$, there is a \emph{projection homomorphism}
\begin{equation*}
P_{\Gamma\bsl\Delta}\colon  O \to \Out{A_{\Gamma\bsl \Delta}},
\end{equation*}
which is induced by the quotient map
\begin{equation*}
\AG \to \AG/\langle \langle A_\Delta \rangle\rangle \cong A_{\Gamma\bsl \Delta}.
\end{equation*}

Restriction and projection maps were first defined in \cite{CCV:Automorphisms2dimensional} and have since become an important tool for studying automorphism groups of RAAGs via inductive arguments.

\subsubsection{Generators of image and kernel}
Day--Wade obtained a complete description of the image and kernel of restriction homomorphisms.
Again let $\mcG$ and $\mcH$ be families of special subgroups of $A_\Gamma$. We say that $\mcG$ is \emph{saturated with respect to $(\mcG,\mcH)$}, if it contains every proper special subgroup stabilised by $\Roar{\AG}{\mcG, \mcH^t}$. Given a special subgroup $A_\Delta\leq A_\Gamma$, set
\begin{equation*}
\mcG_\Delta\coloneqq \set{A_{\Delta\cap \Theta} \mid A_\Theta\in \mcG}.
\end{equation*}
We define $\mcH_\Delta$ analogously.

\begin{theorem}[{\cite[Theorem E]{DW:Relativeautomorphismgroups}}]
\label{image and kernel of restriction homomorphism}
Let $\mcG$ be saturated with respect to $(\mcG,\mcH)$ and let $A_\Delta\in \mcG$. The restriction homomorphism
\begin{equation*}
R_\Delta\colon \Roar{\AG}{\mcG, \mcH^t} \to \Out{A_\Delta}
\end{equation*}
has image equal to
\begin{equation*}
\im R_\Delta= \Roar{A_\Delta}{\mcG_\Delta, \mcH_\Delta^t}
\end{equation*}
and kernel equal to
\begin{equation*}
\ker R_\Delta= \Roar{\AG}{\mcG, (\mcH\cup \set{A_\Delta})^t}.
\end{equation*}
\end{theorem}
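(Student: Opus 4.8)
The plan is to dispose of the kernel first, where things follow quickly from the definitions, and then to attack the image through the Laurence generators, using \cref{generators ROARs} together with the two criteria \cref{Laurence generator containment criterion} and \cref{Laurence generator stabilise criterion}.

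For the kernel, I would argue as follows. An element $\Phi\in\Roar{\AG}{\mcG,\mcH^t}$ lies in $\ker R_\Delta$ exactly when its restriction to $A_\Delta$ is inner. Since $A_\Delta\in\mcG$, every such $\Phi$ stabilises $A_\Delta$, so I may pick a representative $\phi$ with $\phi(A_\Delta)=A_\Delta$. If $R_\Delta(\Phi)=1$, this restriction equals $\operatorname{ad}(a)$ for some $a\in A_\Delta$; replacing $\phi$ by $\operatorname{ad}(a)^{-1}\phi$ — which lies in the same outer class of $\AG$ because $a\in A_\Delta\leq\AG$, still stabilises $A_\Delta$, and now restricts to the identity on $A_\Delta$ — shows that $\Phi$ acts trivially on $A_\Delta$. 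The reverse inclusion is immediate, so $\ker R_\Delta$ consists of exactly those elements of $\Roar{\AG}{\mcG,\mcH^t}$ that additionally act trivially on $A_\Delta$, which is by definition $\Roar{\AG}{\mcG,(\mcH\cup\set{A_\Delta})^t}$.

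For the image I would work generator-by-generator. By \cref{generators ROARs}, $\Roar{\AG}{\mcG,\mcH^t}$ is generated by the Laurence generators it contains; since $A_\Delta\in\mcG$, each such generator $g$ stabilises $A_\Delta$, so \cref{Laurence generator stabilise criterion} guarantees that $R_\Delta(g)$ is well defined and is itself either trivial or again a Laurence generator of $A_\Delta$ of the same type (an inversion $\iota_v$ or transvection $\rho_v^w$ with $v,w\in\Delta$, or a partial conjugation cut out by $K\cap\Delta$). Hence $\im R_\Delta$ is generated by these restricted generators, and it remains to match them against the Laurence generators of $\Roar{A_\Delta}{\mcG_\Delta,\mcH_\Delta^t}$ produced by \cref{generators ROARs} applied to $A_\Delta$. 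For the inclusion $\im R_\Delta\subseteq\Roar{A_\Delta}{\mcG_\Delta,\mcH_\Delta^t}$ I would simply transport the criteria of \cref{Laurence generator containment criterion}: the condition $v\leq_\mcG w$ descends to $v\leq_{\mcG_\Delta}w$, a union of $\mcG^v$-components meets $\Delta$ in a union of the corresponding components of $\Delta\bsl\st(v)$, and the inversion condition passes to the subfamilies $\mcG_\Delta,\mcH_\Delta$; each implication is routine because intersecting a special subgroup with $A_\Delta$ corresponds to intersecting its vertex set with $\Delta$.

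The main obstacle is the reverse inclusion $\Roar{A_\Delta}{\mcG_\Delta,\mcH_\Delta^t}\subseteq\im R_\Delta$, i.e.\ lifting each Laurence generator $\bar g$ of $A_\Delta$ satisfying the $(\mcG_\Delta,\mcH_\Delta)$-criterion to a Laurence generator $g$ of $\AG$ satisfying the $(\mcG,\mcH)$-criterion with $R_\Delta(g)=\bar g$. The $\mcG$-part lifts for free (if $A_\Theta\in\mcG$ and $v\in\Theta$ then $v\in\Delta\cap\Theta$, forcing $w\in\Delta\cap\Theta\subseteq\Theta$), so the genuine difficulty is comparing the standard ordering and the component structure of $\Delta$ with those of the ambient graph $\Gamma$: a priori $\lk_\Delta(v)\subseteq\st_\Delta(w)$ need not imply $\lk_\Gamma(v)\subseteq\st_\Gamma(w)$, and a component of $\Delta\bsl\st(v)$ need not extend to a $\mcG^v$-component of $\Gamma\bsl\st(v)$. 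This is precisely where the hypothesis that $\mcG$ is \emph{saturated with respect to $(\mcG,\mcH)$} enters: saturation forces $\mcG$ to contain every proper special subgroup stabilised by $\Roar{\AG}{\mcG,\mcH^t}$, which pins down enough of the ordering and component data of $\Gamma$ along $\Delta$ to guarantee the required lifts. I would therefore isolate a separate compatibility lemma — that under saturation $v\leq_{\mcG_\Delta}w$ implies $v\leq_\mcG w$, and that the components of $\Delta\bsl\st(v)$ relevant to $\mcG_\Delta$ are restrictions of $\mcG^v$-components of $\Gamma\bsl\st(v)$ — as the technical heart of the proof, after which the remaining bookkeeping closes by a final application of \cref{Laurence generator containment criterion} in $\AG$.
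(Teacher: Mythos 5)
This statement is not proved in the paper at all: it is quoted as Theorem E of Day--Wade, and the only thing the paper records about its proof is the remark immediately following it, namely that restriction maps send Laurence generators to Laurence generators of the same type (or to the identity), and that Day--Wade establish the converse --- every Laurence generator of $\im R_\Delta$ lifts to a Laurence generator of $\Roar{\AG}{\mcG, \mcH^t}$. Your outline follows exactly this strategy, so structurally you are aligned with the actual argument. Moreover, your kernel computation is complete and correct (the correction of a representative by $\operatorname{ad}(a)^{-1}$ with $a \in A_\Delta$ is precisely the right move), and the forward inclusion $\im R_\Delta \subseteq \Roar{A_\Delta}{\mcG_\Delta, \mcH_\Delta^t}$, done generator by generator via \cref{generators ROARs}, \cref{Laurence generator containment criterion} and \cref{Laurence generator stabilise criterion}, is routine in the way you describe.

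The gap is in the reverse inclusion, which is the entire content of Theorem E: you reduce it to a compatibility lemma and then assert that saturation ``pins down enough of the ordering and component data'' to make it true. That assertion \emph{is} the theorem; restating the definition of saturation does not prove it. Concretely, two statements are needed: (a) for $v,w\in\Delta$, $v\leq_{\mcG_\Delta}w$ implies $v\leq_\mcG w$; and (b) every union of $\mcG_\Delta^v$-components of $\Delta\bsl\st_\Delta(v)$ is the trace on $\Delta$ of a union of $\mcG^v$-components of $\Gamma\bsl\st_\Gamma(v)$, so that partial conjugations lift. Statement (a) can in fact be closed with tools available in this paper: by \cref{stabilisation of cones} the conical subgroup $A_{\succeq v}$ is stabilised by all of $\Roar{\AG}{\mcG,\mcH^t}$, so either $\Gamma_{\succeq v}=\Gamma$ (and (a) is vacuous) or saturation forces $A_{\succeq v}\in\mcG$, hence $A_{\Gamma_{\succeq v}\cap\Delta}\in\mcG_\Delta$, and $v\leq_{\mcG_\Delta}w$ then yields $w\in\Gamma_{\succeq v}$, i.e.\ $v\preceq w$; this is the fact the paper later imports as part of \cref{partial order in image and kernel} from Day--Wade's Proposition 4.1. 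Statement (b), however, admits no such one-line argument: two vertices of $\Delta\bsl\st_\Delta(v)$ lying in distinct $\mcG_\Delta^v$-components of $\Delta\bsl\st_\Delta(v)$ could a priori be joined by a $\mcG^v$-path running through vertices of $\Gamma\bsl\Delta$, and ruling this out under saturation is the technical core of Day--Wade's proof. Until (b) is actually established, your proposal is a correct skeleton whose load-bearing step is missing.
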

It is not hard to see that both restriction and projection maps send each Laurence generator either to the identity or to a Laurence generator of the same type. For the proof of \cref{image and kernel of restriction homomorphism}, Day--Wade show that for restriction maps, a converse of this is true as well: Every Laurence generator in $\im R_\Delta$ is given as the restriction of a Laurence generator of $\Roar{\AG}{\mcG, \mcH^t}$.

In general, image and kernel of projection homomorphisms are more difficult to describe. However, we will only need to consider them in a special case:
The center $Z(\AG)$ of $A_\Gamma$ is generated by all vertices $z\in V(\Gamma)$ such that $\st(z)=\Gamma$. If $Z(\AG)$ is non-trivial, these vertices form an abelian equivalence class $Z\coloneqq [z]$ and $\Gamma$ can be written as a join $\Gamma=Z\ast\Delta$ where $\Delta=\Gamma\bsl Z$.
If we have a graph of this form, the center $Z(\AG)=A_Z$ is a normal subgroup which is stabilised by all of $\Out{\AG}$. Hence, there is a projection map
\begin{equation*}
P_{\Delta}\colon  \Out{\AG} \to \Out{A_{\Delta}}.
\end{equation*}
The image of this projection map can be described very similar to the the one of a restriction map. In fact, the situation in this special case is even easier as we do not even need to assume any kind of saturation for our families of special subgroups:
\begin{lemma}
\label{image of projection map}
Assume that $\Gamma$ can be decomposed as a join $\Gamma=Z\ast\Delta$ where $Z$ is a complete graph.
Let $\mcG$ and $\mcH$ be any two families of special subgroups of $\AG$ and let $A_Z\in \mcG$. The projection homomorphism
\begin{equation*}
P_\Delta\colon \Roar{\AG}{\mcG, \mcH^t} \to \Out{A_\Delta}
\end{equation*}
has image equal to
\begin{equation*}
\im P_\Delta= \Roar{A_\Delta}{\mcG_\Delta, \mcH_\Delta^t}.
\end{equation*}
\end{lemma}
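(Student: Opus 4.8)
The plan is to compute the image on the level of Laurence generators. First observe that since $\Gamma = Z\ast\Delta$ with $Z$ complete, the subgroup $A_Z$ is exactly the centre $Z(\AG)$, so $\langle\langle A_Z\rangle\rangle = A_Z$ and the join decomposition realises $\AG$ as a direct product $A_Z\times A_\Delta$, with $P_\Delta$ the quotient map killing $A_Z$. I would first reduce to the case $P(\mcH)\subseteq\mcG$: by \cite[Lemma 3.8]{DW:Relativeautomorphismgroups} one has $\Roar{\AG}{\mcG,\mcH^t} = \Roar{\AG}{\mcG\cup P(\mcH),\mcH^t}$, and on the target side every element of $P(\mcH)_\Delta$ lies in $P(\mcH_\Delta)$, so by \cite[Lemma 3.8]{DW:Relativeautomorphismgroups} applied to $A_\Delta$ the passage from $\mcG$ to $\mcG\cup P(\mcH)$ changes neither $\Roar{\AG}{\mcG,\mcH^t}$ nor $\Roar{A_\Delta}{\mcG_\Delta,\mcH_\Delta^t}$. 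We may therefore assume $\mcG\supseteq P(\mcH)$ and apply \cref{Laurence generator containment criterion} on both $\AG$ and $A_\Delta$.

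By \cref{generators ROARs}, the group $\Roar{\AG}{\mcG,\mcH^t}$ is generated by the Laurence generators it contains; since $P_\Delta$ is a homomorphism, $\im P_\Delta$ is generated by the images of these generators. I would next record the effect of $P_\Delta$ on a Laurence generator: any generator whose moving or acting letter lies in $Z$ is sent to the identity (an inversion $\iota_z$ with $z\in Z$, a transvection $\rho_v^w$ with $v\in Z$ or $w\in Z$; for $v\in Z$ there is no partial conjugation, as $\st_\Gamma(v)=\Gamma$), whereas an inversion $\iota_v$, transvection $\rho_v^w$ or partial conjugation $\pi_K^v$ all of whose data lies in $\Delta$ is sent to the Laurence generator of $A_\Delta$ carrying the same data. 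Here I use the identities $\st_\Gamma(v)=\st_\Delta(v)\cup Z$ and $\lk_\Gamma(v)=\lk_\Delta(v)\cup Z$ for $v\in\Delta$, which give $\Gamma\bsl\st_\Gamma(v)=\Delta\bsl\st_\Delta(v)$, so that $K$ is a union of connected components of $\Gamma\bsl\st_\Gamma(v)$ if and only if it is a union of connected components of $\Delta\bsl\st_\Delta(v)$.

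The heart of the argument is to show that, for a Laurence generator $g$ with data in $\Delta$, one has $g\in\Roar{\AG}{\mcG,\mcH^t}$ if and only if $P_\Delta(g)\in\Roar{A_\Delta}{\mcG_\Delta,\mcH_\Delta^t}$; this identifies the nontrivial images of $P_\Delta$ exactly with the Laurence generators of $\Roar{A_\Delta}{\mcG_\Delta,\mcH_\Delta^t}$, so that \cref{generators ROARs} applied to $A_\Delta$ yields the claimed equality of groups. I would verify the three clauses of \cref{Laurence generator containment criterion} in parallel. For inversions and transvections the point is that, since $v,w\in\Delta$, one has $v\in\Theta\Leftrightarrow v\in\Delta\cap\Theta$ for every $A_\Theta\in\mcG$ (and likewise for $w$ and for $\mcH$); hence the $\mcH$-membership condition, the standard order ($\lk_\Delta(v)\subseteq\st_\Delta(w)$ is equivalent to $\lk_\Gamma(v)\subseteq\st_\Gamma(w)$, as $Z$ lies in every relevant star), and the $\mcG$-order $v\leq_\mcG w$ translate exactly into the conditions defining $v\leq_{\mcG_\Delta}w$. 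For partial conjugations I would check that, on the common vertex set $\Gamma\bsl\st_\Gamma(v)=\Delta\bsl\st_\Delta(v)$, two vertices are $\mcG^v$-adjacent precisely when they are $(\mcG_\Delta)^v$-adjacent: an edge of $\Gamma$ between two $\Delta$-vertices is an edge of $\Delta$, and co-membership in some $A_\Theta\in\mcG^v$ is the same as co-membership in $A_{\Delta\cap\Theta}\in(\mcG_\Delta)^v$ for vertices of $\Delta$. Thus the $\mcG^v$- and $(\mcG_\Delta)^v$-components coincide, and being a union of such components is the same condition on either side.

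The main obstacle is this last comparison of $\mcG$-components, together with the bookkeeping in the $P(\mcH)\subseteq\mcG$ reduction; everything else is a direct translation of the containment criterion across the join. The hypothesis $A_Z\in\mcG$ is harmless in this scheme, since $A_{\Delta\cap Z}=\set{1}$ imposes no condition on $\mcG_\Delta$; it merely records that the centre is among the stabilised subgroups and keeps the statement parallel to \cref{image and kernel of restriction homomorphism}.
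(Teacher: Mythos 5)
Your proof is correct and follows essentially the same route as the paper's: the same reduction to $P(\mcH)\subseteq\mcG$ via \cite[Lemma 3.8]{DW:Relativeautomorphismgroups}, followed by matching the Laurence generators of $\Roar{A_\Delta}{\mcG_\Delta,\mcH_\Delta^t}$ with the same-data generators of $\Roar{\AG}{\mcG,\mcH^t}$ using \cref{generators ROARs}, \cref{Laurence generator containment criterion}, the join identities $\lk_\Gamma(v)=\lk_\Delta(v)\cup Z$ and $\st_\Gamma(w)=\st_\Delta(w)\cup Z$, and the coincidence of $\mcG^v$-components with $(\mcG_\Delta)^v$-components. The only differences are cosmetic: the paper obtains the inclusion $\im P_\Delta\subseteq \Roar{A_\Delta}{\mcG_\Delta,\mcH_\Delta^t}$ directly from the definitions rather than through generators, and your aside that $A_Z$ is exactly the centre is unnecessary and can fail (e.g.\ when a vertex of $\Delta$ is also adjacent to all other vertices) --- only centrality of $A_Z$ is actually used.
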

\begin{proof}
The inclusion ``$\subseteq$'' follows immediately from the definitions.

For the other inclusion, we start by defining $\tilde{\mcG}\coloneqq \mcG\cup P(\mcH)$ as the union of $\mcG$ and the power set of $\mcH$. As observed above, we have 
\begin{equation*}
O\coloneqq \Roar{\AG}{\mcG, \mcH^t}=\Roar{\AG}{\tilde{\mcG}, \mcH^t}.
\end{equation*}
Furthermore, $\tilde{\mcG}_\Delta=\mcG_\Delta \cup P(\mcH_\Delta)$, so we also have 
\begin{equation}
\label{equation added power set}
O_\Delta \coloneqq \Roar{A_\Delta}{\mcG_\Delta, \mcH_\Delta^t}=\Roar{A_\Delta}{\tilde{\mcG}_\Delta, \mcH_\Delta^t}.
\end{equation}

By \cref{generators ROARs}, we know that $O_\Delta$ is generated by the inversions, transvections and partial conjugations it contains. Hence, it suffices to find a preimage under $P_\Delta$ for each of those generators. Combining \cref{equation added power set} with \cref{Laurence generator containment criterion}, we have a complete description of the generators in $O_\Delta$. In what follows, we will use this description to construct the preimages one generator at a time.

The inversion $\iota_v$ is contained in $O_\Delta$ if and only if $v\in\Delta$ and there is no $A_{\Delta'}\in \mcH_\Delta$ such that $v\in\Delta'$. However, this implies that there is no $A_{\Delta'}\in \mcH$ with $v\in\Delta'$, so the inversion at $v$ is an element of $O$. It will be denoted by $\bar{\iota}_v$ and gets mapped to $\iota_v$ under $P_\Delta$.

If one has a transvection $\rho_v^w\in O_\Delta$, \cref{Laurence generator containment criterion} implies that $v\leq_{\tilde{\mcG}_{\Delta}}w$, i.e. $lk_\Delta(v)\subseteq st_\Delta(w)$ and for each $A_{\Delta'}\in \tilde{\mcG}_\Delta$, one has that $v\in \Delta'$ implies $w\in\Delta'$. We want to show that $v\leq_{\tilde{\mcG}}w$. As $\Gamma$ is a join $Z\ast\Delta$, the link and star of $v$ and $w$ in $\Gamma$ are of the form
\begin{align*}
\lk_\Gamma(v)=\lk_\Delta(v)\cup Z, && \st_\Gamma(w)=\st_\Delta(w)\cup Z.
\end{align*}
In particular, $\lk_\Gamma(v)\subseteq \st_\Gamma(w)$. The vertex $v$ cannot be contained in any $\Delta'$ with $A_{\Delta'}\in P(\mcH)$ as this would imply $A_{\set{v}} \in P(\mcH_\Delta)\subseteq \tilde{\mcG}_\Delta$, contradicting the assumption that $v\leq_{\tilde{\mcG}_{\Delta}}w$.
Now take $A_{\Delta'}\in \mcG$ such that $v\in\Delta'$. 
 If $\Delta\subseteq\Delta'$, both $v$ and $w$ are contained in $\Delta'$. If on the other hand $\Delta\cap \Delta'$ is a proper subset of $\Delta$, one has $A_{\Delta\cap \Delta'}\in \mcG_{\Delta}\subseteq \tilde{\mcG}_{\Delta}$, so $w\in \Delta'$. 
It follows that $v\leq_{\tilde{\mcG}} w$, so the transvection multiplying $v$ by $w$ defines an element of $O$. It will be denoted by $\bar{\rho}_v^w\in O$ and is a preimage of $\rho_v^w$.

Again using \cref{Laurence generator containment criterion}, the partial conjugation $\pi_K^v$ is contained in $O_\Delta$ if and only if $v\in \Delta$ and $K$ is a union of $\tilde{\mcG}_\Delta^v$-components of $\Delta\bsl \st(v)$. 
We claim that every $\tilde{\mcG}_\Delta^v$-component $C$ of $\Delta\bsl \st_\Delta(v)$ is also a $\tilde{\mcG}^v$-component of $\Gamma\bsl \st_\Gamma(v)$. To see this, first recall that each element of $Z$ is connected to every vertex of $\Gamma$, so $\Gamma\bsl \st_\Gamma(v) = \Delta\bsl \st_\Delta(v)$. Furthermore, it follows right from the definitions that two vertices $x,y\in \Delta\bsl \st_\Delta(v)$ are $\tilde{\mcG}_\Delta$-adjacent in $\Delta\bsl \st_\Delta(v)$ if and only if they are $\tilde{\mcG}$-adjacent in $\Gamma\bsl \st_\Gamma(v)$. The claim follows and implies that the partial conjugation of $K$ by $v$ defines an element of $O$.
As above, it will be denoted by $\bar{\pi}_{K}^v$ and we note that it is a preimage of $\pi_{K}^v$.
\end{proof}
\subsubsection{Relative orderings in image and kernel}
\paragraph{Standing assumptions and notation} From now on and until the end of \cref{section ROARs}, let $O\coloneqq \Outo[A_\Gamma; \mcG, \mcH^t]$ where $\mcG$ and $\mcH$ are families of special subgroups of $\AG$ such that $\mcG$ is saturated with respect to $(\mcG,\mcH)$; note that saturation implies that $P(\mcH)\subseteq \mcG$. Set $\preceq\coloneqq \leq_\mcG$ to be the $\mcG$-ordering on $V(\Gamma)$.
\\

\begin{remark}
\label{remark relative orderings and saturation}
Given an arbitrary relative automorphism group, there might be several ways of ``representing'' this group by families of subgroups that are stabilised or acted trivially upon. I.e., we might have
\begin{equation*}
\Roar{\AG}{\mcG_1, \mcH_1^t} = \Roar{\AG}{\mcG_2, \mcH_2^t}
\end{equation*}
with $(\mcG_1, \mcH_1)\not = (\mcG_2, \mcH_2)$.
However, if in this situation, we have both $P(\mcH_1) \subseteq \nolinebreak\mcG_1$ and $P(\mcH_2) \subseteq \mcG_2$, the orderings $\leq_{\mcG_1}$ and $\leq_{\mcG_2}$ agree: By \cref{Laurence generator containment criterion}, for every $v,w\in V(\Gamma)$, there is a chain of equivalences
\begin{align*}
v \leq_{\mcG_1} w & \Leftrightarrow  \rho_v^w \in \Roar{\AG}{\mcG_1, \mcH_1^t} = \Roar{\AG}{\mcG_2, \mcH_2^t}\\
& \Leftrightarrow v \leq_{\mcG_2} w .
\end{align*}
In particular, the ordering $\leq_\mcG$ of $V(\AG)$ where $\mcG$ is saturated with respect to $(\mcG,\mcH)$ is an invariant of the group $\Roar{\AG}{\mcG, \mcH^t}$; it depends on the transvections contained in this group but not on any other choices.
\end{remark}

As mentioned above, a restriction homomorphism maps every transvection that is not contained in its kernel to a transvection of the same type. The con\-se\-quen\-ces for the relative ordering in image and kernel are as follows:

\begin{lemma}
\label{partial order in image and kernel}
Let $A_\Delta\in \mcG$ be a special subgroup that is stabilised by $O$ and let $R_\Delta$ denote the corresponding restriction homomorphism. If we write
\begin{equation*}
\im R_\Delta = \Roar{A_\Delta}{\mcG_{\im},\mcH_{\im}^t} \ \text{ and } \ker R_\Delta = \Roar{A_\Gamma}{\mcG_{\ker},\mcH_{\ker}^t}
\end{equation*}
with $\mcG_{\im}$ and $\mcG_{\ker}$ saturated with respect to $(\mcG_{\im},\mcH_{\im})$ and $(\mcG_{\ker},\mcH_{\ker})$, respectively, the following holds true:
\begin{enumerate}
\item
For $v,w\in \Delta$, one has $v\leq_{\mcG_{\im}} w$ if and only if $v\preceq w$.
\item
For $v\not= w\in V(\Gamma)$, one has $v\leq_{\mcG_{\ker}} w$ if and only if $v\in V(\Gamma)\bsl \Delta$ and $v\preceq w$.
\end{enumerate}
\end{lemma}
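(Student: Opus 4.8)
The plan is to reduce both parts to a single principle from Remark \ref{remark relative orderings and saturation}: whenever a relative automorphism group is written with its first family containing the power set of its second, the associated relative ordering is detected by transvections, in the sense that $v \leq_{\mcG'} w$ holds if and only if the transvection $\rho_v^w$ lies in the group. Since $\mcG_{\im}$ and $\mcG_{\ker}$ are saturated (and saturation forces $P(\mcH_{\im}) \subseteq \mcG_{\im}$ and $P(\mcH_{\ker}) \subseteq \mcG_{\ker}$), this applies to $\im R_\Delta$ and $\ker R_\Delta$, and it applies to $O = \Roar{\AG}{\mcG, \mcH^t}$ as well. Thus the whole lemma amounts to comparing transvection membership across these three groups.

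For part (1), fix $v, w \in \Delta$. By \cref{Laurence generator stabilise criterion}, if the transvection $\rho_v^w$ of $\AG$ exists it stabilises $A_\Delta$ (as $w \in \Delta$) and its restriction is the transvection $\rho_v^w$ of $A_\Delta$; hence $\rho_v^w \in O$ implies $\rho_v^w \in \im R_\Delta$. For the converse I invoke the two facts recorded in the discussion following \cref{image and kernel of restriction homomorphism}: every Laurence generator of $\im R_\Delta$ is the restriction of a Laurence generator of $O$, and restriction sends each Laurence generator to the identity or to a generator of the same type. Since a transvection of $A_\Delta$ can only arise as the restriction of the transvection of $\AG$ with the same two letters, membership of $\rho_v^w$ (in $A_\Delta$) in $\im R_\Delta$ forces $\rho_v^w$ (in $\AG$) into $O$. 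Therefore $\rho_v^w \in \im R_\Delta$ if and only if $\rho_v^w \in O$, and applying the transvection principle on both sides yields $v \leq_{\mcG_{\im}} w \Leftrightarrow v \preceq w$.

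For part (2), I use the explicit kernel description $\ker R_\Delta = \Roar{\AG}{\mcG, (\mcH \cup \set{A_\Delta})^t}$ from \cref{image and kernel of restriction homomorphism}. Enlarging $\mcG$ to $\mcG^+ \coloneqq \mcG \cup P(\set{A_\Delta})$ changes neither the group nor, by \cref{remark relative orderings and saturation}, the relative ordering, but now $\mcG^+$ contains the power set of $\mcH \cup \set{A_\Delta}$, so \cref{Laurence generator containment criterion} gives $\rho_v^w \in \ker R_\Delta \Leftrightarrow v \leq_{\mcG^+} w$. Unwinding $\leq_{\mcG^+}$: the constraints coming from $A_\Theta \in \mcG$ together with $\lk_\Gamma(v) \subseteq \st_\Gamma(w)$ are exactly the statement $v \preceq w$, while the constraints coming from $A_\Theta \in P(\set{A_\Delta})$, i.e.\ from $\Theta \subseteq \Delta$, assert that no such $\Theta$ contains $v$ without containing $w$. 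For $v \neq w$ the latter fails precisely when $v \in \Delta$ (take $\Theta = \set{v}$) and holds vacuously when $v \notin \Delta$. Combining, $v \leq_{\mcG_{\ker}} w \Leftrightarrow v \in V(\Gamma) \setminus \Delta$ and $v \preceq w$.

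The delicate point is in part (1): a priori the standard ordering on $\Delta$ is coarser than the one on $\Gamma$, so one cannot directly identify $\leq_{\mcG_\Delta}$ (with $\mcG_\Delta$ as in \cref{image and kernel of restriction homomorphism}) with the restriction of $\preceq$, since $v \leq_\Delta w$ need not imply $v \leq_\Gamma w$. Routing the comparison through transvection membership is what resolves this: being the restriction of a Laurence generator of $O$ forces the ambient transvection $\rho_v^w$ of $\AG$ to exist, and hence $\lk_\Gamma(v)\subseteq \st_\Gamma(w)$, which is why the transvection principle is the right tool rather than a head-on unwinding of $\mcG_\Delta$.
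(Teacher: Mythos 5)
Your proposal is correct, and both parts go through, but part (1) takes a genuinely different route from the paper. The paper's proof of (1) is essentially a citation: since $\mcG$ is saturated, $\im R_\Delta = \Roar{A_\Delta}{\mcG_\Delta, \mcH_\Delta^t}$ by \cref{image and kernel of restriction homomorphism}, then \cite[Proposition 4.1]{DW:Relativeautomorphismgroups} is invoked to give $v\leq_{\mcG_\Delta} w \Leftrightarrow v\preceq w$, and \cref{remark relative orderings and saturation} converts $\leq_{\mcG_\Delta}$ into $\leq_{\mcG_{\im}}$. You never use that proposition; instead you characterise all three orderings by transvection membership via \cref{Laurence generator containment criterion} and transfer membership across $R_\Delta$ using the two facts the paper records in prose after \cref{image and kernel of restriction homomorphism} (restriction sends Laurence generators to the identity or to a generator of the same type, and every Laurence generator of $\im R_\Delta$ is the restriction of a Laurence generator of $O$). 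In effect you re-prove the cited Day--Wade proposition in the transvection case, and your closing remark correctly isolates why this detour is needed: the $\mcG_\Delta$-ordering is built from links and stars in $\Delta$, which are smaller than those in $\Gamma$, so a head-on identification with the restriction of $\preceq$ is not available. What the paper's approach buys is brevity; what yours buys is an argument that is self-contained modulo facts explicitly quoted in the text, at the cost of leaning on the image-generator fact, which the paper attributes to Day--Wade's proof of \cref{generators ROARs}/\cref{image and kernel of restriction homomorphism} rather than stating as a numbered lemma. For part (2) the two arguments agree in substance: the paper reads off $\rho_v^w\in \ker R_\Delta$ as ``$\rho_v^w\in O$ and acts trivially on $A_\Delta$'' and applies \cref{Laurence generator containment criterion} together with \cref{Laurence generator stabilise criterion}, whereas you enlarge $\mcG$ to $\mcG^+=\mcG\cup P(\set{A_\Delta})$ (legitimate, since $P(\mcH)\subseteq\mcG$ by saturation, so this is exactly the padding allowed by \cite[Lemma 3.8]{DW:Relativeautomorphismgroups}) and unwind $\leq_{\mcG^+}$ combinatorially; your case analysis with $\Theta=\set{v}$ is precisely the ``acts trivially iff $v\notin\Delta$'' clause of \cref{Laurence generator stabilise criterion} in disguise.
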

\begin{proof}
As $\mcG$ is saturated, we know that $\im R_\Delta = \Roar{A_\Delta}{\mcG_\Delta, \mcH_\Delta^t}$. For $v,w\in \nolinebreak \Delta$, \cite[Proposition 4.1]{DW:Relativeautomorphismgroups} shows that $v\leq_{\mcG_{\Delta}} w$ if and only if $v\preceq w$. 
Again because of the saturation of $\mcG$, we have $P(\mcH)\subseteq \mcG$. Hence, $P(\mcH_{\Delta})\subseteq \mcG_\Delta$. 
As in \cref{remark relative orderings and saturation}, it follows that $v\leq_{\mcG_\Delta} w$ if and only if $v\leq_{\mcG_{\im}} w$ for $\mcG_{\im}$ saturated with respect to $(\mcG_{\im},\mcH_{\im})$.

For the second point, we have $v\leq_{\mcG_{\ker}} w$ if and only if $\rho_v^w\in \ker R_\Delta$. This is the case if and only if $\rho_v^w$ is contained in $O$ and acts trivially on $A_\Delta$. The claim now follows from \cref{Laurence generator containment criterion} and \cref{Laurence generator stabilise criterion}.
\end{proof}

\subsubsection{Stabilisers in image and kernel}
\cref{image and kernel of restriction homomorphism} gives us a complete description of the image and kernel of a restriction map $R_\Delta \colon  \Roar{\AG}{\mcG, \mcH^t} \to \Out{A_\Delta}$ in the case where $\mcG$ is saturated with respect to $(\mcG,\mcH)$. 
However, if we consider a subgroup of the form 
\begin{equation*}
\Stab_O(A_\Delta) = \Roar{\AG}{\mcG \cup \set{A_\Delta}, \mcH^t},
\end{equation*}  
the family $\mcG \cup \set{A_\Delta}$ is not necessarily saturated with respect to $(\mcG \cup \set{A_\Delta},\mcH)$ and its image under $R_\Delta$ is more difficult to describe. However, the parabolic subgroups we will consider in \cref{section spherical complex for Out(RAAG)} are exactly of this form. The next two lemmas show that in special cases, we can describe their images under $R_\Delta$ without passing to saturated pairs.

\begin{lemma}
\label{restriction of parabolics}
\label{intersection of parabolics with ROAR kernel}
Assume that $O$ stabilises a special subgroup $A_\Delta \leq A_\Gamma$ and let  $R_\Delta\colon O \to \Out{A_\Delta}$ denote the corresponding restriction homomorphism.
Take $\Theta\subset \Gamma$. Then:
\begin{enumerate}
\item  $\Stab_O(A_{\Theta})\cap \ker R_\Delta = \Stab_{\ker R_\Delta }(A_{\Theta})$.
\item If $\Theta\subseteq \Delta$, one has $R_\Delta (\Stab_O(A_{\Theta}))= \Stab_{\im R_\Delta}(A_{\Theta})$.
\end{enumerate}

\end{lemma}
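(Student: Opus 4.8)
The plan is to treat the two statements separately, dispatching the first essentially formally and reducing the second to bookkeeping of Laurence generators.

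For the first statement I would observe that $\ker R_\Delta$ is by definition a subgroup of $O$, and that stabilising $A_\Theta$ is an intrinsic property of an outer automorphism, independent of the ambient group in which it is viewed. Hence the elements of $\ker R_\Delta$ stabilising $A_\Theta$ are exactly the elements of $O$ that stabilise $A_\Theta$ and happen to lie in $\ker R_\Delta$; these two descriptions are precisely the right- and left-hand sides. One can make this completely explicit using \cref{image and kernel of restriction homomorphism} and the identity $\Stab_O(A_\Theta)=\Roar{\AG}{\mcG\cup\set{A_\Theta},\mcH^t}$: since $\ker R_\Delta=\Roar{\AG}{\mcG,(\mcH\cup\set{A_\Delta})^t}$ and the intersection of two relative automorphism groups just unites the two families, both sides equal $\Roar{\AG}{\mcG\cup\set{A_\Theta},(\mcH\cup\set{A_\Delta})^t}$.

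For the second statement the key point is that both groups are themselves relative automorphism groups, hence by \cref{generators ROARs} generated by the Laurence generators they contain: one has $\Stab_O(A_\Theta)=\Roar{\AG}{\mcG\cup\set{A_\Theta},\mcH^t}$ and, using $\im R_\Delta=\Roar{A_\Delta}{\mcG_\Delta,\mcH_\Delta^t}$, also $\Stab_{\im R_\Delta}(A_\Theta)=\Roar{A_\Delta}{\mcG_\Delta\cup\set{A_\Theta},\mcH_\Delta^t}$ (here $\Theta\subseteq\Delta$ is exactly what makes $A_\Theta$ a special subgroup of $A_\Delta$). I would then invoke the facts recorded after \cref{image and kernel of restriction homomorphism} that $R_\Delta$ sends every Laurence generator of $O$ to the identity or to a Laurence generator of $\im R_\Delta$ of the same type and acting letter, and that every Laurence generator of $\im R_\Delta$ is the restriction of such a generator of $O$. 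The strategy is then to prove, for every Laurence generator $g$ of $O$ with $R_\Delta(g)\neq 1$, the equivalence
\begin{equation*}
g \text{ stabilises } A_\Theta \iff R_\Delta(g) \text{ stabilises } A_\Theta .
\end{equation*}
Granting this, "$\subseteq$" follows because $\Stab_O(A_\Theta)$ is generated by the Laurence generators of $O$ stabilising $A_\Theta$, each of which maps into $\Stab_{\im R_\Delta}(A_\Theta)$ (those with trivial restriction being harmless), while "$\supseteq$" follows because each Laurence generator of $\Stab_{\im R_\Delta}(A_\Theta)$ lifts to a Laurence generator $g$ of $O$ which, by the equivalence, again stabilises $A_\Theta$ and so lies in $\Stab_O(A_\Theta)$.

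It remains to verify the displayed equivalence type by type via \cref{Laurence generator stabilise criterion}. For an inversion there is nothing to check, since inversions stabilise every special subgroup. For a transvection $\rho_v^w\in O$ with $v\in\Delta$ one has $w\in\Delta$ (because $v\leq_\mcG w$ and $A_\Delta\in\mcG$) and $R_\Delta(\rho_v^w)=\rho_v^w$; as the relevant criterion "$v\notin\Theta$ or $w\in\Theta$" mentions only $v,w,\Theta$ and not the ambient graph, the two sides agree. The genuine obstacle is the partial-conjugation case: for $g=\pi_K^v$ with $v\in\Delta$ the restriction is $\pi_{K\cap\Delta}^v$, and triviality of the action on $A_\Theta$ is governed upstairs by $K\cap\Theta$ and $\Theta\bsl\st_\Gamma(v)$ but downstairs by $(K\cap\Delta)\cap\Theta$ and $\Theta\bsl\st_\Delta(v)$. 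Here $\Theta\subseteq\Delta$ is used decisively: it yields $K\cap\Theta=(K\cap\Delta)\cap\Theta$ and $\Theta\bsl\st_\Gamma(v)=\Theta\bsl\st_\Delta(v)$, with the latter set contained in $\Delta$, so the two trivial-action criteria coincide and "$v\in\Theta$" is literally the same condition on both sides. This simultaneously shows that in the lifting step the choice of $K$ among the components of $\Gamma\bsl\st_\Gamma(v)$ not meeting $\Delta$ is immaterial, since only $K\cap\Delta$ enters the criterion. The main work of the proof is thus precisely this comparison of the stabilisation criteria across the graphs $\Gamma$ and $\Delta$ for partial conjugations.
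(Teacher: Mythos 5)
Your proof is correct, but it reaches part (2) by a genuinely different route than the paper. For part (1) you and the paper agree the statement is tautological; your reformulation of both sides as $\Roar{\AG}{\mcG\cup\set{A_\Theta},\,(\mcH\cup\set{A_\Delta})^t}$ is a legitimate way to make that explicit. For part (2) the paper never touches generators: it records ``$\subseteq$'' as clear, and for ``$\supseteq$'' writes any $\Phi\in\Stab_{\im R_\Delta}(A_\Theta)$ as $\Phi=[\psi|_{A_\Delta}]$ with $[\psi]\in O$ and $\psi(A_\Delta)=A_\Delta$, then observes that the inner automorphism of $A_\Delta$ witnessing stabilisation of $A_\Theta$ by $\Phi$ is also inner in $\AG$, so $[\psi]\in\Stab_O(A_\Theta)$ and $R_\Delta([\psi])=\Phi$ --- a two-line argument valid for arbitrary outer automorphisms. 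You instead run everything through \cref{generators ROARs}, the combinatorial criteria of \cref{Laurence generator containment criterion} and \cref{Laurence generator stabilise criterion}, and the Day--Wade lifting statement quoted after \cref{image and kernel of restriction homomorphism}. What your longer route buys is that the inclusion ``$\subseteq$'', which the paper leaves as clear (and which, argued with representatives, implicitly needs that a conjugate of $A_\Theta$ contained in $A_\Delta$ and conjugate to $A_\Theta$ in $\AG$ is already conjugate to it in $A_\Delta$), is reduced to the purely combinatorial check that the stabilisation criteria agree across $\Gamma$ and $\Delta$ once $\Theta\subseteq\Delta$; your treatment of the partial-conjugation case, including the remark that the components of $K$ missing $\Delta$ are immaterial, is exactly the point where this comparison has content. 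Two small things to make explicit: first, $A_\Delta\in\mcG$ (which you use to get $v\in\Delta\Rightarrow w\in\Delta$ for a transvection $\rho_v^w\in O$) follows from the standing saturation assumption because $A_\Delta$ is stabilised by $O$, the case $\Delta=\Gamma$ being trivial; second, your type-by-type equivalences use \cref{Laurence generator stabilise criterion} as an ``if and only if'', which is how Day--Wade's Lemma~2.2 reads and how the paper itself applies it (e.g.\ in \cref{parabolics are proper subgroups}), but the restatement in this paper literally gives only sufficiency for the stabilising clause, so you should cite the original formulation.
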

\begin{proof}
The first point becomes tautological after spelling out the definitions.

For the second point, the inclusion ``$\subseteq$'' is clear.
On the other hand, each $\Phi\in \im R_\Delta$ can by definition be written as $\Phi=[\psi|_{A_\Delta}]$ where $[\psi]\in O$ and  $\psi(A_\Delta)=A_\Delta$. If $\Phi$ stabilises $A_{\Theta}$, we know that $\psi$ conjugates $A_{\Theta}$ to a subgroup of $A_\Delta$. Hence, $[\psi] \in \Stab_O(A_{\Theta})$ and the second claim follows.
\end{proof}

\begin{lemma}
\label{projection of parabolics}
Assume that $\Gamma$ can be decomposed as a join $\Gamma=Z\ast\Delta$ where $Z$ is a complete graph and $A_Z\in \mcG$.
Let $P_\Delta$ denote the projection map $O\to \Outo[A_\Delta]$. Then for every $\Theta\subset \Gamma$ one has
\begin{equation*}
P_\Delta (\Stab_O(A_{\Theta}))= \Stab_{\im P_\Delta}(A_{\Theta\cap \Delta}).
\end{equation*}
\end{lemma}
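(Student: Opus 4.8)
The plan is to prove the two inclusions separately, using that $Z$ is a complete graph joined to $\Delta$, so that $A_Z = Z(\AG)$ is central and $\AG = A_Z \times A_\Delta$; in particular the quotient map $q\colon \AG \to A_\Delta$ inducing $P_\Delta$ is just the projection killing the central factor. Two elementary observations will be used repeatedly: for any full subgraph $\Theta\subseteq\Gamma$ one has $q(A_\Theta)=A_{\Theta\cap\Delta}$, and for a vertex $v\in\Delta$ one has $\st_\Gamma(v)=\st_\Delta(v)\cup Z$, whence $\Gamma\bsl\st_\Gamma(v)=\Delta\bsl\st_\Delta(v)$ and $\Theta\bsl\st_\Gamma(v)=(\Theta\cap\Delta)\bsl\st_\Delta(v)$.

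For the inclusion ``$\subseteq$'', I would take $\Phi\in\Stab_O(A_\Theta)$ and choose a representative $\phi$ with $\phi(A_\Theta)=A_\Theta$. Writing $\bar\phi$ for the induced automorphism of $A_\Delta$ representing $P_\Delta(\Phi)$, the relation $q\circ\phi = \bar\phi\circ q$ gives $\bar\phi(A_{\Theta\cap\Delta}) = \bar\phi(q(A_\Theta)) = q(\phi(A_\Theta)) = q(A_\Theta) = A_{\Theta\cap\Delta}$. Hence $\bar\phi$, and therefore $P_\Delta(\Phi)$, stabilises $A_{\Theta\cap\Delta}$, so $P_\Delta(\Phi)\in\Stab_{\im P_\Delta}(A_{\Theta\cap\Delta})$.

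For the reverse inclusion, I would use that $\Stab_{\im P_\Delta}(A_{\Theta\cap\Delta}) = \Roar{A_\Delta}{\mcG_\Delta\cup\set{A_{\Theta\cap\Delta}}, \mcH_\Delta^t}$, combining \cref{image of projection map} with the identity $\Stab_O(G)=\Out{A;\mcG\cup\set{G},\mcH^t}$. By \cref{generators ROARs} this group is generated by the Laurence generators it contains, and since $P_\Delta(\Stab_O(A_\Theta))$ is a subgroup it suffices to exhibit, for each such generator $g$, a preimage lying in $\Stab_O(A_\Theta)$. As $g$ is a Laurence generator of $\im P_\Delta$, I can take exactly the preimage $\bar g\in O$ (namely $\bar\iota_v$, $\bar\rho_v^w$ or $\bar\pi_K^v$) constructed in the proof of \cref{image of projection map}, which satisfies $P_\Delta(\bar g)=g$. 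It then remains to verify that $\bar g$ stabilises $A_\Theta$ whenever $g$ stabilises $A_{\Theta\cap\Delta}$, which I would check case by case with \cref{Laurence generator stabilise criterion}: inversions stabilise every special subgroup automatically; for a transvection $\rho_v^w$ with $v,w\in\Delta$ the conditions ``$v\notin\Theta$'' and ``$w\in\Theta$'' are unaffected by intersecting with $\Delta$; and for a partial conjugation the identities $K\cap\Theta=K\cap(\Theta\cap\Delta)$ (as $K\subseteq\Delta$), $\Theta\bsl\st_\Gamma(v)=(\Theta\cap\Delta)\bsl\st_\Delta(v)$, and $v\in\Theta\Leftrightarrow v\in\Theta\cap\Delta$ turn the stabilisation criterion for $A_{\Theta\cap\Delta}$ into the one for $A_\Theta$ verbatim.

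The main obstacle is the bookkeeping in this final case analysis: one must confirm that the preimage generators of \cref{image of projection map} carry the same combinatorial data (for the partial conjugation, $K\subseteq\Delta$ is a union of $\tilde{\mcG}^v$-components of $\Gamma\bsl\st_\Gamma(v)$, and $v,w\in\Delta$), so that the star identities above are applicable, and that the criterion of \cref{Laurence generator stabilise criterion} is read off correctly under the decomposition $\st_\Gamma(v)=\st_\Delta(v)\cup Z$. Everything else is formal.
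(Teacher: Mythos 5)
Your argument is correct in substance, but it takes a genuinely longer route than the paper's. The paper's proof is a two-line formal argument: it writes $\Stab_O(A_\Theta) = \Roar{\AG}{\mcG\cup\set{A_\Theta}, \mcH^t}$ and applies \cref{image of projection map} \emph{directly to this group} --- the whole point of that lemma being that it requires no saturation and only asks that $A_Z$ lie in the stabilised family, which it does since $A_Z\in\mcG\subseteq\mcG\cup\set{A_\Theta}$. This yields $P_\Delta(\Stab_O(A_\Theta)) = \Roar{A_\Delta}{\mcG_\Delta\cup\set{A_{\Theta\cap\Delta}}, \mcH_\Delta^t}$ at once, and the right-hand side is identified with $\Stab_{\im P_\Delta}(A_{\Theta\cap\Delta})$ exactly as you do. You make this identification only for the right-hand side and then recover the left-hand side by hand: the inclusion ``$\subseteq$'' via representatives and $q\circ\phi=\bar\phi\circ q$, and ``$\supseteq$'' by lifting Laurence generators, which essentially replays the proof of \cref{image of projection map} with the additional family element $A_{\Theta\cap\Delta}$ carried through the case analysis. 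What the paper's route buys is that all of this combinatorial bookkeeping was already done once, for an arbitrary family containing $A_Z$; your route proves the same thing but duplicates that work. It does have the virtue of making the mechanism (which generators lift, and why the lifted generators respect $A_\Theta$) explicit.

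Two repairs are needed to make your write-up airtight. First, your case analysis passes \emph{from} ``$g$ stabilises $A_{\Theta\cap\Delta}$'' \emph{to} the combinatorial conditions (``$v\notin\Theta\cap\Delta$ or $w\in\Theta\cap\Delta$'', etc.); this is the converse direction of \cref{Laurence generator stabilise criterion}, which the paper states only as a sufficient condition for stabilising. The converse is in fact true (it is how the cited lemma of Day--Wade reads, and the paper itself invokes it in the proof of \cref{parabolics are proper subgroups}), but to stay within the lemmas as stated you should instead extract the conditions from \cref{Laurence generator containment criterion} --- an equivalence --- applied to $\Roar{A_\Delta}{\mcG_\Delta\cup\set{A_{\Theta\cap\Delta}}, \mcH_\Delta^t}$ after enlarging the $\mcG$-family by $P(\mcH_\Delta)$. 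Second, your opening claim $A_Z = Z(\AG)$ need not hold: $A_Z$ is central, but the centre can be larger (take $\Gamma$ a triangle and $Z$ a single vertex). This is harmless, since all you actually use is centrality of $A_Z$ and the decomposition $\AG=A_Z\times A_\Delta$, both of which are correct.
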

\begin{proof}
The stabiliser $\Stab_{O}(A_{\Theta})$ is the same as the relative automorphism group $\Roar{A_\Gamma}{\mcG\cup \set{A_\Theta}, \mcH^t}$. By \cref{image of projection map}, the image of this group is equal to
\begin{equation}
\label{equation stabilisers under projection maps}
P_\Delta (\Stab_O(A_{\Theta})) = \Roar{A_\Delta}{\mcG_\Delta\cup \set{A_{\Theta\cap \Delta}}, \mcH_\Delta^t}.
\end{equation}

On the other hand, we have $\im P_\Delta = \Roar{A_\Delta}{\mcG_\Delta, \mcH_\Delta^t}$, so the right hand side of \cref{equation stabilisers under projection maps} is also equal to $\Stab_{\im P_\Delta}(A_{\Theta\cap \Delta})$ and the claim follows.
\end{proof}

\subsection{Restrictions to conical subgroups}
In this section, we define a family of special subgroups that will play an important role in our inductive arguments later on and study some properties of these special subgroups.

For a vertex $v\in V(\Gamma)$, define the following subgraphs of $\Gamma$:
\begin{align*}
\Gamma_{\succeq v} \coloneqq \set{w\in V(\Gamma) \mid v \preceq w} \text{ and }  \Gamma_{\succ v} \coloneqq \set{w\in V(\Gamma) \mid v \prec w},
\end{align*}
where $v \prec w$ if $v \preceq w$ and $w\not\sim_\mcG v$. We define 
\begin{align*}
A_{\succeq v} \coloneqq A_{\Gamma_{\succeq v}} \text{ and } A_{\succ v} \coloneqq A_{\Gamma_{\succ v}}
\end{align*}
as the special subgroups of $\AG$ corresponding to these subgraphs.
Note that these special subgroups only depend on the $\sim_\mcG$-equivalence class of $v$, i.e. if $v\sim_\mcG w$, we have $A_{\succeq v}=A_{\succeq w}$.

In the ``absolute setting'' where $\mcG$ and $\mcH$ are trivial and $\preceq$ is equal to the standard ordering of $V(\Gamma)$, these special subgroups appear as \emph{admissible subgroups} in the work of Duncan--Remeslennikov \cite{DR:Automorphismspartiallycommutative}. 
We will also refer to them as \emph{conical subgroups} of $\AG$ as they are generated by elements corresponding to an upwards-closed cone in the Hasse diagram of the partial order that $\preceq$ induces on the equivalence classes of $\sim_\mcG$ (see \cref{figure conical subgroup}).

\begin{figure}
\begin{center}
\includegraphics[scale=1]{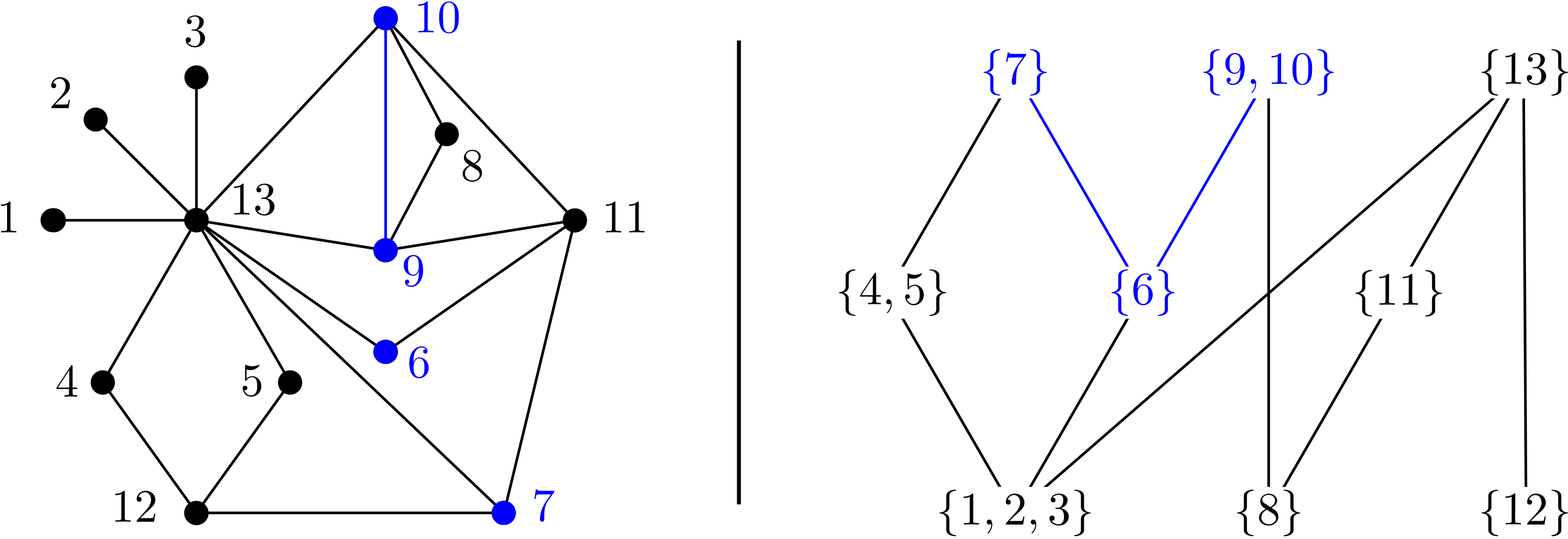}
\end{center}
\caption{A graph $\Gamma$ with vertex set $V(\Gamma)=\set{1, \ldots , 13}$ and the Hasse diagram of the associated partial order $\leq$ of the standard equivalence classes of its vertices. The conical subgroup $\Gamma_{\geq 6}$ is marked in blue.}
\label{figure conical subgroup}
\end{figure}

The elements of $\Outo$ are characterised among all elements of $\Out{\AG}$ by the property that they stabilise these special subgroups. Namely, the following holds true:
\begin{lemma}[{\cite[Proposition 3.3]{DW:Relativeautomorphismgroups}}]
Let $\mcG_\geq$ be the set of special subgroups of $A_\Gamma$ of the form $A_{\geq v}$. Then
\begin{equation*}
\Outo = \Out{\AG;\, \mcG_\geq}.
\end{equation*}
\end{lemma}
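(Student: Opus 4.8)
The plan is to prove the two inclusions $\Outo\subseteq\Out{\AG;\, \mcG_\geq}$ and $\Out{\AG;\, \mcG_\geq}\subseteq\Outo$ separately, using that $\Outo$ is generated by the inversions, transvections and partial conjugations of $\Out{\AG}$. Since $\Out{\AG;\, \mcG_\geq}$ is a subgroup, for the first inclusion it suffices to check that each of these generators stabilises every $A_{\geq v}$, and I would do this with \cref{Laurence generator stabilise criterion}. Inversions always stabilise. For a transvection $\rho_a^b$ (which exists only if $a\leq b$) the criterion gives stabilisation as soon as $a\notin\Gamma_{\geq v}$ (then it acts trivially) or $a\in\Gamma_{\geq v}$; in the latter case $v\leq a\leq b$ forces $b\in\Gamma_{\geq v}$, so the acting letter lies in $\Gamma_{\geq v}$ and we are done.

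The only genuine computation in this direction is for a partial conjugation $\pi_K^a$. If $a\in\Gamma_{\geq v}$ the criterion applies directly, so assume $a\notin\Gamma_{\geq v}$, i.e.\ $\lk(v)\not\subseteq\st(a)$. I would fix a vertex $u\in\lk(v)\bsl\st(a)$ and observe that for every $w\in\Gamma_{\geq v}$ one has $u\in\lk(v)\subseteq\st(w)$; hence every $w\in\Gamma_{\geq v}\bsl\st(a)$ is equal or adjacent to $u$ inside the full subgraph $\Gamma\bsl\st(a)$. Therefore $\Gamma_{\geq v}\bsl\st(a)$ lies in a single connected component of $\Gamma\bsl\st(a)$, so any union $K$ of such components either contains all of $\Gamma_{\geq v}\bsl\st(a)$ or is disjoint from $\Gamma_{\geq v}$. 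In both cases \cref{Laurence generator stabilise criterion} shows $\pi_K^a$ acts trivially on $A_{\geq v}$, completing the inclusion $\Outo\subseteq\Out{\AG;\, \mcG_\geq}$.

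For the reverse inclusion I would use that $\Out{\AG}$ is generated by graph automorphisms together with the three pure types, and that the latter all lie in $\Outo$; hence every $\Phi\in\Out{\AG}$ can be written as $\Phi=\sigma\, w$ with $\sigma$ a graph automorphism and $w\in\Outo$. If $\Phi$ stabilises all cones, then by the first inclusion so does $w$, and therefore so does $\sigma=\Phi\, w^{-1}$. It thus remains to show that a graph automorphism $\sigma$ stabilising every $A_{\geq v}$ lies in $\Outo$. Since $\sigma$ sends the special subgroup $A_{\geq v}$ to the special subgroup $A_{\sigma(\Gamma_{\geq v})}$, and since two special subgroups of $\AG$ are conjugate if and only if their defining subgraphs coincide, stabilising $A_{\geq v}$ forces $\sigma(\Gamma_{\geq v})=\Gamma_{\geq v}$. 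As $\sigma$ preserves adjacency it preserves the standard ordering, so $\Gamma_{\geq\sigma(v)}=\sigma(\Gamma_{\geq v})=\Gamma_{\geq v}$; evaluating on $v$ and on $\sigma(v)$ gives $\sigma(v)\leq v$ and $v\leq\sigma(v)$, i.e.\ $v\sim\sigma(v)$. Thus $\sigma$ permutes the vertices within each equivalence class $[v]$. Each such class is either a complete graph (abelian) or a discrete graph (free), and equivalent vertices are twins (they have equal stars, respectively equal links), so $\sigma$ restricts to a permutation of the full subgraph on $[v]$ and fixes everything else. Finally, any permutation of $[v]$ is realised by inversions and transvections between vertices of $[v]$, because these generate $\GL{m}{\mathbb{Z}}$ on $A_{[v]}\cong\mathbb{Z}^m$ in the abelian case and $\Out{F_m}$ on $A_{[v]}\cong F_m$ in the free case, both of which contain all coordinate, respectively generator, permutations and equal the corresponding pure outer automorphism group. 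Hence $\sigma\in\Outo$ and $\Phi=\sigma\, w\in\Outo$.

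The two routine directions are the generator check; the real content sits in the last paragraph. The main obstacle is the passage from ``$\sigma$ fixes every cone'' to ``$\sigma$ is an honest element of $\Outo$'', which has two delicate points: first, that $\sigma(\Gamma_{\geq v})=\Gamma_{\geq v}$ genuinely forces $\sigma$ to permute only within $\sim$-classes (this is where one needs that the up-sets $\Gamma_{\geq v}$ determine the standard preorder and that distinct full subgraphs yield non-conjugate special subgroups); and second, that such twin-permutations are products of pure Laurence generators rather than ``new'' symmetries, which is precisely the point at which one invokes $\Out{F_m}=\Outo[F_m]$ and $\GL{m}{\mathbb{Z}}=\Outo[\mathbb{Z}^m]$.
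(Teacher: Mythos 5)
Your first inclusion is correct; note that the paper itself never proves this lemma (it is quoted from Day--Wade, Proposition 3.3), and what you do there is exactly the specialisation to $\mcG=\mcH=\emptyset$ of the relative statements the paper does prove later, namely \cref{trivial action of partial conjugation} and \cref{stabilisation of cones}. Your reduction of the converse to graph automorphisms is also sound, provided you make explicit why $\Phi=\sigma w$ exists: conjugating an inversion, transvection or partial conjugation by a graph automorphism yields another generator of the same type, so $\Outo$ is normal in $\Out{\AG}$ and $\Out{\AG}=\Aut{\Gamma}\cdot\Outo$. The analysis showing that a cone-preserving $\sigma$ permutes each $\sim$-class, and that the twin structure lets you split $\sigma$ into per-class permutations, is fine.

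The genuine gap is in your final step, in the free case. From $\Out{F_m}=\Outo[F_m]$ you may conclude that the transposition of $x,y\in[v]$ equals a product of inversions and transvections \emph{inside} $\Out{A_{[v]}}$, but this does not transfer to $\Out{\AG}$: extension by the identity on the remaining generators is a homomorphism $\Aut{A_{[v]}}\to\Aut{\AG}$, yet it does \emph{not} descend to a map $\Out{A_{[v]}}\to\Out{\AG}$, because an inner automorphism of $A_{[v]}$ extended by the identity is not inner in $\AG$. Concretely, the Out-level identity only yields $\sigma_{xy}=\tilde P\circ e(c_g)$ in $\Aut{\AG}$, where $\tilde P$ is a product of inversions and transvections of $\AG$, $g\in A_{[v]}$, and $e(c_g)$ conjugates $A_{[v]}$ by $g$ while fixing all other generators; you still owe the argument that $[e(c_g)]\in\Outo$. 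This is true but uses partial conjugations, which never appear in your last paragraph: for $x\in[v]$ one computes $e(c_x)\circ c_{x}^{-1}=(\pi_K^x)^{-1}$ with $K=\Gamma\bsl\bigl([v]\cup\lk(x)\bigr)$, and the twin structure of $[v]$ is what guarantees $K$ is a union of connected components of $\Gamma\bsl\st(x)$, so $[e(c_x)]\in\Outo$ and hence $[e(c_g)]\in\Outo$ for every $g\in A_{[v]}$. (Alternatively, avoid Out entirely: the transposition is already a product of inversions and transvections in $\Aut{F_2}$, since $\operatorname{Inn}(F_2)$ lies in the subgroup they generate, e.g. conjugation by $x$ is a left transvection composed with the inverse of a right one; the resulting Aut-level identity does extend to $\AG$.) In the abelian case your argument is complete as written, since $\Aut{\mathbb{Z}^m}=\Out{\mathbb{Z}^m}$ and no inner discrepancy arises.
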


In particular, each of these special subgroups is stabilised by all of $\Outo$. We will need a relative version of this statement.

\begin{lemma}
\label{trivial action of partial conjugation}
Let $v,x \in V(\Gamma)$, let $K$ be a union of $\mcG^x$-components of $\Gamma\bsl \st(x)$ and let $\pi_K^x\in O$ denote the corresponding partial conjugation. If $v\npreceq x$, the partial conjugation $\pi_K^x$ acts trivially on $A_{\succeq v}$.
\end{lemma}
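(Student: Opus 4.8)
The plan is to deduce everything from the stabilisation criterion in \cref{Laurence generator stabilise criterion}: with acting letter $x$ and $\Delta=\Gamma_{\succeq v}$, that lemma says $\pi_K^x$ acts trivially on $A_{\succeq v}$ precisely when
\[
K\cap \Gamma_{\succeq v}=\emptyset \quad\text{or}\quad \Gamma_{\succeq v}\bsl \st(x)\subseteq K.
\]
Since $K$ is a \emph{union} of $\mcG^x$-components of $\Gamma\bsl\st(x)$, and in particular $K\subseteq \Gamma\bsl\st(x)$, I would first reduce to a purely combinatorial claim: it suffices to show that $\Gamma_{\succeq v}\bsl\st(x)$ is contained in a single $\mcG^x$-component $C$ of $\Gamma\bsl\st(x)$. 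Indeed, each $\mcG^x$-component is either entirely contained in $K$ or disjoint from it; in the first case $\Gamma_{\succeq v}\bsl\st(x)\subseteq C\subseteq K$, and in the second $K\cap(\Gamma_{\succeq v}\bsl\st(x))=\emptyset$, which together with $K\subseteq\Gamma\bsl\st(x)$ forces $K\cap\Gamma_{\succeq v}=\emptyset$. Either alternative satisfies the criterion, and if $\Gamma_{\succeq v}\bsl\st(x)=\emptyset$ the first one holds trivially.

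To establish the claim I would split according to the two ways in which $v\npreceq x$ can occur, unwinding $\preceq\,=\,\leq_\mcG$: either $\lk(v)\not\subseteq\st(x)$ (the standard ordering fails), or there is $A_\Theta\in\mcG$ with $v\in\Theta$ and $x\notin\Theta$ (the $\mcG$-condition fails). In the second case, every $w\in\Gamma_{\succeq v}$ satisfies $v\preceq w$, so $v\in\Theta$ forces $w\in\Theta$; hence $\Gamma_{\succeq v}\subseteq\Theta$ with $A_\Theta\in\mcG^x$, so any two vertices of $\Gamma_{\succeq v}\bsl\st(x)$ are $\mcG^x$-adjacent through $\Theta$ and lie in a common component. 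In the first case I would fix a witness $u\in\lk(v)$ with $u\notin\st(x)$; then for every $w\in\Gamma_{\succeq v}$ the relation $v\preceq w$ yields $\lk(v)\subseteq\st(w)$, so $u\in\st(w)$, i.e. $u=w$ or $u$ is adjacent to $w$. As $u\in\Gamma\bsl\st(x)$, this makes $u$ a common $\mcG^x$-neighbour (via ordinary adjacency) of all of $\Gamma_{\succeq v}\bsl\st(x)$, so this set again lies in the single $\mcG^x$-component containing $u$.

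The only real care needed is bookkeeping around $\st(x)$: vertices of $\Gamma_{\succeq v}$ that happen to lie in $\st(x)$ are harmless, since $K\subseteq\Gamma\bsl\st(x)$, which is exactly why the criterion only constrains $\Gamma_{\succeq v}\bsl\st(x)$; note also that $v\npreceq x$ gives $x\notin\Gamma_{\succeq v}$ directly. One must check that the hub $u$ of the first case genuinely lies in $\Gamma\bsl\st(x)$ (it does, being chosen outside $\st(x)$), so that it participates in a $\mcG^x$-component. I expect the main—though mild—obstacle to be this Case-1 hub argument: confirming that $u\in\st(w)$ for all $w\in\Gamma_{\succeq v}$ translates into genuine $\mcG^x$-connectivity \emph{inside} $\Gamma\bsl\st(x)$, rather than merely adjacency in the ambient graph $\Gamma$.
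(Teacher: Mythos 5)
Your proof is correct and follows essentially the same route as the paper's: the same case split of $v\npreceq x$ (a witness $u\in\lk(v)\bsl\st(x)$ acting as a hub adjacent to all of $\Gamma_{\succeq v}$, versus a subgroup $A_\Theta\in\mcG^x$ containing all of $\Gamma_{\succeq v}$), concluding that $\Gamma_{\succeq v}\bsl\st(x)$ lies in a single $\mcG^x$-component and then invoking \cref{Laurence generator stabilise criterion}. The only difference is cosmetic: you make explicit the reduction step --- that single-component containment together with $K$ being a union of $\mcG^x$-components forces one of the two alternatives of the criterion --- which the paper leaves implicit.
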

\begin{proof}
As $v$ is not smaller than $x$ with respect to $\preceq$, there either is an element in $\lk(v)$ which is not contained in $\st(x)$ or there is $A_\Delta\in\mcG$ such that $\Delta$ contains $v$ but does not contain $x$. We claim that in both cases, $\Gamma_{\succeq v}$ intersects at most one $\mcG^x$-component of $\Gamma\bsl \st(x)$.

Indeed, if there is $y\in \lk(v)\bsl \st(x)$, one has $y\in \st(w)\bsl \st(x)$ for all $w\in \Gamma_{\succeq v}$. Hence, all elements of $\Gamma_{\succeq v}$ are adjacent to $x$ and $\Gamma_{\succeq v}\bsl \st(x)$ is contained in a single $\mcG^x$-component of $\Gamma\bsl \st(x)$.
 If on the other hand for some $A_\Delta \in\mcG$, one has $v\in \Delta$, it follows that $w\in  \Delta$ for all $w\in \Gamma_{\succeq v}$. Now if $x\not\in \Delta$, this implies that all elements of $\Gamma_{\succeq v}$ are $\mcG^x$-adjacent, so they in particular lie in the same $\mcG^x$-component.
Either way, \cref{Laurence generator stabilise criterion} implies that $\pi_K^x$ acts trivially on $A_{\succeq v}$.
\end{proof}

\begin{proposition}
\label{stabilisation of cones}
For every vertex $v \in V(\Gamma)$, the special subgroup $A_{\succeq v}$ is stabilised by every element from $O$.
\end{proposition}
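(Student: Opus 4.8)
The plan is to reduce the statement to the Laurence generators of $O$ and then verify the three types separately using the stabilisation criterion of \cref{Laurence generator stabilise criterion}. By \cref{generators ROARs}, the group $O=\Roar{\AG}{\mcG,\mcH^t}$ is generated by the inversions, transvections and partial conjugations it contains, so it suffices to show that each such generator stabilises $A_{\succeq v}$. Throughout I would set $\Delta\coloneqq \Gamma_{\succeq v}$, so that $A_{\succeq v}=A_\Delta$. Inversions are then immediate, since by \cref{Laurence generator stabilise criterion} every inversion stabilises every special subgroup.

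For a transvection $\rho_x^y\in O$ with acting letter $y$, \cref{Laurence generator containment criterion} gives $x\preceq y$. Here I would split into two cases. If $x\notin\Delta$, that is $v\npreceq x$, then \cref{Laurence generator stabilise criterion} says $\rho_x^y$ acts trivially on $A_\Delta$ and hence stabilises it. If instead $x\in\Delta$, that is $v\preceq x$, then transitivity of $\preceq$ combined with $x\preceq y$ yields $v\preceq y$, so $y\in\Delta$; the clause of \cref{Laurence generator stabilise criterion} asserting stabilisation whenever the acting letter lies in $\Delta$ then finishes this case.

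For a partial conjugation $\pi_K^x\in O$ I would use the same dichotomy on whether $v\preceq x$. If $v\npreceq x$, then \cref{trivial action of partial conjugation} already tells us that $\pi_K^x$ acts trivially on $A_{\succeq v}$, and so stabilises it. If instead $v\preceq x$, then $x\in\Gamma_{\succeq v}=\Delta$, and the final clause of \cref{Laurence generator stabilise criterion} (a partial conjugation stabilises $A_\Delta$ whenever its acting letter lies in $\Delta$) again gives stabilisation. Since inversions, transvections and partial conjugations exhaust a generating set of $O$, this proves the claim.

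The argument is short, and the only genuine content is packaged into \cref{trivial action of partial conjugation}, which does the real work in the partial-conjugation case where the acting letter fails to dominate $v$. The single point to be careful about is organising the case splits so that they are exhaustive: for both transvections and partial conjugations the two subcases are governed by whether $v\preceq x$, and the ``acting letter in $\Delta$'' clause of the stabilisation criterion covers exactly the case complementary to trivial action. Recognising this complementarity is what makes the proof fall out cleanly rather than requiring any further graph-theoretic analysis.
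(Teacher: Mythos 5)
Your proof is correct and follows essentially the same route as the paper's: reduce to the Laurence generators via \cref{generators ROARs}, dispose of inversions immediately, use upward-closedness of $\Gamma_{\succeq v}$ (equivalently, transitivity of $\preceq$) for transvections, and split partial conjugations by whether $v\preceq x$, invoking \cref{trivial action of partial conjugation} in the negative case. The only cosmetic difference is that you make the trivial-action subcase for transvections explicit, whereas the paper folds it into a single sentence.
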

\begin{proof}
As $O$ is generated by the inversions, transvections and partial conjugation it contains, it suffices to prove the statement for each such element. As above, this can be done using \cref{Laurence generator containment criterion} and \cref{Laurence generator stabilise criterion}.

For inversions, there is nothing to show as they always stabilise every special subgroup. 
If we have a transvection $\rho_x^y\in O$, we must have $x\preceq y$. The set $\Gamma_{\succeq v}$ is upwards-closed with respect to $\preceq$, hence $x\in \Gamma_{\succeq v}$ implies $y\in \Gamma_{\succeq v}$. It follows that  $\rho_x^y$ stabilises $A_{\succeq v}$.
Given a partial conjugation $\pi_K^x\in O$, we either have $v\npreceq x$, in which case \cref{trivial action of partial conjugation} implies that $\pi_K^x$ even acts trivially on $A_{\succeq v}$, or we have $x\in \Gamma_{\succeq v}$ which implies that $\pi_K^x$ stabilises $A_{\succeq v}$.
\end{proof}

A consequence of this is that for every equivalence class $[v]_\mcG$ of vertices of $\Gamma$, we have a restriction map 
\begin{equation*}
R_{\succeq v}= R_{A_{\succeq v}}\colon  O \to \Outo[A_{\succeq v}].
\end{equation*} 
These maps are crucial for the line of argument in the following section. We will study some of their properties in \cref{strongly divided for ROARs}.

\section{A spherical complex for $\Out{A_\Gamma}$}
\label{section spherical complex for Out(RAAG)}
In this section, we define maximal parabolic subgroups of $\Outo$ in the general case. We then prove Theorem \ref{introduction homotopy type} which states that the coset complex associated to these parabolic subgroups is homotopy equivalent to a wedge of spheres.

\paragraph{Notation and standing assumptions} As before, let $\Gamma$ be a graph, $\mcG$ and $\mcH$ families of special subgroups of $\AG$ such that $\mcG$ is saturated with respect to $(\mcG, \mcH)$, define $O\coloneqq \Outo[A_\Gamma; \mcG, \mcH^t]$ and set $\preceq\coloneqq \leq_\mcG$ to be the $\mcG$-ordering on $V(\Gamma)$.
Let $T_\mcG$ denote the set of $\sim_\mcG$-equivalence classes of vertices of $\Gamma$. 

\subsection{Rank and maximal parabolic subgroups}

\begin{definition}
We define the \emph{rank of $O$} as
\begin{equation*}
\rk(O)\coloneqq \sum_{[v]_{\mcG}\in T_\mcG}(|[v]_{\mcG}|-1)= |V(\Gamma)|-|T_\mcG|.
\end{equation*} 
\end{definition}

Now fix an ordering $[v]_\mcG=\set{v_1,\ldots , v_n}$ on each equivalence class $[v]_\mcG \in\nolinebreak T_\mcG$. 
For all $[v]_\mcG \in T_\mcG$ and $1\leq j \leq n-1$, let $\Delta_v^j\subset \Gamma$ be the full subgraph of $\Gamma$ with vertex set $\set{v_1, \ldots , v_j} \cup \Gamma_{\succ v}$.
\begin{lemma}
\label{parabolics are proper subgroups}
For all $[v]_\mcG \in T_\mcG$ and $1\leq j \leq n-1$, the stabiliser $\Stab_O(A_{\Delta_v^j})$ is a proper subgroup of $O$.
\end{lemma}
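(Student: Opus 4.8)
The plan is to prove properness by exhibiting a single element of $O$ that does \emph{not} stabilise $A_{\Delta_v^j}$; since $\Stab_O(A_{\Delta_v^j})$ is by definition a subgroup of $O$, one such element is enough. The natural candidate is a transvection internal to the equivalence class $[v]_\mcG$. Because $j\leq n-1$, the vertex $v_{j+1}$ exists and lies in $[v]_\mcG$. As $v_{j+1}\sim_\mcG v$, it is not contained in $\Gamma_{\succ v}$, and it is distinct from $v_1,\ldots,v_j$, so $v_{j+1}\notin \Delta_v^j$; on the other hand $v_1\in\Delta_v^j$. Since $v_1$ and $v_{j+1}$ lie in the same $\sim_\mcG$-class we have $v_1\preceq v_{j+1}$, so by \cref{Laurence generator containment criterion} (applicable since saturation of $\mcG$ gives $P(\mcH)\subseteq\mcG$) the transvection $\rho_{v_1}^{v_{j+1}}$ is an element of $O$.

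The only genuine subtlety is that ``stabilises'' is meant up to conjugacy (see \cref{section relative automorphism groups}), so I cannot argue directly from the formula for $\rho_{v_1}^{v_{j+1}}$; I need an obstruction insensitive to the choice of representative. Abelianisation supplies one: the quotient $q\colon \AG\to A_\Gamma^{\mathrm{ab}}\cong\mathbb{Z}^{V(\Gamma)}$ sends each generator $w$ to a basis vector $e_w$, maps the special subgroup $A_{\Delta_v^j}$ onto the coordinate subgroup $\mathbb{Z}^{\Delta_v^j}$ spanned by $\set{e_w \mid w\in\Delta_v^j}$, and kills all inner automorphisms. Consequently every outer automorphism induces a well-defined automorphism of $\mathbb{Z}^{V(\Gamma)}$, and any two representatives of the same outer class induce the same map. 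If some representative $\psi$ of $[\rho_{v_1}^{v_{j+1}}]$ satisfied $\psi(A_{\Delta_v^j})=A_{\Delta_v^j}$, then applying $q$ and using $q\circ\psi=\psi_\ast\circ q$ would force the induced map $\psi_\ast=(\rho_{v_1}^{v_{j+1}})_\ast$ to preserve $\mathbb{Z}^{\Delta_v^j}$.

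To finish I compute that induced map: $\rho_{v_1}^{v_{j+1}}$ sends $e_{v_1}\mapsto e_{v_1}+e_{v_{j+1}}$ and fixes every other basis vector. Here $e_{v_1}\in\mathbb{Z}^{\Delta_v^j}$ because $v_1\in\Delta_v^j$, but $e_{v_1}+e_{v_{j+1}}\notin\mathbb{Z}^{\Delta_v^j}$ because $v_{j+1}\notin\Delta_v^j$. Thus the induced map does not preserve $\mathbb{Z}^{\Delta_v^j}$, contradicting the conclusion of the previous paragraph. Hence no representative of $[\rho_{v_1}^{v_{j+1}}]$ stabilises $A_{\Delta_v^j}$, i.e. $\rho_{v_1}^{v_{j+1}}\in O\setminus\Stab_O(A_{\Delta_v^j})$, and $\Stab_O(A_{\Delta_v^j})$ is a proper subgroup of $O$. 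The main obstacle to watch is precisely the up-to-conjugacy nature of stabilisation, which is why the argument is routed through the conjugation-invariant abelianisation rather than through \cref{Laurence generator stabilise criterion} (whose stabilisation clause gives only a sufficient, not a necessary, condition).
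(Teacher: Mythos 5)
Your proof is correct, and its skeleton coincides with the paper's: both exhibit a transvection internal to $[v]_\mcG$ as an element of $O$ via \cref{Laurence generator containment criterion} and then show it fails to stabilise $A_{\Delta_v^j}$; the paper uses $\rho_{v_1}^{v_n}$ where you use $\rho_{v_1}^{v_{j+1}}$, an immaterial difference. Where you genuinely diverge is in the justification of non-stabilisation. The paper asserts that the transvection does not stabilise $A_{\Delta_v^j}$ because $v_1\in\Delta_v^j$ while $v_n\notin\Delta_v^j$, citing \cref{Laurence generator stabilise criterion}; as you correctly observe, that lemma as stated in this paper gives only a \emph{sufficient} condition for stabilisation, so the paper is implicitly invoking the ``only if'' direction of Day--Wade's original criterion. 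Your detour through the abelianisation $q\colon \AG\to\mathbb{Z}^{V(\Gamma)}$ supplies exactly the missing necessity in a conjugation-invariant way: the induced automorphism of $\mathbb{Z}^{V(\Gamma)}$ is the same for every representative of the outer class, it sends $e_{v_1}$ to $e_{v_1}+e_{v_{j+1}}$, and hence cannot preserve $q(A_{\Delta_v^j})$, so no representative maps $A_{\Delta_v^j}$ to itself. This costs a few extra lines but makes the argument self-contained with respect to the up-to-conjugacy definition of stabilisation from \cref{section relative automorphism groups}, whereas the paper's version is shorter but leans on the stronger (if and only if) form of the stabilisation criterion that it never states explicitly.
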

\begin{proof}
Again, we use \cref{Laurence generator containment criterion} and \cref{Laurence generator stabilise criterion}:
As all vertices of $[v]_\mcG$ are equivalent with respect to $\leq_\mcG$, the transvection $\rho_{v_1}^{v_n}$ is an element of $O$. However, 
this transvection does not stabilise $A_{\Delta_v^j}$ because $v_1$ is contained in $\Delta_v^j$ while $v_n$ is not.
\end{proof}

For $[v]_\mcG \in T_\mcG$, let
\begin{equation*}
\mcP_{[v]_\mcG}\coloneqq \set{\Stab_O(A_{\Delta_v^j}) \, \middle| \, 1\leq j \leq n-1},
\end{equation*}
where if $|[v]_\mcG|=1$, this is to be understood as $\mcP_{[v]_\mcG}=\emptyset$. 

\begin{definition} We define the set of \emph{maximal standard parabolic subgroups of $O$} as the union
\label{definition parabolics for RAAGs}
\begin{equation*}
\mcP(O)\coloneqq \bigcup_{[v]_\mcG\in T_\mcG} \mcP_{[v]_\mcG}.
\end{equation*}
\end{definition}

The reader might at this point want to verify that for the graph $\Gamma$ depicted in \cref{figure conical subgroup} on page \pageref{figure conical subgroup}, one has $|\mcP(\Outo)|=4$.
The term ``maximal'' parabolic will become clear in \cref{section CM and higher generation} where we will define and study parabolic subgroups of lower rank. As before, we will usually leave out the adjective ``standard'' (see \cref{remark standard parabolics}). 

\begin{remark}
We note the following properties of $\mcP(O)$ and $\rk(O)$:
\begin{enumerate}
\item $\rk(O)=|\mcP(O)|$. We will also give an alternative interpretation of $\rk(O)$ in \cref{section rank via Coxeter groups}.
\item  By \cref{parabolics are proper subgroups}, every element of $\mcP(O)$ is a proper subgroup of $O$.
\item Following \cref{remark relative orderings and saturation}, the definition of parabolic subgroups depends on the ordering chosen for each equivalence class, but not on the pair $(\mcG, \mcH)$ we chose to represent $O$.
\item If $O$ is equal to  $\GL{n}{\mathbb{Z}}$ or a Fouxe-Rabinovitch group, we recover the definitions of parabolic subgroups in these groups as defined in \cref{section buildings} and \cref{section factor complexes}. Furthermore, $\rk(\GL{n}{\mathbb{Z}})=\rk(\Out{F_n})= n-1$.
\end{enumerate}
\end{remark}

Note that it is possible that there is no $\mcG$-equivalence class of size bigger than one. In this case, the rank of $O$ is zero and $\mcP(O)$ is empty. For further comments on this, see \cref{section closing comments}.

\subsection{The parabolic sieve}
\label{section induction}
In this subsection, we explain the idea of the inductive argument that we will use to show sphericity of the coset complexes $\CC{O}{\mcP(O)}$.

\paragraph{Outline of proof}
Whenever $\Delta\subset \Gamma$ is stabilised by $O$, the restriction map $R_\Delta$ gives rise to a short exact sequence
\begin{equation*}
1\to N\to O \stackrel{R_\Delta}{\to} Q\to 1
\end{equation*}
and by \cref{image and kernel of restriction homomorphism}, both $N$ and $Q$ are relative automorphism groups of RAAGs again. Using the considerations of \cref{section ROARs}, we will show that for the correct choice of $\Delta$, every $P\in \mcP(O)$ satisfies the following dichotomy: Either $R_\Delta(P)$ is contained in $\mcP(Q)$ or $P\cap N$ forms an element of $\mcP(N)$. Applying a restriction homomorphism hence has the effect of a sieve on $\mcP(O)$ --- some of the parabolic subgroups pass through and form parabolics of the quotient $Q$ while others remain in the sieve and form parabolics of the subgroup $N$.
Now using the results of \cref{section coset posets and complexes}, this allows us to describe the homotopy type of $\CC{O}{\mcP(O)}$ in terms of the topology of the lower-dimensional coset complexes $\CC{Q}{\mcP(Q)}$ and $\CC{N}{\mcP(N)}$. This is used for an inductive argument with two phases: We first apply restriction maps to conical subgroups (\cref{section induction conical restrictions}) and then analyse the homotopy type of coset complexes in the conical setting (\cref{section coset complexes of conical ROARs}).
Concrete examples of this induction will be given in \cref{section summary induction and examples}.

\subsubsection{Conical restrictions}
\label{section induction conical restrictions}
\begin{lemma}[Induction step]
\label{induction step}
Let $v\in V(\Gamma)$ and let $R\coloneqq R_{\succeq v}$ denote the corresponding restriction map to $A_{\succeq v}$.
Then there is a homotopy equivalence
\begin{equation*}
\CC{O}{\mcP(O)} \simeq \CC{\im R}{\mcP(\im R)} \ast \CC{\ker R}{\mcP(\ker R)}.
\end{equation*}
\end{lemma}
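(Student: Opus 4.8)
The plan is to apply \cref{short exact sequences} to the short exact sequence
\[
1 \to \ker R \to O \stackrel{R}{\to} \im R \to 1
\]
arising from the restriction homomorphism $R = R_{\succeq v}$; this sequence is available because $A_{\succeq v}$ is stabilised by all of $O$ (\cref{stabilisation of cones}), and by \cref{image and kernel of restriction homomorphism} both $\ker R$ and $\im R$ are again relative automorphism groups of RAAGs. As the generating set required by \cref{short exact sequences} I would take the set $S$ of Laurence generators of $O$, which generate by \cref{generators ROARs}. Everything then hinges on verifying the dichotomy: for each maximal parabolic $P = \Stab_O(A_{\Delta_w^j}) \in \mcP(O)$, either $\ker R \subseteq P$ or $S \bsl \ker R \subseteq P$.

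To check this I would split $\mcP(O)$ according to whether $v \preceq w$. Since $\Delta_w^j \subseteq \Gamma_{\succeq w}$ always holds, the case $v \preceq w$ gives $\Delta_w^j \subseteq \Gamma_{\succeq v}$, so \cref{strongly divided for ROARs}(1) yields $\ker R \subseteq P$. If instead $v \not\preceq w$, then $[w]_\mcG \cap \Gamma_{\succeq v} = \emptyset$ (as $\Gamma_{\succeq v}$ is a union of $\sim_\mcG$-classes), whence $\Gamma_{\succeq v} \cap \Gamma_{\succeq w} = \Gamma_{\succeq v} \cap \Gamma_{\succ w} \subseteq \Gamma_{\succ w} \subseteq \Delta_w^j$, and \cref{strongly divided for ROARs}(2) shows that $P$ contains every Laurence generator of $O$ outside $\ker R$. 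As $S$ generates $O$, no proper subgroup can contain both $\ker R$ and $S \bsl \ker R$, so the two cases are disjoint; \cref{short exact sequences} then gives
\[
\CC{O}{\mcP(O)} \simeq \CC{\im R}{\overbar{\mcP(O)}} \ast \CC{\ker R}{\mcP(O) \cap \ker R}.
\]

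It remains to identify the two families. For the image, $\overbar{\mcP(O)}$ consists of the $R(P)$ with $v \preceq w$, and \cref{restriction of parabolics}(2) gives $R(\Stab_O(A_{\Delta_w^j})) = \Stab_{\im R}(A_{\Delta_w^j})$. By \cref{partial order in image and kernel}(1) the $\mcG_{\im}$-ordering on $\Gamma_{\succeq v}$ is the restriction of $\preceq$, so its non-trivial classes are exactly the $[w]_\mcG$ with $v \preceq w$ (of unchanged size), and the subgraph defining the $j$-th parabolic of such a class is $\{w_1, \dots, w_j\} \cup (\Gamma_{\succeq v} \cap \Gamma_{\succ w}) = \Delta_w^j$; hence $\overbar{\mcP(O)} = \mcP(\im R)$. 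For the kernel, $\mcP(O) \cap \ker R$ consists of the $P \cap \ker R$ with $v \not\preceq w$, and \cref{restriction of parabolics}(1) identifies each with $\Stab_{\ker R}(A_{\Delta_w^j})$.

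The step I expect to be most delicate is matching this last family with $\mcP(\ker R)$, because the ordering governing $\ker R$ genuinely changes. By \cref{partial order in image and kernel}(2) every vertex of $\Gamma_{\succeq v}$ becomes $\leq_{\mcG_{\ker}}$-minimal and forms a singleton class, whereas for $w \notin \Gamma_{\succeq v}$ one still has $[w]_{\mcG_{\ker}} = [w]_\mcG$; thus the non-trivial $\mcG_{\ker}$-classes are precisely the $[w]_\mcG$ with $v \not\preceq w$. A short computation then shows that the set of vertices strictly above $w$ in the $\mcG_{\ker}$-ordering equals $\Gamma_{\succ w}$ --- the elements of $\Gamma_{\succeq v}$ that exceed $w$ already lie in $\Gamma_{\succ w}$, while those of $[w]_\mcG$ are excluded --- so the defining subgraph of the $j$-th parabolic of $[w]_\mcG$ in $\ker R$ is once more $\Delta_w^j$. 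This yields $\mcP(O) \cap \ker R = \mcP(\ker R)$ and finishes the argument.
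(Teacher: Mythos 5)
Your proposal is correct and takes essentially the same approach as the paper's proof: verify the dichotomy for each maximal parabolic via \cref{strongly divided for ROARs} (your unified treatment of the case $v \npreceq w$ via $\Gamma_{\succeq v}\cap\Gamma_{\succeq w}\subseteq \Gamma_{\succ w}$ merges the paper's two subcases $w\prec v$ and incomparable), apply \cref{short exact sequences}, and then identify the resulting families with $\mcP(\im R)$ and $\mcP(\ker R)$ using \cref{restriction of parabolics} and \cref{partial order in image and kernel}. One cosmetic slip: by \cref{partial order in image and kernel}, a vertex of $\Gamma_{\succeq v}$ can never be the \emph{smaller} element of a $\leq_{\mcG_{\ker}}$-relation, so such vertices become \emph{maximal} (not minimal) singleton classes in the $\mcG_{\ker}$-ordering --- but your subsequent computations (singleton classes, $[w]_{\mcG_{\ker}}=[w]_\mcG$ for $v\npreceq w$, and $\Gamma_{\succ_{\mcG_{\ker}} w}=\Gamma_{\succ w}$) use the correct fact, so nothing breaks.
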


We want to apply \cref{short exact sequences} to prove this statement. To do so, we have to show that for each $P\in \mcP(O)$, either $\ker R\subseteq P$ or $P$ contains all inversions, transvections and partial conjugations of $O$ that are not contained in $\ker R$. This is the content of the following lemma.

\begin{lemma}
\label{strongly divided for ROARs}
The restriction map $R= R_{\succeq v}$ has the following properties:
\begin{enumerate}
\item For all $\Delta\subseteq \Gamma_{\succeq v}$, one has $\ker R\subseteq \Stab_O(A_\Delta)$.
\item For all $w\in V(\Gamma)$, the following holds: If $\Delta\subseteq \Gamma_{\succeq w}$ such that 
\begin{equation*}
\Gamma_{\succeq v}\cap \Gamma_{\succeq w}\subseteq \Delta,
\end{equation*}
the stabiliser $\Stab_O(A_\Delta)$ contains all inversions, transvections and partial conjugations of $O$ that are not contained in $\ker R$.
\end{enumerate}
\end{lemma}
\begin{proof}
By \cref{image and kernel of restriction homomorphism}, the kernel of $R$ consists of all elements from $O$ that act trivially on the special subgroup $A_{\succeq v}$. This immediately implies the first claim.

For the second one, we again use \cref{Laurence generator containment criterion} and \cref{Laurence generator stabilise criterion}. First note that $\Stab_O(A_\Delta)$ contains all inversions of $O$.

Next assume we have a transvection $\rho_x^y\in O$. If $x\not\in \Gamma_{\succeq v}$, the transvection is contained in $\ker R$.
If on the other hand $x \not\in \Delta$, the transvection $\rho_x^y$ acts trivially on $A_\Delta$ and hence is contained in $\Stab_O(A_\Delta)$.
Now observe that the assumption that $\Gamma_{\succeq v}\cap \Gamma_{\succeq w}\subseteq \Delta$ implies that $\Delta\cap \Gamma_{\succeq v}$ is equal to $\Gamma_{\succeq v}\cap \Gamma_{\succeq w}$, a set which is upwards-closed with respect to $\preceq$. So if $x \in \Delta\cap \Gamma_{\succeq v}$, we also have $y \in \Delta\cap \Gamma_{\succeq v}$. Again it follows that $\rho_x^y\in \Stab_O(A_\Delta)$. 

Lastly, consider a partial conjugation $\pi_K^x\in O$. If $v\npreceq x$, \cref{trivial action of partial conjugation} implies that $\pi_K^x$ is contained in $\ker R$.
This lemma also show that if $w\npreceq x$, the partial conjugation $\pi_K^x$  acts trivially on $A_{\succeq w}$, and hence is contained in $\Stab_O(A_\Delta)$.
The only case that remains is that $x$ is greater than both $v$ and $w$, i.e. $x\in \Gamma_{\succeq v}\cap \Gamma_{\succeq w}$. As we assumed that $\Gamma_{\succeq v}\cap \Gamma_{\succeq w}\subseteq \Delta$, this implies that $x\in \Delta$, so again $\pi_K^x\in \Stab_O(A_\Delta)$.
\end{proof}

\begin{proof}[Proof of \cref{induction step}]
Set $\mcP\coloneqq \mcP(O)$.

Take $[w]_\mcG\in T_\mcG$ and $P=\Stab_O(A_\Delta)\in\mcP_{[w]_\mcG}$ with $\Delta=\Delta_w^j$ as above.
 If $v\preceq w$, we have $\Delta\subset \Gamma_{\succeq v}$. Hence by the first point of \cref{strongly divided for ROARs}, we know that $\ker R\subseteq P$. 
If on the other hand $w\prec v$, one has $\Gamma_{\succeq v}\cap \Gamma_{\succeq w} = \Gamma_{\succeq v} \subset \Delta$. 
Similarly if $v$ and $w$ are incomparable, one has $\Gamma_{\succeq v} \cap \Gamma_{\succeq w}\subseteq  \Gamma_{\succ w} \subset \Delta$. In both cases, the second point of \cref{strongly divided for ROARs} tells us that $P$ contains all inversions, transvections and partial conjugations of $O$ which are not contained in $\ker R$. 
From this, it follows that
\begin{align}
\label{equation P^ker R}
\mcP_{\ker R}=\set{P\in \mcP_{[w]_{\mcG}} \mid v\preceq w}& &\text{ and }	&& \mcP^{\ker R}=\set{P\in \mcP_{[w]_{\mcG}} \mid v\npreceq w},
\end{align}
with notation as defined on page \pageref{paragraph notation H^N H_N}. 
\cref{short exact sequences} now shows that there is a homotopy equivalence
\begin{equation*}
\CC{O}{\mcP}\simeq \CC{\im R}{\overbar{\mcP}} \ast \CC{\ker R}{\mcP\cap \ker R},
\end{equation*}
where $\overbar{\mcP}= \set{R(P) \mid P\in \mcP_{\ker R}}$ and $\mcP\cap{\ker R}=\set{P\cap \ker R \mid P\in \mcP^{\ker R}}$.

If we have $P\in \mcP_{\ker R}$, there is $\Delta\subset \Gamma_{\succeq v}$ such that $P=\Stab_O(A_\Delta)$. Using \cref{restriction of parabolics}, it follows that $R(P)=\Stab_{\im R} (A_\Delta)$. 
\cref{partial order in image and kernel} implies that one has
\begin{equation*}
\overbar{\mcP}=\mcP(\im R).
\end{equation*}
For every $P = \Stab_O(A_\Delta) \in \mcP^{\ker R}$, we know by \cref{restriction of parabolics} that 
\begin{equation*}
P\cap \ker R = \Stab_{\ker R}(A_{\Delta}).
\end{equation*}
Write $\ker R=\Roar{A_\Gamma}{\mcG_{\ker},\mcH_{\ker}^t}$ where $\mcG_{\ker}$ is saturated with respect to $(\mcG_{\ker},\mcH_{\ker})$. Then by \cref{partial order in image and kernel}, for $x,y \in V(\Gamma)$, we have $x\leq_{\mcG_{\ker}} y$ if and only if $v\npreceq x$ and $x\preceq y$. Combining this with \cref{equation P^ker R}, it follows that 
\begin{equation*}
\mcP \cap \ker R = \mcP(\ker R).
\end{equation*}
This finishes the proof.
\end{proof}

For the first phase of our induction, we now use this iteratively in order to obtain:

\begin{proposition}
\label{join of base cases}
There is a homotopy equivalence  
\begin{equation*}
\CC{O}{\mcP(O)} \simeq \ast_{[v]_{\mcG} \in T_\mcG} \CC{O_v}{\mcP(O_v)}
\end{equation*}
where for all $[v]_{\mcG} \in T_\mcG$, one has $O_v = \Roar{A_{\succeq v}}{\mcG_v,\mcH_v^t}$ such that:
\begin{enumerate}
\item	$\mcG_v$ is saturated with respect to $(\mcG_v,\mcH_v^t)$,
\item	$\mcH_v=\mcH_{\Gamma_{\succeq v}} \cup \set{A_{\succeq w} \mid v \prec w}$,
\item	for $x\not= y \in \Gamma_{\succeq v}$, one has $x\leq_{\mcG_v} y$ if and only if $x\in [v]_{\mcG}$ and $x\preceq y$.
\end{enumerate}
\end{proposition}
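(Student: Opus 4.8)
The plan is to prove the statement by iterating the Induction Step (\cref{induction step}), peeling off one $\sim_\mcG$-equivalence class at a time. First I would fix a linear extension $c_1, \ldots , c_m$ of the partial order that $\preceq$ induces on $T_\mcG$, arranged from top to bottom so that $c_i \succ c_j$ implies $i<j$; thus each $c_i$ is maximal among $\set{c_i, \ldots , c_m}$. Writing $O^{(0)}=O$ and picking $v^{(i)}\in c_i$, at the $i$-th stage I would apply \cref{induction step} to $O^{(i-1)}$ with $R = R_{\succeq v^{(i)}}$, obtaining
\[
\CC{O^{(i-1)}}{\mcP(O^{(i-1)})} \simeq \CC{\im R}{\mcP(\im R)} \ast \CC{O^{(i)}}{\mcP(O^{(i)})},
\]
where $O^{(i)}\coloneqq \ker R$. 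By \cref{image and kernel of restriction homomorphism} the kernel $O^{(i)}$ is again a relative automorphism group of $A_\Gamma$ (represented by a saturated pair), so the iteration continues. After $m$ steps the coset complexes over classes that have become singletons are taken over the empty family, hence are empty complexes and disappear from the join. I would set $O_{v^{(i)}}\coloneqq \im R$ and collect the resulting join over all classes; since singleton classes give $\mcP(O_v)=\emptyset$ on both sides, the factors match the product over all of $T_\mcG$.

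The bookkeeping of the orderings is the clean part. Iterating the second item of \cref{partial order in image and kernel}, I would show inductively that in $O^{(i)}$ one has $x \leq_{\mcG^{(i)}} y$ iff $x\notin c_1\cup \cdots \cup c_i$ and $x\preceq y$; in particular $c_1,\ldots ,c_i$ shatter into singletons while $c_{i+1},\ldots ,c_m$ survive unchanged. As $v^{(i)}\notin c_1\cup\cdots\cup c_{i-1}$, the cone $\Gamma_{\succeq v^{(i)}}$ computed in $O^{(i-1)}$ coincides with the original one, and the first item of \cref{partial order in image and kernel} then gives, for $x\neq y \in \Gamma_{\succeq v^{(i)}}$, that $x\leq_{\mcG_{\im R}} y$ iff $x\in c_i$ and $x\preceq y$ --- exactly condition (3) for $O_{v^{(i)}}$. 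The same computation (via \eqref{equation P^ker R} in the proof of \cref{induction step}) shows that among the surviving classes only $\mcP_{c_i}$ maps into $\im R$ while $\mcP_{c_{i+1}},\ldots ,\mcP_{c_m}$ pass into the kernel, so the iteration distributes the parabolics one class per factor.

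It then remains to identify $O_{v^{(i)}}=\im R$ with the group described by (1)--(2). Condition (1) is arranged by choosing a saturated representative. For condition (2) I would argue on generators: by \cref{generators ROARs} two relative automorphism groups coincide once they contain the same inversions, transvections and partial conjugations, so it suffices to compare, inside $A_{\succeq v^{(i)}}$, the Laurence generators of $\im R$ with those of $\Roar{A_{\succeq v^{(i)}}}{\mcG_{v^{(i)}}, \mcH_{v^{(i)}}^t}$ for $\mcH_{v^{(i)}}=\mcH_{\Gamma_{\succeq v^{(i)}}}\cup \set{A_{\succeq w} \mid v^{(i)}\prec w}$. By the description of image generators following \cref{image and kernel of restriction homomorphism}, a generator lies in $\im R$ precisely when it is the restriction of a Laurence generator of $O^{(i-1)}$, i.e. of $O$ acting trivially on each $A_{\succeq v^{(s)}}$, $s<i$. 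Using \cref{Laurence generator stabilise criterion}, this ``acts trivially'' condition translates, for inversions $\iota_y$ and transvections $\rho_y^z$, into $y\notin \Gamma_{\succeq v^{(s)}}$ for all $s<i$, that is $y\notin c_1\cup\cdots\cup c_{i-1}$, which on $\Gamma_{\succeq v^{(i)}}$ forces $y\in c_i$ and reproduces the containment criterion of \cref{Laurence generator containment criterion} for $\mcH_{v^{(i)}}$ verbatim.

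The main obstacle is the partial-conjugation case, where the two descriptions do not match on the nose: freezing $A_{\succeq v^{(s)}}$ for $s<i$ produces, upon restriction, ``acts trivially'' constraints on every common upper bound $A_{\Gamma_{\succeq v^{(s)}}\cap \Gamma_{\succeq v^{(i)}}}$, and for classes $c_s$ incomparable to $c_i$ these are genuinely more subgroups than the cones $A_{\succeq w}$ listed in $\mcH_{v^{(i)}}$. I expect to resolve this through the power-set relation $P(\mcH_{v^{(i)}})\subseteq \mcG_{v^{(i)}}$ coming from saturation: since $\mcG_{v^{(i)}}$ contains all cones $A_{\succeq w}$ with $v^{(i)}\prec w$ (each contained in $\Gamma_{\succ v^{(i)}}$), the $\mcG_{v^{(i)}}^{x}$-components of $\Gamma_{\succeq v^{(i)}}\bsl \st(x)$ never split such a cone, and an argument in the spirit of \cref{trivial action of partial conjugation} and \cref{strongly divided for ROARs} shows that a partial conjugation respecting all these cones automatically respects every upward-closed subset of $\Gamma_{\succ v^{(i)}}$, so the extra common-upper-bound constraints are redundant. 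Granting this redundancy, \cref{Laurence generator containment criterion} yields the same partial conjugations on both sides, whence $\im R = O_{v^{(i)}}$; assembling the factors for $i=1,\ldots ,m$ together with the empty factors from singleton classes then gives the claimed join.
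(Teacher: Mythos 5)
Your overall strategy is the same as the paper's --- iterated conical restrictions fed into \cref{short exact sequences} via \cref{induction step} --- just organised linearly along a top-to-bottom linear extension of the class poset rather than recursively, and this linearisation is fine: your bookkeeping of the orderings via \cref{partial order in image and kernel}, and of how \cref{equation P^ker R} distributes the parabolics one class per factor, is correct. You also correctly isolate the real crux, a point over which the paper's own write-up passes rather quickly: after incomparable classes have been frozen, the image acquires trivial-action constraints on the sets $\Gamma_{\succeq v^{(s)}}\cap\Gamma_{\succeq v^{(i)}}$, which are unions of cones rather than cones, so condition (2) does not hold on the nose.

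However, your proposed resolution of this point contains a genuine gap. The implication you assert --- that because $\mcG_{v^{(i)}}$ contains the cones, a partial conjugation acting trivially on every cone $A_{\succeq w}$, $w\succ v^{(i)}$, automatically acts trivially on every upward-closed subset of $\Gamma_{\succ v^{(i)}}$ --- is false: not splitting individual cones does not glue distinct cones to each other. Concretely, let $\Gamma'$ be the star with centre $b$ and leaves $s,x,y,z$ (so $A_{\Gamma'}\cong \mathbb{Z}\times F_4$), with inherited order $s\prec b,x,y,z$ and $b,x,y,z$ pairwise incomparable, so that the cones above $s$ are the singletons $\set{b},\set{x},\set{y},\set{z}$. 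The set $\set{x}$ is a connected component of $\Gamma'\bsl\st(y)=\set{s,x,z}$, and by \cref{Laurence generator stabilise criterion} the partial conjugation $\pi_{\set{x}}^{y}$ acts trivially on each of $\langle b\rangle,\langle x\rangle,\langle y\rangle,\langle z\rangle$, yet it does not act trivially on the upward-closed sets $\set{x,z}$ or $\set{x,y,z}$. This configuration really occurs in your induction (take two incomparable classes whose common upper bounds are three pairwise incomparable maximal vertices $x,y,z$, each carrying a private leaf: the group cut out by the cone constraints alone then strictly contains the actual image, the difference consisting exactly of elements such as $\pi_{\set{x}}^{y}$). The redundancy you want is nevertheless true, but it cannot be derived from the claimed form of $O_{v^{(i)}}$ alone --- with the ``cone-constraints-only'' saturated pair the claimed equality is simply wrong. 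The fix is to take $\mcG_{v^{(i)}}$ to be the family of all proper special subgroups stabilised by the \emph{actual} image group: since that group acts trivially on each intersection set $\Delta$, every $A_Y$ with $Y\subseteq\Delta$ --- in particular $A_{\Delta\bsl\set{x}}$ --- is stabilised and hence lies in $\mcG_{v^{(i)}}$; these subgroups glue all of $\Delta\bsl\st(x)$ into a single $\mcG_{v^{(i)}}^{x}$-component, whereupon \cref{Laurence generator stabilise criterion} forces every partial conjugation of $\Roar{A_{\succeq v^{(i)}}}{\mcG_{v^{(i)}},\mcH_{v^{(i)}}^t}$ to act trivially on $A_\Delta$, and \cref{generators ROARs} closes the argument; one checks separately that this pair is still saturated and reproduces the actual image. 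Since your sketch derives the gluing only from the cones inside $\mcG_{v^{(i)}}$, it cannot reach this conclusion, and the ``granting this redundancy'' step is exactly where the proof is incomplete.
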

\begin{proof}
We want to inductively use the restriction maps $R_{\succeq w}$. In order to do this, assume that we have shown that $\CC{\Outo}{\mcP(O)\,}$ is homotopy equivalent to a join of coset complexes of the form
\begin{equation*}
\CC{U}{\mcP(U)},
\end{equation*}
where $U=\Roar{A_\Theta}{\mcE,\mcF^t}$ with $\Theta=\Gamma_{\succeq v}$ and such that the following hypotheses hold:
\begin{enumerate}
\item	$\mcE$ is saturated with respect to $(\mcE,\mcF^t)$,
\item	$\mcF = \mcH_\Theta \cup \mcF'$ where $\mcF' \subseteq \set{A_{\succeq w} \mid v \prec w}$,
\item	for all $w\in \Theta$, either $U$ acts trivially on $A_{\succeq w}$ or $\Theta_{\geq_\mcE w} = \Theta_{\succeq w}$.
\end{enumerate}
Note that a priori, there is slight ambiguity in writing $A_{\succeq w}$ without specifying the ambient graph. Here we can however ignore this issue because for all $w\succ v$, we have $\Gamma_{\succeq w} = \Theta_{\succeq w}$.
For technical reasons we allow $v$ to be a formal element $0$ with $\Gamma_{\succeq 0} \coloneqq \Gamma$.
These three hypotheses hold in particular for the initial case where $v=0$, $\mcE=\mcG$ and $\mcF=\mcH$.

Now assume that there is $w \in \Theta$ such that $U$ does not act trivially on $A_{\succeq w}$. In this case, we have $\Theta_{\geq_\mcE w} = \Theta_{\succeq w}$, so by \cref{stabilisation of cones}, the special subgroup $A_{\succeq w}\leq A_\Theta$ is stabilised by $U$ and we can consider the restriction map $R\colon U \to \Outo[A_{\succeq w}]$.
By \cref{induction step}, this yields a homotopy equivalence
\begin{equation*}
\CC{U}{\mcP(U)} \simeq \CC{\im R}{\mcP(\im R)} \ast \CC{\ker R}{\mcP(\ker R)}.
\end{equation*}

By \cref{image and kernel of restriction homomorphism}, we can write
\begin{equation*}
\ker R = \Roar{A_\Theta}{\mcE_{\ker}, (\mcF\cup \set{A_{\succeq w}})^t},
\end{equation*}
where $\mcE_{\ker}$ is saturated with respect to the pair $(\mcE_{\ker}, \mcF\cup \set{A_{\succeq w}})$. Furthermore, \cref{partial order in image and kernel} together with the third hypothesis of our induction imply that for all $w\in \Theta$, either $\ker R$ acts trivially on $A_{\succeq w}$ or $\Theta_{\geq_{\mcE_{\ker}} w} = \Theta_{\succeq w}$.
It follows that $\CC{\ker R}{\mcP(\ker R)}$ satisfies the hypotheses of our induction.

Again using \cref{image and kernel of restriction homomorphism}, we can write
\begin{equation*}
\im R = \Roar{A_{\succeq w}}{\mcE_{\im}, \mcF_{\im}^t}
\end{equation*}
where $\mcE_{\im}$ is saturated with respect to $(\mcE_{\im},\mcF_{\im})$ and the elements of $\mcF_{\im}$ are the special subgroups generated by the vertices of $\Delta\cap \Gamma_{\succeq w}$ for some $\Delta \in \mcF$.
This implies that $\CC{\im R}{\mcP(\im R)}$ satisfies the first hypotheses of our induction. The third one is an immediate consequence of  \cref{partial order in image and kernel}.

Now apply induction to these coset complexes.
This process ends if we arrive at a case where for all $w \succ v$, the group $U$ acts trivially on $A_{\succeq w}$. But then we can set $\mcF=\mcH_{\Theta} \cup \set{A_{\succeq w} \mid v \prec w}$ and for $x\not= y$, $x \leq_\mcE y$ is only possible if $x\in[v]_\mcG$. If $v=0$, this means that the relative ordering on $\Gamma_{\preceq 0}= \Gamma$ is trivial, so $\mcP(U)=\emptyset$. If $v\not =0$, the group $O_v\coloneqq U$ satisfies all conditions of the claim.
\end{proof}

\subsubsection{Coset complexes of conical RAAGs}
\label{section coset complexes of conical ROARs}
We now want to deal with the coset complexes $\CC{O_v}{\mcP(O_v)}$ of conical RAAGs that we obtained in \cref{join of base cases}. This is why in this subsection, we impose the following assumptions.
\paragraph{Standing assumptions}
Until the end of \cref{section coset complexes of conical ROARs}, we assume that:
\begin{enumerate}
\item  \label{item conical subgroup} There is a vertex $v\in V(\Gamma)$ such that $\Gamma= \Gamma_{\succeq v}$, i.e. every vertex of $\Gamma$ is greater than or equal to $v$ with respect to $\preceq$.
\item \label{item trivial action} For all $w\succ v$, the group $O$ acts trivially on the special subgroup $A_{\set{w}}\leq \nolinebreak A_\Gamma$.
\end{enumerate}
Observe that \cref{item conical subgroup} implies that 
\begin{align}
\label{equation only bigger things stabilised}
\text{for all } A_\Delta\in \mcG \text{, we have  } \Delta\subseteq \Gamma_{\succ v}:
\end{align}
By \cref{Laurence generator containment criterion} and \cref{Laurence generator stabilise criterion},
every $\Delta\subseteq \Gamma$ such that $O$ stabilises $A_\Delta$ must be upwards-closed with respect to $\preceq$. Hence, if $\Delta$ intersects $[v]_\mcG$ non-trivially, it follows that $\Delta=\Gamma$.
Furthermore, \cref{item trivial action} implies that for all $w\succ v$, the equivalence class $[w]_\mcG$ is a singleton. Hence, we have 
\begin{align}
\label{equation only one set of parabolics}
\mcP(O)=\mcP_{[v]_\mcG}.
\end{align}
\newline

In this situation, let
\begin{equation*}
Z \coloneqq \set{w\in V(\Gamma) \mid v\prec w \text{ and $w$ is adjacent to $v$} }.
\end{equation*} 
We define the \emph{group of twists by elements in $\Gamma_{\succ v}$} as the subgroup $T \leq O$ generated by the transvections $\rho_x^z$ with $x\in [v]_\mcG$ and $z\in Z$.

\begin{lemma}
\label{SES for projection map}
$T$ is a free abelian group. Furthermore, $\Gamma$ can be decomposed as a join $\Gamma= \nolinebreak Z\ast\Delta$ and there is a short exact sequence
\begin{equation*}
1 \to T \to O \stackrel{P_\Delta}{\to} \Roar{A_\Delta}{\mcG_\Delta, \mcH_\Delta^t} \to 1.
\end{equation*}
\end{lemma}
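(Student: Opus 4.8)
The plan is to prove the three assertions separately, deriving the short exact sequence last from an explicit computation of $\ker P_\Delta$. I would begin with the geometric statements, which fall out immediately from the standing assumption $\Gamma=\Gamma_{\succeq v}$. Set $\Delta\coloneqq\Gamma\setminus Z$. For $z\in Z$ and $d\in\Delta$, the assumption $v\preceq d$ gives $\lk(v)\subseteq\st(d)$; since $z$ is adjacent to $v$ we have $z\in\lk(v)\subseteq\st(d)$, and $z\neq d$ forces $z\in\lk(d)$. Hence every vertex of $Z$ is adjacent to every vertex of $\Delta$, i.e. $\Gamma=Z\ast\Delta$. Running the same computation with a second element $z'\in Z$ in place of $d$ (using $v\prec z$, so $\lk(v)\subseteq\st(z)$, and $z'\in\lk(v)$) shows that $Z$ is a complete graph and that each $z\in Z$ satisfies $\st(z)=\Gamma$; thus $A_Z$ is central, $A_\Gamma=A_Z\times A_\Delta$, and $A_Z\cong\mathbb{Z}^{|Z|}$.

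For the claim that $T$ is free abelian, I would first note that each generator $\rho_x^z$ with $x\in[v]_\mcG$, $z\in Z$ lies in $O$: one has $x\sim_\mcG v\prec z$, so $x\preceq z$, hence $x\leq_\mcG z$ and $\rho_x^z\in O$ by \cref{Laurence generator containment criterion}; moreover $z$ is central, so $\rho_x^z$ is a twist. These twists commute pairwise --- twists with distinct base letters commute automatically, while $\rho_x^z$ and $\rho_x^{z'}$ commute because $z,z'$ commute in $A_\Gamma$ ($Z$ being complete). So $T$ is abelian and finitely generated. To pin down the rank I would pass to the induced action on $H_1(A_\Gamma)\cong\mathbb{Z}^{V(\Gamma)}$, under which $\rho_x^z$ maps to the elementary matrix $E_{x,z}$; since $[v]_\mcG\cap Z=\emptyset$, these commuting elementary matrices freely generate a free abelian group of rank $|[v]_\mcG|\cdot|Z|$. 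As $T$ is abelian and its generating set surjects onto this image, the natural surjection $\mathbb{Z}^{[v]_\mcG\times Z}\to T$ must be injective, giving $T\cong\mathbb{Z}^{|[v]_\mcG|\cdot|Z|}$.

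For the short exact sequence, I would first obtain surjectivity: $A_Z$ is a proper, nontrivial special subgroup stabilised by $O$, so by saturation of $\mcG$ we have $A_Z\in\mcG$, and \cref{image of projection map} then identifies $\im P_\Delta$ with $\Roar{A_\Delta}{\mcG_\Delta,\mcH_\Delta^t}$. It remains to show $\ker P_\Delta=T$. The inclusion $T\subseteq\ker P_\Delta$ is immediate, since every $z\in Z$ dies in $A_\Delta=A_\Gamma/A_Z$, so each $\rho_x^z$ induces the identity. For the reverse inclusion I would take $[\phi]\in\ker P_\Delta$ and, after composing with a suitable inner automorphism of $A_\Gamma$, assume $\phi$ induces the identity on $A_\Delta$; then $\phi(A_Z)=A_Z$ and $\phi(g)\in g\cdot A_Z$ for every generator $g$. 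For $z\in Z$, triviality of the $O$-action on $A_{\{z\}}$ together with centrality of $z$ (so that all representatives of $[\phi]$ agree on $z$) forces $\phi(z)=z$. For a vertex $d\in\Gamma_{\succ v}\setminus Z\subseteq\Delta$, the same triviality gives a representative fixing $d$, so $\phi(d)$ is conjugate to $d$; but conjugates of $d$ lie in $A_\Delta$, whereas $\phi(d)=d\cdot z_d$ has $A_Z$-component $z_d$, forcing $z_d=1$ and hence $\phi(d)=d$.

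Thus $\phi$ fixes every generator outside $[v]_\mcG$ and sends each $x\in[v]_\mcG$ to $x\cdot z_x$ with $z_x\in A_Z$; writing $z_x=\prod_{z\in Z}z^{a_{x,z}}$ exhibits $\phi$ as the product $\prod_{x,z}(\rho_x^z)^{a_{x,z}}\in T$, giving $\ker P_\Delta\subseteq T$ and completing the sequence. The main obstacle is precisely this reverse inclusion, where all the standing hypotheses are used essentially: the condition $\Gamma=\Gamma_{\succeq v}$ controls which letters can receive twists, while triviality of the $O$-action on the $A_{\{w\}}$ for $w\succ v$ is exactly what rules out both a nontrivial action on the central factor $A_Z$ and any transvection based at a vertex strictly above $v$ --- either of which would enlarge the kernel beyond $T$. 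The two points requiring genuine care are the reduction to a representative inducing the identity on $A_\Delta$ and the conjugacy argument forcing $z_d=1$ for $d\succ v$, both of which I would carry out via the direct product decomposition $A_\Gamma=A_Z\times A_\Delta$ and \cref{Laurence generator stabilise criterion}.
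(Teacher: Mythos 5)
Your proof is correct, and its skeleton matches the paper's: establish $\Gamma = Z \ast \Delta$ with $Z$ complete, identify $\im P_\Delta$ via \cref{image of projection map}, and identify $\ker P_\Delta$ with $T$. Where you genuinely diverge is in the last two claims. The paper disposes of both the freeness of $T$ and the equality $\ker P_\Delta = T$ by citation --- \cite[Proposition 4.4]{CV:Finitenesspropertiesautomorphism} (see also \cite[5.1.4]{DW:Relativeautomorphismgroups}) --- using only that $O$ acts trivially on the central subgroup $A_Z$. You instead prove both from scratch: freeness by pushing $T$ into $\GL{|V(\Gamma)|}{\mathbb{Z}}$ via the abelianisation, where the generators become independent commuting elementary matrices (legitimate on outer classes, since inner automorphisms die in $H_1$), and the kernel computation by normalising a representative $\phi$ so that it induces the identity on $A_\Delta \cong A_\Gamma/A_Z$, then using centrality of $A_Z$, the trivial action on each $A_{\set{w}}$ with $w \succ v$, and the splitting $A_\Gamma = A_Z \times A_\Delta$ to force $\phi$ to fix every generator outside $[v]_\mcG$ and to twist each $x \in [v]_\mcG$ by an element of $A_Z$. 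This buys self-containedness --- the reader need not unwind the cited proposition, and your argument makes visible exactly where each standing hypothesis enters --- at the cost of length; the paper's citation is shorter, and its hypothesis (trivial action on $A_Z$) is exactly what the first half of the proof establishes. Your explicit verification that $A_Z \in \mcG$ via saturation, which is needed to invoke \cref{image of projection map} at all, is also a point the paper leaves implicit, and it is good that you spelled it out.

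One small repair: you never treat the case $Z = \emptyset$, and your surjectivity step breaks there --- $A_Z$ is then trivial, so it is not a \emph{proper, non-trivial} special subgroup, saturation does not place it in $\mcG$, and \cref{image of projection map} does not apply. This case does occur (e.g.\ for discrete $\Gamma$). The paper dismisses it in one line, since then $\Delta = \Gamma$, the map $P_\Delta$ is the identity and $T$ is trivial, so the statement holds vacuously; you should do the same before assuming $Z \neq \emptyset$.
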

\begin{proof}
If $Z = \emptyset$, the statement is trivial, so we can assume that $Z$ contains at least one element. 
By definition, we have $Z \subseteq \lk(v) \bsl [v]_\mcG$.
 As every vertex of $\Gamma$ is greater than or equal to $v$ with respect to $\preceq$, this implies that $Z$ is a complete graph and we can write $\Gamma= \nolinebreak Z\ast\Delta$.

Using the assumption that $O$ acts trivially on $A_{\set{w}}$ for all $w\succ v$, \cref{Laurence generator containment criterion} and \cref{Laurence generator stabilise criterion} imply that $O$ acts trivially on the normal subgroup $A_Z \triangleleft \nolinebreak \AG$.
Consequently, we have a well-defined projection map $P_\Delta\colon O \to \Out{A_\Delta}$. By \cref{image of projection map} the image of this map is equal to $\Roar{A_\Delta}{\mcG_\Delta, \mcH_\Delta^t}$.

The description of the kernel $\ker P_\Delta$ as the free abelian group $T$ generated by the transvection $\rho_x^z$ with $x\in [v]_\mcG$ and $z\in Z$ follows from \cite[Proposition 4.4]{CV:Finitenesspropertiesautomorphism} because $O$ acts trivially on $A_Z$ (see also \cite[5.1.4]{DW:Relativeautomorphismgroups}). 
\end{proof}

\begin{lemma}
\label{parabolics after projection map}
Let $\Delta \coloneqq \Gamma \bsl Z$ and let $P_\Delta$ denote the corresponding projection map. There is a a homotopy equivalence
\begin{equation*}
\CC{O}{\mcP(O)} \simeq \CC{\im P_\Delta}{\mcP(\im P_\Delta)}.
\end{equation*}
\end{lemma}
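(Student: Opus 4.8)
The plan is to feed the short exact sequence
\[
1 \to T \to O \stackrel{P_\Delta}{\to} \Roar{A_\Delta}{\mcG_\Delta, \mcH_\Delta^t} \to 1
\]
supplied by \cref{SES for projection map} into \cref{short exact sequences}, taking the kernel to be $N = T$, the quotient to be $\im P_\Delta$, and the family to be $\mcP = \mcP(O)$; as a generating set $S$ of $O$ I would use the Laurence generators, which generate $O$ by \cref{generators ROARs}. By \eqref{equation only one set of parabolics} we have $\mcP(O)=\mcP_{[v]_\mcG}=\set{\Stab_O(A_{\Delta_v^j}) \mid 1\le j\le n-1}$, so after applying the corollary everything comes down to identifying the two families $\overbar{\mcP(O)}$ and $\mcP(O)\cap T$ that appear in its conclusion.

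First I would check the hypothesis of \cref{short exact sequences} in its strongest form, namely that every $P\in\mcP(O)$ contains the \emph{whole} kernel $T$. Recall that $T$ is generated by the transvections $\rho_x^z$ with $x\in[v]_\mcG$ and $z\in Z$. Since $Z\subseteq\Gamma_{\succ v}$ and $\Gamma_{\succ v}\subseteq\Delta_v^j$ for every $j$, the acting letter $z$ lies in $\Delta_v^j$, so \cref{Laurence generator stabilise criterion} shows that $\rho_x^z$ stabilises $A_{\Delta_v^j}$; hence $T\le\Stab_O(A_{\Delta_v^j})$. As each $P$ is a proper subgroup containing $N=T$, it cannot also contain $S\bsl T$ (that would force $P=\langle S\rangle=O$), so in the notation of \cref{short exact sequences} the index set of $\mcP(O)\cap T$ is empty. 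Thus $\mcP(O)\cap T=\emptyset$, the factor $\CC{T}{\emptyset}$ is empty, and since the join with the empty complex is the identity we obtain
\[
\CC{O}{\mcP(O)} \simeq \CC{\im P_\Delta}{\overbar{\mcP(O)}},
\qquad \overbar{\mcP(O)}=\set{P_\Delta(P)\mid P\in\mcP(O)}.
\]

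It then remains to show $\overbar{\mcP(O)}=\mcP(\im P_\Delta)$. By \cref{projection of parabolics}, $P_\Delta(\Stab_O(A_{\Delta_v^j}))=\Stab_{\im P_\Delta}(A_{\Delta_v^j\cap\Delta})$, and using $Z\subseteq\Gamma_{\succ v}$ together with $[v]_\mcG\cap Z=\emptyset$ one computes that $\Delta_v^j\cap\Delta$ is the full subgraph of $\Delta$ on $\set{v_1,\ldots,v_j}\cup(\Gamma_{\succ v}\bsl Z)$. To match these with the defining subgraphs of the parabolics of $\im P_\Delta$, I would first record that the $\mcG_\Delta$-ordering on $\Delta$ is the restriction of $\preceq$: for $x,y\in\Delta$ the transvection $\rho_x^y$ lies in $\im P_\Delta$ if and only if $x\preceq y$ (one direction because $\rho_x^y\in O$ projects to it, the other by the transvection construction in the proof of \cref{image of projection map}), and by \cref{remark relative orderings and saturation} this characterises $\leq_{\mcG_\Delta}$. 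Consequently $[v]_{\mcG_\Delta}=[v]_\mcG$ with the same chosen ordering, each class $[u]_{\mcG_\Delta}$ with $u\in\Gamma_{\succ v}\bsl Z$ stays a singleton, and the set of vertices of $\Delta$ lying strictly above $v$ in the $\mcG_\Delta$-ordering is exactly $\Gamma_{\succ v}\bsl Z$. Hence $\mcP(\im P_\Delta)=\mcP_{[v]_{\mcG_\Delta}}$ and its $j$-th member is $\Stab_{\im P_\Delta}(A_{\Delta_v^j\cap\Delta})$, which matches $\overbar{\mcP(O)}$ term by term.

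The step I expect to be the main obstacle is this final matching: one must confirm both that passing to $\Delta$ creates no new non-singleton equivalence class (so that $\im P_\Delta$ again has precisely $n-1$ maximal parabolics) and that the orderings on $[v]_\mcG$ are compatible, so that $\Stab_O(A_{\Delta_v^j})\mapsto\Stab_{\im P_\Delta}(A_{\Delta_v^j\cap\Delta})$ is index-preserving rather than only a bijection of unordered families. The degenerate case $Z=\emptyset$, where $T$ is trivial and $P_\Delta$ is essentially the identity, I would dispatch at the outset since there the asserted equivalence is immediate.
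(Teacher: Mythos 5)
Your proposal is correct and follows essentially the same route as the paper's proof: it feeds the short exact sequence of \cref{SES for projection map} into \cref{short exact sequences}, checks via \cref{Laurence generator stabilise criterion} that every maximal parabolic contains the twist group $T$ (so the kernel factor of the join is empty), and then identifies $\overbar{\mcP(O)}$ with $\mcP(\im P_\Delta)$ using \cref{projection of parabolics} and the fact that $\leq_{\mcG_\Delta}$ is the restriction of $\preceq$ to $\Delta$. The additional details you supply (the Laurence-generator argument for the restricted ordering, the singleton equivalence classes in $\Gamma_{\succ v}\bsl Z$, and the term-by-term matching) are precisely the points the paper leaves implicit.
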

\begin{proof}
By \cref{SES for projection map}, we have a short exact sequence
\begin{equation*}
1 \to T \to O \stackrel{P_\Delta}{\to} \im P_\Delta \to 1,
\end{equation*}
where $\im P_\Delta = \Roar{A_\Delta}{\mcG_\Delta, \mcH_\Delta^t}$.
We first claim that every parabolic subgroup $P\in \mcP(O)$ contains $T$. Indeed, we observed above that $\mcP(O)=\mcP_{[v]_\mcG}$ (see \cref{equation only one set of parabolics}). By definition, every $P\in \mcP_{[v]_\mcG}$ is of the form $P=\Stab_O(A_\Theta)$ for some $\Theta$ containing $\Gamma_{\succ v}$. The claim now follows from \cref{Laurence generator stabilise criterion}.

Hence, \cref{short exact sequences} yields a homotopy equivalence
\begin{equation*}
\CC{O}{\mcP(O)} \simeq \CC{\im P_\Delta}{\overbar{\mcP(O)}} \ast \emptyset = \CC{\im P_\Delta}{\overbar{\mcP(O)}}
\end{equation*}
The ordering $\leq_{\mcG_\Delta}$ is just the restriction of $\preceq$ to $\Delta$, so \cref{projection of parabolics} implies that $\overbar{\mcP(O)} = \mcP(\im P_\Delta)$.
\end{proof}

We now distinguish between the case where $[v]_\mcG$ is an abelian and the case where it is a free equivalence class.
\begin{lemma}
\label{coset complex of abelian equivalence class}
Assume that $\Gamma= \Gamma_{\succeq v}$ where $[v]_{\mcG}$ is an abelian equivalence class of size $n\coloneqq |[v]_{\mcG}| \geq 2$.
Then the coset complex
\begin{equation*}
\CC{O}{\mcP(O)}
\end{equation*}
is homotopy equivalent to the Tits building associated to $\GL{n}{\mathbb{Q}}$.
\end{lemma}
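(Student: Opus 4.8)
The plan is to read this off from \cref{parabolics after projection map} together with \cref{building and coset complex}; the entire content of the lemma is to identify the target $\im P_\Delta$ of the projection with $\GL{n}{\mathbb{Z}}$. Recall $Z = \set{w\in V(\Gamma) \mid v\prec w \text{ and } w \text{ adjacent to } v}$ and $\Delta = \Gamma\bsl Z$, so that \cref{parabolics after projection map} already supplies a homotopy equivalence $\CC{O}{\mcP(O)} \simeq \CC{\im P_\Delta}{\mcP(\im P_\Delta)}$. Thus it suffices to show $\im P_\Delta \cong \GL{n}{\mathbb{Z}}$ in such a way that $\mcP(\im P_\Delta)$ becomes the family $\mcP(\GL{n}{\mathbb{Z}})$ of \cref{section buildings}; the building then enters through the isomorphism of \cref{building and coset complex}.

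The crucial step --- and the only place where abelianness of $[v]_\mcG$ is used --- is to show that $Z = \Gamma_{\succ v}$, equivalently $\Delta = [v]_\mcG$. First I would prove that every $w\succ v$ is adjacent to $v$. Since $n\geq 2$ there is $v'\in [v]_\mcG$ with $v'\neq v$; as $[v]_\mcG$ is abelian, $v'$ is adjacent to $v$, so $v'\in \lk(v)$. From $v\preceq w$ one has $\lk(v)\subseteq \st(w)$, hence $w$ is adjacent to $v'$. Now $v'\sim_\mcG v$ gives $v'\preceq v$, so $\lk(v')\subseteq \st(v)$, and because $w\in \lk(v')$ this forces $w\in\st(v)$, i.e.\ $w$ is adjacent to $v$. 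Therefore $w\in Z$, which proves $\Gamma_{\succ v}\subseteq Z$ and hence $\Delta = \Gamma\bsl Z = [v]_\mcG$.

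Once $\Delta = [v]_\mcG$ is established the rest is bookkeeping. The special subgroup $A_\Delta = A_{[v]_\mcG}$ is free abelian of rank $n$, so $\Out{A_\Delta} = \GL{n}{\mathbb{Z}}$ and $\Outo[A_\Delta] = \Out{A_\Delta}$. The families $\mcG_\Delta$ and $\mcH_\Delta$ are trivial: by \cref{equation only bigger things stabilised} every $A_\Theta\in\mcG$ has $\Theta\subseteq \Gamma_{\succ v}$, and saturation gives $P(\mcH)\subseteq \mcG$, so the same containment holds for the supports of $\mcH$; intersecting with $\Delta = [v]_\mcG$ (which is disjoint from $\Gamma_{\succ v}$) leaves only the trivial subgroup. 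Hence $\im P_\Delta = \Roar{A_\Delta}{\mcG_\Delta,\mcH_\Delta^t} = \GL{n}{\mathbb{Z}}$. Moreover, since $\Gamma_{\succ v}\cap \Delta = \emptyset$, the defining graphs reduce to $\Delta_v^j = \set{v_1,\ldots,v_j}$, so
\[
\mcP(\im P_\Delta) = \set{\Stab_{\GL{n}{\mathbb{Z}}}(\ll v_1,\ldots,v_j\rr) \,\middle|\, 1\leq j\leq n-1},
\]
which is exactly $\mcP(\GL{n}{\mathbb{Z}})$. Combining the displayed homotopy equivalence with \cref{building and coset complex} then yields the claim.

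The main obstacle is conceptual rather than computational: one must notice that the potentially troublesome "higher" vertices $w\succ v$ which are \emph{not} adjacent to $v$ --- precisely the vertices that survive in $\Delta$ in the free case and there produce a relative free factor complex instead of a building --- simply cannot occur when $[v]_\mcG$ is abelian of size at least two. The short adjacency argument in the second paragraph is what rules them out; after that the whole situation degenerates to the pure $\GL{n}{\mathbb{Z}}$ case and \cref{building and coset complex} closes the proof.
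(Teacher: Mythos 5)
Your proof is correct and follows essentially the same route as the paper: invoke \cref{parabolics after projection map}, use abelianness of $[v]_\mcG$ to force $Z = \Gamma_{\succ v}$ and hence $\Delta = [v]_\mcG$ with $\mcG_\Delta$ and $\mcH_\Delta$ trivial, identify $\im P_\Delta$ with $\GL{n}{\mathbb{Z}}$, and finish via \cref{building and coset complex}. Your adjacency argument in the second paragraph is simply a spelled-out version of the paper's one-line assertion that every vertex of $\Gamma_{\succ v}$ must be adjacent to $v$.
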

\begin{proof}
By \cref{parabolics after projection map}, we have a homotopy equivalence
\begin{equation*}
\CC{O}{\mcP(O)} \simeq \CC{\im P_\Delta}{\mcP(\im P_\Delta)},
\end{equation*}
where $\Delta = \Gamma \bsl Z$ and $\im P_\Delta = \Roar{A_\Delta}{\mcG_\Delta, \mcH_\Delta^t}$.

By assumption, the abelian equivalence class $[v]_{\mcG}$ contains at least two elements which are adjacent to each other. As every vertex of $\Gamma$ is greater than or equal to $v$ with respect to $\preceq$, this implies that every vertex of $\Gamma_{\succ v}$ must be adjacent to $v$. Hence, $Z=\Gamma_{\succ v}$ and $\Delta= [v]_\mcG$.
As observed above (\cref{equation only bigger things stabilised}), every $\Theta\subseteq \Gamma$ with $A_\Theta \in \mcG$ is entirely contained in $\Gamma_{\succ v}$. Consequently, we have $\mcG_\Delta = \mcH_\Delta = \emptyset$ and
\begin{equation*}
\Roar{A_\Delta}{\mcG_\Delta, \mcH_\Delta^t} = \GL{n}{\mathbb{Z}}.
\end{equation*}
This means that $\CC{O}{\mcP(O)}\simeq \CC{\GL{n}{\mathbb{Z}}}{\mcP(\GL{n}{\mathbb{Z}})}$ and this coset complex is isomorphic to the Tits building associated to $\GL{n}{\mathbb{Q}}$ by \cref{building and coset complex}.
\end{proof}

In the setting of a free equivalence class, the situation is slightly more complicated: As before, we start by projecting away from $Z$, but we then might have to apply further restriction maps.
\begin{lemma}
\label{coset complex of free equivalence class}
Assume that $\Gamma= \Gamma_{\succeq v}$ where $[v]_{\mcG}$ is a free equivalence class of size $n\coloneqq |[v]_{\mcG}| \geq 2$.
Then there is a special subgroup $A\leq A_\Gamma$ such that 
\begin{equation*}
A = F \ast A_1 \ast \cdots \ast A_k
\end{equation*}
where $F$ is the free group of rank $n$ generated by $[v]_{\mcG}$ and the coset complex
\begin{equation*}
\CC{O}{\mcP(O)}
\end{equation*}
is homotopy equivalent to the free factor complex of $A$ relative to $\set{[A_1],\ldots, [A_k]}$.
\end{lemma}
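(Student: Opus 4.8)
The plan is to combine the projection reduction already in place with an identification of $\im P_\Delta$ as a (pure) Fouxe--Rabinovitch group, and then invoke the isomorphism between coset complexes of such groups and relative free factor complexes. First I would apply \cref{parabolics after projection map} to replace $\CC{O}{\mcP(O)}$ by $\CC{\im P_\Delta}{\mcP(\im P_\Delta)}$, where $\Delta = \Gamma\bsl Z$ and, by \cref{SES for projection map}, $\im P_\Delta = \Roar{A_\Delta}{\mcG_\Delta,\mcH_\Delta^t}$. The reason for projecting away $Z$ is that in $\Delta$ the class $[v]_\mcG$ becomes a set of isolated vertices: since $[v]_\mcG$ is free and $\Gamma=\Gamma_{\succeq v}$, any neighbour of $v$ is $\succeq v$ and cannot be $\sim_\mcG$-equivalent to $v$ (free classes contain no adjacent pairs), hence is $\succ v$ and lies in $Z$; thus $\lk_\Gamma(x)=\lk_\Gamma(v)=Z$ for every $x\in[v]_\mcG$, so deleting $Z$ makes all of $[v]_\mcG$ isolated. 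Setting $A\coloneqq A_\Delta$ and $\Theta\coloneqq\Delta\bsl[v]_\mcG$, the special subgroup $A$ therefore splits as a free product $A=F\ast A_1\ast\cdots\ast A_k$ with $F=A_{[v]_\mcG}$ free of rank $n$ and $A_i=A_{D_i}$, where the $D_i$ are the $\mcG_\Delta$-components of $\Theta$ (unions of connected components of $\Theta$, hence genuine free factors).

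Next I would show that $\im P_\Delta$ is exactly the (pure) Fouxe--Rabinovitch group of this free product, i.e. the relative outer automorphism group acting trivially on each $A_i$. I would run through the Laurence generators using \cref{Laurence generator containment criterion} and \cref{Laurence generator stabilise criterion} together with the conical standing assumptions: the ordering $\leq_{\mcG_\Delta}$ is the restriction of $\preceq$, which is nontrivial only on $[v]_\mcG$, so every transvection in $\im P_\Delta$ has its source in $[v]_\mcG$ and every inversion inverts a generator of $F$, while the admissible partial conjugations are precisely those dictated by the $\mcG_\Delta$-component structure. The delicate bookkeeping is to check that the subgroups appearing in $\mcH_\Delta$ (the restrictions of $\mcH_{\Gamma_{\succeq v}}$ and of the conical subgroups $A_{\succeq w}$) either lie inside a single peripheral factor $A_i$ --- in which case the triviality condition they impose is redundant, as it is implied by trivial action on all of $A_i$ --- or are absorbed into the choice of the $D_i$. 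This is the point where one may first have to apply further restriction maps $R_{\succeq w}$ to strip off residual conical structure before the group is recognised cleanly, the corresponding image factors being trivial and hence contributing nothing to the resulting join.

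Finally, under this identification the maximal parabolic subgroups match up. The defining subgraph $\Delta_v^j$ meets $\Delta$ in $\{v_1,\dots,v_j\}\cup\Theta$, so $A_{\Delta_v^j\cap\Delta}=\langle v_1,\dots,v_j\rangle\ast A_1\ast\cdots\ast A_k$, which is precisely the standard free factor $S_j$ from \cref{section factor complexes}. Hence $\mcP(\im P_\Delta)=\set{\Stab_{\im P_\Delta}(S_j)\mid 1\le j\le n-1}$. The facet-transitivity argument in the proof of \cref{isomorphism relative free factor complex} constructs its realising automorphisms from partial conjugations and changes of free basis, all of which are pure, so it applies verbatim to $\im P_\Delta$; together with \cref{detecting CC} this gives an isomorphism $\CC{\im P_\Delta}{\mcP(\im P_\Delta)}\cong\mcF(A;\mcA)$, the free factor complex of $A$ relative to $\mcA=\set{A_1,\dots,A_k}$. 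Composing with the projection equivalence yields $\CC{O}{\mcP(O)}\simeq\mcF(A;\mcA)$, as claimed.

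I expect the main obstacle to be the middle step: pinning down the peripheral structure $\set{A_1,\dots,A_k}$ precisely as the $\mcG_\Delta$-components and verifying, generator by generator, that $\im P_\Delta$ is exactly the pure Fouxe--Rabinovitch group for that structure rather than a proper subgroup cut out by extra imposed triviality conditions coming from $\mcH_\Delta$. Everything else is either already available (the projection reduction, the free-product decomposition of $A$) or a direct transcription of the free-group arguments of \cref{section factor complexes}.
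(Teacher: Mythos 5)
Your opening moves match the paper: projecting away $Z$ via \cref{parabolics after projection map} and decomposing $A_\Delta = A_{[v]_\mcG}\ast A_{\Delta_1}\ast\cdots\ast A_{\Delta_k}$ along the $\mcG_\Delta$-components is exactly how the paper's proof begins. The genuine gap is your middle step: the claim that $\im P_\Delta$ \emph{equals} the pure Fouxe--Rabinovitch group $\Roar{A_\Delta}{\set{A_{\Delta_i}}_i^t}$ is false in general. The standing assumptions only force the group to act trivially on the singletons $A_{\set{w}}$ (respectively, in the situation produced by \cref{join of base cases}, on the conical subgroups $A_{\succeq w}$) for $w\succ v$; a component $\Delta_i$ is typically a union of several such pieces, and acting trivially on each piece does not imply acting trivially on the group they generate. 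Concretely, a partial conjugation $\pi_K^y$ with acting letter $y\in\Delta_i$ whose domain contains some but not all components of $\Delta_i\bsl\st(y)$ can act trivially on every element of $\mcH$ and yet restrict to a non-inner partial conjugation of $A_{\Delta_i}$: take $\Gamma$ to be two isolated vertices $v_1,v_2$ together with a path $b$--$a$--$y$--$c$--$d$, and let $\mcH$ consist of the subgroups $A_{\geq w}$ for $w\notin\set{v_1,v_2}$; then $\pi^y_{\set{b}}$ lies in $O$ but acts non-trivially on $A_{\set{a,b,c,d,y}}$ (here $Z=\emptyset$, so $\im P_\Delta=O$). This possibility is precisely why the paper does \emph{not} identify $\im P_\Delta$ with the Fouxe--Rabinovitch group: instead it applies further restriction maps $R_{\Delta_i}$ \emph{to the components}, verifies that every parabolic subgroup contains all inversions, transvections and partial conjugations outside $\ker R_{\Delta_i}$, concludes via \cref{short exact sequences} that the image side contributes only the empty complex to the join, and iterates until the kernel $\Roar{A_\Delta}{\set{A_{\Delta_i}}_i^t}$ is reached --- only that kernel is the Fouxe--Rabinovitch group, to which \cref{isomorphism relative free factor complex} is then applied.

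Your hedge does not repair this. The maps you propose, $R_{\succeq w}$, are useless at this stage: the group already acts trivially on every $A_{\succeq w}$ with $w\succ v$, so these restrictions are trivial and cannot strip off the offending partial conjugations, whose acting letters lie in the $\Delta_i$ (which are not conical subgroups). Moreover the ``image factors'' are not trivial groups --- they are in general non-trivial subgroups of $\Outo[A_{\Delta_i}]$ generated by partial conjugations (see Step 3 of \cref{section summary inductive procedure}); what makes them harmless is that no parabolic subgroups descend to them, and verifying that dichotomy is exactly the content your proposal skips. Let me add that your final step, decoupled from the false identification, could in fact be salvaged and would even yield an isomorphism rather than a homotopy equivalence: every element of $\mcG_\Delta\cup\mcH_\Delta$ lies inside a single component $\Delta_i$ (its vertices are pairwise $\mcG_\Delta$-adjacent), so the Fouxe--Rabinovitch group is contained in $\im P_\Delta$; since $\im P_\Delta$ also stabilises each $[A_{\Delta_i}]$ and preserves coranks, it acts on the relative free factor complex facet-transitively with the standard chain as strict fundamental domain, and \cref{detecting CC} applies directly to $\im P_\Delta$. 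But both of these containments require arguments that your write-up does not supply, since you derive them from the identification that fails.
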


\begin{proof}
Again by \cref{parabolics after projection map}, we have a homotopy equivalence
\begin{equation*}
\CC{O}{\mcP(O)} \simeq \CC{\im P_\Delta}{\mcP(\im P_\Delta)},
\end{equation*}
where $\Delta = \Gamma \bsl Z$ and $\im P_\Delta = \Roar{A_\Delta}{\mcG_\Delta, \mcH_\Delta^t}$. As noted above, 
the $\mcG_\Delta$-ordering on $\Delta$ is just the restriction of $\preceq$ to $\Delta$; in particular we have $[v]_{\mcG_\Delta}= [v]_\mcG$ and $\Delta=\Delta_{\succeq v}$.

As no two vertices from $[v]_{\mcG}$ are adjacent to each other, the link $\lk_\Gamma(v)$ is entirely contained in $Z$, so every element of $[v]_{\mcG}$ forms an isolated vertex of $\Delta$.
This implies that $\Delta$ decomposes as a disjoint union $\Delta=[v]_\mcG \sqcup \bigsqcup \Delta_i$ where each $\Delta_i$ is a $\mcG_\Delta$-component of $\Delta$. In particular, we have
\begin{equation*}
A_\Delta = A_{[v]_\mcG} \ast A_{\Delta_1} \ast \ldots \ast A_{\Delta_k}.
\end{equation*}
Moreover, for all $i$, the group $\im P_\Delta$ stabilises $A_{\Delta_i}$: If $\Delta_i$ contains at least two vertices, this is \cite[Lemma 3.13.1]{DW:Relativeautomorphismgroups} and if $\Delta_i$ is a singleton, the action on $A_{\Delta_i}$ is trivial by assumption.

If there is an $i$ such that $\im P_\Delta$ acts non-trivially on $\Delta_i$, there is a non-trivial restriction map $R\colon  \im P_\Delta \to \Out{A_{\Delta_i}}$. Its kernel can be written as 
\begin{equation*}
\ker R=\Roar{A_\Delta}{\mcG_{\ker}, (\mcH_\Delta\cup\set{A_{\Delta_i}})^t},
\end{equation*}
where $\mcG_{\ker}$ is saturated with respect to $(\mcG_{\ker}, \mcH_\Delta\cup\set{A_{\Delta_i}})$. 
One can easily check that each $P\in \mcP(\im P_\Delta)$ contains all the inversions, transvections and partial conjugations not contained in $\ker R$: The kernel contains all inversions and transvections from $\im P_\Delta$ as well as the partial conjugations that have acting letter contained in $[v]_\mcG$. The remaining partial conjugations are contained in all of the parabolic subgroups.

Hence, by \cref{short exact sequences}, we have a homotopy equivalence
\begin{equation*}
\CC{\im P_\Delta}{\mcP(\im P_\Delta)} \simeq \emptyset \ast \CC{\ker R}{\mcP\cap \ker R}= \CC{\ker R}{\mcP\cap \ker R}.
\end{equation*}
\cref{partial order in image and kernel} implies that the ordering $\leq_{\mcG_{\ker}}$ agrees with $\preceq$ on $\Delta$; hence using \cref{restriction of parabolics}, we obtain $\mcP(\ker R) = \mcP\cap \ker R$. 
All the $A_{\Delta_i}$ are stabilised by $\ker R$, so we can use induction and apply restriction maps until we reach the group $\Roar{A_\Delta}{\set{A_{\Delta_i}}_i^t}$. This group is equal to $\Out{A_\Delta, \set{A_{\Delta_i}}_i^t}$ and hence a Fouxe-Rabinovitch group. The claim now follows from \cref{isomorphism relative free factor complex}.
\end{proof}

Using the results of \cref{section building and relative free factor complex}, the last two lemmas can be summarised as:
\begin{corollary}
\label{homotopy type of base cases}
Assume that $\Gamma= \Gamma_{\succeq v}$ where $[v]_{\mcG}$ is an equivalence class of size $n\coloneqq |[v]_{\mcG}|$.
Then $\CC{O}{\mcP(O)}$ is $(n-2)$-spherical.
\end{corollary}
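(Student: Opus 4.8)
The plan is to split into cases according to the size and type of the equivalence class $[v]_\mcG$ and simply to invoke the two preceding lemmas together with the sphericity results established in \cref{section building and relative free factor complex}. Since an equivalence class of size at least two is, by construction, either a complete graph (abelian) or a discrete graph (free), the following cases will be exhaustive.

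First I would dispose of the degenerate case $n=1$. Here $[v]_\mcG$ is a singleton, so by definition $\mcP_{[v]_\mcG}=\emptyset$. Under the standing assumption $\Gamma=\Gamma_{\succeq v}$ we have $\mcP(O)=\mcP_{[v]_\mcG}$ (this is \cref{equation only one set of parabolics}), so the family of parabolics is empty. Consequently $\CC{O}{\mcP(O)}$ is the empty complex, which by the convention fixed in \cref{section spherical complexes} is $(-1)$-spherical; as $n-2=-1$, this is exactly what is required.

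Next, for $n\geq 2$ I would use the abelian/free dichotomy. If $[v]_\mcG$ is abelian, \cref{coset complex of abelian equivalence class} identifies $\CC{O}{\mcP(O)}$ up to homotopy with the Tits building of $\GL{n}{\mathbb{Q}}$, and the Solomon--Tits Theorem (\cref{Solomon-Tits}) tells us this building is $(n-2)$-spherical. If $[v]_\mcG$ is free, \cref{coset complex of free equivalence class} identifies $\CC{O}{\mcP(O)}$ up to homotopy with the free factor complex of some $A=F\ast A_1\ast\cdots\ast A_k$ relative to $\set{[A_1],\ldots,[A_k]}$, where $F$ is free of rank $n$; \cref{relative free factor complexes} then gives that this complex is $(n-2)$-spherical. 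Since sphericity is a homotopy invariant, it transfers across these homotopy equivalences in both cases.

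There is no genuine obstacle here: the corollary is a bookkeeping statement collecting the two lemmas, and the substantive content has already been carried out in proving them and in the base-case sphericity theorems of \cref{section building and relative free factor complex}. The only point requiring a moment's care is the boundary case $n=1$, where one must check that the empty parabolic family yields the empty complex and that this matches the $(-1)$-spherical convention so that the uniform statement ``$(n-2)$-spherical'' holds without exception.
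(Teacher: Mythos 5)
Your proposal is correct and follows exactly the paper's own proof: the $n=1$ case via the empty-family convention, and for $n\geq 2$ the abelian/free dichotomy resolved by \cref{coset complex of abelian equivalence class} together with the Solomon--Tits Theorem, respectively \cref{coset complex of free equivalence class} together with \cref{relative free factor complexes}. No differences worth noting.
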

\begin{proof}
If $n=1$, the statement is trivial as in this case, the set $\mcP(O)=\mcP_{[v]_{\mcG}}$ is empty. Hence, the complex $\CC{O}{\mcP(O)}$ is the empty set which we consider to be $(-1)$-spherical (see \cref{section spherical complexes}). Now let $n\geq 2$.
If $[v]_{\mcG}$ is abelian, \cref{coset complex of abelian equivalence class} implies that the coset complex is homotopy equivalent to the Tits building associated to $\GL{n}{\mathbb{Q}}$ which is $(n-2)$-spherical by the Solomon--Tits Theorem.
If on the other hand $[v]_{\mcG}$ is free, it is by \cref{coset complex of free equivalence class} homotopy equivalent to a relative free factor complex which is by \cref{relative free factor complexes} $(n-2)$-spherical as well.
\end{proof}

\subsection{Proof of Theorem A}
We return to the general situation where $\Gamma$ is any graph and $\mcG$ and $\mcH$ are any families of special subgroups of $\AG$ such that $\mcG$ is saturated with respect to $(\mcG, \mcH)$. Recall that $\preceq$ denotes the $\mcG$-ordering of $V(\Gamma)$ and $T_\mcG$ denotes the set of associated $\sim_\mcG$-equivalence classes.

The only thing that is left to be done for the proof of Theorem \ref{introduction homotopy type}, which we restate below, is to collect the results obtained in \cref{section induction}.
\begin{theorem}
\label{homotopy type complex of parabolics in ROARs}
Let $O\coloneqq \Outo[A_\Gamma; \mcG, \mcH^t]$. The coset complex $\CC{O}{\mcP(O)}$ is homotopy equivalent to a wedge of spheres of dimension $\rk(O)-1$.
\end{theorem}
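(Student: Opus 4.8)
The plan is to combine the two phases of the inductive procedure developed in \cref{section induction}. First I would apply \cref{join of base cases} to decompose the coset complex into a join indexed by the $\sim_\mcG$-equivalence classes of vertices:
\begin{equation*}
\CC{O}{\mcP(O)} \simeq \ast_{[v]_{\mcG} \in T_\mcG} \CC{O_v}{\mcP(O_v)}.
\end{equation*}
By the conclusion of \cref{join of base cases}, each factor $O_v = \Roar{A_{\succeq v}}{\mcG_v,\mcH_v^t}$ lives over the conical graph $\Gamma_{\succeq v}$ and satisfies the standing assumptions of \cref{section coset complexes of conical ROARs}: the saturation of $\mcG_v$, the fact that $\mcH_v$ contains $\set{A_{\succeq w} \mid v \prec w}$ (which forces the trivial action on each $A_{\set{w}}$ for $w\succ v$), and the relative ordering property ensuring $[v]_{\mcG_v} = [v]_\mcG$ is the only non-trivial equivalence class. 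Hence each factor is exactly of the form handled by \cref{homotopy type of base cases}.

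Next I would invoke \cref{homotopy type of base cases}, which tells us that $\CC{O_v}{\mcP(O_v)}$ is $(n_v-2)$-spherical, where $n_v \coloneqq |[v]_{\mcG}|$. Substituting into the join decomposition, I get a join of spherical complexes. The dimension bookkeeping is then the crux: by \cref{homotopy type of wedges and joins}, joining an $a$-spherical and a $b$-spherical complex produces an $(a+b+1)$-spherical complex, so an iterated join of $(n_v-2)$-spherical complexes over the index set $T_\mcG$ is spherical of dimension
\begin{equation*}
\sum_{[v]_{\mcG}\in T_\mcG}(n_v-2) + (|T_\mcG|-1) = \sum_{[v]_{\mcG}\in T_\mcG}(n_v-1) - 1 = \rk(O)-1,
\end{equation*}
using the definition $\rk(O)= \sum_{[v]_{\mcG}\in T_\mcG}(|[v]_{\mcG}|-1)$. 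A small point to check is that factors with $n_v=1$ contribute an empty (i.e. $(-1)$-spherical) complex; by the convention fixed in \cref{section spherical complexes} the join with a $(-1)$-spherical space shifts dimension by $0$, which is consistent with such a class contributing $n_v-1=0$ to the sum, so these factors may be harmlessly discarded.

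I expect the genuine mathematical content to already be packaged into \cref{join of base cases} and \cref{homotopy type of base cases}, so the remaining proof is essentially an arithmetic verification that the join dimensions add up correctly to $\rk(O)-1$. The main obstacle—if any—is purely a boundary-case nuisance: if no $\mcG$-equivalence class has size greater than one, then $\mcP(O)=\emptyset$, $\rk(O)=0$, and the coset complex is the empty set, which is $(-1)$-spherical, matching the formula $\rk(O)-1=-1$; this degenerate case should be noted explicitly so the statement holds uniformly. Assembling these pieces completes the proof.
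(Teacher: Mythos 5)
Your proposal is correct and follows essentially the same route as the paper's own proof: apply \cref{join of base cases}, verify that each factor $O_v$ satisfies the standing assumptions of \cref{section coset complexes of conical ROARs}, invoke \cref{homotopy type of base cases}, and conclude via the join-dimension count of \cref{homotopy type of wedges and joins}. Your explicit treatment of singleton equivalence classes as $(-1)$-spherical factors matches the convention the paper uses inside \cref{homotopy type of base cases}, so no gap remains.
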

\begin{proof} 
By \cref{join of base cases}, we know that there is a homotopy equivalence  
\begin{equation*}
\CC{O}{\mcP(O)} \simeq \ast_{[v]_{\mcG} \in T_\mcG} \CC{O_v}{\mcP(O_v)}
\end{equation*}
where for all $[v]_{\mcG} \in T_\mcG$, one has $O_v = \Roar{A_{\succeq v}}{\mcG_v,\mcH_v^t}$ such that:
\begin{enumerate}
\item	$\mcG_v$ is saturated with respect to $(\mcG_v,\mcH_v^t)$,
\item	$\mcH_v=\mcH_{\Gamma_{\succeq v}} \cup \set{A_{\succeq w} \mid v \prec w}$,
\item	for $x \not= y \in \Gamma_{\succeq v}$, one has $x\leq_{\mcG_v} y$ if and only if $x\in [v]_{\mcG}$ and $x\preceq y$.
\end{enumerate}

Let $[v]_{\mcG} \in T_\mcG$. Condition 3 implies that the $\mcG_v$-equivalence class of $v$ in $\Gamma_{\succeq v}$ is equal to $[v]_{\mcG}$ and that all other $w\in \Gamma_{\succeq v}$ are greater than $v$ with respect to $\leq_{\mcG_v}$. Now Condition 2 implies that for all $w$ with $w >_{\mcG_v} v$, the group $O_v$ acts trivially on $A_{\set{w}} \leq A_{\succeq v}$. Hence, the assumptions of \cref{section coset complexes of conical ROARs} are fulfilled and \cref{homotopy type of base cases} implies that $\CC{O_v}{\mcP(O_v)}$ is spherical of dimension $|[v]_{\mcG}|-2$. It follows from \cref{homotopy type of wedges and joins} that the join of these complexes is spherical of dimension $\sum_{[v]_{\mcG}\in T_\mcG}(|[v]_{\mcG}|-1) -1=\rk(O)-1$.
\end{proof}

\section{Summary of the inductive procedure and examples}
\label{section summary induction and examples}
\subsection{Consequences for the induction of Day--Wade}
\label{section summary inductive procedure}

The proof of \cref{homotopy type complex of parabolics in ROARs} relies on the inductive procedure defined in \cite{DW:Relativeautomorphismgroups}: The authors there show that for every graph $\Gamma$, the group $\Outo$ has a subnormal series 
\begin{equation*}
1= N_0 \leq N_1\leq \ldots \leq N_k = \Outo
\end{equation*}
such that for all $i$, the quotient $N_{i+1}/N_{i}$ is isomorphic to either a free abelian group, to $\GL{n}{\mathbb{Z}}$ or to a Fouxe-Rabinovitch group (see \cite[Theorem A]{DW:Relativeautomorphismgroups}). The methods we use in \cref{section induction} provide more detailed information about this inductive procedure which decomposes $\Outo$ in terms of short exact sequences related to restriction and projection homomorphisms: We are able to give an explicit description of the restriction and projection maps that one has to use during the induction and of the base cases one obtains this way. In what follows, we will give a summary of these results. See also \cref{figure decomposition tree}.

\begin{figure}[t]
\begin{center}
\includegraphics{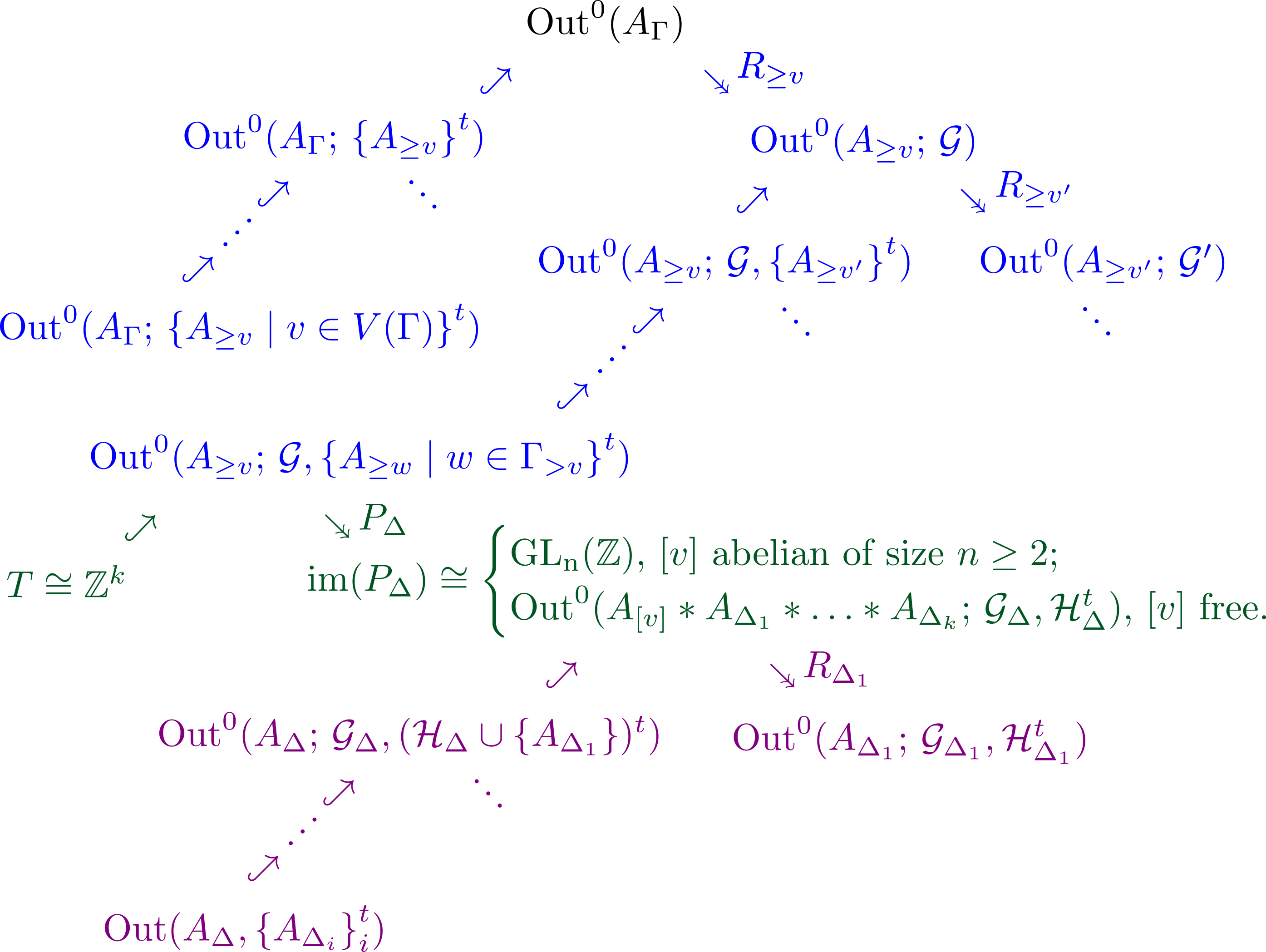}
\end{center}
\caption{Decomposition of $\Outo$. Step 1 is coloured in blue, Step 2 in green and Step 3 in magenta.}
\label{figure decomposition tree}
\end{figure}

To simplify notation, we will describe the decomposition of $O=\Outo$, however all of this can also be stated in the more general case where $O$ is any relative automorphism group of a RAAG.

\paragraph{Step 1}
First one iteratively restricts to conical subgroups $A_{\geq v}$ until one is left with relative automorphism groups that act trivially on all of their proper conical subgroups --- for this, one needs to apply exactly one restriction map for every (standard) equivalence class of $V(\Gamma)$ and the order in which one applies the corresponding restriction maps does not change the base cases of this first induction step. One of these base cases is given by the intersection of the kernels of all the conical restriction maps; it is the group $\Roar{\AG}{\set{A_{\geq v} \mid v \in V(\Gamma)}^t}$ which does not contain any inversions or transvections. The other base cases are all of the form $\Roar{A_{\geq v}}{\mcG, \set{A_{\geq w} \mid w \in \Gamma_{> v}}^t}$ for some $v\in V(\Gamma)$ and some family $\mcG$ of special subgroups of $A_{\geq v}$. There is exactly one such base case for every equivalence class $[v]$ of $V(\Gamma)$ and it is generated by all the restrictions to $A_{\geq v}$ of inversions, transvections and partial conjugations of $\Outo$ that act trivially on $A_{\geq w}$ for every $w>v$.

\paragraph{Step 2}
Now for each of these groups, one applies the (possibly trivial) projection map $P_\Delta$ where $\Delta\coloneqq \Gamma_{\geq v}\bsl Z$ and $Z$ is the full subgraph of $\Gamma_{\geq v}$ consisting of all those vertices of $\Gamma$ which are adjacent to $v$ and strictly greater than $v$ with respect to the standard ordering on $V(\Gamma)$. The kernel of this projection map is given by the free abelian group $T$ generated by all twist of elements in $[v]$ by elements in $\Gamma_{> v}$.
We now have to distinguish two cases: If $[v]$ is an abelian equivalence class of size $n\geq 2$, then the image of $P_\Delta$ is given by $\Out{A_{[v]}} \cong \GL{n}{\mathbb{Z}}$.
If this is not the case, we proceed with Step 3.

\paragraph{Step 3} If $[v]$ is a free equivalence class, the graph $\Delta$ decomposes as a disjoint union $\Delta=[v] \sqcup \bigsqcup \Delta_i$ where each $\Delta_i$ is a relative connected component of $\im(P_\Delta)$.
One can show that the $\Delta_i$ are precisely the non-empty intersections $\Delta_i \nolinebreak= (\Delta\bsl [v] ) \cap \Gamma_i$ where $\Gamma_i$ is a connected component of $\Gamma\bsl \lk(v)$.
We now iteratively apply the restriction maps $R_{\Delta_i}$. This yields two kinds of base cases: The first one is given by the intersection of the kernels of all the $R_{\Delta_i}$ and can be described as the Fouxe-Rabinovitch group $\Out{A_\Delta, \set{A_{\Delta_i}}_i^t}$. The second one is given by the images of the restriction maps. For each $i$, this is a relative automorphism group of $A_{\Delta_i}$; as $\Delta_i\subseteq \Gamma_{>v}$, this group contains no inversions or transvections and is generated by partial conjugations. 

\paragraph{The base cases} In summary, our induction yields the following base cases:
\begin{enumerate}
	\item \label{item base case left-most kernel} The ``left-most'' kernel $\Roar{\AG}{\set{A_{\geq v} \mid v \in V(\Gamma)}^t}$;
	\item for every abelian equivalence class $[v]$ of size $n\geq 2$:
		\begin{enumerate}
			\item the free abelian group $T$ generated by all twist of elements in $[v]$ by elements in $\Gamma_{> v}$, which has rank $n\cdot |\Gamma_{> v}\cap \lk(v)|$;
			\item \label{item base case GLn} $\Out{A_{[v]}} \cong \GL{n}{\mathbb{Z}}$;
		\end{enumerate}
	\item for every free equivalence class $[v]$:
		\begin{enumerate}
			\item the free abelian group $T$ generated by all twist of elements in $[v]$ by elements in $\Gamma_{> v}$, which has rank $n\cdot |\Gamma_{> v}\cap \lk(v)|$;
			\item \label{item base case relative connected components} for every connected component $\Gamma_i$ of $\Gamma\bsl \lk(v)$ such that 
				\begin{equation*}
				\Delta_i \coloneqq (\Gamma_{> v}\bsl \lk(v) ) \cap \Gamma_i\not = \emptyset:
				\end{equation*}			
			a subgroup of $\Outo[A_{\Delta_i}]$ generated by partial conjugations;
			\item \label{item FR with transvections} a Fouxe-Rabinovitch group $\Out{A_\Delta, \set{A_{\Delta_i}}_i^t}$ where $\Delta=\Gamma_{\geq v}\bsl \lk(v)$ and the $\Delta_i$ are as in \cref{item base case relative connected components}.
		\end{enumerate}
\end{enumerate}

The base cases having a non-empty set of parabolic subgroups are \cref{item base case GLn} and \cref{item FR with transvections} if $|[v]|\geq 2$. Note that we allow $|v|=1$ in \cref{item FR with transvections} which results in $\Out{\ll v\rr}\cong \mathbb{Z}/2\mathbb{Z}$ if $v$ is maximal.
One should mention that \cref{item base case left-most kernel} and \cref{item base case relative connected components} are not necessarily base cases of the induction of Day--Wade: There might be further non-trivial restriction and projection maps and after applying them one can decompose these groups into Fouxe-Rabinovitch groups and free abelian groups generated by partial conjugations.

\subsection{Examples}

\subsubsection{String of diamonds}
Let $\Gamma$ be the string of $d$ diamonds (see \cref{figure string of diamonds}), as considered in \cite[Section 5.3]{CSV:Outerspaceuntwisted} and \cite[Section 6.3.1]{DW:Relativeautomorphismgroups}.
 Assume $d\geq 2$. The standard equivalence classes of $\Gamma$ are given by 
\begin{align*}
[a_i]=[b_i]=\set{a_i, b_i}, \, 1\leq i \leq d &&\text{ and }&& [c_i]=\set{c_i}, \, 0\leq i \leq d.
\end{align*}
The conical subgroups here are
\begin{align*}
&\Gamma_{\geq a_i}=[a_i]=\set{a_i, b_i}, \, 1\leq i \leq d,\\
&\Gamma_{\geq c_i} =[c_i], \, 1\leq i \leq d-1 ,\\
&\Gamma_{\geq c_0}=[c_0]\cup [c_1] \cup [a_1] \text{ and } \Gamma_{\geq c_d}=[c_d]\cup [c_{d-1}] \cup [a_d].
\end{align*}
If we order $[a_i]$ as $(a_i,b_i)$, the family of maximal parabolic subgroups of the group $O\coloneqq \Outo$ is given as
\begin{align*}
\mcP(O)=\set{\Stab_O(\ll a_i \rr) \mid 1\leq i \leq d}
\end{align*}
and we have $\rk(O)=d$, i.e. $\CC{O}{\mcP(O)}$ is $(d-1)$-spherical.

\begin{figure}
\begin{center}
\includegraphics{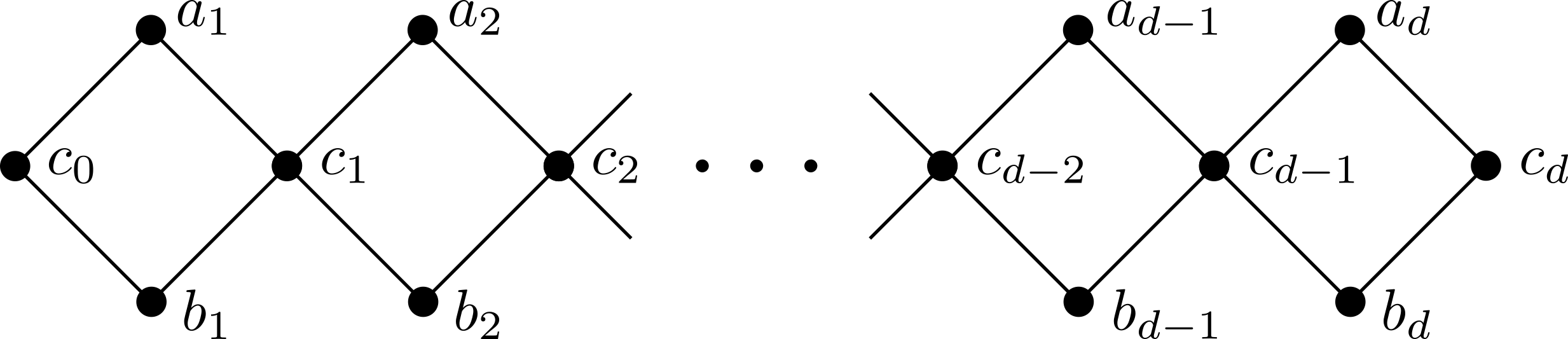}
\end{center}
\caption{String of $d$ diamonds.}
\label{figure string of diamonds}
\end{figure}

After restricting to these conical subgroups (Step 1 of our induction), we are left with the following base cases:
\begin{enumerate}
\item The left-most kernel $\Roar{\AG}{\set{A_{\geq v} \mid v \in V(\Gamma)}^t}$ which here is generated by all the partial conjugations of $O$;
\item for all $1\leq i \leq d-1$: the group $\Out{\ll c_i \rr}\cong \mathbb{Z}/2\mathbb{Z}$;
\item$ \begin{aligned}[t]
	\operatorname{Out}^0(\ll c_0,c_1,a_1,b_1\rr; \, \set{\ll c_1\rr , \ll a_1, b_1\rr}&^t) \\
	=&\Roar{\ll c_0,c_1,a_1,b_1\rr}{\set{\ll c_1, a_1, b_1\rr}^t};
	\end{aligned}$
\item$ \begin{aligned}[t]
	\operatorname{Out}^0(\ll c_{d},c_{d-1},a_d,b_d\rr; \, \{\ll c_{d-1} \rr, &\ll  a_d, b_d\rr\}^t) \\
	=&\Roar{\ll c_{d},c_{d-1},a_d,b_d\rr}{\set{\ll c_{d-1}, a_d, b_d\rr}^t};
	\end{aligned}$
\item for all $1\leq i \leq d$: the group $\Out{\ll a_i,b_i \rr}\cong \Out{F_2}$.
\end{enumerate}
Only the groups of the last item have a non-empty set of parabolic subgroups (each given by the singleton $\set{\Stab_{\Out{\ll a_i,b_i \rr}}(\ll a_i \rr)}$). All items but the first one describe Fouxe-Rabinovitch groups, so the induction already ends here and we do not have to apply Step 2 and Step 3.

The following direct argument gives a more explicit description of the coset complex: For all $i$, we have a surjective restriction map $O \to \Out{\ll a_i, b_i \rr}$. These can be amalgamated to a map $R: O \to \oplus_{i=1}^n \Out{\ll a_i, b_i \rr}$. We have $\ker(R)\subseteq P_i \coloneqq \Stab_O(\ll a_i \rr)$ for all $i$, so by \cref{short exact sequences}, 
\begin{equation*}
\label{eq:explicit_diamonds}
\CC{O}{\mcP(O)} \cong \CC{\oplus_{i} \Out{\ll a_i, b_i \rr}}{\overbar{\mcP(O)}} \cong \ast_{i} \CC{\Out{\ll a_i, b_i \rr}}{\Stab(\ll a_i \rr)\,}.
\end{equation*}
(For the second isomorphism, we used that $\overbar{P_i}$ contains $\Out{\ll a_j, b_j \rr}$ for $j\not=i$.) 
Each factor in this join is a copy of the free factor complex associated to $\Out{F_2}$ and $O$ acts on their join in the obvious way.
 
\subsubsection{Trees}
Let $\Gamma$ be a tree, define $O\coloneqq\Outo$ and, to simplify notation, assume that $|V(\Gamma)|\geq 3$. 
Let $L$ denote the set of leafs of $\Gamma$. For each leaf $l$, let $z_l$ denote the (unique) vertex adjacent to $l$ and let $Z=\set{z_l \mid l\in L}$ be the set of vertices of $\Gamma$ that are adjacent to some leaf. Then we have
\begin{align*}
[v]=\begin{cases}
	\set{v} &,\, v\in V(\Gamma)\bsl L;\\
	\lk(z_v)\cap L &,\, v\in L.
	\end{cases}
\end{align*}
The conical subgroups are given by
\begin{align*}
\Gamma_{\geq v}=\set{v}, \, v\in V(\Gamma)\bsl L && \text{ and } && \Gamma_{\geq l}= \st(z_l),\, l\in L .
\end{align*}
Now for each $z\in Z$, let $\set{v^z_1,\ldots, v^z_{k_z}}=\lk(z)\bsl L$ be the non-leaf vertices adjacent to $z$ and $\set{l^z_1,\ldots, l^z_{n_z}}=\lk(z)\cap L$ the leafs adjacent to $z$ (see \cref{figure star in tree}). 
Then 
\begin{align*}
\mcP(O)=\bigcup_{z \in Z} \set{\Stab_O\left(\ll l^z_1,\ldots, l^z_i, v^z_1,\ldots, v^z_{k_z}, z \rr \right) \, \middle| \, 1\leq i \leq n_z-1}
\end{align*}
and $\rk(O)= \sum_{z \in Z} (n_z-1) = |L|-|Z|$, which implies that $\CC{O}{\mcP(O)}$ is $(|L|-|Z|-1)$-spherical.

\begin{figure}
\begin{center}
\includegraphics{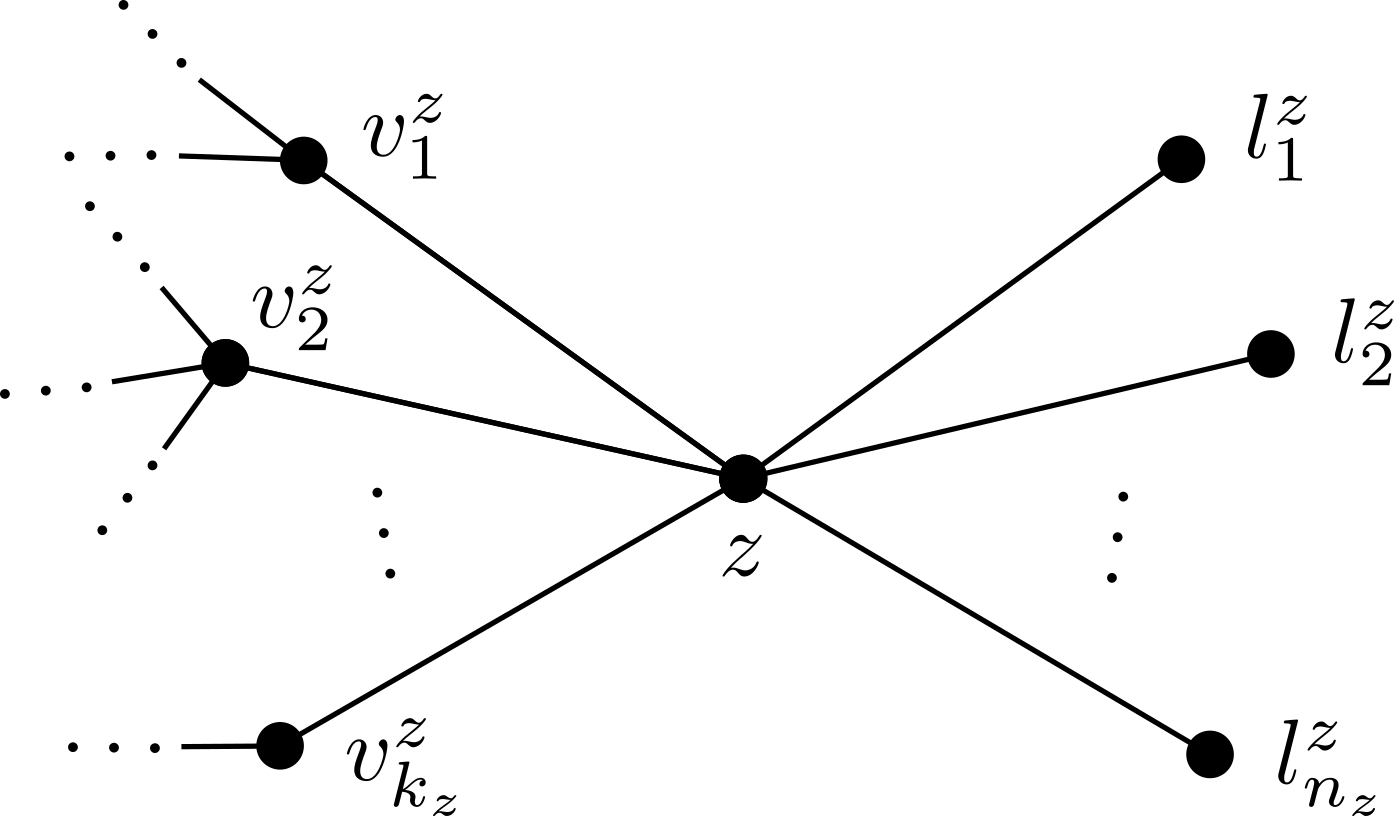}
\end{center}
\caption{$\st(z)=\Gamma_{\geq l_i^z}$.}
\label{figure star in tree}
\end{figure}

Step 1 of our induction leads to the following base cases:
\begin{enumerate}
\item The left-most kernel $\Roar{\AG}{\set{A_{\geq v} \mid v \in V(\Gamma)}^t}$ generated by all the partial conjugations of $O$ acting trivially on $\st(z)$ for all $z\in Z$;
\item for all $v \in V(\Gamma) \bsl L$: the group $\Out{\ll v \rr}\cong \mathbb{Z}/2\mathbb{Z}$;
\item for all $z \in Z$: the group $\Roar{A_{\st(z)}}{\set{\ll v^z_1\rr, \ldots, \ll v^z_{k_z} \rr, \ll z \rr}^t}$.
\end{enumerate}
In Step 2, we apply for each group of the last item the projection map $P_\Delta$ where $\Delta= \lk(z)$. Its kernel is a free abelian group of rank $n_z$, generated by the twists of the leafs adjacent to $z$. The image of this projection map is the Fouxe-Rabinovitch group
\begin{align*}
\Roar{A_{\lk(z)}}{\set{\ll v^z_1\rr, \ldots, \ll v^z_{k_z} \rr}^t}.
\end{align*}
It contains $n_z-1$ maximal parabolic subgroups, given by the stabilisers of $\ll l^z_1,\ldots, l^z_i, v^z_1,\ldots, v^z_{k_z}\rr$, $1\leq i \leq n_z-1$. Again, we do not have to apply Step 3.

\subsubsection{Constructions}
\label{section constructions}
This section does not really contain examples but rather shows how to obtain new examples from known ones.

\paragraph{Direct product}
Let $\Gamma_1$ and $\Gamma_2$ be graphs and let $\Gamma\coloneqq \Gamma_1 \ast \Gamma_2$ be their join. On the level of RAAGs, this means that $A_{\Gamma}=A_{\Gamma_1}\times A_{\Gamma_2}$. Let $\leq , \leq_1\nolinebreak, \leq_2$ be the standard orderings and $[\cdot], [\cdot]_1, [\cdot]_2$ the corresponding equivalence classes of $\Gamma, \Gamma_1, \Gamma_2$, respectively. Let $Z,Z_1, Z_2$ be the (possibly trivial) subgraphs of $\Gamma, \Gamma_1, \Gamma_2$ consisting of all vertices that are adjacent to every vertex of the corresponding graph; clearly, $Z=Z_1\ast Z_2$. It is easy to see that for $v_i\in \Gamma_i$, one has
\begin{align*}
[v_i]=	\begin{cases}
			Z &, v_i\in Z_i;\\
			[v_i]_i &, v_i\not\in Z_i;
		\end{cases}
&& \text{ and } &&
\Gamma_{\geq v_i} =	(\Gamma_i)_{\geq_i v_i}\cup  Z_j .
\end{align*}
We do not spell out the consequences for all of the induction, but would like to point out the following implication for the ranks of the corresponding automorphism groups and hence the dimensions of the associated coset complexes:
\begin{align*}
\rk(\Outo)=\begin{cases}
\rk(\Outo[A_{\Gamma_1}]) + \rk(\Outo[A_{\Gamma_2}]) &, Z_i=\emptyset \text{ for some } i; \\
\rk(\Outo[A_{\Gamma_1}]) + \rk(\Outo[A_{\Gamma_2}]) + 1 &\text{, otherwise}.
\end{cases}
\end{align*}
Note that $Z_i=\emptyset$ is equivalent to saying that the center $Z(A_{\Gamma_i})$ is trivial.

A particularly simple instance of this is the situation where $\AG = F_n \times F_m$ with $n,m > 1$. Then, $\Outo = \Out{F_n} \times \Out{F_m}$  and the coset complex $\CC{\Outo}{\mcP(\Outo)\,}$ is isomorphic to the the join of the free factor complexes associated to $\Out{F_n}$ and $\Out{F_m}$.

\paragraph{Free product}
Let $\Gamma\coloneqq \Gamma_1 \sqcup \Gamma_2$ be the disjoint union of the graphs $\Gamma_1$ and $\Gamma_2$, i.e. $A_{\Gamma}=A_{\Gamma_1}\ast A_{\Gamma_2}$, and keep the notation of the prior paragraph otherwise. Let $D,D_1, D_2$ be the (possibly trivial) subgraphs of $\Gamma, \Gamma_1, \Gamma_2$ consisting of all their isolated vertices; we have $D=D_1 \sqcup D_2$. For $v_i\in \Gamma_i$, one has:
\begin{align*}
[v_i]=	\begin{cases}
			D &, v_i\in D_i;\\
			[v_i]_i &, v_i\not\in D_i;
		\end{cases}
&& \text{ and } &&
\Gamma_{\geq v_i} =	\begin{cases}
			\Gamma &,v_i\in D_i;\\
			(\Gamma_i)_{\geq_i v_i} &, v_i\not\in D_i.
		\end{cases}
\end{align*}
Similar to the case of direct products, this implies:
\begin{align*}
\rk(\Outo)=\begin{cases}
\rk(\Outo[A_{\Gamma_1}]) + \rk(\Outo[A_{\Gamma_2}]) &, D_i=\emptyset \text{ for some } i; \\
\rk(\Outo[A_{\Gamma_1}]) + \rk(\Outo[A_{\Gamma_2}]) + 1 &\text{, otherwise}.
\end{cases}
\end{align*}
This allows for example to generalise the example of tree-RAAGs given above to the setting of forests.

\paragraph{Complement graph}
For a graph $\Gamma$, let $\Gamma^c$ denote its complement, i.e. the graph with vertex set $V(\Gamma)$ where $v$ and $w$ form an edge if and only if they do not form an edge in $\Gamma$. Let $\leq_c$ and $[\cdot]_c$ denote the standard ordering and its equivalence classes on $\Gamma^c$. Then it is easy to see that 
\begin{align*}
[v]=[v]_c && \text{ and } && v\leq_c w \Leftrightarrow w \leq v.
\end{align*}
In particular, one has $\rk(\Outo[A_{\Gamma^c}]) = \rk(\Outo)$. This also explains the analogy between the settings of direct and free products considered above.

\section{Cohen--Macaulayness, higher generation \& rank}
In this section, we generalise the results of \cref{section spherical complex for Out(RAAG)}: We show that the coset complex of parabolic subgroups of a relative automorphism group $O$ of a RAAG is not only spherical, but even Cohen--Macaulay. This is used to determine the degree of generation that families of (possibly non-maximal) parabolic subgroups provide. We also give an interpretation of the rank in terms of a ``Weyl group'' of $O$.

\paragraph{Notation and standing assumptions} As before, let $\Gamma$ be a graph, $\mcG$ and $\mcH$ families of special subgroups of $\AG$ such that $\mcG$ is saturated with respect to $(\mcG, \mcH)$, define $O\coloneqq \Outo[A_\Gamma; \mcG, \mcH^t]$ and set $\preceq\coloneqq \leq_\mcG$ to be the $\mcG$-ordering on $V(\Gamma)$.
Let $T_\mcG$ denote the set of $\sim_\mcG$-equivalence classes of vertices of $\Gamma$. 

\label{section CM and higher generation}
\subsection{Cohen--Macaulayness}
For coset complexes, the Cohen--Macaulay property can be characterised as follows.
\begin{theorem}[{\cite[Theorem 2.11]{Bru:Highergeneratingsubgroups}}]
\label{characterisation CC CM}
Let $G$ be a group and \mcH\, be a finite family of subgroups of $G$. Then $\CC{G}{\mcH}$ is homotopy Cohen--Macaulay if and only if every $\mcH'\subseteq \mcH$ is $(|\mcH'|-1)$-generating for $G$.
\end{theorem}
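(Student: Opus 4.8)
The plan is to prove both implications at once by induction on $k \coloneqq |\mcH|$ (uniformly over all groups $G$), exploiting that $X \coloneqq \CC{G}{\mcH}$ is a \emph{balanced} (colourful) complex: each vertex $gH$ carries the colour $H\in\mcH$, no two cosets of the same subgroup share a simplex (distinct cosets of $H$ are disjoint), and every facet is a transversal of the $k$ colours, so $\dim X = k-1$. Two structural facts drive everything. First, for $\mcH'\subseteq\mcH$ the complex $\CC{G}{\mcH'}$ is exactly the full subcomplex of $X$ spanned by the vertices whose colour lies in $\mcH'$ (the colour-selected subcomplex); this is immediate from the nerve description. Second, I would establish a \emph{Link Lemma}: every simplex $\sigma$ is carried by the $G$-action to a standard simplex $\{H : H\in\mcE\}$ of trivial cosets (translate by any element of $\bigcap\sigma$), and a short direct computation shows
\[
\lk_X(\sigma)\cong \CC{L}{\{\, L\cap H' \mid H'\in\mcH\setminus\mcE \,\}},\qquad L\coloneqq\bigcap_{H\in\mcE}H,
\]
where $\mcE$ is the colour set of $\sigma$. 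In particular vertex links are coset complexes, $\lk_X(H)\cong\CC{H}{\mcH_H}$ with $\mcH_H=\{H\cap H' : H'\neq H\}$, and taking links commutes with colour selection: $\lk_{\CC{G}{\mcF}}(H)\cong\CC{H}{\{H\cap H' : H'\in\mcF\setminus\{H\}\}}$ for any $\mcF\ni H$.

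The second ingredient is the standard inductive reformulation of Cohen--Macaulayness: a complex $X$ of dimension $d$ is homotopy Cohen--Macaulay if and only if it is $(d-1)$-connected and $\lk_X(v)$ is homotopy Cohen--Macaulay for every vertex $v$, which follows by matching $\lk_{\lk_X(v)}(\tau)=\lk_X(v*\tau)$ against the defining connectivity bounds. Combined with the Link Lemma and $G$-transitivity on colour types, $X$ is Cohen--Macaulay iff $X$ is $(k-2)$-connected and $\CC{H}{\mcH_H}$ is Cohen--Macaulay for every $H\in\mcH$. To transfer connectivity between $X$ and its links I would use a cofibre (cone-attachment) lemma: writing $X_{\hat H}=\CC{G}{\mcH\setminus\{H\}}$, the complex $X$ is obtained from $X_{\hat H}$ by attaching, for each colour-$H$ vertex $v$, the cone $v*\lk_X(v)$ along $\lk_X(v)\subseteq X_{\hat H}$ (the links lie in $X_{\hat H}$ because a simplex through $v$ contains no further colour-$H$ vertex). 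The resulting cofibre sequence $X_{\hat H}\to X\to\bigvee_v \Sigma\,\lk_X(v)$ lets me transfer connectivity in both directions: if $X$ is $(k-2)$-connected and $X_{\hat H}$ is $(k-3)$-connected then every $\lk_X(v)$ is $(k-3)$-connected, and conversely.

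With these tools the induction closes. For ``$\Leftarrow$'', assume every $\mcH'\subseteq\mcH$ is $(|\mcH'|-1)$-generating. Then $X$ is $(k-2)$-connected (take $\mcH'=\mcH$) and $X_{\hat H}$ is $(k-3)$-connected, so the cofibre lemma forces each $\CC{H}{\mcH_H}$ to be $(k-3)$-connected. To upgrade this to full Cohen--Macaulayness of $\CC{H}{\mcH_H}$ I invoke the induction hypothesis: its proper colour-selected subcomplexes are $\lk_{\CC{G}{\mcF}}(H)$ with $|\mcF|<k$, and since all subfamilies of $\mcF\subseteq\mcH$ are generating, $\CC{G}{\mcF}$ is Cohen--Macaulay by induction, hence its vertex links are too and are suitably connected; together with the top connectivity just obtained, every subfamily of $\mcH_H$ is appropriately generating for $H$, so $\CC{H}{\mcH_H}$ is Cohen--Macaulay by induction. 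Thus all vertex links of $X$ are Cohen--Macaulay and $X$ is $(k-2)$-connected, so $X$ is Cohen--Macaulay. For ``$\Rightarrow$'', assume $X$ is Cohen--Macaulay; its vertex links $\CC{H}{\mcH_H}$ are then Cohen--Macaulay and $(k-3)$-connected, and the same cofibre sequence shows $X_{\hat H}$ is $(k-3)$-connected. Since the vertex links of $X_{\hat H}$ are colour-selected subcomplexes of the Cohen--Macaulay links of $X$, the induction hypothesis makes them Cohen--Macaulay, so $X_{\hat H}$ is Cohen--Macaulay; iterating the removal of colours shows every $\CC{G}{\mcH'}$ is Cohen--Macaulay, hence $(|\mcH'|-2)$-connected, i.e.\ $\mcH'$ is $(|\mcH'|-1)$-generating.

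The main obstacle is the top-connectivity step of ``$\Leftarrow$'': the hypothesis bounds the connectivity of the complexes $\CC{G}{\mcF}$, whereas Cohen--Macaulayness needs the connectivity of their \emph{links}, which is not implied by high connectivity of a complex in general; the cofibre lemma linking $X$, $X_{\hat H}$ and the vertex links is precisely what bridges this gap, and getting its connectivity bookkeeping right is the crux. Two technicalities I would handle with care. First, the cofibre lemma above is phrased homologically, so to obtain genuine $\pi_i$-connectivity I would run it in a simply-connected range via relative Hurewicz, anchoring the induction on the cases $k\leq 2$, where everything reduces by hand to nonemptiness and connectedness of $X$. Second, the Link Lemma can yield a non-proper intersection $L\cap H'=L$ when $H\subseteq H'$; such a colour contributes a cone point, the link becomes a cone, and the subfamily $\{H,H'\}$ fails to be $1$-generating, so both sides of the equivalence degrade consistently and no separate treatment of antichains is needed once the coset-complex conventions are read to allow an improper subgroup as a cone vertex.
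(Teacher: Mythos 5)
First, a remark on the comparison itself: this paper does not prove the statement — it imports it wholesale from \cite{Bru:Highergeneratingsubgroups} (Theorem 2.11 there), so there is no in-paper proof to measure you against. I can only assess your argument on its own terms.

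Your structural groundwork is correct and is certainly the natural skeleton for such a proof: the balanced (colour) structure of $\CC{G}{\mcH}$, the identification of $\CC{G}{\mcH'}$ with colour-selected subcomplexes, the Link Lemma $\lk(\sigma)\cong \CC{L}{\set{L\cap H' \mid H'\in \mcH\setminus\mcE}}$, the reduction of Cohen--Macaulayness to ``$(d-1)$-connected and all vertex links Cohen--Macaulay'', and the decomposition $X = X_{\hat H}\cup\bigcup_v \bigl(v\ast\lk_X(v)\bigr)$ are all fine. The ``$\Rightarrow$'' direction can indeed be completed along your lines: there the links are \emph{known} to be simply connected (they are links in a Cohen--Macaulay complex), so van Kampen applied to the cone attachments gives $\pi_1(X_{\hat H})\cong\pi_1(X)=1$, and then the homological cofibre argument plus Hurewicz yields the connectivity of $X_{\hat H}$.

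The genuine gap is exactly at the point you yourself call the crux, in the ``$\Leftarrow$'' direction: deducing that each vertex link is $(k-3)$-connected from the connectivity of $X$ and $X_{\hat H}$. The cofibre sequence $X_{\hat H}\to X\to \bigvee_v \Sigma\,\lk_X(v)$ sees the links only through their \emph{suspensions}, so it yields $\widetilde H_j(\lk_X(v))=0$ for $j\leq k-3$ and nothing more; it cannot control $\pi_1(\lk_X(v))$, and relative Hurewicz cannot be ``run in a simply-connected range'' here, because the space whose simple connectivity is needed is the link itself --- precisely the thing being proven. Worse, the transfer statement is \emph{false} at the level of generality at which you establish it (complexes obtained by coning off subcomplexes): let $L$ be a finite acyclic complex with nontrivial perfect fundamental group (e.g.\ the presentation complex of Higman's group), put $X_{\hat H}=CL$ (a cone on $L$) and $X = X_{\hat H}\cup_L (v\ast L)=\Sigma L$. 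Then $X$ and $X_{\hat H}$ are both contractible, hence $n$-connected for every $n$, while $\lk_X(v)=L$ is not simply connected. So no amount of connectivity of $X$ and $X_{\hat H}$ forces homotopy connectivity of links; only the homological half of your lemma survives. Your induction hypothesis does correctly supply the generation property for all \emph{proper} subfamilies of $\mcH_H$, but the top case --- that $\mcH_H$ itself is $(k-2)$-generating for $H$, equivalently that $\CC{H}{\mcH_H}$ is $(k-3)$-connected --- is exactly what the cofibre lemma fails to deliver, and the whole induction hinges on it. Any correct proof must use the coset-complex structure to control fundamental groups of links, for instance via the Abels--Holz correspondence (\cref{higher generating subgroups by generation}) identifying simple connectivity of $\CC{H}{\mcH_H}$ with $H$ being the free product of the family $\mcH_H$ amalgamated along its intersections; no ingredient of this kind appears in your argument, and without one the ``$\Leftarrow$'' direction does not close.
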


This allows us to generalise \cref{homotopy type complex of parabolics in ROARs} in the following way:

\begin{theorem}
\label{CM of coset complex parabolic subgroups}
Let $O\coloneqq \Outo[A_\Gamma; \mcG, \mcH^t]$. The coset complex $\CC{O}{\mcP(O)}$ is homotopy Cohen--Macaulay.
\end{theorem}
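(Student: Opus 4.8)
The plan is to deduce Cohen--Macaulayness from \cref{characterisation CC CM}, which has the advantage of replacing Cohen--Macaulayness (not a homotopy invariant, as it refers to links of simplices) by a purely connectivity-theoretic condition that \emph{is} a homotopy invariant. Concretely, since $\mcP(O)$ is finite with $|\mcP(O)|=\rk(O)$, it suffices by \cref{characterisation CC CM} to show that every subfamily $\mcP'\subseteq \mcP(O)$ is $(|\mcP'|-1)$-generating, i.e. that $\CC{O}{\mcP'}$ is $(|\mcP'|-2)$-connected. As the $|\mcP'|$ cosets $\set{P\mid P\in \mcP'}$ all contain the identity, their intersection is non-empty, so $\CC{O}{\mcP'}$ is $(|\mcP'|-1)$-dimensional; hence this connectivity is equivalent to $\CC{O}{\mcP'}$ being $(|\mcP'|-1)$-spherical. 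Note that \cref{homotopy type complex of parabolics in ROARs} is exactly this statement in the special case $\mcP'=\mcP(O)$.

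First I would observe that the entire inductive argument of \cref{section induction} goes through verbatim with an arbitrary subfamily $\mcP'\subseteq \mcP(O)$ in place of $\mcP(O)$. Indeed, the dichotomy underlying \cref{induction step}---that each parabolic either contains $\ker R$ or contains all inversions, transvections and partial conjugations outside $\ker R$ (\cref{strongly divided for ROARs})---is a property of the individual subgroups, hence holds for the elements of $\mcP'$ too. Applying \cref{short exact sequences} to $\mcP'$ and iterating the conical restrictions as in \cref{join of base cases} yields a homotopy equivalence
\begin{equation*}
\CC{O}{\mcP'}\simeq \ast_{[v]_\mcG\in T_\mcG}\CC{O_v}{\mcP'_v},
\end{equation*}
where $\mcP'_v\subseteq \mcP(O_v)$ consists of those parabolics of the conical base case $O_v$ arising from $\mcP'$. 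Because the decomposition sorts parabolics according to the equivalence classes in $T_\mcG$ (cf. \cref{equation P^ker R}), the sizes add up as $|\mcP'|=\sum_{[v]_\mcG}|\mcP'_v|$. The same remark applies to \cref{parabolics after projection map} and to the restriction steps in the proofs of \cref{coset complex of abelian equivalence class,coset complex of free equivalence class}: every element of $\mcP'_v$ contains the relevant twist subgroup $T$ (respectively the generators outside the kernel), so these homotopy equivalences persist for subfamilies.

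It then remains to treat the base cases. By the above, $\CC{O_v}{\mcP'_v}$ is homotopy equivalent to $\CC{G}{\mcQ'}$ where $(G,\mcQ')$ is either $\GL{n}{\mathbb{Z}}$ with a subfamily of its maximal parabolics or a Fouxe--Rabinovitch group with a subfamily of its maximal parabolics, and $|\mcQ'|=|\mcP'_v|$. In the Fouxe--Rabinovitch case, \cref{CM free factor complex} shows that the full coset complex is homotopy Cohen--Macaulay, so \cref{characterisation CC CM} (in its ``only if'' direction) gives that $\mcQ'$ is $(|\mcQ'|-1)$-generating, i.e. $\CC{G}{\mcQ'}$ is $(|\mcQ'|-1)$-spherical. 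In the $\GL{n}{\mathbb{Z}}$ case the coset complex is the Tits building (\cref{building and coset complex}), which is Cohen--Macaulay: the link of a simplex is a join of buildings of smaller rank, each spherical by \cref{Solomon-Tits}, so \cref{characterisation CC CM} again yields that $\CC{G}{\mcQ'}$ is $(|\mcQ'|-1)$-spherical. Feeding these into the join and applying \cref{homotopy type of wedges and joins}, the dimensions combine as $\sum_{i=1}^{k}(|\mcP'_{v_i}|-1)+(k-1)=|\mcP'|-1$, where the join runs over the $k$ factors with $\mcP'_{v_i}\neq \emptyset$ (empty factors are $(-1)$-spherical and do not affect the join). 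Thus $\CC{O}{\mcP'}$ is $(|\mcP'|-1)$-spherical, completing the induction, and \cref{characterisation CC CM} yields the theorem.

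The main obstacle I anticipate is bookkeeping rather than a new idea: one must verify that the family-level identifications of \cref{section induction}---in particular $\overbar{\mcP}=\mcP(\im R)$ and $\mcP\cap \ker R=\mcP(\ker R)$ from \cref{induction step}---restrict \emph{bijectively} to arbitrary subfamilies, so that the join decomposition and the count $|\mcP'|=\sum_{[v]_\mcG}|\mcP'_v|$ are exact, and that the degenerate situations (empty subfamilies, singleton equivalence classes, and the rank-zero case $\mcP(O)=\emptyset$) are consistent with the $(-1)$-spherical conventions of \cref{section spherical complexes}. A secondary point is to make the Cohen--Macaulayness of the $\GL{n}{\mathbb{Z}}$-building explicit, since the excerpt records only its sphericity via \cref{Solomon-Tits}; the short link argument above (links being joins of smaller buildings), or the classical shellability of buildings, closes this gap.
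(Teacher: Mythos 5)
Your proposal is correct and follows essentially the same route as the paper: reduce via \cref{characterisation CC CM} to showing every subfamily $\mcP'\subseteq\mcP(O)$ gives a $(|\mcP'|-1)$-spherical coset complex, rerun the parabolic sieve of \cref{section induction} with $\mcP'$ in place of $\mcP(O)$ to get a join decomposition with $\sum_{[v]_\mcG}|\mcP'_v|=|\mcP'|$, and settle the base cases using Cohen--Macaulayness of the Tits building and of \cref{CM free factor complex} together with the ``only if'' direction of \cref{characterisation CC CM}. Your only addition is making explicit why the building is homotopy Cohen--Macaulay (links are joins of lower-rank buildings), a classical fact the paper's proof takes for granted when citing \cref{building and coset complex}.
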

\begin{proof}
By \cref{characterisation CC CM}, it suffices to show that for all $\mcP'\subseteq \mcP$, the coset complex $\CC{O}{\mcP'}$ is $(|\mcP'|-1)$-spherical. This can be done following the induction of \cref{section induction}: We first iteratively apply restriction maps to conical subgroups as in \cref{section induction conical restrictions}. In each step, the parabolic subgroups in $\mcP'$ satisfy a dichotomy that allows us to apply \cref{short exact sequences}. We get an analogue of \cref{join of base cases}: The coset complex $\CC{O}{\mcP'}$ is homotopy equivalent to the join $\ast_{[v]_{\mcG} \in T_\mcG} \CC{O_v}{\mcP_v}$ where $O_v$ is exactly as in \cref{join of base cases} and $\mcP_v\subseteq \mcP(O_v)$. There is a one-to-one correspondence between the parabolic subgroups occurring in the join and the elements of $\mcP'$; in particular, $\sum_{[v]_{\mcG} \in T_\mcG} |\mcP_v|= |\mcP'|$. One now follows the arguments of \cref{section coset complexes of conical ROARs} to show that if $[v]_{\mcG}$ is an abelian equivalence class of size $n$, we have $\CC{O_v}{\mcP_v}\simeq \CC{\GL{n}{\mathbb{Z}}}{\mcQ}$ with $\mcQ\subseteq \mcP(\GL{n}{\mathbb{Z}})$ and that if $[v]_{\mcG}$ is a free equivalence class, we have 
\begin{equation*}
\CC{O_v}{\mcP_v}\simeq \CC{\Out{A;\mcA^t}}{\mcQ}
\end{equation*}
where $A=F_n\ast A_1\ast \ldots \ast A_k$, $\mcA=\set{A_1,\ldots, A_k}$ and $\mcQ\subseteq \mcP(\Out{A;\mcA^t})$. Both $\CC{\GL{n}{\mathbb{Z}}}{\mcP (\GL{n}{\mathbb{Z}})\,}$ and $\CC{\Out{A;\mcA^t}}{\mcP (\Out{A;\mcA^t})\,}$ are homotopy Cohen--Macaulay: In the first case, this holds because the coset complex is isomorphic to the building associated to $\GL{n}{\mathbb{Q}}$ (see \cref{building and coset complex}), in the second case this is \cref{CM free factor complex}.
Hence, \cref{characterisation CC CM} implies that $\CC{O_v}{\mcP_v}$ is spherical of dimension $|\mcP_v|-1$. It now follows from \cref{homotopy type of wedges and joins} that $\CC{O}{\mcP'}$ is spherical of dimension $(\sum_{[v]_{\mcG} \in T_\mcG} |\mcP_v|) -1 = |\mcP'|-1$.
\end{proof}

An immediate consequence of this is that $\CC{O}{\mcP(O)}$ is a \emph{chamber complex}, i.e. that each pair $\sigma,\tau$ of facets of $\CC{O}{\mcP(O)}$ can be connected by a sequence of facets $\sigma=\nolinebreak\tau_1,\ldots, \tau_k=\tau$ such that for all $1\leq i \leq k$, the intersection of $\tau_i$ and $\tau_{i+1}$ is a face of codimension $1$ (see \cite[Proposition 11.7]{Bjo:Topologicalmethods} and \cite[Remark 2.8]{Bru:Highergeneratingsubgroups}). 

\subsection{Parabolic subgroups of lower rank}
\label{section parabolics of lower rank}
\begin{definition} 
\label{definition lower rank parabolics}
Let $r\coloneqq \rk(O)$ and $1\leq m \leq r-1$. We define the family of \emph{rank-$m$ standard parabolic subgroups of $O$} as the set of all intersections of $(r-m)$ distinct maximal standard parabolic subgroups,
\begin{equation*}
\mcP_{m}(O) \coloneqq \set{P_1\cap \ldots \cap P_{r-m} \mid P_1, \ldots , P_{r-m} \text{ distinct elements of } \mcP(O)}.
\end{equation*}
In particular, we have $\mcP(O)=\mcP_{r-1}(O)$. 
\end{definition}

Every parabolic subgroup of $O$ is itself a relative automorphism group of $\AG$. The term ``rank-$m$'' parabolic subgroup is justified by the following:

\begin{proposition}
\label{rank of parabolic subgroups}
For all $P\in \mcP_{m}(O)$, we have $\rk(P)=m$.
\end{proposition}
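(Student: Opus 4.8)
The plan is to realise $P$ as a relative automorphism group of $\AG$, read off which transvections it contains, and count the equivalence classes of the resulting ordering. First I would observe that, by \cref{definition lower rank parabolics}, $P$ is an intersection $P = \Stab_O(A_{\Delta_1}) \cap \ldots \cap \Stab_O(A_{\Delta_{r-m}})$ of $r-m$ distinct maximal parabolics, where $r \coloneqq \rk(O)$ and each $A_{\Delta_i}$ is one of the special subgroups $A_{\Delta_u^j}$ of \cref{definition parabolics for RAAGs}. Using $\Stab_O(A_\Theta) = \Roar{\AG}{\mcG \cup \set{A_\Theta}, \mcH^t}$ iteratively, this gives
\[
P = \Roar{\AG}{\mcG', \mcH^t}, \qquad \mcG' \coloneqq \mcG \cup \set{A_{\Delta_1}, \ldots, A_{\Delta_{r-m}}}.
\]
Since $\mcG$ is saturated we have $P(\mcH) \subseteq \mcG \subseteq \mcG'$, so by \cref{Laurence generator containment criterion} the ordering $\leq_{\mcG'}$ satisfies $x \leq_{\mcG'} y \Leftrightarrow \rho_x^y \in P$; as in \cref{remark relative orderings and saturation} this shows that the $\sim_{\mcG'}$-classes coincide with those of the saturation of $\mcG'$, so that $\rk(P) = |V(\Gamma)| - |T_{\mcG'}|$. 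It therefore suffices to understand how the classes in $T_\mcG$ refine when passing from $\mcG$ to $\mcG'$.

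Because $\mcG \subseteq \mcG'$, the relation $\sim_{\mcG'}$ refines $\sim_\mcG$, so each $\sim_{\mcG'}$-class lies in a single $\sim_\mcG$-class and it only remains to count how each $[u]_\mcG$ splits. For $x,y$ in a common class $[u]_\mcG$ one has $x \sim_{\mcG'} y$ if and only if $x \in \Delta_i \Leftrightarrow y \in \Delta_i$ for every added cut $\Delta_i$. The crucial point is an independence statement: a cut $\Delta_v^j = \set{v_1, \ldots, v_j} \cup \Gamma_{\succ v}$ coming from a class $[v]_\mcG \neq [u]_\mcG$ does \emph{not} split $[u]_\mcG$. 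Indeed, for $x \in [u]_\mcG$ we have $x \notin \set{v_1, \ldots, v_j} \subseteq [v]_\mcG$, so $x \in \Delta_v^j$ iff $x \in \Gamma_{\succ v}$; and since $\Gamma_{\succ v}$ is upwards-closed with respect to $\preceq$ and hence a union of $\sim_\mcG$-classes, this membership is constant over $[u]_\mcG$. Thus $[u]_\mcG$ is split only by the chosen cuts $\Delta_u^j$ of its own class, and for $x = u_a$ one has $u_a \in \Delta_u^j \Leftrightarrow a \leq j$.

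Finally I would count. Write $n_u \coloneqq |[u]_\mcG|$ and let $s_u$ be the number of chosen cuts lying in $[u]_\mcG$. Here I use that distinct maximal parabolics correspond to distinct pairs $([u]_\mcG, j)$ with $1 \leq j \leq n_u - 1$: the assignment $([u]_\mcG, j) \mapsto \Stab_O(A_{\Delta_u^j})$ maps the $\sum_u (n_u - 1) = \rk(O) = |\mcP(O)|$ such pairs onto $\mcP(O)$, so it is a bijection. Consequently the chosen cuts within $[u]_\mcG$ occur at distinct values $j_1 < \ldots < j_{s_u}$ and partition $[u]_\mcG$ into the $s_u+1$ consecutive intervals $\set{u_1,\ldots,u_{j_1}}, \ldots, \set{u_{j_{s_u}+1},\ldots,u_{n_u}}$, and $\sum_u s_u = r-m$. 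Hence $[u]_\mcG$ contributes $n_u - (s_u+1)$ to $\rk(P)$, and summing yields
\[
\rk(P) = \sum_u \bigl(n_u - 1 - s_u\bigr) = \Bigl(\sum_u (n_u - 1)\Bigr) - \sum_u s_u = \rk(O) - (r-m) = m.
\]

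I expect the main obstacle to be the independence claim of the second paragraph, namely that cuts arising from one equivalence class leave every other class intact; this is exactly where the structure of the subgraphs $\Delta_u^j$ and the fact that $\Gamma_{\succ v}$ is a union of $\sim_\mcG$-classes are used, and it is what makes the rank additive over the classes. Everything else is bookkeeping, once the bijection between $\mcP(O)$ and the set of cuts is recorded and the remark on saturation is invoked to justify computing $\rk(P)$ directly from the (possibly non-saturated) family $\mcG'$.
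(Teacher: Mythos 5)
Your proof is correct and follows essentially the same route as the paper's: realise $P$ as $\Roar{\AG}{\mcG', \mcH^t}$ with $\mcG' = \mcG \cup \set{A_{\Delta_1},\ldots,A_{\Delta_{r-m}}}$, use $P(\mcH)\subseteq\mcG'$ to justify computing $\rk(P)$ from the (non-saturated) family $\mcG'$, and count how each $\sim_\mcG$-class splits into $\sim_{\mcG'}$-classes. The only difference is that you spell out the details the paper dismisses as ``easy to check'' --- the independence argument (cuts from one class do not split another, since $\Gamma_{\succ v}$ is a union of $\sim_\mcG$-classes), the interval structure of the splitting, and the bijection between pairs $([u]_\mcG, j)$ and maximal parabolics.
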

\begin{proof}
For every $P\in  \mcP_{m}(O)$, there is a $V\subset V(\Gamma)$ and for every $v\in V$ a subset $J_v\subset \set{1,\ldots, |[v]_\mcG|}$ such that
\begin{align*}
P= \Roar{\AG}{\mcG', \mcH^t}, &\text{ where } \mcG'= \mcG \cup \set{A_{\Delta_v^j} \, \middle| \, v\in V, \, j\in J_v}, \\ 
& \Delta_v^j=\set{v_1, \ldots , v_j} \cup \Gamma_{\succ v} \text{ and } \sum_{v\in V} |J_v|=r-m.
\end{align*}
 As $\mcG$ contains $P(\mcH)$, so does $\mcG'$. It is easy to check that if $v\in V$, the $\mcG$-equivalence class $[v]_\mcG$ can be written as the disjoint union of $(|J_v|+1)$-many $\mcG'$-equivalence classes and that otherwise, one has $[v]_\mcG=[v]_{\mcG'}$. From this, the claim follows immediately.
\end{proof}

Theorem \ref{introduction higher generation by parabolic subgroups} is now an easy corollary of Cohen--Macaulayness of $\CC{O}{\mcP(O)}$ and the results of \cite{Bru:Highergeneratingsubgroups}:
\begin{corollary}
\label{higher generation by parabolic subgroups}
The family $\mcP_m(O)$ of rank-$m$ parabolic subgroups of $O$ is $m$-generating, the corresponding coset complex $\CC{O}{\mcP_m(O)}$ is $m$-spherical.
\end{corollary}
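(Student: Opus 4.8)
The plan is to obtain the statement as a formal consequence of the homotopy Cohen--Macaulayness of $\CC{O}{\mcP(O)}$ proved in \cref{CM of coset complex parabolic subgroups}, combined with the higher-generation machinery of \cite{Bru:Highergeneratingsubgroups}. Write $r\coloneqq\rk(O)=|\mcP(O)|$ and set $X\coloneqq\CC{O}{\mcP(O)}$, an $(r-1)$-dimensional homotopy Cohen--Macaulay complex on which $O$ acts facet-transitively with a single fundamental facet $C$ (see \cref{section group actions detecting cc}). The first step is to translate \cref{definition lower rank parabolics} into this geometry: the vertex set of $C$ is $\mcP(O)$, so a choice of $r-m$ distinct maximal parabolics spans an $(r-m-1)$-dimensional face of $C$ whose $O$-stabiliser is exactly the corresponding element of $\mcP_m(O)$. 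Thus $\mcP_m(O)$ is the family of stabilisers of the codimension-$m$ faces of $C$, and $\CC{O}{\mcP_m(O)}$ is the complex whose vertices are the $(r-m-1)$-faces of $X$ and whose simplices are the collections of such faces lying in a common chamber.

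I would then invoke the result of \cite{Bru:Highergeneratingsubgroups} that converts the Cohen--Macaulay property of a coset complex into sphericity for the coset complexes of its families of intersections: for a finite family $\mcH$ with $\CC{G}{\mcH}$ homotopy Cohen--Macaulay and $|\mcH|=r$, the coset complex of the family of all $(r-m)$-fold intersections of $\mcH$ is $m$-spherical. Applying this with $G=O$ and $\mcH=\mcP(O)$ (legitimate by \cref{CM of coset complex parabolic subgroups}) yields immediately that $\CC{O}{\mcP_m(O)}$ is $m$-spherical. Since an $m$-spherical complex is $(m-1)$-connected, the family $\mcP_m(O)$ is $m$-generating by the very definition of higher generation, which is the remaining assertion of \cref{higher generation by parabolic subgroups}.

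The point I expect to require the most care --- and the reason the statement rests on the \emph{homotopy} (rather than merely homological) Cohen--Macaulay property --- is that $\CC{O}{\mcP_m(O)}$ is in general far from being $m$-dimensional, so $m$-sphericity cannot be read off as top-degree connectivity. Already for $O=\GL{n}{\mathbb{Z}}$ the $r-m$ truncations of a single complete flag all share the same coset representative, and hence span a simplex of dimension $\binom{r}{m}-1$, which exceeds $m$ as soon as $m<r-1$; the complex only becomes homotopy equivalent to a wedge of $m$-spheres after one controls how these high-dimensional chambers are glued, and this is precisely the content of the cited theorem. To apply it cleanly I would check its hypotheses are met here: $\mcP(O)$ is finite and $X$ is homotopy Cohen--Macaulay, and, to rule out degeneracies, that distinct $(r-m)$-element subsets of $\mcP(O)$ give distinct intersections --- this follows from the structural description of the rank-$m$ parabolics in \cref{rank of parabolic subgroups}, since the induced splitting of $\mcG$-equivalence classes (an invariant of the group by \cref{remark relative orderings and saturation}) records the subset. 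Should a direct argument be preferred, one could instead rerun the restriction--projection induction of \cref{section induction} with $\mcP_m(O)$ in place of $\mcP(O)$, using that each intersection is again a parabolic of the appropriate lower rank, but the appeal to \cite{Bru:Highergeneratingsubgroups} is considerably shorter.
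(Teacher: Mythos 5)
Your proof is correct and is essentially the paper's own argument: the paper likewise deduces the corollary immediately from the homotopy Cohen--Macaulayness of $\CC{O}{\mcP(O)}$ (\cref{CM of coset complex parabolic subgroups}) together with \cite[Theorem 2.9]{Bru:Highergeneratingsubgroups}, which is exactly the intersection-family result you invoke. The extra verifications you add (identifying $\mcP_m(O)$ with stabilisers of codimension-$m$ faces of the fundamental facet, and checking that distinct $(r-m)$-element subsets of $\mcP(O)$ have distinct intersections) are not hypotheses of the cited theorem, so they are harmless but unnecessary.
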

\begin{proof}
As the coset complex is homotopy Cohen--Macaulay by \cref{CM of coset complex parabolic subgroups}, this is an immediate consequence of \cite[Theorem 2.9]{Bru:Highergeneratingsubgroups}.
\end{proof}

In the case where $O=\GL{n}{\mathbb{Z}}$, this is \cite[Theorem 3.3]{AH:Highergenerationsubgroups}; for this case, 2-generation was also already shown by Tits in \cite[Section 13]{Tit:Buildingssphericaltype}.
Observe that although $\CC{O}{\mcP_m(O)}$ is $m$-spherical, it is a priori a complex of dimension 
\begin{equation*}
|\mcP_m(O)|-1=\binom{r}{m}-1.
\end{equation*}

\paragraph{Presentations for $O$}
A consequence of higher generation is that one can obtain presentations of $O$ from presentations of the parabolic subgroups as follows:
Write $\mcP_m(O)=\set{P_1,\ldots, P_{\binom{r}{m}}}$. For each $i$, let $L_i$ be the set of all inversions, transvections and partial conjugations of $O$ that are contained in $P_i$. By \cref{generators ROARs}, the set $L_i$ generates $P_i$. Let $P_i = \grep{L_i}{R_i}$ be a presentation for $P_i$. Then we have:
\begin{corollary}
\label{presentation by parabolic subgroups}
Let $1\leq m \leq r-1$ and $k\coloneqq \binom{r}{m}$. Then:
\begin{enumerate}
\item The union $\bigcup_{i=1}^{k} L_i$ is a generating set for $O$.
\item If $m\geq 2$, a presentation for $O$ is given by $O= \grep{\bigcup_{i=1}^{k} L_i}{\bigcup_{i=1}^{k} R_i}$.
\end{enumerate}
\end{corollary}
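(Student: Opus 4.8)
The plan is to deduce both statements from the higher-generation result \cref{higher generation by parabolic subgroups} together with the fact (\cref{generators ROARs}) that every relative automorphism group is generated by the Laurence generators it contains. For the first claim, I would argue as follows. Since $m\geq 1$, \cref{higher generation by parabolic subgroups} shows that $\mcP_m(O)$ is in particular $1$-generating, so by the first part of \cref{higher generating subgroups by generation} the union $\bigcup \mcP_m(O)$ generates $O$. As each $P_i$ is again a relative automorphism group of $\AG$, \cref{generators ROARs} tells us that $P_i=\ll L_i\rr$; hence $\bigcup_{i=1}^{k} L_i$ generates $O$.

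For the second claim, fix $m\geq 2$. Then \cref{higher generation by parabolic subgroups} gives that $\mcP_m(O)$ is $2$-generating, so the second part of \cref{higher generating subgroups by generation} identifies $O$ with the free product $\tilde O$ of $\mcP_m(O)$ amalgamated along its intersections; concretely, $\tilde O$ has one generator $x_g$ for each $g\in \bigcup_i P_i$, subject to the relations $x_g x_h=x_{gh}$ whenever $g$ and $h$ lie in a common $P_i$. Writing $\hat O\coloneqq \grep{\bigcup_i L_i}{\bigcup_i R_i}$, the task reduces to constructing an isomorphism $\hat O\cong \tilde O$, since $\tilde O\cong O$. I would do this by exhibiting mutually inverse homomorphisms.

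The map $\alpha\colon \hat O\to \tilde O$ sending each Laurence generator $s$ to $x_s$ is well-defined because the sub-presentation of $\tilde O$ on $\set{x_g\mid g\in P_i}$ presents $P_i$, so the relators in $R_i$ hold in $\tilde O$. In the other direction, for each $g\in\bigcup_i P_i$ choose an index $i$ with $g\in P_i$ and a word $w_g$ in $L_i$ representing $g$, and define $\beta\colon \tilde O\to \hat O$ on generators by $x_g\mapsto \overline{w_g}$. The main obstacle is to show that $\beta$ is well-defined, i.e. independent of the chosen index and word. This is where the structure of parabolic subgroups enters: if $g\in P_i\cap P_j$, then $P_i\cap P_j$ is again a relative automorphism group of $\AG$ whose Laurence generators are precisely $L_i\cap L_j$, so by \cref{generators ROARs} there is a word $t$ in $L_i\cap L_j$ representing $g$. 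Since $w_g$ and $t$ are both $L_i$-words for $g\in P_i=\grep{L_i}{R_i}$, they agree in $\hat O$, and likewise any $L_j$-word for $g$ agrees with $t$ in $\hat O$; hence all choices yield the same element $\overline{w_g}$. Granting this independence, verifying that $\beta$ respects each relator $x_g x_h x_{gh}^{-1}$ becomes routine, since one may evaluate all three images using a single index $i$ with $g,h,gh\in P_i$ and then use that $w_g w_h$ and $w_{gh}$ both represent $gh$ in $P_i=\grep{L_i}{R_i}$. A direct check on generators then shows $\alpha$ and $\beta$ are mutually inverse, giving $\hat O\cong \tilde O\cong O$ and thus the desired presentation. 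I expect the coherence of $\beta$ at the intersections $P_i\cap P_j$ to be the only genuinely delicate point; everything else is formal once \cref{higher generating subgroups by generation} and \cref{generators ROARs} are in place.
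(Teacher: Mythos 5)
Your proof is correct and takes essentially the same route as the paper, whose own proof is a one-line citation of \cref{higher generation by parabolic subgroups} ($m$-generation of $\mcP_m(O)$) together with \cref{higher generating subgroups by generation} (the Abels--Holz characterisation of $1$- and $2$-generation). The extra work you do --- identifying the free product of $\mcP_m(O)$ amalgamated along its intersections with $\grep{\bigcup_i L_i}{\bigcup_i R_i}$, with the key coherence point that $P_i\cap P_j$ is again a relative automorphism group whose Laurence generators are exactly $L_i\cap L_j$ and which is generated by them via \cref{generators ROARs} --- correctly fills in precisely the details the paper leaves implicit.
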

\begin{proof}
This follows from \cref{higher generation by parabolic subgroups} and \cref{higher generating subgroups by generation}.
\end{proof}

\subsection{Interpretation of rank in terms of Coxeter groups}
\label{section rank via Coxeter groups}
The rank of a group with a $BN$-pair is given by the rank of the associated Weyl group $W$, which is a Coxeter group. This is also true in the setting of relative automorphism groups of RAAGs as we will see in what follows.

\begin{definition}
Let $\Aut{\Gamma}$ denote the group of graph automorphisms of $\Gamma$. This group embeds in $\Out{\AG}$
and we define $\AutO(\Gamma)$ as the intersection $\Aut{\Gamma}\cap O$. 
\end{definition}

If $O$ is equal to $\Out{F_n}$ or $\GL{n}{\mathbb{Z}}$, we have $\AutO(\Gamma)=\Aut{\Gamma}=\Sym(n)$, the Weyl group associated to $\GL{n}{\mathbb{Q}}$, which has rank $n-1$. In general, $\AutO(\Gamma)$ can be seen as the group of ``algebraic'' graph automorphisms of $\Gamma$. It appears as ``$\Sym^0(\Gamma)$'' in \cite[Section 3.2]{CCV:Automorphisms2dimensional} where it is studied under the additional assumption that $\Gamma$ be connected and triangle-free.

\begin{lemma}
\label{algebraic graph automorphisms}
The group $\AutO(\Gamma)$ is naturally isomorphic to the direct product 
\begin{equation*}
\AutO(\Gamma) \cong \bigoplus_{[v]_\mcG \in T_{\mcG}}\Sym([v]_\mcG).
\end{equation*}
\end{lemma}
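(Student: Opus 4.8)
The plan is to exhibit the obvious map and show it is an isomorphism. I would define
\[
\Phi\colon \bigoplus_{[v]_\mcG \in T_\mcG}\Sym([v]_\mcG)\longrightarrow \Out{\AG}
\]
by sending a tuple $(\tau_{[v]_\mcG})$ to the permutation of $V(\Gamma)$ acting as $\tau_{[v]_\mcG}$ on each $\mcG$-class and fixing nothing else, composed with the embedding $\Aut{\Gamma}\hookrightarrow \Out{\AG}$. The first task is to check that this is well-defined with image in $\AutO(\Gamma)$. Each $\mcG$-class $[v]_\mcG$ lies inside a standard class $[v]$, and any two vertices $x,y\in[v]$ satisfy $\st(x)=\st(y)$ (abelian case) or $\lk(x)=\lk(y)$ (free case); hence any permutation supported on $[v]_\mcG$ is a genuine graph automorphism. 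Such a permutation fixes each conical subgroup $A_{\geq v}$ setwise, as these are unions of standard classes, so by the lemma $\Outo=\Out{\AG;\, \mcG_{\geq}}$ it lies in $\Outo$. Moreover all vertices of a fixed $\mcG$-class belong to exactly the same members $A_\Delta\in\mcG$ (this is precisely the definition of $\sim_\mcG$), so $\Phi(\tau)$ preserves every $A_\Delta\in\mcG$ setwise; and if $[v]_\mcG$ meets some $\Delta$ with $A_\Delta\in\mcH$, then saturation gives $A_{\set{v}}\in P(\mcH)\subseteq\mcG$, which forces $[v]_\mcG=\set{v}$, so $\Phi(\tau)$ fixes such vertices and acts trivially on $A_\Delta$. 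Thus $\Phi(\tau)\in\AutO(\Gamma)$.

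Injectivity of $\Phi$ is immediate from the embedding $\Aut{\Gamma}\hookrightarrow\Out{\AG}$, since distinct tuples induce distinct permutations of $V(\Gamma)$.

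The heart of the argument is surjectivity. Given $\sigma\in\AutO(\Gamma)$, viewed as a graph automorphism, $\sigma$ preserves $\lk$ and $\st$ and hence the standard preorder and its equivalence relation; it therefore induces an order automorphism $\bar\sigma$ of the poset of standard classes, with $\sigma(\Gamma_{\geq v})=\Gamma_{\geq\sigma(v)}$, i.e.\ $\sigma(A_{\geq v})=A_{\geq\sigma(v)}$ as honest special subgroups. Since $\sigma\in\Outo=\Out{\AG;\, \mcG_{\geq}}$ stabilises $A_{\geq v}$, the subgroups $A_{\geq\sigma(v)}$ and $A_{\geq v}$ are conjugate in $\AG$; comparing their images in the abelianisation $H_1(\AG)\cong\mathbb{Z}^{V(\Gamma)}$, on which conjugation acts trivially, yields $\Gamma_{\geq\sigma(v)}=\Gamma_{\geq v}$. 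Hence $\bar\sigma$ fixes every principal up-set of the poset of standard classes, and the elementary poset fact that an order automorphism fixing all principal up-sets is the identity forces $\bar\sigma=\operatorname{id}$; so $\sigma$ preserves each standard class setwise. Finally, for $x\in[v]$ and any $A_\Delta\in\mcG$ we have $x\in\Delta\Leftrightarrow\sigma(x)\in\sigma(\Delta)=\Delta$, so $\sigma(x)\sim_\mcG x$ and thus $\sigma([v]_\mcG)=[v]_\mcG$ for every $\mcG$-class. Therefore $\sigma$ lies in the image of $\Phi$.

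I expect the main obstacle to be the step just used, namely that a \emph{pure} graph automorphism must fix each standard equivalence class. I would resolve it exactly by the abelianisation comparison for conjugate special subgroups together with the order-theoretic up-set argument. The subsequent refinement of the standard partition by $\mcG$ into $\mcG$-classes, and the automatic verification of the $\mcH$-conditions, are then routine given that $\mcG$ is saturated with respect to $(\mcG,\mcH)$.
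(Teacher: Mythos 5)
Your proof is correct, but it reaches the crucial direction by a genuinely different route than the paper. For the containment $\bigoplus_{[v]_\mcG}\Sym([v]_\mcG)\leq \AutO(\Gamma)$, the paper does not verify the defining conditions of $O$ directly as you do; it observes instead that $O$ contains all transvections $\rho_x^y$ and inversions $\iota_y$ with $x,y\in[v]_\mcG$ (via \cref{Laurence generator containment criterion}) and that products of these realise every permutation of the class, just as permutation matrices are products of elementary and diagonal matrices. For the converse --- that a pure graph automorphism preserves every $\mcG$-class --- the paper writes a representative as a product of Laurence generators (\cref{generators ROARs}) and runs a mod-$2$ exponent count in the abelianisation: if $\phi(v)=v'$ with $v\neq v'$, the count $\#_{v'}$ can only change through a transvection $\rho_v^{v'}$ lying in $O$, forcing $v\preceq v'$, and symmetrically $v'\preceq v$. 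You avoid the generation theorem entirely: you use that a graph automorphism carries special subgroups to special subgroups, the characterisation $\Outo=\Out{\AG;\,\mcG_\geq}$ of \cite[Proposition 3.3]{DW:Relativeautomorphismgroups}, and the rigidity fact that conjugate special subgroups have equal images in the abelianisation and hence coincide --- first to pin down the standard classes via the principal up-sets $\Gamma_{\geq v}$, then to refine to $\mcG$-classes via setwise preservation of the members of $\mcG$ (your step $\sigma(\Delta)=\Delta$ for $A_\Delta\in\mcG$ is justified by the same conjugacy-plus-abelianisation argument; you leave it implicit, but it is fine). The trade-off: the paper's counting argument proves the stronger statement that \emph{any} element of $O$ sending a vertex to a vertex must stay within its $\mcG$-class, at the cost of invoking Day--Wade's generation theorem; your argument is more structural and needs only the stabiliser description of $\Outo$, but it applies only to graph automorphisms --- which is all this lemma requires.
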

\begin{proof}
If $|[v]_\mcG|>1$, the group $O$ contains for all $x,y\in [v]_\mcG$ the transvection $\rho_x^y$ and the inversion $\iota_y$. It follows that the full group $\Sym([v]_\mcG)$ of permutations of $[v]_\mcG$ is contained in $\AutO(\Gamma)$, so the direct product $\bigoplus_{[v]_\mcG \in T_{\mcG}}\Sym([v]_\mcG)$ is a subgroup of $\AutO(\Gamma)$.

It remains to show that this group does not contain any other elements, i.e. that every element of $\AutO(\Gamma)$ preserves all the $\mcG$-equivalence classes of $V(\Gamma)$. To see this, assume that $\phi\in \Aut{\AG}$ represents an element of $O$ such that $\phi(v)=v'$ for some $v,v'\in V(\Gamma)$. We will show that $v\sim_{\mcG} v'$:

For $x\in V(\Gamma)$ and a word $w$ in the alphabet $V(\Gamma)^{\pm 1}$, let $\#_x(w)\in \mathbb{Z}$ denote the number of occurrences of $x$ in $w$, counted with sign. For $g\in \AG$, let $\#_x(g) \in \nolinebreak \mathbb{Z}/2\mathbb{Z}$ be the image of $\#_x(w)$ in $\mathbb{Z}/2\mathbb{Z}$, where $w$ is a word representing $g$ --- this number only depends on $g$ and not on the chosen representative $w$. 
Now assume that $v\not= v'$. Then clearly, $\#_{v'}(v)=0$ and $\#_{v'}(\phi(v))=\#_{v'}(v')=1$. Writing $\phi$ as a product of inversions, transvections and partial conjugations, it follows that there must be such a Laurence generator $[\psi]\in O$ with $\#_{v'}(\psi(v))=1$.
This is only possible if $\psi$ is given by the transvection $\rho_{v}^{v'}$. However, if this is contained in $O$, we know that $v\preceq v'$.
As $\phi^{-1}$ sends $v'$ to $v$, we also have $v'\preceq v$, hence $v\sim_{\mcG} v'$.
\end{proof}

Recall that the \emph{rank of a Coxeter system $(W,S)$} is given by $\rk(W,S)=|S|$.
\begin{corollary}
\label{rank via Coxeter system}
There is a subset $S \subset \AutO(\Gamma)\leq O$ such that $(\AutO(\Gamma), S)$ is a Coxeter system of rank equal to $\rk(O)$.
\end{corollary}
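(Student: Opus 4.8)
The plan is to produce an explicit Coxeter generating set for $\AutO(\Gamma)$ using the product decomposition from \cref{algebraic graph automorphisms} together with the standard Coxeter presentation of symmetric groups. By that lemma, there is a natural isomorphism
\begin{equation*}
\AutO(\Gamma) \cong \bigoplus_{[v]_\mcG \in T_{\mcG}}\Sym([v]_\mcG),
\end{equation*}
so the first step is to recall that each symmetric group $\Sym([v]_\mcG)$ on $|[v]_\mcG|$ letters is a Coxeter group: it carries the Coxeter system $(\Sym([v]_\mcG), S_{[v]_\mcG})$ of type $A_{|[v]_\mcG|-1}$, where $S_{[v]_\mcG}$ consists of the adjacent transpositions with respect to the chosen ordering $[v]_\mcG=\set{v_1,\ldots,v_n}$. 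This generating set has exactly $|[v]_\mcG|-1$ elements.

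Next I would use the elementary fact that a direct product of Coxeter systems is again a Coxeter system: if $(W_i, S_i)$ are Coxeter systems, then $(\prod_i W_i, \coprod_i S_i)$ is a Coxeter system, where generators in different factors commute (i.e. satisfy the relation $(s t)^2 = 1$). Concretely, I set
\begin{equation*}
S \coloneqq \coprod_{[v]_\mcG \in T_{\mcG}} S_{[v]_\mcG},
\end{equation*}
viewing each $S_{[v]_\mcG}$ as a subset of $\AutO(\Gamma)$ via the isomorphism of \cref{algebraic graph automorphisms}. Then $(\AutO(\Gamma), S)$ is a Coxeter system, being a direct product of the finitely many Coxeter systems $(\Sym([v]_\mcG), S_{[v]_\mcG})$. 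The transpositions themselves are realised inside $O$ as products of the transvections $\rho_x^y$ and inversions $\iota_y$ that the proof of \cref{algebraic graph automorphisms} exhibits inside $\AutO(\Gamma)$, so $S$ genuinely sits in $O$ as the statement requires.

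Finally I would compute the rank. By definition the rank of the Coxeter system is $|S|$, and
\begin{equation*}
|S| = \sum_{[v]_\mcG \in T_{\mcG}} |S_{[v]_\mcG}| = \sum_{[v]_\mcG \in T_{\mcG}} (|[v]_\mcG|-1) = \rk(O),
\end{equation*}
where the last equality is exactly the definition of $\rk(O)$. This closes the argument. I do not anticipate a genuine obstacle here: the only things to verify are the standard type-$A$ Coxeter presentation of $\Sym(n)$ and the fact that a product of Coxeter systems is a Coxeter system, both of which are classical. The one point requiring a sentence of care is that $S$ must be taken inside $\AutO(\Gamma) \leq O$ rather than abstractly, but the explicit generators (adjacent transpositions, expressed via the Laurence generators $\rho_x^y$ and $\iota_y$ found in the proof of \cref{algebraic graph automorphisms}) make this transparent. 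The result should therefore follow directly by combining \cref{algebraic graph automorphisms} with these two standard facts about Coxeter groups.
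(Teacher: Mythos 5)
Your proposal is correct and takes essentially the same approach as the paper: the paper's proof likewise deduces the claim directly from \cref{algebraic graph automorphisms} together with the fact that the symmetric group on $n$ letters is the Coxeter group of type $A_{n-1}$. You merely spell out the standard facts the paper leaves implicit, namely the adjacent-transposition generating set and the fact that a direct product of Coxeter systems is a Coxeter system whose rank is the sum of the ranks, which matches $\rk(O)=\sum_{[v]_\mcG\in T_\mcG}(|[v]_\mcG|-1)$.
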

\begin{proof}
The symmetric group on a set of $n$ elements is the Coxeter group of type $A_{n-1}$, so the claim follows from \cref{algebraic graph automorphisms}.
\end{proof}

Additional comments on this can be found in the ``BN-pairs'' paragraph of \cref{section closing comments}.

\section{Closing comments and open questions}
\label{section closing comments}

We conclude with comments on the limitations of our constructions and on open questions related to the complex $\RAAGcomplex=\CC{O}{\mcP(O)}$.

\paragraph{Description as a subgroup poset in $\boldsymbol{\AG}$}
Both in the setting of $\GL{n}{\mathbb{Z}}$ and of Fouxe-Rabinovitch groups $\Out{A; \mcA^t}$, we studied the coset complex of parabolic subgroups by finding an isomorphic poset of subgroups of $A_\Gamma$ and then determined its homotopy type. These were the poset of direct summands of $\mathbb{Z}^n$ and the relative free factor complex $\FRfactor(A,\mcA)$, respectively. In general, however, the author is not aware of a natural description of $\RAAGcomplex$ which looks similar.

It is not hard to see that if $P=\Stab_O(A_\Delta)$ and $P'=\Stab_O(A_{\Delta'})$ are distinct parabolic subgroups, then the $O$-orbits of $[A_\Delta]$ and $[A_{\Delta'}]$ intersect trivially. Hence, the map $g\Stab_O(A_\Delta)\mapsto [g(A_\Delta)]$ defines a bijection between the vertices of $\RAAGcomplex$ and the union of the $O$-orbits of conjugacy classes of special subgroups $A_{\Delta_v^j}$ that we used for the definition of parabolic subgroups.
However, it is not true that the adjacency relation in $\RAAGcomplex$ is given by containment of corresponding subgroups of $\AG$.

These orbits can be described more explicitly: Let $\Delta_v^j\subset \Gamma$ be the full subgraph of $\Gamma$ with vertex set $\set{v_1, \ldots , v_j} \cup \Gamma_{\succ v}$. As $A_{\succeq v}$ is stabilised by $O$, it is clear that $[g(A_{\Delta_v^j})]\leq [A_{\succeq v}]$. If $[v]$ is abelian, $\Gamma_{\succeq v}\subseteq \st([v])$. This implies that the $O$-orbit of $[A_{\Delta_v^j}]$ is given by
\begin{equation*}
\set{[g(A_{\set{v_1, \ldots, v_j}})\times A_{\succ v}] \,\middle|\, g \in \Out{A_{[v]}}\cong \GL{|[v]|}{\mathbb{Z}}}.
\end{equation*}
If on the other hand $[v]$ is a free equivalence class, one has $\Gamma_{\succeq v} = \Delta \ast Z$, where $Z\coloneqq \lk(v)\cap \Gamma_{\succeq v}$ is a complete graph and $\Delta = [v] \sqcup \Gamma_1 \sqcup \ldots \sqcup \Gamma_k$ with $\Gamma_1, \ldots , \Gamma_k$ the connected components of $\Gamma_{\succ v}\bsl Z$. 
Hence, $\AG$ decomposes as a direct product $A_{\Delta} \times A_Z$ and every element of $O$ preserves this product structure. It follows that the $O$-orbit of $[A_{\Delta_v^j}]$ is equal to
\begin{equation*}
\set{[g(A_{\set{v_1, \ldots , v_j}} \ast A_{\Gamma_1} \ast \ldots \ast A_{\Gamma_k}) \times A_Z] \,\middle|\, g\in O\cap \operatorname{Out}(A_\Delta)}
\end{equation*}
Using \cite[Lemma 2.11]{HM:Relativefreesplitting} (see \cref{sec:rel_free_factor}), every element in this orbit is of the form $F \ast A_{\Gamma_1}^{a_1} \ast \ldots \ast A_{\Gamma_k}^{a_k}$, where $a_j\in A_\Delta$ and $F$ is a free group of rank $j$.

\paragraph{Limitations of our construction}
It seems that our definition of parabolic subgroups and the corresponding coset complex capture well the aspects of $\Out{\AG}$ that come from similarities of this group with $\GL{n}{\mathbb{Z}}$ and $\Out{F_n}$: Firstly, our definitions recover the Tits building as $\CC{\GL{n}{\mathbb{Z}}}{\mcP(\GL{n}{\mathbb{Z}})}$ and the free factor complex as $\CC{\Out{F_n}}{\mcP(\Out{F_n})}$. Secondly, the results we obtain show strong similarities in behaviour between the general situation of $\Out{\AG}$ and these special cases: The associated coset complex is spherical, even Cohen--Macaulay (\cref{CM of coset complex parabolic subgroups}) and families of parabolic subgroups are highly generating with the degree of generation depending on the rank of these subgroups (\cref{higher generation by parabolic subgroups}). Another strong indication which suggests a certain optimality of our definitions is the description of $\rk(\Outo)$ in terms of a Coxeter subgroup (\cref{rank via Coxeter system}).
Furthermore, our induction leads to well-suited families of parabolic subgroups in all those ``components'' of $\Outo$ that closely resemble  general linear groups and automorphism groups of free groups; i.e.~the base cases that are given by $\GL{n}{\mathbb{Z}}$, $n\geq 2$, and Fouxe-Rabinovitch groups containing transvections (\cref{item base case GLn} and \cref{item FR with transvections} in \cref{section summary inductive procedure}).

However, our construction is rather transvection-based in the sense that the standard ordering of $V(\Gamma)$ --- which is used to define the parabolic subgroups --- is entirely determined by the transvections that $\Out{\AG}$
contains. This makes our definition of parabolic subgroups quite \emph{local}: Whether or not $v\leq w$ can be read off from the one-balls around these vertices. 
This is also reflected by the fact that the conical subgraphs $\Gamma_{\geq v}$, which play a central role in our induction, are contained in the two-ball around $v$ if $v$ is not an isolated vertex.
In contrast, certain aspects of $\Out{\AG}$ seem not to be mere generalisations of phenomena in arithmetic groups and automorphism groups of free groups. For example, $\Out{\AG}$ contains partial conjugations which cannot be written as a product of transvections. The existence of these partial conjugations is a \emph{global} phenomenon in the sense that the shape of the connected components of $\Gamma\bsl \st(v)$ is not determined by local conditions on $\Gamma$.
These aspects are not very well represented in $\RAAGcomplex$: The base cases of our induction that correspond to them do not contain any parabolic subgroups. In the extremal case where there is no equivalence class of $V(\Gamma)$ that has size greater than one, $\mcP(\Outo)$ is even empty.
For specific applications, one might try to overcome this by introducing further parabolic subgroups that capture these global aspects. However, the author currently does not see a canonical way to do this.

\paragraph{BN-pairs} The existence of a ``Weyl group'' $\AutO(\Gamma)$ as described in \cref{section rank via Coxeter groups} suggests that one might be able to transfer additional notions from the theory of groups with BN-pair to automorphism groups of RAAGs. It does for instance seem reasonable to define a ``Borel-subgroup'' by taking the intersection of all standard parabolic subgroups or to use the Weyl group to define apartments in $\RAAGcomplex$. For this, it might be helpful to use the \emph{standard representation} $\Out{\AG}\to \GL{|V(\Gamma)|}{\mathbb{Z}}$ induced by the abelianisation. The question that has yet to be clarified is to what extent this point of view might be fruitful for studying automorphism groups of RAAGs; one should keep in mind that all this can also be done for $O=\Out{F_n}$ which is far away from having a BN-pair.

\paragraph{Boundary structures} 
Both buildings and free factor complexes can be seen as boundary structures of classifying spaces --- in the first case, this is due to Borel--Serre who constructed a bordification of symmetric spaces whose boundary can be described by rational Tits buildings \cite{BS:Cornersarithmeticgroups}; in the second case, it was shown in \cite{BG:Homotopytypecomplex} that the free factor complex can be seen as a subspace of the simplicial boundary of Culler--Vogtmann Outer space. In the RAAG-setting, one may ask whether a similar statement holds and $\RAAGcomplex$ can be seen as a boundary structure of the RAAG Outer space defined in \cite{BCV:OuterspaceRAAGs}, \cite{CSV:Outerspaceuntwisted} or a similar space. However, without further changes, this will not work for arbitrary $O$. In particular, if $O$ does not contain any transvection, the complex $\RAAGcomplex$ is trivial, while this need not be the case for the RAAG Outer space and its boundary. This for example occurs for RAAGs defined by \emph{focused graphs} that appear in the work of Bregman--Fullarton \cite{BF:Infinitegroupsacting}, if the standard ordering on the graph $\Gamma$ is trivial. In that case, $\Outo$ is a semi-direct product of a free abelian group generated by partial conjugations and the (finite) group of inversions.

\paragraph{Geometric aspects}
This text focuses on the topology of $\RAAGcomplex$. It also seems very reasonable, however, to ask what can be said about the geometry of this complex.
Motivated by the work of Masur and Minsky \cite{MM:Geometrycomplexcurves.a}, who showed that the curve complex $\mcC(S)$ is hyperbolic, Bestvina and Feighn \cite{BF:Hyperbolicitycomplexfree} proved that the free factor complex is hyperbolic as well. This is only one of many results in the study of $\Out{F_n}$ from a geometric point of view, which has become popular in recent years.
On the other hand, there is also a rich theory concerning metric aspects of buildings (for an overview, see \cite[Section 12]{AB:Buildings}). Combining these two theories should be an interesting topic for further investigations.

\bibliographystyle{halpha}
\bibliography{mybibliography}
\bigskip
  \footnotesize
  Benjamin Br\"uck\\ \textsc{Department of Mathematics \\
ETH Z\"urich\\
Rämistrasse 101\\
8092 Z\"urich, Switzerland}\\
\texttt{ https://people.math.ethz.ch/\textasciitilde bbrueck} \\ \nopagebreak
  \texttt{benjamin.brueck@math.ethz.ch}
\end{document}